\numberwithin{equation}{section}
\newtheorem{theorem}{Theorem}[section]
\newtheorem{lemma}[theorem]{Lemma}
\newtheorem{proposition}[theorem]{Proposition}
\newtheorem{remark}[theorem]{Remark}
\newtheorem{definition}[theorem]{Definition}
\newcommand{\wrt}{w.r.t.\ }
\newcommand{\iid}{i.i.d.\ }
\renewcommand{\div}{{\rm div}}
\def\N#1{\left\|\,#1\,\right\|}
\def\Ninf#1{\left\|\,#1\,\right\|_{\infty}}
\newcommand{\cro}[2]{\ensuremath{\left\langle#1\, ,\, #2\right\rangle}}
\newcommand{\bE}{\ensuremath{\mathbf{E}}}
\newcommand{\bP}{\ensuremath{\mathbf{P}}}
\newcommand{\bV}{\ensuremath{\mathbf{V}}}
\newcommand{\bW}{\ensuremath{\mathbf{W}}}
\newcommand{\bbR}{\ensuremath{\mathbb{R}}}
\newcommand{\bbS}{\ensuremath{\mathbb{S}}}
\newcommand{\bbX}{\ensuremath{\mathbb{X}}}
\newcommand{\bbY}{\ensuremath{\mathbb{Y}}}
\newcommand{\bbZ}{\ensuremath{\mathbb{Z}}}
\newcommand{\cC}{\ensuremath{\mathcal{C}}}
\newcommand{\cE}{\ensuremath{\mathcal{E}}}
\newcommand{\cF}{\ensuremath{\mathcal{F}}}
\newcommand{\cH}{\ensuremath{\mathcal{H}}}
\newcommand{\cI}{\ensuremath{\mathcal{I}}}
\newcommand{\cJ}{\ensuremath{\mathcal{J}}}
\newcommand{\cK}{\ensuremath{\mathcal{K}}}
\newcommand{\cL}{\ensuremath{\mathcal{L}}}
\newcommand{\cM}{\ensuremath{\mathcal{M}}}
\newcommand{\cP}{\ensuremath{\mathcal{P}}}
\newcommand{\cQ}{\ensuremath{\mathcal{Q}}}
\newcommand{\cR}{\ensuremath{\mathcal{R}}}
\newcommand{\cS}{\ensuremath{\mathcal{S}}}
\newcommand{\cT}{\ensuremath{\mathcal{T}}}
\newcommand{\cU}{\ensuremath{\mathcal{U}}}
\newcommand{\cV}{\ensuremath{\mathcal{V}}}
\newcommand{\cW}{\ensuremath{\mathcal{W}}}
\newcommand{\tk}{\ensuremath{\tilde{k}}}
\newcommand{\tx}{\ensuremath{\tilde{x}}}
\newcommand{\tB}{\ensuremath{\tilde{B}}}
\newcommand{\tcH}{\ensuremath{\widetilde{\cH}}}
\newcommand{\ttheta}{\ensuremath{\tilde{\theta}}}
\newcommand{\tTheta}{\ensuremath{\tilde{\Theta}}}
\newcommand{\tomega}{\ensuremath{\tilde{\omega}}}
\newcommand{\ttau}{\ensuremath{\tilde{\tau}}}
\newcommand{\tpi}{\ensuremath{\tilde{\pi}}}
\newcommand{\btheta}{\ensuremath{\bar\theta}}
\newcommand{\dd}{\,\text{\rm d}}             
\DeclareMathSymbol{\leqslant}{\mathalpha}{AMSa}{"36} 
\DeclareMathSymbol{\geqslant}{\mathalpha}{AMSa}{"3E} 
\DeclareMathSymbol{\eset}{\mathalpha}{AMSb}{"3F}     
\renewcommand{\leq}{\;\leqslant\;}                   
\renewcommand{\geq}{\;\geqslant\;}                   
\renewcommand{\div}{{\rm div}}
\newcommand{\nuN}[1]{\nu_{N,\, #1}}
\newcommand{\bnuN}[1]{\bar{\nu}_{N,\, #1}}
\newcommand{\etaN}[1]{\eta_{N,\, #1}}
\DeclareMathOperator*{\Supp}{Supp}
\newcommand{\com}[1]{}
\begin{document}

\title[Fluctuations for mean-field diffusions in spatial interaction]{Transition from Gaussian to non-Gaussian fluctuations for mean-field diffusions in spatial interaction}
\author{Eric Lu\c con$^1$}
\author{Wilhelm Stannat$^{2, 3}$}
\footnotetext[1]{ Laboratoire MAP5, Universit\'e Paris Descartes and CNRS, Sorbonne Paris Cit\'e, 45 rue des Saints P\`eres, 75270 Paris Cedex 06, France. email: eric.lucon@parisdescartes.fr.}
\footnotetext[2]{Institut f\"ur Mathematik, Technische Universit\"at Berlin, Stra{\ss}e des 17. Juni 136, D-10623 Berlin, Germany. email: stannat@math.tu-berlin.de.}
\footnotetext[3]{Bernstein Center for Computational Neuroscience Berlin, Philippstr. 13, D-10115 Berlin, Germany.}

\date{\today}
\keywords{Weakly interacting diffusions, spatially-extended particle systems, weighted empirical measures, fluctuations, Kuramoto model, neuronal models, stochastic partial differential equations}

\begin{abstract}
We consider a system of $N$ disordered mean-field interacting diffusions within spatial constraints : each particle $ \theta_{ i}$ is attached to one site $x_{ i}$ of a periodic lattice and the interaction between particles $ \theta_{ i}$ and $ \theta_{ j}$ decreases as $ \left\vert x_{ i} - x_{ j} \right\vert^{ - \alpha}$ for $ \alpha\in[0, 1)$. In a previous work \cite{LucSta2014}, it was shown that the empirical measure of the particles converges in large population to the solution of a nonlinear partial differential equation of McKean-Vlasov type. The purpose of the present paper is to study the fluctuations associated to this convergence. We exhibit in particular a phase transition in the scaling and in the nature of the fluctuations: when $ \alpha\in[0, \frac{ 1}{ 2})$, the fluctuations are Gaussian, governed by a linear SPDE, with scaling $ \sqrt{N}$ whereas the fluctuations are deterministic with scaling $ N^{ 1- \alpha}$ in the case $ \alpha\in( \frac{ 1}{ 2}, 1)$.

2010 \textit{Mathematics Subject Classification: 60F05, 60G57, 60H15, 82C20, 92B25.}
\end{abstract}

\maketitle


\section{Introduction}
The aim of the paper is to study the large population fluctuations of disordered mean-field interacting diffusions within spatial interaction. A general instance of the model may be given by the following system of $N$ coupled stochastic differential equations in $\bbX:=\bbR^{ m}$ ($m\geq1$):
\begin{equation}
\label{eq:odegene}
\dd\theta_{i, t}= c(\theta_{i,t}, \omega_{i}) \dd t + \frac{1}{|\Lambda_{N}|}\sum_{\substack{j\in\Lambda_{N}\\ j\neq i}} \Gamma\left(\theta_{i,t}, \omega_{i}, \theta_{j, t}, \omega_{j} \right)\Psi(x_{i}, x_{j}) \dd t + \dd B_{i, t},\ i\in \Lambda_{ N},\ t\in[0, T]
\end{equation}
where $T>0$ is a fixed but arbitrary time horizon. In \eqref{eq:odegene}, $c(\theta_{ i, t}, \omega_{ i})$ models the local dynamics of the particle $ \theta_{ i}$, $ \Gamma( \theta_{ i, t}, \omega_{ i}, \theta_{ j, t}, \omega_{ j})$ governs the interaction between particles $ \theta_{ i}$ and $ \theta_{ j}$ and $(B_{ i})_{ i\in \Lambda_{ N}}$ is a collection of independent standard Brownian motions modeling thermal noise in the system. Both local dynamics and interaction are perturbed by an independent random environment that is a sequence of i.i.d. random variables $(\omega_{i})_{ i\geq1}$ in $\bbY:= \bbR^{n}$ ($n\geq1$) modeling some local inhomogeneity for each particle.

The novelty of \eqref{eq:odegene} is that some geometry is imposed on the interactions: the particles $ \theta_{ i}$ in \eqref{eq:odegene} are regularly positioned on a periodic lattice (of dimension $1$ for simplicity) and the interaction between two particles depend on the distance between them. Namely, define $\Lambda_{ N}:= \left\lbrace -N, \ldots, N\right\rbrace$ with $-N$ and $N$ being identified, with cardinal $ \left\vert \Lambda_{ N} \right\vert=2N$ and suppose that for all $i\in \Lambda_{ N}$, the particle $ \theta_{ i}$ is at the fixed position $x_{ i}:= \frac{ i}{ 2N}\in\bbS_{ N}$, where $\bbS_{ N}:= \left\lbrace x_{ i}:= \frac{ i}{ 2N},\ i\in \Lambda_{ N}\right\rbrace$ is a subset of the one-dimensional circle $\bbS:= \bbR/\bbZ$. Making the obvious identification between $x \in \left[ -\frac{ 1}{ 2}, \frac{ 1}{ 2}\right]$ and its equivalence class $\bar x\in \bbS$, the euclidean norm $ \left\vert \cdot \right\vert$ on $\left[ -\frac{ 1}{ 2}, \frac{ 1}{ 2}\right]$ induces a distance $d(\cdot, \cdot)$ on $\bbS$ by:
\begin{equation}
\label{eq:dist_d}
\forall \bar x= x[1], \bar y= y[1]\in \bbS,\ d(\bar x, \bar y)= \min( \left\vert x-y \right\vert, 1 - \left\vert x-y \right\vert).
\end{equation}
The interaction in \eqref{eq:odegene} is supposed to decrease polynomially with the distance between particles through the spatial weight
\begin{equation}
\label{eq:def_psi}
\Psi(x, y):= d(x, y)^{ - \alpha},\ x, y\in \bbS,
\end{equation}
where $ \alpha$ is a parameter in $[0, 1)$.
\begin{remark}
We will often drop the notation $j \neq i$ in sums as in \eqref{eq:odegene}, since, with no loss of generality, one can define $ \Psi(x, x):=0$ for all $x$.
\end{remark}
\subsection{Weakly interacting diffusions}
\subsubsection{Mean-field models in physics and neuroscience}
In the particular case where $ \alpha=0$ and $\omega_{ i}=0$, the geometry and the disorder in \eqref{eq:odegene} become irrelevant and we retrieve the usual class of \emph{weakly interacting diffusions} (studied since McKean, Oelschl\"ager and Sznitman \cite{McKean1967, Gartner, Oelsch1984,SznitSflour}). Such systems are encountered in the context of statistical physics and biology (synchronization of large populations of individuals, collective behavior of social insects, emergence of synchrony in neural networks \cite{22657695, MR2998591,1211.0299}) and for particle approximations of nonlinear partial differential equations (see e.g. \cite{MR1410117, Malrieu2003} and references therein). Usual questions for this class of models concern their large population behavior (propagation of chaos, existence of a continuous limit for the empirical measure of the particles \cite{Oelsch1984,Gartner,Lucon2011}, fluctuations around this limit \cite{Fernandez1997,MR865013,Lucon2011} and large deviations \cite{daiPra96}) as well as their long-time behavior \cite{MR2731396,Malrieu2003}.

A recent interest for similar mean-fields diffusions has been developed for the modeling of the spiking activity of neurons in a noisy environment (e.g. Hodgkin-Huxley and FitzHugh-Nagumo oscillators \cite{22657695,MR2674516,MR2998591}). In this context, $\theta_{ i}$ represents the electrical activity of one single neuron, $ \Gamma$ captures synaptic connections between neurons and the disorder $ \omega_{ i}$ models an inhomogeneous behavior between inhibition and excitation. We refer to \cite{22657695} for more details on application of these models to neuroscience.

Another illustration of weakly interacting diffusions concerns statistical physics and models of synchronization of oscillators. In particular, the Kuramoto model (see e.g. \cite{Acebron2005, Lucon2011,MR3207725, Strogatz1991}) describes the synchronization of rotators $\theta_{i}$ in $\bbR/2\pi\bbZ$ with inhomogeneous frequencies $ \omega_{ i}$:
\begin{equation}
\label{eq:Kurintro}
\dd\theta_{i}(t)= \omega_{i} \dd t + \frac{K}{N}\sum_{j=1}^{N} \sin\left(\theta_{j}-\theta_{i}\right)\dd t + \sigma\dd B_{i}(t),\ t\geq0,\  i=1,\ldots, N.
\end{equation}
The system \eqref{eq:Kurintro} is well-known to exhibit a phase transition between incoherence to synchrony as the interaction strength $K>0$ increases. We refer to the mentioned references for further details on the dynamical properties of \eqref{eq:Kurintro}.

\subsubsection{Diffusions in spatial interaction}
The motivation of going beyond pure mean-field interactions comes from the biological observation that neurons do not interact in a mean-field way (see \cite{Bullmore:2009aa, PhysRevLett.110.118101} and references therein). There has been recently a growing interest in models closer to the topology of real neuronal networks \cite{PhysRevE.85.066201,PhysRevE.85.026212,PhysRevLett.110.118101}.  Even though the analysis of such models seems to be difficult in general, it is quite natural to expect that properties valid in the pure mean-field case (the first of them being the existence of a continuous limit in an infinite population) still hold for perturbations of the mean-field case, namely for systems where the interactions are not strictly identical, but where the number of connections is sufficiently large to ensure some self-averaging as the population size increases.

In this perspective, the main motivation of \eqref{eq:odegene} comes from works of Rogers and Wille \cite{PhysRevE.54.R2193} and of Gupta, Potters and Ruffo \cite{PhysRevE.85.066201} (see also \cite{PhysRevE.66.011109,PhysRevE.82.016205}) where a spatial version of the Kuramoto model is introduced:
\begin{equation}
\label{eq:Kurintrospace}
\dd\theta_{i, t}= \omega_{i} \dd t + \frac{K}{N}\sum_{j=1}^{N} \sin\left(\theta_{j, t}-\theta_{i, t}\right) \frac{ 1}{ \left\vert x_{ i} - x_{ j}\right\vert^{ \alpha}}\dd t + \sigma\dd B_{i, t},\ t\geq0,\  i=1,\ldots, N.
\end{equation}
In \eqref{eq:Kurintrospace} (and more generally in \eqref{eq:odegene}), the particles are still interacting on the complete graph but the strength of interaction decreases polynomially with the distance between particles.

System \eqref{eq:Kurintrospace} has to be related to analogous models of statistical physics with long-range interactions (e.g. the Ising model or XY-model with interaction in $r^{ - \alpha}$, see \cite{Aizenman:1988aa} and references therein). The influence of the spatial decay in \eqref{eq:Kurintrospace} on the synchronization properties of the system (in particular the existence of critical exponents and finite size effects) does not seem to be completely understood so far (see the aforementioned references for further details).
\subsubsection{Empirical measure and McKean-Vlasov limit}
Note that in \eqref{eq:odegene}, $ \theta_{ i}\in\bbX:= \bbR^{m}$, but one should also think to the case of $\bbX$ being a compact domain of $\bbR^{ m}$, see for example $\bbX=\bbR/2\pi\bbZ$ in the Kuramoto case \eqref{eq:Kurintrospace}. For any vectors $u$ and $v$ in $\bbX$ (or $\bbY$), $ \left\vert u \right\vert$ stands for the euclidean norm of $u$ and $u\cdot v$ is the scalar product between $u$ and $v$.

We endow \eqref{eq:odegene} with an initial condition: the particles $ (\theta_{ i, 0})_{ i=1, \ldots N}$ are independent and identically distributed according to some law $ \zeta$ on $\bbX$. The disorder $ (\omega_{ i})_{ i=1, \ldots, N}$ is a sequence of independent random variables, identically distributed according to some law $ \mu$ on $\bbY$, independently from $(\theta_{ i, 0})_{ i\in \Lambda_{ N}}$ and $(B_{ i})_{ i\in \Lambda_{ N}}$. 

All the statistical information of \eqref{eq:odegene} is contained in the empirical measure of the particles, disorder and positions (that is a random process in the set of probabilities on $\bbX\times\bbY\times\bbS$):
\begin{equation}
\label{eq:emp_measure}
\nu_{ N, t}:= \frac{1}{ \left\vert \Lambda_{N} \right\vert} \sum_{j\in\Lambda_{N}}\delta_{(\theta_{j,t}, \omega_{j}, x_{j})},\ N\geq 1,\ t\in[0, T].
\end{equation}
The object of a previous work \cite{LucSta2014} was to show that, under mild assumptions on the model (see \cite{LucSta2014}, Theorem~2.18), the empirical measure $ \nu_{ N}$ converges, as $N\to\infty$, to the unique solution $ \nu$ of the following weak McKean-Vlasov equation:
\begin{equation}
\label{eq:nut}
 \int f \dd\nu_{t} = \int f \dd \nu_{ 0} + \int_{0}^{t} \int \left(\frac{1}{2}\Delta_{\theta}f + \nabla_{\theta}f \cdot \left\lbrace c + \int \Gamma(\cdot, \ttheta, \tomega) \Psi(\cdot, \tx) \nu_{ s}(\dd \ttheta, \dd \tomega, \dd \tx) \right\rbrace\right) \dd\nu_{s}\dd s,
\end{equation}
for any regular test function $(\theta, \omega, x) \mapsto f(\theta, \omega, x)$ and where the initial condition is 
\begin{equation}
\label{eq:cond_ini_meanfield}
\nu_{ 0}(\dd \theta, \dd \omega, \dd x):= \zeta(\dd \theta) \mu(\dd \omega) \dd x.
\end{equation}
The main difficulty in the analysis of \eqref{eq:nut} is that it involves the singular kernel $ \Psi$ so that even the well-posedness of such equation is unclear. We refer to \cite{LucSta2014} for more details. Equation \eqref{eq:nut} is a spatial generalization of the standard McKean-Vlasov equation in the pure mean-field case (see e.g. \cite{Oelsch1984,Gartner,Lucon2011}).

\subsubsection{Fluctuations around the mean-field limit}
The purpose of this work is to address the question of the fluctuations of the empirical measure $ \nu_{ N}$ \eqref{eq:emp_measure}  around its limit $ \nu$ \eqref{eq:nut}. We are in particular interested in the influence of the geometry on the speed of convergence of $ \nu_{ N}$ towards $ \nu$.
\begin{definition}
\label{def:first_order_fluct}
Define the fluctuation process 
\begin{equation}
\label{eq:fluct}
\eta_{N, t}:= a_{N} \left(\nu_{N, t}- \nu_{t}\right),\ N\geq 1,\ t\in[0, T]
\end{equation}
where $a_{ N}$ is an appropriate renormalizing factor.
\end{definition}
The first contribution of the paper is to exhibit a phase transition in the size of the fluctuation of \eqref{eq:odegene}: the correct scaling $a_{ N}$ in \eqref{eq:fluct} depends on the value of the spatial parameter $ \alpha\in[0, 1)$ in \eqref{eq:def_psi} with respect to the critical value $ \frac{ 1}{ 2}$.
\begin{definition}[Fluctuation renormalization]
\label{def:aN}
Fix $0\leq\alpha<1$ and define for $N\geq 1$
\begin{equation}
\label{eq:aN}
a_{N}:=\begin{cases}
N^{ \frac{1}{2}}, & \text{if } 0\leq\alpha< \frac{1}{2},\\
N^{1-\alpha}, & \text{if } \frac{1}{2}<\alpha< 1.
\end{cases}
\end{equation}
\end{definition}
\noindent Let us show intuitively that Definition~\ref{def:aN} provides the correct scaling for \eqref{eq:fluct}. The convergence of $ \nu_{ N}$ towards $ \nu$ is due to the competition of two effects: the convergence of the empirical distribution of the initial condition $ \theta_{ i, 0}$ and the Brownian motions $B_{ i}$ (which scales typically as $ \sqrt{N}$) and the convergence with respect to the spatial variable $x_{ i}$. To fix ideas, set $c \equiv 0$ and $ \Gamma\equiv 1$ in \eqref{eq:odegene}. In this case, everything boils down to the approximation of the integral $ \int_{\bbS} \frac{ 1}{ \left\vert x - x_{ i} \right\vert^{ \alpha}} \dd x$ by the Riemann sum $ \frac{ 1}{ N} \sum_{ j=1}^{ N} \frac{ 1}{ \left\vert x_{ j} - x_{ i}\right\vert^{ \alpha}}$. A simple estimate (see Lemma~\ref{lem:fluct_Psi} below) shows that the rate of this last convergence is exactly $N^{ 1- \alpha}$. Definition~\ref{def:aN} simply chooses the predominant scaling in both cases.

This intuition also suggests that when $ \alpha< \frac{ 1}{ 2}$, the randomness prevails and one should obtain Gaussian fluctuations as $N\to\infty$, whereas when $ \alpha> \frac{ 1}{ 2}$ the randomness disappears under the scaling $N^{ 1- \alpha}$ and one should obtain a deterministic limit for $ \eta_{ N}$. The main result of the paper is precisely to make this intuition rigorous: we show that the fluctuation process $ \eta_{ N}$ converges to the unique solution of a linear stochastic partial differential equation when $ \alpha< \frac{ 1}{ 2}$ (see Theorem~\ref{theo:conv_subcrit}) and that $ \eta_{ N}$ has a deterministic limit in the supercritical case $ \alpha> \frac{ 1}{ 2}$ (see Theorem~\ref{theo:conv_supercrit}).

What makes the analysis difficult here is the singularity of the spatial kernel $ \Psi$ in \eqref{eq:odegene}. An important aspect of the paper is the introduction of an auxiliary weighted fluctuation process $\cH_{ N}$ (in addition to $ \eta_{ N}$) that is necessary to capture the spatial variations of the system and to cope with the singularity of the weight $ \Psi$. We refer to Section~\ref{sec:main_results} for more details.

\subsubsection{On the quenched fluctuations} The main results (Theorems~\ref{theo:conv_subcrit} and~\ref{theo:conv_supercrit}) are \emph{averaged} with respect to the disorder $(\omega_{ i})_{ i}$. Although we did not go in this direction for the simplicity of exposition, analogous results also hold in the quenched set-up, that is when we only integrate in \eqref{eq:odegene} w.r.t. the Brownian noise and the initial condition, and \emph{not} w.r.t. the disorder. This has been carried out in a previous paper \cite{Lucon2011} in the non spatial case. We let the reader adapt a similar strategy to the present situation.

\subsubsection{Existing literature}
The use of weighted empirical processes such as $\cH_{ N}$ (see Section~\ref{sec:second_order_process}) in the context of interacting particle systems is reminiscent of previous works. One should mention in particular the articles of Kurtz and Xiong \cite{MR1705602, MR2118848} on particle approximations for nonlinear SPDEs.

The present paper uses Hilbertian techniques already introduced by Fernandez and M\'el\'eard \cite{Fernandez1997} (see also \cite{MR865013,Lucon2011}) who proved a similar central limit theorem for weakly interacting diffusions without spatial geometry (that is $ \alpha=0$ in the framework of \eqref{eq:odegene}). It is shown in \cite{Fernandez1997} that the fluctuations are governed by a linear SPDE and that the convergence holds in some appropriate weighted Sobolev space of distributions. One can see the first result of the paper (Theorem~\ref{theo:conv_subcrit}) as a generalization of the result of Fernandez and M\'el\'eard to the spatial case: when $ \alpha< \frac{ 1}{ 2}$, the spatial damping on the interactions in \eqref{eq:odegene} is not strong enough to have an effect on the behavior of the fluctuations of the system and the fluctuations remain Gaussian.

The supercritical case (Theorem~\ref{theo:conv_supercrit}) may also be related to a class of models previously studied in the literature, that is \emph{moderately interacting diffusions} (see \cite{Jourdain1998,MR876258}). This class of models also exhibits deterministic fluctuations (see \cite{Jourdain1998} p.~755), but one should point out that the precise scaling $a_{ N}$ is not explicitly known in this case (see \cite{Jourdain1998}, Remark~3.15).

\subsubsection{Comments and perspective} 
The main conclusion of Theorem~\ref{theo:conv_supercrit} is that, in the case $ \alpha> \frac{ 1}{ 2}$, the leading term in the asymptotic expansion of the empirical measure $ \nu_{ N}$ around $ \nu$ is deterministic, of order $ \frac{ 1}{ N^{ 1- \alpha}}$. A natural question would be to ask about the existence and the nature of the next term in this expansion. Concerning the dependence in the spatial variable, one easily sees that the term following $ \frac{ 1}{ N^{ 1- \alpha }}$ in the asymptotic expansion of $ \frac{ 1}{ N} \sum_{ j=1}^{ N} \frac{ 1}{ \left\vert x_{ j} \right\vert^{ \alpha}}$ is of order $ \frac{ 1}{ N}$, which is in any case smaller than the Gaussian scaling $ \frac{ 1}{ \sqrt{N}}$. Consequently, one should expect the next term in the expansion of $ \nu_{ N}$ to be Gaussian, of order $ \frac{ 1}{ \sqrt{N}}$. The precise form of this term remains unclear, though. Note that a similar analysis has been made by Oelschl\"ager in \cite{MR876258} concerning moderately interacting diffusions.

Another natural question would be to ask what happens at the critical case $ \alpha= \frac{ 1}{ 2}$ (that is when the spatial and Gaussian fluctuations are exactly of the same order). Although it is natural to think that the correct scaling is exactly $ \sqrt{N}$, the present work only provides partial answers. A closer look to the proof below shows that the scaling is \emph{at least} $ \frac{ \sqrt{N}}{ \ln N}$ (see Section~\ref{sec:fluct_crit}). To derive the correct scaling and limit for the fluctuations in this case seems to require alternative techniques.

The behavior as $ N\to\infty$ of systems similar to \eqref{eq:odegene} in the case $ \alpha>1$ is also of interest and is the object of an ongoing work. The existence of a continuous limit at the level of the law of large number is doubtful in this case.

It is likely that the results presented here should be generalized to other models of interacting diffusions, especially to systems with random inhomogeneous connectivities (in the spirit of \cite{arous1995large, ben1997symmetric}) which are of particular interest in the context of neuroscience \cite{22657695}. This will be the object of a future work.
\subsubsection{Outline of the paper}
In Section~\ref{sec:main_results}, we specify the assumptions on the model and state the main results (Theorem~\ref{theo:conv_subcrit} and~\ref{theo:conv_supercrit}). Section~\ref{sec:tightness_results} is devoted to prove tightness of the fluctuation process in an appropriate space of distributions. The identification of the limits is done in Section~\ref{sec:identification_limits}. Section~\ref{sec:proofs} contains the proofs of technical propositions.

\section{Main results}
\label{sec:main_results}
In the rest of the paper, $ \tau= \left(\theta, \omega, x\right)$ stands for an element of $\bbX\times\bbY\times\bbS$. In particular, for any $s\in[0, T]$, $i\in \Lambda_{ N}$, $ \tau_{ i,s}=(\theta_{ i,s}, \omega_{ i}, x_{ i})$. We use also the duality notation
\begin{equation}
\label{eq:cro_f_lambda}
\left\langle \lambda\, ,\, f\right\rangle:=\int_{ \bbX\times\bbY\times\bbS} f(\theta, \omega, x) \lambda(\dd\theta, \dd\omega, \dd x),
\end{equation}
where $ \lambda$ is a measure on $\bbX\times\bbY\times\bbZ$ and $(\theta, \omega, x)\mapsto f(\theta, \omega, x)$ is a test function on $\bbX\times\bbY\times\bbS$. We also use functions of \emph{two variables} defined on $(\bbX\times\bbY\times\bbS)^{ 2}$ and introduce notations for the corresponding partial integrals: for any such function $(\tau, \ttau) \mapsto g(\tau, \ttau)$, define
\begin{align}
\Big[ g\, ,\, \lambda\Big]_{ 1}(\tau)&= \Big[ g\, ,\, \lambda\Big]_{ 1}(\theta, \omega, x):=\int_{ \bbX\times\bbY\times\bbS} g(\theta, \omega, x, \ttheta, \tomega, \tx) \lambda(\dd\ttheta, \dd\tomega, \dd \tx),\label{eq:cro_1}\\
\Big[ g\, ,\, \lambda\Big]_{ 2}(\ttau)&= \Big[ g\, ,\, \lambda\Big]_{ 2}(\ttheta, \tomega, \tx):=\int_{ \bbX\times\bbY\times\bbS} g(\theta, \omega, x, \ttheta, \tomega, \tx) \lambda(\dd\theta, \dd\omega, \dd x).\label{eq:cro_2}
\end{align}
With a slight abuse of notations, we will often drop the subscript whenever it is clear from the context and write $ \Big[ \cdot\, ,\, \cdot \Big]$ instead of $ \Big[ \cdot\, ,\, \cdot \Big]_{ i}$, $i=1, 2$. With these notations, one can write the dynamics \eqref{eq:odegene} only in terms of the empirical measure~\eqref{eq:emp_measure}
\begin{equation}
\dd\theta_{i, t}= c(\theta_{i,t}, \omega_{i}) \dd t +  \Big[ \Gamma \Psi\, ,\, \nu_{ N, t}\Big](\theta_{ i, t}, \omega_{ i}, x_{ i}) \dd t + \dd B_{i, t},\ i\in \Lambda_{ N},\ t\in[0, T]\label{eq:odegene_short}
\end{equation}
where $ \Big[ \Gamma \Psi\, ,\, \nu_{ N, t}\Big]= \Big[ \Gamma \Psi\, ,\, \nu_{ N, t}\Big]_{ 1}$ stands for \eqref{eq:cro_1} with the choice of $g(\theta, \omega, x, \ttheta, \tomega, \tx):= \Gamma(\theta, \omega, \ttheta, \tomega) \Psi(x, \tx)$ and $ \lambda:= \nu_{ N,t}$. Note also that the McKean-Vlasov equation \eqref{eq:nut} may be written as
\begin{equation}
\label{eq:nutLm}
\cro{\nu_{t}}{f} = \left\langle \nu_{ 0} \, ,\,  f \right\rangle + \int_{0}^{t}\cro{\nu_{s}}{L[\nu_{s}]f}\dd s,
\end{equation}
where, for any measure $ \lambda$ on $\bbX\times\bbY\times\bbS$, the generator $L[\lambda]$ is given by
\begin{equation}
\label{eq:propagL}
L[\lambda]f(\tau):= \frac{1}{2}\Delta_{\theta}f(\tau) + \nabla_{\theta} f(\tau) \cdot \left\{c(\theta, \omega) + \Big[\Gamma\Psi, \lambda\Big](\tau)\right\}.
\end{equation}
\subsection{Decomposition of the fluctuation process}
\subsubsection{Two-particle fluctuation process}
\label{sec:second_order_process}
The main difficulty in the analysis of \eqref{eq:fluct} comes from the singularity of the spatial kernel $ \Psi$ \eqref{eq:def_psi}. In particular, if one studies the process $ \eta_{ N}$ alone, one would need to consider test functions $(\theta, \omega, x) \mapsto f(\theta, \omega, x)$ with singularities w.r.t. $x$ whereas the embeddings techniques used in this work require a minimal regularity on the test functions (see Section~\ref{sec:Hilbert_spaces} below). Hence, the main idea is to introduce an auxiliary process $\cH_{ N}$ in order to bypass the lack of regularity of the kernel $ \Psi$. This process, that we call \emph{two-particle fluctuation process}, is the key object in order to understand the influence of the positions $(x_{ i}, x_{ j})$ of the particles on the fluctuations of the whole system.
\begin{definition}
\label{def:second_order_fluct}
Define the two-particle fluctuation process by 
\begin{equation}
\label{eq:secorderfluc}
\cH_{ N, t}:= a_{ N} \left( \frac{1}{ \left\vert \Lambda_{ N} \right\vert^{ 2}} \sum_{i, j\in \Lambda_{ N}} \Psi(x_{ i}, x_{ j}) \delta_{(\tau_{ i, t}, \tau_{ j, t})} - \nu_{ N, t} \otimes \nu_{ t} \left(\Psi\cdot\right)\right)(\dd \tau, \dd \ttau),\ t\in[0, T]
\end{equation}
that is, for any test function $(\theta, \omega, x, \ttheta, \tomega, \tx)= (\tau, \ttau)\mapsto g(\tau, \ttau)$
\begin{equation}
\label{eq:HN_g}
\cro{ \cH_{ N, t}}{ g}:= a_{ N}\left( \frac{1}{ \left\vert \Lambda_{ N} \right\vert^{ 2}} \sum_{ i, j\in \Lambda_{ N}} \Psi(x_{ i}, x_{ j}) g( \tau_{ i, t}, \tau_{ j, t}) - \frac{1}{ \left\vert \Lambda_{ N} \right\vert} \sum_{ i\in \Lambda_{ N}}\int \Psi(x_{ i}, \tx)g( \tau_{ i, t}, \ttau) \nu_{ t}(\dd \ttau)\right)
\end{equation}
\end{definition}
The process $\cH_{ N}$ captures the mutual fluctuations of \emph{two particles} $(\theta_{ i}, \theta_{ j})$ instead of one. One can easily see that $\cH_{ N}$ captures the correct fluctuations induced by the space variables (especially in the supercritical case $ \alpha> \frac{ 1}{ 2}$): taking $g\equiv 1$ in \eqref{eq:HN_g}, one obtains that $ \left\langle \cH_{ N, t}\, ,\, 1\right\rangle= \frac{ 1}{ \left\vert \Lambda_{ N} \right\vert}\sum_{ i\in \Lambda_{ N}} \left\lbrace a_{ N}\left( \frac{1}{ \left\vert \Lambda_{ N} \right\vert} \sum_{j\in \Lambda_{ N}} \Psi(x_{ i}, x_{ j}) - \int_{ \bbS} \Psi(x_{ i}, \tx) \dd \tx\right)\right\rbrace$, which, by Lemma~\ref{lem:fluct_Psi} below and Definition~\ref{def:aN}, is exactly of order $1$ when $ \alpha> \frac{ 1}{ 2}$. 

What makes the use of $\cH_{ N}$ critical is that it enables to separate the issue of the singularity of the spatial kernel $ \Psi$ from the issue of the regularity of the test functions $g$: it is the process $\cH_{ N}$ itself that carries the singularity in $(x, \tx)$, not the test functions. Hence, we are allowed to (and we will in the sequel) consider test functions as regular as required in \emph{all} variables $(\theta, \omega, x)$, which is crucial for the Sobolev embeddings techniques used in the paper. Note that it also necessary to consider the process $\cH_{ N}$ in the subcritical case $ \alpha< \frac{ 1}{ 2}$ even if $\cH=\lim_{ N\to\infty}\cH_{ N}$ does not appear in the final convergence result (see Theorem~\ref{theo:conv_subcrit}).
\subsubsection{Relations between $ \eta_{ N}$ and $ \cH_{ N}$}
\label{sec:relation_HN_etaN}
It is immediate to see from \eqref{eq:fluct} and \eqref{eq:secorderfluc} that for all test functions $(\tau, \ttau) \mapsto g(\tau, \ttau)$
\begin{equation}
\label{eq:rel_etaN_HN}
\left\langle \cH_{ N, t}\, ,\, g\right\rangle= \left\langle \eta_{ N, t}\, ,\, \left\langle \nu_{ N, t}\, ,\, \Psi g\right\rangle\right\rangle, \quad t\in[0, T],
\end{equation}
where by $ \left\langle \eta_{ N, t}\, ,\, \left\langle \nu_{ N, t}\, ,\, \Psi g\right\rangle\right\rangle$ we mean $\left\langle \eta_{ N, t}(\dd \ttau)\, ,\, \left\langle \nu_{ N, t}(\dd \tau)\, ,\, \Psi(x, \tx) g(\tau, \ttau)\right\rangle\right\rangle$. A natural question would be to ask if an equality similar to \eqref{eq:rel_etaN_HN} holds in the limit as $N\to \infty$
\begin{equation}
\label{eq:rel_eta_H}
\left\langle \cH_{ t}\, ,\, g\right\rangle= \left\langle \eta_{ t}\, ,\, \left\langle \nu_{ t}\, ,\, \Psi g\right\rangle\right\rangle, \quad t\in[0, T],
\end{equation}
for any possible limits $ \eta_{ N, t}\to_{ N\to \infty} \eta_{ t}$ and $\cH_{ N, t}\to_{ N\to\infty} \cH_{ t}$. Supposing that \eqref{eq:rel_eta_H} is true, a consequence is that the limiting process $\cH$ becomes \emph{a posteriori} useless for the determination of $ \eta$: we will show in Theorem~\ref{theo:conv_subcrit} (see also Remark~\ref{rem:uncoupling_eta_H}) that, using \eqref{eq:rel_eta_H}, one can characterize the limit of $ \eta_{ N}$ as the unique solution to a linear SPDE \eqref{eq:SPDE_subcrit_eta} involving only $ \eta$ and not the auxiliary process $ \cH$.

Equality \eqref{eq:rel_eta_H} is certainly true in the case without space (that is the case considered in \cite{Fernandez1997,Lucon2011}, equivalent to $ \alpha=0$ in \eqref{eq:odegene}). One result of the paper is to show that \eqref{eq:rel_eta_H} remains true in the subcritical case $ 0\leq\alpha< \frac{ 1}{ 2}$ (although the proof for this equality is not straightforward, see Section~\ref{sec:repr_H_eta} and Proposition~\ref{prop:repres_H_eta}).
\medskip

On the contrary, we will show that \eqref{eq:rel_eta_H} does not hold in the supercritical case $ \alpha> \frac{ 1}{ 2}$: the limiting processes $ \eta$ and $\cH$ found in Theorem~\ref{theo:conv_supercrit} lead to different expressions in \eqref{eq:rel_eta_H}: when $g\equiv 1$ in \eqref{eq:rel_eta_H}, one obtains that $\left\langle \cH_{ t}\, ,\, 1\right\rangle= \left\langle \cH_{ 0}\, ,\, 1\right\rangle= \chi(\alpha)\neq0$ (see \eqref{eq:SPDE_supercrit} and \eqref{eq:H0_large_alpha}). On the other hand, $ \left\langle \nu_{ t}\, ,\, \Psi(\cdot, \tx)\right\rangle= \int_{ \bbS} \Psi(x, \tx) \dd x$ is a constant $C$ (equal to $\frac{ 2^{ \alpha}}{ 1- \alpha}$, recall Lemma~\ref{lem:fluct_Psi}) and we see from \eqref{eq:SPDE_supercrit} that $ \left\langle \eta_{ t}\, ,\, C\right\rangle=0$. 

The reason for this difference is that, when $ \alpha> \frac{ 1}{ 2}$, the predominant scaling (i.e. $ N^{ 1- \alpha}$) comes from the rate of convergence of the discrete Riemann sum $\frac{ 1}{ \left\vert \Lambda_{ N} \right\vert} \sum_{ j\in \Lambda_{ N}} \Psi(x_{ i}, x_{ j})$ towards the integral $\int_{ \bbS} \Psi(x_{ i}, \tx) \dd \tx$ (Lemma~\ref{lem:fluct_Psi}). This scaling is strongly related to the singularity of the kernel $ \Psi$: if we had replaced $ \Psi$ by a regular (e.g. $\cC^{ 1}$) function, the rate of convergence of this Riemann sum  would become $ \frac{ 1}{ N}$ and the predominant scaling of the whole system would still be $\sqrt{N}$. If one naively replaces in \eqref{eq:rel_etaN_HN} the empirical measure $ \nu_{ N}$ by its McKean-Vlasov limit $ \nu$, the singularity in space disappears: the function $\tx \mapsto \left\langle \nu_{ t}\, ,\, \Psi(\cdot, \tx) g(\cdot, \ttau)\right\rangle$ is continuous (provided $g$ is). In particular, in \eqref{eq:rel_etaN_HN}, the term $\left\langle \eta_{ N, t}\, ,\, \left\langle \nu_{t}\, ,\, \Psi g\right\rangle\right\rangle$ does not contribute to the scaling $ N^{ 1- \alpha}$. It only appears in $ \left\langle \eta_{ N, t}\, ,\, \left\langle \nu_{ N, t}- \nu_{ t}\, ,\, \Psi g\right\rangle\right\rangle$, which is a function of $\cH_{ N, t}$ but not of $ \eta_{ N, t}$ alone. Hence, there is no hope to have a closed formula for $ \eta$ when $ \alpha> \frac{ 1}{ 2}$: we obtain as $N\to\infty$ a system of coupled deterministic equations in $( \eta, \cH)$, see \eqref{eq:SPDE_supercrit}.

\subsubsection{Semi-martingale representation of the fluctuation processes}
\label{sec:semimart_repr_etaN_HN}
The starting point of the analysis is to write a semi-martingale decomposition for both processes $ \eta_{ N}$ and $ \cH_{ N}$. We see in \eqref{eq:semimart} the use of the process $\cH_{ N}$: the singular part in the semi-martingale decomposition of $ \eta_{ N}$  is completely expressed in terms of $\cH_{ N}$.
\begin{proposition}[Semi-martingale representation of $\etaN{s}$]
\label{prop:semimart}
For every test function $(\theta, \omega, x)\mapsto f(\theta, \omega, x)$, for all $t\in[0, T]$, one has
\begin{equation}
\label{eq:semimart}
\cro{\etaN{t}}{f} = \cro{\etaN{0}}{f} + \int_{0}^{t} \cro{\etaN{s}}{L[\nu_{ s}]f}\dd s+ \int_{0}^{t} \cro{\cH_{ N, s}}{ \Phi[f]} \dd s+ \cM_{ N, t}^{( \eta)}f
\end{equation}
where $L[\nu_{s}]$ is the propagator defined in \eqref{eq:propagL} and
\begin{equation}
\label{eq:Phif}
\Phi[f](\tau, \ttau)= \Phi[f]( \theta, \omega, x, \ttheta, \tomega, \tx):= \nabla_{\theta}f(\theta, \omega, x)\cdot \Gamma( \theta, \omega, \ttheta, \tomega).
\end{equation}
and where the martingale term is given by
\begin{equation}
\label{eq:def_MN_eta}
\cM_{ N, t}^{( \eta)}f := \frac{a_{N}}{ \left\vert \Lambda_{N} \right\vert} \sum_{i\in\Lambda_{N}} \int_{0}^{t} \nabla_{\theta} f(\theta_{i,s}, \omega_{i}, x_{i})\cdot \dd B_{i, s},\ t\in[0, T].
\end{equation}
\end{proposition}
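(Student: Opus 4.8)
The plan is to derive \eqref{eq:semimart} by applying It\^o's formula to the semimartingales $\theta_{i,s}$ and then rewriting the resulting expressions in the dual notation. Concretely, fix a test function $f=f(\theta,\omega,x)$ of the required regularity. For each $i\in\Lambda_N$, It\^o's formula applied to $s\mapsto f(\theta_{i,s},\omega_i,x_i)$, using the dynamics \eqref{eq:odegene_short}, gives
\begin{equation}
\label{eq:ito_single}
f(\theta_{i,t},\omega_i,x_i)= f(\theta_{i,0},\omega_i,x_i) + \int_0^t L[\nu_{N,s}]f(\tau_{i,s})\dd s + \int_0^t \nabla_\theta f(\theta_{i,s},\omega_i,x_i)\cdot\dd B_{i,s},
\end{equation}
where $L[\nu_{N,s}]$ is the generator \eqref{eq:propagL} evaluated at $\lambda=\nu_{N,s}$, since $\Big[\Gamma\Psi,\nu_{N,s}\Big](\tau_{i,s})$ is exactly the drift term in \eqref{eq:odegene_short}. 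Averaging \eqref{eq:ito_single} over $i\in\Lambda_N$ with weight $1/|\Lambda_N|$ yields the semimartingale decomposition of $\cro{\nu_{N,t}}{f}$; subtracting the weak McKean--Vlasov identity \eqref{eq:nutLm} and multiplying by $a_N$ produces the decomposition of $\cro{\eta_{N,t}}{f}$.

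The second step is to reorganize the drift terms. After subtraction we obtain $\cro{\eta_{N,t}}{f}-\cro{\eta_{N,0}}{f}$ equal to $a_N\int_0^t\big(\cro{\nu_{N,s}}{L[\nu_{N,s}]f}-\cro{\nu_s}{L[\nu_s]f}\big)\dd s$ plus the martingale $\cM_{N,t}^{(\eta)}f$, the latter being immediately identified with \eqref{eq:def_MN_eta}. The key algebraic manipulation is to split
\begin{equation}
\label{eq:split_drift}
\cro{\nu_{N,s}}{L[\nu_{N,s}]f}-\cro{\nu_s}{L[\nu_s]f} = \cro{\nu_{N,s}-\nu_s}{L[\nu_s]f} + \cro{\nu_{N,s}}{\big(L[\nu_{N,s}]-L[\nu_s]\big)f}.
\end{equation}
The first term on the right is $a_N^{-1}\cro{\eta_{N,s}}{L[\nu_s]f}$, giving the second term of \eqref{eq:semimart} after multiplication by $a_N$. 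For the second term, since $L[\lambda]$ depends on $\lambda$ only through the drift $\Big[\Gamma\Psi,\lambda\Big]$ and this dependence is \emph{linear} in $\lambda$, one has $\big(L[\nu_{N,s}]-L[\nu_s]\big)f(\tau)=\nabla_\theta f(\tau)\cdot\Big[\Gamma\Psi,\nu_{N,s}-\nu_s\Big](\tau)$. Therefore
\begin{equation}
\label{eq:to_HN}
a_N\cro{\nu_{N,s}}{\big(L[\nu_{N,s}]-L[\nu_s]\big)f} = a_N\int \nabla_\theta f(\tau)\cdot\Big[\Gamma\Psi,\nu_{N,s}-\nu_s\Big](\tau)\,\nu_{N,s}(\dd\tau),
\end{equation}
and recognizing $\nabla_\theta f(\theta,\omega,x)\cdot\Gamma(\theta,\omega,\ttheta,\tomega)=\Phi[f](\tau,\ttau)$ from \eqref{eq:Phif}, the right-hand side of \eqref{eq:to_HN} becomes $a_N\big(\cro{\nu_{N,s}\otimes\nu_{N,s}}{\Psi\,\Phi[f]}-\cro{\nu_{N,s}\otimes\nu_s}{\Psi\,\Phi[f]}\big)$, which is precisely $\cro{\cH_{N,s}}{\Phi[f]}$ by Definition~\ref{def:second_order_fluct} (the first term matches the double sum in \eqref{eq:HN_g} because $\nu_{N,s}\otimes\nu_{N,s}$ puts mass $|\Lambda_N|^{-2}$ on each pair $(\tau_{i,s},\tau_{j,s})$, and the diagonal terms vanish since $\Psi(x,x):=0$). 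This gives the third term of \eqref{eq:semimart}.

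The last point is to confirm that $\cM_{N,t}^{(\eta)}f$ defined by \eqref{eq:def_MN_eta} is genuinely a martingale, which follows from the fact that each $\int_0^t\nabla_\theta f(\theta_{i,s},\omega_i,x_i)\cdot\dd B_{i,s}$ is a square-integrable martingale under the standing assumptions on $f$ and the moment bounds on the particle system (established in \cite{LucSta2014}); its bracket will be computed separately when needed. I expect no serious obstacle here: the only subtlety is the careful bookkeeping of the diagonal terms $i=j$ and the justification that all the integrals and exchanges of sum and integral are licit, which rests on the regularity of $f$, the integrability of $\Psi$ (it belongs to $L^1(\bbS)$ since $\alpha<1$), and the a priori moment estimates on $(\theta_{i,s})$ already available from the companion paper. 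The identity \eqref{eq:semimart} is thus essentially a bookkeeping exercise once It\^o's formula and the linearity of $\lambda\mapsto L[\lambda]$ are in hand.
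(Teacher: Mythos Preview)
Your proof is correct and follows essentially the same approach as the paper: apply It\^o's formula to obtain the semimartingale decomposition of $\langle\nu_{N,t},f\rangle$, subtract the McKean--Vlasov identity \eqref{eq:nutLm}, and split the drift via the linearity of $\lambda\mapsto L[\lambda]$ into the $\langle\eta_{N,s},L[\nu_s]f\rangle$ piece and the remainder $a_N\langle\nu_{N,s},\nabla_\theta f\cdot[\Gamma\Psi,\nu_{N,s}-\nu_s]\rangle$, which is then identified with $\langle\cH_{N,s},\Phi[f]\rangle$. The paper writes out the same computation with slightly different bookkeeping (separating the $\tfrac12\Delta_\theta f + \nabla_\theta f\cdot c$ part from the interaction part before recombining), but the argument is the same.
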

\begin{proposition}[Semi-martingale representation for $\cH_{ N}$]
\label{prop:semimartI}
For any regular and bounded test function $(\tau, \ttau)\mapsto g(\tau, \ttau)$
\begin{equation}
\label{eq:semimart_HN}
\cro{ \cH_{ N, t}}{ g}= \cro{ \cH_{ N, 0}}{ g} + \int_{0}^{t} \cro{ \cH_{ N, s}}{ \mathscr{L}_{ s}g}\dd s + \int_{0}^{t} F_{ N, s}g \dd s + \int_{0}^{t} G_{ N, s}g \dd s +  \cM^{ (\cH)}_{ N, t}g,
\end{equation}
where
\begin{equation}
\mathscr{L}_{ s}g( \tau, \ttau):= \mathscr{L}_{ s}^{ (1)}g( \tau, \ttau) + \mathscr{L}^{ (2)}_{ s}g( \tau, \ttau)\label{eq:def_opL}
\end{equation}
for 
\begin{align}
 \mathscr{L}_{ s}^{ (1)}g( \tau, \ttau)&:=\frac{ 1}{ 2} \Delta_{ \theta, \ttheta} g(\tau, \ttau) +  \nabla_{ \theta}g( \tau, \ttau) \cdot\left\{c(\theta, \omega) + \left[ \Gamma \Psi, \nu_{s}\right](\tau)\right\} \nonumber\\ & \qquad \qquad \qquad +  \nabla_{ \ttheta}g( \tau, \ttau) \cdot\left\{c(\ttheta, \tomega) + \left[ \Gamma \Psi, \nu_{ s}\right](\ttau)\right\},\label{eq:def_opL1}\\
  \mathscr{L}_{ s}^{ (2)}g( \tau, \ttau)&:=\cro{ \nu_{s}}{ \Psi(\cdot, x)\nabla_{ \ttheta}g(\cdot, \tau)} \cdot\Gamma( \theta, \omega, \ttheta, \tomega),\label{eq:def_opL2}
\end{align}
where the remaining terms $F_{ N}$ and $G_{ N}$ are 
\begin{align}
F_{ N, s}g&:=\cro{ \cH_{ N, s}}{\nabla_{ \theta}g( \tau, \ttau) \cdot\left[ \Gamma \Psi, \nu_{ N, s}- \nu_{ s}\right](\tau)}\label{eq:def_UN}\\
G_{ N, s}g&:= \cro{ \cH_{ N,s}}{ \cro{\nu_{ N, s}- \nu_{ s}}{ \Psi(\cdot, x)\nabla_{ \ttheta}g(\cdot, \tau)} \cdot \Gamma( \theta, \omega, \ttheta, \tomega)}\label{eq:def_VN}
\end{align}
and the martingale part $\cM^{ (\cH)}_{ N, t}$ is given by
\begin{equation}
\label{eq:def_MNH}
\begin{split}
&\cM^{ (\cH)}_{ N, t}g:= \frac{ a_{ N}}{ \left\vert \Lambda_{ N} \right\vert^{ 2}} \sum_{ i, j\in \Lambda_{ N}} \int_{0}^{t}  \nabla_{  \ttheta} g(\tau_{ i, s}, \tau_{ j, s}) \Psi(x_{ i}, x_{ j})\cdot\dd B_{ j, s}+\\&\frac{ a_{ N}}{ \left\vert \Lambda_{ N} \right\vert} \sum_{ i\in \Lambda_{ N}} \int_{0}^{t} \left( \frac{ 1}{ \left\vert \Lambda_{ N} \right\vert} \sum_{ j\in \Lambda_{ N}} \nabla_{  \theta} g(\tau_{ i, s}, \tau_{ j, s})\Psi(x_{ i}, x_{ j}) - \int\nabla_{ \theta}g(\tau_{ i, s}, \ttau) \Psi(x_{ i}, \tx)\nu_{ s}(\dd\ttau)\right)\cdot\dd B_{ i, s}
\end{split}
\end{equation}
\end{proposition}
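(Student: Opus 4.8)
The plan is to obtain \eqref{eq:semimart_HN} by applying It\^o's formula to each of the two terms that make up $\cro{\cH_{N,t}}{g}$ in \eqref{eq:HN_g} and then reorganising the resulting drifts, using the symmetry $\Psi(x,\tx)=\Psi(\tx,x)$ and the convention $\Psi(x_{i},x_{i})=0$. Write $\cro{\cH_{N,t}}{g}=a_{N}(A_{N,t}g-B_{N,t}g)$, where $A_{N,t}g:=|\Lambda_{N}|^{-2}\sum_{i,j}\Psi(x_{i},x_{j})g(\tau_{i,t},\tau_{j,t})$ is the genuinely empirical part and $B_{N,t}g:=|\Lambda_{N}|^{-1}\sum_{i}\cro{\nu_{t}}{\Psi(x_{i},\cdot)g(\tau_{i,t},\cdot)}$ is the part paired against the McKean--Vlasov limit.

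For $A_{N,t}g$: only the $\theta$-coordinates of $\tau_{i,t}=(\theta_{i,t},\omega_{i},x_{i})$ evolve, via the closed dynamics \eqref{eq:odegene_short}; since $\cro{B_{i}}{B_{j}}\equiv 0$ for $i\neq j$ and the diagonal $i=j$ drops out because $\Psi(x_{i},x_{i})=0$, It\^o's formula gives that $g(\tau_{i,t},\tau_{j,t})$ has drift $\tfrac12\Delta_{\theta,\ttheta}g+\nabla_{\theta}g\cdot\{c+[\Gamma\Psi,\nu_{N,s}]\}(\tau_{i,s})+\nabla_{\ttheta}g\cdot\{c+[\Gamma\Psi,\nu_{N,s}]\}(\tau_{j,s})$ (evaluated at $(\tau_{i,s},\tau_{j,s})$) and martingale increment $\nabla_{\theta}g\cdot\dd B_{i,s}+\nabla_{\ttheta}g\cdot\dd B_{j,s}$; summing against $a_{N}\Psi(x_{i},x_{j})/|\Lambda_{N}|^{2}$ reproduces exactly the $\dd B_{j,s}$-term of $\cM^{(\cH)}_{N,t}g$ and the first half of its $\dd B_{i,s}$-term. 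For $B_{N,t}g$: for fixed $i$, $\cro{\nu_{t}}{\Psi(x_{i},\cdot)g(\tau_{i,t},\cdot)}$ depends on time deterministically through $t\mapsto\nu_{t}$ and stochastically through $g(\tau_{i,t},\cdot)$, with no covariation between the two; the weak equation \eqref{eq:nutLm} handles the first, contributing $\cro{\nu_{s}}{L[\nu_{s}]\big(\Psi(x_{i},\cdot)g(\tau_{i,s},\cdot)\big)}\dd s=\cro{\nu_{s}}{\Psi(x_{i},\cdot)\big(\tfrac12\Delta_{\ttheta}g+\nabla_{\ttheta}g\cdot\{c+[\Gamma\Psi,\nu_{s}]\}\big)(\tau_{i,s},\cdot)}\dd s$, while It\^o in $\theta_{i}$ handles the second, contributing $\cro{\nu_{s}}{\Psi(x_{i},\cdot)\big(\tfrac12\Delta_{\theta}g+\nabla_{\theta}g\cdot\{c+[\Gamma\Psi,\nu_{N,s}]\}\big)(\tau_{i,s},\cdot)}\dd s$ plus the martingale $\cro{\nu_{s}}{\Psi(x_{i},\cdot)\nabla_{\theta}g(\tau_{i,s},\cdot)}\cdot\dd B_{i,s}$; summed over $i$, this produces the remaining half of the $\dd B_{i,s}$-term of $\cM^{(\cH)}_{N,t}g$.

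It then remains to identify the drift of $a_{N}(A_{N,t}g-B_{N,t}g)$ with $\cro{\cH_{N,s}}{\mathscr{L}_{s}g}+F_{N,s}g+G_{N,s}g$. Each piece of the $B$-drift has the same functional form $\phi(\tau_{i,s},\cdot)$ as the matching $A$-drift piece $\phi(\tau_{i,s},\tau_{j,s})$, so by the very definition \eqref{eq:HN_g} their difference is $\cro{\cH_{N,s}}{\phi}$. This immediately produces $\cro{\cH_{N,s}}{\tfrac12\Delta_{\theta,\ttheta}g+\nabla_{\theta}g\cdot c(\theta,\omega)+\nabla_{\ttheta}g\cdot c(\ttheta,\tomega)}$ and, after splitting $[\Gamma\Psi,\nu_{N,s}]=[\Gamma\Psi,\nu_{s}]+[\Gamma\Psi,\nu_{N,s}-\nu_{s}]$ in the $\nabla_{\theta}$-interaction term, also $\cro{\cH_{N,s}}{\nabla_{\theta}g\cdot[\Gamma\Psi,\nu_{s}](\tau)}+F_{N,s}g$. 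What is left over is the $\nabla_{\ttheta}$-interaction term of the $A$-drift, $|\Lambda_{N}|^{-2}a_{N}\sum_{i,j}\Psi(x_{i},x_{j})\nabla_{\ttheta}g(\tau_{i,s},\tau_{j,s})\cdot[\Gamma\Psi,\nu_{N,s}](\tau_{j,s})$, together with the $\nabla_{\ttheta}$-interaction term of the $B$-drift; expanding the interaction $[\Gamma\Psi,\nu_{N,s}](\tau_{j,s})$ over a third index, relabelling the three indices, and using $\Psi(x,\tx)=\Psi(\tx,x)$, one checks that the sum of these two leftovers equals $\cro{\cH_{N,s}}{\psi^{N}}+\cro{\cH_{N,s}}{\nabla_{\ttheta}g\cdot[\Gamma\Psi,\nu_{s}](\ttau)}$ with $\psi^{N}(\tau,\ttau):=\cro{\nu_{N,s}}{\Psi(\cdot,x)\nabla_{\ttheta}g(\cdot,\tau)}\cdot\Gamma(\theta,\omega,\ttheta,\tomega)$. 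Finally $\psi^{N}$ splits as $\mathscr{L}^{(2)}_{s}g$ (its $\nu_{s}$ part) plus the integrand defining $G_{N,s}g$ (its $\nu_{N,s}-\nu_{s}$ part), so $\cro{\cH_{N,s}}{\psi^{N}}=\cro{\cH_{N,s}}{\mathscr{L}^{(2)}_{s}g}+G_{N,s}g$. Collecting all contributions yields precisely \eqref{eq:semimart_HN}; the interchanges of finite sums with the (finite, since $\alpha<1$) integrals against $\Psi$, and the square-integrability of $\cM^{(\cH)}_{N,\cdot}g$, follow from boundedness of $g$ and its first two $\theta$-derivatives together with the integrability estimates on $\Psi$ from \cite{LucSta2014} (cf. Lemma~\ref{lem:fluct_Psi}).

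I expect the main obstacle to be this last, purely combinatorial, reorganisation: the nonlinear interaction creates triple sums and double integrals in both $A_{N}$ and $B_{N}$, and it is the non-obvious cancellation among the $\nabla_{\ttheta}$-interaction leftovers — visible only after re-indexing the summation/integration variables and invoking the symmetry of $\Psi$ — that forces the extra drift operator $\mathscr{L}^{(2)}_{s}$ and the error term $G_{N}$ to appear, in contrast with the one-particle process $\eta_{N}$, whose entire singular contribution reduces to the single term $\cro{\cH_{N,s}}{\Phi[f]}$ in \eqref{eq:semimart}.
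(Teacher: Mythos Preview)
Your proposal is correct and follows essentially the same approach as the paper's own proof: It\^o's formula applied separately to the empirical double sum and to the $\nu_t$-integrated part, followed by the same add--and--subtract of $[\Gamma\Psi,\nu_s]$ in the $\nabla_\theta$-interaction, and the same re-indexing of the triple sum in the $\nabla_{\ttheta}$-interaction to recognise it as $\cro{\cH_{N,s}}{\psi^N}+\cro{\cH_{N,s}}{\nabla_{\ttheta}g\cdot[\Gamma\Psi,\nu_s](\ttau)}$ with $\psi^N=\cro{\nu_{N,s}}{\Psi(\cdot,x)\nabla_{\ttheta}g(\cdot,\tau)}\cdot\Gamma$. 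The paper's proof is organised slightly differently (it first combines everything into a single expression and then isolates the last ``interaction'' block as a sum $A_N+B_N$), but the algebraic content and the key re-indexing step are identical to yours.
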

The proofs of Propositions~\ref{prop:semimart} and~\ref{prop:semimartI} are given in Section~\ref{sec:proof_semimart}. The whole point of the paper is to take the limit as $N\to\infty$ in the semi-martingale decompositions \eqref{eq:semimart} and \eqref{eq:semimart_HN}.
\subsection{Assumptions}
\label{sec:assumptions}
Define the following integer
\begin{equation}
P:= m+n +1=\dim (\bbX\times\bbY\times\bbS).\label{eq:P}
\end{equation}
The local dynamics term $(\theta, \omega)\mapsto c(\theta, \omega)$ in \eqref{eq:odegene} is supposed to be differentiable w.r.t. $( \theta, \omega)$ up to order $3P+9$ and satisfies a one-sided Lipschitz condition \wrt $\theta$, uniformly in $ \omega$:
\begin{equation}
\label{eq:cgrowthcond}
\sup_{ \omega\in \bbY} \left\lbrace \left(\theta-\bar\theta\right)\cdot \left(c(\theta, \omega)-c(\bar\theta, \omega)\right) \right\rbrace \leq L \left\vert \theta-\bar\theta \right\vert^{2},\ (\theta, \bar\theta)\in \bbX^{ 2}
\end{equation}
for some constant $L$. We also suppose that $ (\theta, \omega) \mapsto c(\theta, \omega)$ and its derivatives up to order $3P+9$ are bounded in $ \theta$ with polynomial bound in $ \omega$: there exist $\iota\geq1$ and $C>0$ such that for all differential operator $D$ in $( \theta, \omega)$ of order smaller than $3P+9$
\begin{equation}
\label{eq:polgrowthc}
\sup_{ \theta\in\bbX}\left\vert Dc(\theta, \omega) \right\vert\leq C \left(1+ \left\vert \omega \right\vert^{\iota}\right),\ \omega\in \bbY.
\end{equation}
The interaction term $\Gamma$ in \eqref{eq:odegene} is supposed to be bounded by $\Ninf{\Gamma}$, globally Lipschitz-continuous on $ \left(\bbX\times\bbY\right)^{2}$, with a Lipschitz constant $\N{\Gamma}_{Lip}$ and differentiable in all variables up to order $3P+9$ with bounded derivatives. We also require that 
\begin{equation}
\sum_{ \left\vert k \right\vert\leq 2P+3} \sup_{ \theta, \omega, \bar \omega} \left(\int_{ \bbX} \left\vert D_{ \theta, \omega}^{ k} \Gamma( \bar \theta, \bar \omega, \theta, \omega)\right\vert  \dd \bar\theta\right)<+\infty.
\end{equation}
Define the following exponents (where $ \iota$ is given by \eqref{eq:polgrowthc})
\begin{align}
\gamma&:= \max \left(P, \left\lfloor \frac{ n}{ 2}\right\rfloor + \iota\right)+1,\label{eq:def_beta}\\
\underline \kappa&:= m+1\text{ and }\bar \kappa:= \underline \kappa + 2 \gamma,\label{eq:kappas}\\
\underline \iota&:= n+2 \iota+1\text{ and }\bar \iota:= \underline \iota + 2 \gamma.\label{eq:iotas}
\end{align}
The law $ \zeta$ of the particles $( \theta_{ i, 0})_{ i\in \Lambda_{ N}}$ is assumed to be absolutely continuous with respect to the Lebesgue measure on $\bbX=\bbR^{ m}$ and its density (that we also denote by $ \theta \mapsto \zeta( \theta)$ with a slight abuse of notation) satisfies the integrability condition
\begin{equation}
\label{eq:integ_zeta}
\exists p>m,\ \int_{ \bbX} \zeta(\theta)^{ p} \dd \theta<+\infty.
\end{equation}
We also assume that $ \zeta$ and $ \mu$ satisfy the moment conditions
\begin{equation}
\label{eq:moments_mu_xi0}
\int_{ \bbX} \left\vert \theta \right\vert^{ 8\bar \kappa} \zeta(\dd \theta)< +\infty \text{ and } \int_{ \bbY} \left\vert \omega \right\vert^{ 8 \bar \iota} \mu(\dd \omega)< +\infty,
\end{equation}
where $(\bar \kappa, \bar \iota)$ are given in \eqref{eq:kappas} and \eqref{eq:iotas}. In what follows, we denote as $\bE(\cdot)$ the expectation with respect to the initial condition $ \theta_{ i, 0}$, the Brownian motions $(B_{ i})$ and the disorder $(\omega_{ i})$ (i.e. we work in the averaged model).

\subsection{Estimates on the McKean-Vlasov equation}
We first recall some results concerning the continuous limit \eqref{eq:nut}. The crucial object here is the nonlinear process associated to the McKean-Vlasov equation \eqref{eq:nut} \cite{SznitSflour,LucSta2014}. The following result is a direct consequence of \cite{LucSta2014}, Section~3.2:
\begin{proposition}
\label{prop:nonlin}
Under the assumptions made in Section~\ref{sec:assumptions}, there is pathwise existence and uniqueness of the solution $(\bar \theta_{ t}^{ x, \omega}, \omega, x)_{ t\in[0, T]}$ of the nonlinear system
\begin{equation}
\label{eq:nonlin}
\left\{\begin{split}
\bar\theta^{x, \omega}_{ t} &= \theta_{ 0} + \int_{0}^{t} \left(c(\bar\theta^{x, \omega}_{ s}, \omega) + \Big[\Gamma\Psi, \lambda_{s}\Big](\bar\theta^{x, \omega}_{ s}, \omega, x)\right)\dd s + B_{ t},\\
\lambda_{t}&=\lambda_{t}^{x, \omega}(\dd\theta) \mu(\dd\omega)\dd x,\\
\lambda_{t}^{x, \omega}&(\dd\theta) \mu(\dd\omega)\text{ is the law of }(\bar \theta_{ t}^{ x, \omega}, \omega),
\end{split}\right.\ t\in[0, T].
\end{equation}
satisfying $\sup_{ t\leq T} \int \left(\left\vert \theta \right\vert^{ 8\bar\kappa} \vee \left\vert \omega \right\vert^{ 8\bar\iota}\right) \lambda_{ t}({\rm d} \theta, {\rm d} \omega, \dd x)< +\infty$, where $ \bar \kappa$ and $ \bar \iota$ are given in \eqref{eq:kappas} and \eqref{eq:iotas}.
\end{proposition}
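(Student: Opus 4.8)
The plan is to obtain existence, uniqueness and the moment bound for the nonlinear system \eqref{eq:nonlin} by a standard fixed-point argument on the law of the nonlinear process, combined with the a priori moment estimates that are already available from the companion paper \cite{LucSta2014}. Since the statement is explicitly announced as ``a direct consequence of \cite{LucSta2014}, Section~3.2'', the role of the proof here is mainly to explain how the cited construction produces exactly the object described, paying attention to the extra moment exponents $8\bar\kappa$ and $8\bar\iota$ forced by \eqref{eq:moments_mu_xi0} and by the Sobolev-embedding bookkeeping fixed in Section~\ref{sec:assumptions}.

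\medskip
\noindent\textbf{Step 1: set up the fixed-point map.} Fix the disorder law $\mu$ and, for each $(x,\omega)\in\bbS\times\bbY$, view $\lambda=\big(\lambda_t^{x,\omega}(\dd\theta)\mu(\dd\omega)\dd x\big)_{t\in[0,T]}$ as an element of the space of continuous maps $[0,T]\to\cP(\bbX\times\bbY\times\bbS)$, equipped with a Wasserstein-type metric restricted to measures with the prescribed disorder and spatial marginals $\mu(\dd\omega)\dd x$. To a candidate flow $\lambda$ one associates, for $\mu(\dd\omega)\dd x$-almost every $(x,\omega)$, the strong solution of the SDE in \eqref{eq:nonlin} with $\Big[\Gamma\Psi,\lambda_s\Big](\cdot,\omega,x)$ frozen; the coefficient $\theta\mapsto c(\theta,\omega)+\Big[\Gamma\Psi,\lambda_s\Big](\theta,\omega,x)$ is one-sided Lipschitz by \eqref{eq:cgrowthcond} (the spatial term $\Big[\Gamma\Psi,\lambda_s\Big]$ is $\theta$-independent up to the Lipschitz $\Gamma$, hence globally Lipschitz in $\theta$ with constant $\N{\Gamma}_{Lip}\int\Psi\,\dd\tx<\infty$ by Lemma~\ref{lem:fluct_Psi}), so pathwise existence and uniqueness of $\bar\theta^{x,\omega}$ holds by the usual Yamada--Watanabe / monotone-coefficient SDE theory. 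Define $\Theta(\lambda)_t$ to be the law of $(\bar\theta_t^{x,\omega},\omega,x)$; a fixed point of $\Theta$ is precisely a solution of \eqref{eq:nonlin}.

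\medskip
\noindent\textbf{Step 2: contraction and the moment bound.} Using the boundedness of $\Gamma$, $\int_\bbS\Psi(x,\tx)\dd\tx=\tfrac{2^\alpha}{1-\alpha}$ (Lemma~\ref{lem:fluct_Psi}) and the Lipschitz continuity of $\Gamma$, one estimates $\big|\Big[\Gamma\Psi,\lambda_s\Big](\tau)-\Big[\Gamma\Psi,\lambda'_s\Big](\tau)\big|$ by a constant times the Wasserstein distance between $\lambda_s$ and $\lambda'_s$ (note that the $\theta$-marginal is the only thing that moves, since the $(\omega,x)$-marginals are fixed), which yields contraction of $\Theta$ on $[0,T]$ after the standard Gr\"onwall argument, iterated over short time intervals if necessary, or directly via a Picard iteration with a weighted sup-norm in time. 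For the moment bound, one applies It\^o's formula to $\big|\bar\theta_t^{x,\omega}\big|^{8\bar\kappa}$, uses the one-sided Lipschitz bound \eqref{eq:cgrowthcond}, the polynomial-in-$\omega$ growth \eqref{eq:polgrowthc}, the boundedness of $\Gamma\Psi$ in the relevant sense, and the BDG inequality to control the martingale term, then takes $\bE$ and integrates against $\mu(\dd\omega)\dd x$; the moment assumption \eqref{eq:moments_mu_xi0} on $\zeta$ handles the initial condition and the $\left\vert\omega\right\vert^{8\bar\iota}$ bound is immediate from \eqref{eq:moments_mu_xi0} since $\omega$ is not evolved. This gives $\sup_{t\leq T}\int\big(\left\vert\theta\right\vert^{8\bar\kappa}\vee\left\vert\omega\right\vert^{8\bar\iota}\big)\lambda_t(\dd\theta,\dd\omega,\dd x)<+\infty$.

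\medskip
\noindent\textbf{Step 3: identification with \eqref{eq:nut} and invocation of \cite{LucSta2014}.} Finally, one checks by It\^o's formula that the flow of marginals $\lambda_t=\nu_t$ constructed above solves the weak McKean--Vlasov equation \eqref{eq:nut} (equivalently \eqref{eq:nutLm}), so that the object produced is the same $\nu$ appearing throughout the paper; uniqueness for \eqref{eq:nut} — which is the genuinely delicate point because of the singular kernel $\Psi$ — is exactly what \cite{LucSta2014}, Section~3.2, establishes, and we quote it. I expect the main obstacle to be precisely this interplay with the singularity of $\Psi$: one must be careful that all the manipulations (freezing the spatial term, the Wasserstein estimate, the It\^o computation for moments) only ever use $\Psi$ through the finite integral $\int_\bbS\Psi(x,\tx)\dd\tx$ and through $L^1$-in-one-variable bounds, never through a pointwise bound on $\Psi$ itself; since $\alpha<1$ this is fine, but it is the step where the argument would break for $\alpha\geq1$, consistent with the remark in the introduction that a continuous limit is doubtful there. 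Everything else is routine and, as stated, contained in \cite{LucSta2014}.
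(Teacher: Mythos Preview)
Your proposal is correct and aligns with the paper's treatment: the paper gives no proof of this proposition at all, simply introducing it as ``a direct consequence of \cite{LucSta2014}, Section~3.2''. Your sketch unpacks what that citation contains --- the standard Picard/fixed-point construction on the law, moment bounds via It\^o plus \eqref{eq:cgrowthcond} and \eqref{eq:moments_mu_xi0}, and the observation that the singularity of $\Psi$ only enters through $\int_\bbS\Psi(x,\tx)\dd\tx<\infty$ --- and correctly identifies that the substantive point (well-posedness of \eqref{eq:nut} with the singular kernel) is precisely what is deferred to \cite{LucSta2014}.
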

\begin{proposition}
\label{prop:decomp_nu}
Under the hypothesis made in Section~\ref{sec:assumptions}, there exists a unique weak solution $t \mapsto \nu_{ t}$ in $\mathcal{C}([0, T], \mathcal{M}_{ 1}(\mathbf{R}^{ m}\times\mathbf{R}^{ n}))$  to \eqref{eq:nut} satisfying \[\sup_{ t\leq T} \int \left(\left\vert \theta \right\vert^{ 8\bar \kappa} \vee \left\vert \omega \right\vert^{ 8\bar \iota}\right) \nu_{ t}({\rm d} \theta, {\rm d} \omega, \dd x)< +\infty.\]
Moreover, there exists a continuous measure-valued process $t\mapsto\xi_{ t}$ on $ \bbX\times\bbY$ such that
\begin{equation}
\label{eq:decomp_nu_xi}
\nu_{t}(\dd\theta, \dd\omega, \dd x)= \xi_{ t}(\dd \theta, \dd\omega)\dd x,\quad t\in[0, T].
\end{equation}
The process $ \xi$ admits a regular density $(t, \theta, \omega) \mapsto p_{ t}(\theta, \omega)$ with respect to $\dd \theta\otimes \mu(\dd \omega)$
\begin{equation}
\label{eq:xit_pt_mu}
\xi_{ t}(\dd \theta, \dd \omega)= p_{ t}(\theta, \omega)\dd \theta\mu(\dd \omega),\ t\in[0, T],
\end{equation}
and this density $p$ satisfies the a priori estimates:
\begin{align}
0\leq p_{ t}(\theta, \omega)&\leq \frac{ 1+ \left\vert \omega \right\vert^{ \iota}}{ t^{ \alpha_{ 0}}},\quad \omega\in\bbY,\ t\in(0, T]\label{eq:est_pt}\\
\left\vert \div_{ \theta}p_{ t}(\theta, \omega) \right\vert&\leq \frac{ 1+ \left\vert \omega \right\vert^{ \iota}}{ t^{ \alpha_{ 0}+ \frac{ 1}{ 2}}},\quad \omega\in\bbY,\ t\in(0, T],\label{eq:est_div_pt}
\end{align} for some $0< \alpha_{ 0}< \frac{ 1}{ 2}$.
\end{proposition}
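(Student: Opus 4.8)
The plan is to read off both well-posedness and the structural decomposition \eqref{eq:decomp_nu_xi} from the nonlinear process of Proposition~\ref{prop:nonlin}, the one genuinely new ingredient being that the singular weight $\Psi$ \emph{averages to a constant over the circle}. For existence I would take the nonlinear process $(\bar\theta^{x,\omega}_{t},\omega,x)_{t\in[0,T]}$ of Proposition~\ref{prop:nonlin}, started from $\theta_{0}\sim\zeta$, $\omega\sim\mu$ and $x$ uniform on $\bbS$, mutually independent, and set $\nu_{t}:=\mathrm{Law}(\bar\theta^{x,\omega}_{t},\omega,x)$. Applying It\^o's formula to $f(\bar\theta^{x,\omega}_{t},\omega,x)$ and taking expectations, the self-consistency relation in \eqref{eq:nonlin} identifies the drift with $\bigl[\Gamma\Psi,\nu_{s}\bigr]$ and produces exactly the weak form \eqref{eq:nutLm}--\eqref{eq:propagL} with initial datum \eqref{eq:cond_ini_meanfield}; continuity of $t\mapsto\nu_{t}$ and the stated moment bound are inherited from the corresponding properties of $\bar\theta^{x,\omega}$ together with \eqref{eq:moments_mu_xi0}.

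For uniqueness I would run the standard coupling reduction to the pathwise uniqueness of \eqref{eq:nonlin} already granted by Proposition~\ref{prop:nonlin}: any weak solution with the moment bound has, testing \eqref{eq:nut} against functions of $(\omega,x)$ alone (which $L[\nu_{s}]$ annihilates), the fixed $(\omega,x)$-marginal $\mu(\dd\omega)\dd x$; since $\Ninf{\Gamma}<\infty$ and $\tx\mapsto\Psi(x,\tx)=d(x,\tx)^{-\alpha}$ is integrable on $\bbS$ for $\alpha<1$, the interaction drift $[\Gamma\Psi,\nu_{s}]$ is uniformly bounded and one closes a Gr\"onwall estimate for the difference of two solutions in a Wasserstein-type distance. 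The decomposition \eqref{eq:decomp_nu_xi} is then the heart of the matter. By translation invariance of the distance $d$ on $\bbS$, $\int_{\bbS}\Psi(x,\tx)\dd\tx=\int_{\bbS}d(0,\tx)^{-\alpha}\dd\tx=\tfrac{2^{\alpha}}{1-\alpha}=:\bar\Psi$ is a finite constant, \emph{independent of} $x$ (cf.\ Lemma~\ref{lem:fluct_Psi}); consequently, if the measure fed into the drift has the form $\lambda_{s}(\dd\theta,\dd\omega,\dd x)=\xi_{s}(\dd\theta,\dd\omega)\dd x$, then
\begin{align*}
\Big[\Gamma\Psi,\lambda_{s}\Big](\theta,\omega,x)
&=\Bigl(\int_{\bbS}\Psi(x,\tx)\dd\tx\Bigr)\int_{\bbX\times\bbY}\Gamma(\theta,\omega,\ttheta,\tomega)\,\xi_{s}(\dd\ttheta,\dd\tomega)\\
&=\bar\Psi\int_{\bbX\times\bbY}\Gamma(\theta,\omega,\ttheta,\tomega)\,\xi_{s}(\dd\ttheta,\dd\tomega),
\end{align*}
which no longer depends on $x$. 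Hence the $x$-independent ansatz $\nu_{t}=\xi_{t}\otimes\dd x$ is self-consistent: it turns \eqref{eq:nut} into the \emph{non-spatial} McKean--Vlasov equation driven by the bounded, smooth drift $b_{s}(\theta,\omega):=c(\theta,\omega)+\bar\Psi\int\Gamma(\theta,\omega,\cdot,\cdot)\,\dd\xi_{s}$ (equivalently, the case $\Psi\equiv\bar\Psi$), whose unique solution, extended by $\dd x$ in the spatial slot, solves \eqref{eq:nut}; by the uniqueness just established it must be $\nu$, which is \eqref{eq:decomp_nu_xi}. (Alternatively: the Picard iteration producing the solution preserves $x$-independence, since the initial datum \eqref{eq:cond_ini_meanfield} is $x$-independent, hence so is its limit.)

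It remains to construct the density $p_{t}$ and to prove \eqref{eq:est_pt}--\eqref{eq:est_div_pt}. I would write $\xi_{t}(\dd\theta,\dd\omega)=\mathrm{Law}(\bar\theta^{\omega}_{t},\omega)$ where, at frozen $\omega$, $\dd\bar\theta^{\omega}_{t}=b_{t}(\bar\theta^{\omega}_{t},\omega)\dd t+\dd B_{t}$ with $\bar\theta^{\omega}_{0}\sim\zeta$. For each $\omega$ the drift $\theta\mapsto b_{t}(\theta,\omega)$ is bounded, with $\Ninf{b_{t}(\cdot,\omega)}\leq C(1+\left\vert\omega\right\vert^{\iota})$ by \eqref{eq:polgrowthc}, $\Ninf{\Gamma}<\infty$ and the fact that $\xi_{s}$ is a probability measure, and it is smooth of the order required by the assumptions of Section~\ref{sec:assumptions}; hence $\bar\theta^{\omega}_{t}$ has a smooth transition density $q^{\omega}_{t}(\theta_{0},\theta)$ with Gaussian-type upper bounds for $q^{\omega}_{t}$ and $\nabla_{\theta}q^{\omega}_{t}$ whose constants are controlled by a power of $1+\left\vert\omega\right\vert$. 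Writing $p_{t}(\theta,\omega)=\int_{\bbX}\zeta(\theta_{0})\,q^{\omega}_{t}(\theta_{0},\theta)\,\dd\theta_{0}$ and applying H\"older's inequality with the exponent $p>m$ from \eqref{eq:integ_zeta}, the short-time singularity is absorbed into the factor $t^{-m/(2p)}$, yielding \eqref{eq:est_pt} with $\alpha_{0}:=\tfrac{m}{2p}\in(0,\tfrac12)$; the gradient bound gives in the same way $\left\vert\div_{\theta}p_{t}(\theta,\omega)\right\vert\leq C(1+\left\vert\omega\right\vert^{\iota})\,t^{-1/2-m/(2p)}$, which is \eqref{eq:est_div_pt}, and smoothness of $(t,\theta,\omega)\mapsto p_{t}$ follows from interior parabolic regularity for $\partial_{t}p_{t}=\tfrac12\Delta_{\theta}p_{t}-\div_{\theta}(b_{t}p_{t})$ together with \eqref{eq:xit_pt_mu}.

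The main obstacle I anticipate is the logical ordering around the decoupling: a priori \eqref{eq:nut} is a fixed-point problem whose drift depends on $x$ through the \emph{singular} kernel $\Psi$, and one may not invoke $x$-independence of $\nu_{t}$ before it is proved; it is precisely the translation-invariance identity $\int_{\bbS}\Psi(x,\cdot)\equiv\bar\Psi$, combined with the uniqueness statement (equivalently, with the contractivity behind the Picard iteration), that collapses the spatial variable. The second delicate point is the heat-kernel step: obtaining \eqref{eq:est_pt}--\eqref{eq:est_div_pt} with the sharp powers of $t$ and only a \emph{polynomial} dependence on $\omega$ --- the one compatible with the moment assumption \eqref{eq:moments_mu_xi0} --- calls for handling the $\omega$-uniform, bounded interaction part of $b$ separately from the $\omega$-growing local drift $c$, rather than a crude global Gaussian estimate.
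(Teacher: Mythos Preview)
Your proposal is correct and follows essentially the same route as the paper: existence via the nonlinear process and It\^o's formula, uniqueness (which the paper simply quotes from \cite{LucSta2014}, Proposition~2.19, rather than redoing the Gr\"onwall argument you sketch), the decomposition \eqref{eq:decomp_nu_xi} via the key observation that $\int_{\bbS}\Psi(x,\tx)\dd\tx$ is constant in $x$ so that $\xi_t\otimes\dd x$ solves \eqref{eq:nut} and hence equals $\nu_t$ by uniqueness, and finally the density bounds via Gaussian estimates on the fundamental solution combined with H\"older against $\zeta\in L^{p}$, yielding exactly $\alpha_{0}=\tfrac{m}{2p}=\tfrac{m}{2}(1-\tfrac{1}{q})$. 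Your anticipated obstacles are real but are resolved just as you outline; the paper does not in fact separate the $\omega$-growing and $\omega$-bounded parts of the drift, it simply tracks that the constants $c_{i}(\omega)$ in the Aronson--Friedman estimates grow like $1+|\omega|^{\iota}$.
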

The proof of Proposition~\ref{prop:decomp_nu} is postponed to Section~\ref{app:decomp_nu}.

\begin{proposition}
\label{prop:moment_particles}
The particle system \eqref{eq:odegene} and the non-linear process \eqref{eq:nonlin} satisfy the moment conditions
\begin{equation}
\sup_{ N\geq1}\sup_{ i\in \Lambda_{ N}}\bE \left( \sup_{ s\leq T}\left\vert \theta_{ i, s} \right\vert^{ 8 \bar \kappa}\right) <+\infty \text{ and } \bE \left(\sup_{ s\leq T} \sup_{ x\in \bbS}\left\vert \bar\theta_{ s}^{ x} \right\vert^{ 8 \bar \kappa}\right)<+\infty,
\end{equation}
where $ \bar \kappa$ is given in \eqref{eq:kappas}.
\end{proposition}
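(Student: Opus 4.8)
The plan is to establish the two moment bounds of Proposition~\ref{prop:moment_particles} by standard stochastic calculus estimates, treating the particle system and the nonlinear process in parallel, with the singular spatial kernel $\Psi$ being the only genuine difficulty. First I would fix $i\in\Lambda_N$ and apply It\^o's formula to $t\mapsto \left\vert \theta_{i,t}\right\vert^{2q}$ with $2q = 8\bar\kappa$. The drift contribution splits into a local-dynamics term involving $\theta_{i,s}\cdot c(\theta_{i,s},\omega_i)$ and an interaction term involving $\theta_{i,s}\cdot \big[\Gamma\Psi,\nu_{N,s}\big](\tau_{i,s})$. For the local-dynamics term I would use the one-sided Lipschitz condition \eqref{eq:cgrowthcond} together with the polynomial bound \eqref{eq:polgrowthc}, which gives $\theta\cdot c(\theta,\omega) \leq L\left\vert\theta\right\vert^2 + \left\vert\theta\right\vert\,\left\vert c(0,\omega)\right\vert \leq L'\left\vert\theta\right\vert^2 + C'(1+\left\vert\omega_i\right\vert^{2\iota})$ after Young's inequality; the disorder moment assumption \eqref{eq:moments_mu_xi0} (noting $8\bar\iota$ is more than enough to control $\left\vert\omega_i\right\vert^{2\iota}$) makes this integrable.

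The interaction term is where the kernel matters, but it is in fact harmless because $\Gamma$ is bounded by $\Ninf{\Gamma}$ and the spatial weights appear only through the \emph{average} $\frac{1}{\left\vert\Lambda_N\right\vert}\sum_{j}\Psi(x_i,x_j)$. The key elementary observation is that this average is bounded uniformly in $N$ and $i$: since $\Psi(x_i,x_j)=d(x_i,x_j)^{-\alpha}$ with $\alpha<1$ and the $x_j$ are equispaced, $\frac{1}{\left\vert\Lambda_N\right\vert}\sum_{j\neq i}d(x_i,x_j)^{-\alpha} \leq \frac{2}{2N}\sum_{k=1}^{N}(k/(2N))^{-\alpha} = C\,N^{-1}\cdot N^{\alpha}\sum_{k=1}^N k^{-\alpha} \leq C'$ (this is precisely the content underlying Lemma~\ref{lem:fluct_Psi}, which bounds the Riemann-sum error and a fortiori the sum itself). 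Hence $\left\vert\big[\Gamma\Psi,\nu_{N,s}\big](\tau_{i,s})\right\vert \leq \Ninf{\Gamma}\cdot\sup_i \frac{1}{\left\vert\Lambda_N\right\vert}\sum_j\Psi(x_i,x_j) \leq C$, and the interaction contribution to the It\^o expansion is bounded by $C\left\vert\theta_{i,s}\right\vert^{2q-1} \leq C(1+\left\vert\theta_{i,s}\right\vert^{2q})$. The martingale term $\int_0^t \left\vert\theta_{i,s}\right\vert^{2q-2}\theta_{i,s}\cdot\dd B_{i,s}$ is handled by Burkholder--Davis--Gundy after taking suprema, producing a term $\bE\sup_{s\leq t}\left\vert\theta_{i,s}\right\vert^{2q}$ with a small constant that can be absorbed. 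Collecting terms, taking expectations and $\sup_{s\leq t}$, one arrives at
\[
\bE\Big(\sup_{s\leq t}\left\vert\theta_{i,s}\right\vert^{2q}\Big) \leq C_1\big(1+\bE\left\vert\theta_{i,0}\right\vert^{2q}\big) + C_2\int_0^t \bE\Big(\sup_{r\leq s}\left\vert\theta_{i,r}\right\vert^{2q}\Big)\dd s,
\]
and Gr\"onwall's lemma closes the estimate, with the right-hand side finite and independent of $N$ and $i$ thanks to \eqref{eq:moments_mu_xi0} applied to $\zeta$.

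For the nonlinear process $\bar\theta_s^{x,\omega}$ the argument is essentially identical: the interaction term is now $\big[\Gamma\Psi,\lambda_s\big](\bar\theta_s^{x,\omega},\omega,x) = \int \Gamma(\bar\theta_s^{x,\omega},\omega,\ttheta,\tomega)\Psi(x,\tx)\,\lambda_s(\dd\ttheta,\dd\tomega,\dd\tx)$, which by boundedness of $\Gamma$ is bounded by $\Ninf{\Gamma}\int_{\bbS}\Psi(x,\tx)\dd\tx = \Ninf{\Gamma}\cdot\frac{2^{\alpha}}{1-\alpha}$ uniformly in $x$ (again Lemma~\ref{lem:fluct_Psi}, integrable form). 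One runs the same It\^o computation on $\left\vert\bar\theta_t^{x,\omega}\right\vert^{2q}$, obtains a Gr\"onwall inequality, and then takes $\sup_{x\in\bbS}$ — which is legitimate because all constants are $x$-independent — followed by expectation. The main obstacle, such as it is, is purely bookkeeping: one must verify that the polynomial-in-$\omega$ growth of $c$ is absorbed by the disorder moments (the exponents $\bar\kappa,\bar\iota$ in \eqref{eq:kappas}--\eqref{eq:iotas} are deliberately generous, so $8\bar\kappa$ and $8\bar\iota$ leave ample room), and that the $x$-supremum can be taken outside the expectation, which follows from continuity of $x\mapsto\bar\theta_t^{x,\omega}$ established in Proposition~\ref{prop:nonlin} together with compactness of $\bbS$. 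No regularity of test functions or of $\Psi$ beyond integrability is needed here, so the singularity of the kernel never actually bites at this level.
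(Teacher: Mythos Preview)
Your sketch is correct and is precisely the ``standard'' argument the paper declines to write out: It\^o on $|\theta|^{2q}$, one-sided Lipschitz for $c$, boundedness of $\Gamma$ together with the uniform bound $\sup_{N,i}\frac{1}{|\Lambda_N|}\sum_j\Psi(x_i,x_j)<\infty$ (this is Remark~\ref{rem:Psi_bounded} in the paper, i.e.\ Lemma~\ref{lem:Riem} with $\beta=\alpha<1$), BDG, and Gr\"onwall. The paper's own proof simply cites Proposition~\ref{prop:nonlin} for the nonlinear process and leaves the particle system to the reader, so you are supplying exactly what was omitted.

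One simplification you missed: the obstacle you flag about exchanging $\sup_{x\in\bbS}$ and $\bE$ is a non-issue, because by Remark~\ref{rem:Psix} the drift $[\Gamma\Psi,\nu_t](\theta,\omega,x)$ does not depend on $x$ (rotational invariance makes $\int_\bbS\Psi(x,\tx)\dd\tx$ constant), so the nonlinear process $\bar\theta^{x,\omega}$ is literally independent of $x$ and the supremum is vacuous. No continuity-plus-compactness argument is needed.
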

\begin{proof}[Proof of Proposition~\ref{prop:moment_particles}]
The estimate on the nonlinear process $ \bar \theta$ is a direct consequence of Proposition~\ref{prop:nonlin}. The same estimate for the particle system is standard and left to the reader.
\end{proof}
\subsection{Fluctuations results}
We prove the convergence of the fluctuations processes in appropriate Sobolev weighted spaces of distributions $\bV$ and $\bW$ that are defined in the next section, using Hilbertian techniques developed by Fernandez and M\'el\'eard in \cite{Fernandez1997}. We only state the result here and refer to Section~\ref{sec:Hilbert_spaces} for precise definitions of these spaces.
\subsubsection{Fluctuation result in the subcritical case $ \alpha< \frac{ 1}{ 2}$}
The first main result of the paper is the following (recall the definition of the propagator $L[ \nu_{ s}]$ in \eqref{eq:propagL}):
\begin{theorem}
\label{theo:conv_subcrit}
Suppose $ \alpha< \frac{ 1}{ 2}$. Under the hypothesis of Section~\ref{sec:assumptions}, the random process $(\eta_{ N})_{ N\geq1}$ defined by \eqref{eq:fluct} converges in law as $N\to\infty$ to $\eta\in\cC([0, T], \bV_{ -2(P+2)}^{ \underline\kappa + \gamma, \underline\iota + \gamma})$ solution in $\bV_{ -3(P+2)}^{ \underline\kappa, \underline\iota}$ to the linear stochastic partial differential equation
\begin{equation}
\label{eq:SPDE_subcrit_eta}
\eta_{ t} = \eta_{ 0} + \int_{0}^{t} \cL_{ s}^{ \ast} \eta_{ s} \dd s + \cM_{ t}^{ (\eta)}, \quad t\in[0, T]
\end{equation}
where 
\begin{equation}
\label{eq:scL}
\cL_{s}f(\theta, \omega, x):= L[\nu_{s}]f(\theta, \omega, x) + \cro{\nu_{s}}{\nabla_{\theta}f(\cdot) \cdot \Gamma(\cdot, \cdot, \theta, \omega)\Psi(\cdot, x)},
\end{equation}
$ \eta_{ 0}$ is a Gaussian process with explicit covariance given in Proposition~\ref{prop:initial_cond_HN} and $\cM^{ ( \eta)}$ is an explicit martingale given in Definition~\ref{def:covariance_WH}. $ \eta_{ 0}$ and $\cM^{ ( \eta)}$ are independent.
\end{theorem}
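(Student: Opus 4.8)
The plan is to follow the classical three-step scheme for fluctuations of interacting diffusions — tightness, identification of the limit, and uniqueness — in the spirit of Fernandez and M\'el\'eard \cite{Fernandez1997}, the new ingredient being the systematic use of the two-particle process $\cH_{N}$ of Definition~\ref{def:second_order_fluct} to absorb the singularity of the spatial kernel $\Psi$. \emph{Tightness.} First I would establish, uniformly in $N$, second-moment bounds of the form $\bE\big[\sup_{t\leq T}\N{\eta_{N,t}}_{\bV_{-2(P+2)}^{\underline\kappa+\gamma,\underline\iota+\gamma}}^{2}\big]<\infty$ together with an analogous bound for $\cH_{N}$ in the corresponding $\bW$-space. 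These follow from the semimartingale decompositions \eqref{eq:semimart} and \eqref{eq:semimart_HN}, the moment estimates of Proposition~\ref{prop:moment_particles}, and the a priori density bounds \eqref{eq:est_pt}--\eqref{eq:est_div_pt}; the only genuinely new point with respect to \cite{Fernandez1997} is the control of the terms carrying $\Psi$, namely $\int_{0}^{t}\cro{\cH_{N,s}}{\Phi[f]}\dd s$ in \eqref{eq:semimart} and the operators $\mathscr{L}_{s},F_{N,s},G_{N,s}$ of \eqref{eq:semimart_HN}, each of which is estimated using $\sup_{x}\int_{\bbS}\Psi(x,\tx)\dd\tx=\frac{2^{\alpha}}{1-\alpha}<\infty$ (Lemma~\ref{lem:fluct_Psi}) and the regularity of the density $p$ from Proposition~\ref{prop:decomp_nu}. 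Combining these bounds with standard tightness criteria for Hilbert-space-valued semimartingales (compactness of the Hilbert--Schmidt embeddings between the Sobolev scales, plus an Aldous-type condition on the finite-variation and bracket parts of \eqref{eq:semimart}--\eqref{eq:semimart_HN}) yields tightness of $(\eta_{N},\cH_{N})_{N}$ in $\cC([0,T],\bV_{-2(P+2)}^{\underline\kappa+\gamma,\underline\iota+\gamma})$ and in the corresponding $\bW$-space; as usual one works with an index negative enough for the embeddings to be Hilbert--Schmidt, while the limiting equation \eqref{eq:SPDE_subcrit_eta} will only be required to hold in the larger space $\bV_{-3(P+2)}^{\underline\kappa,\underline\iota}$.

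\emph{Endpoints.} The convergence $\eta_{N,0}\Rightarrow\eta_{0}$ towards the Gaussian field whose covariance is computed in Proposition~\ref{prop:initial_cond_HN} is the ordinary central limit theorem for the \iid pairs $(\theta_{i,0},\omega_{i})$ (the positions $x_{i}$ being deterministic). For the martingale $\cM_{N,t}^{(\eta)}f$ of \eqref{eq:def_MN_eta} I would compute its predictable bracket, $\frac{a_{N}^{2}}{|\Lambda_{N}|^{2}}\sum_{i}\int_{0}^{t}|\nabla_{\theta}f(\theta_{i,s},\omega_{i},x_{i})|^{2}\dd s$, show by propagation of chaos that it converges to the deterministic limit $\frac12\int_{0}^{t}\cro{\nu_{s}}{|\nabla_{\theta}f|^{2}}\dd s$, verify the Lindeberg condition via Proposition~\ref{prop:moment_particles}, and conclude by the martingale central limit theorem that $\cM_{N}^{(\eta)}\Rightarrow\cM^{(\eta)}$, a continuous Gaussian martingale with the covariance of Definition~\ref{def:covariance_WH}, independent of $\eta_{0}$ since its bracket is deterministic and $\eta_{N,0}$ depends only on the initial data.

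\emph{Identification.} Let $(\eta_{N},\cH_{N})\Rightarrow(\eta,\cH)$ along a subsequence. Passing to the limit in \eqref{eq:semimart} — using the previous bounds to control remainders, the endpoint convergences just established, and the continuity of $f\mapsto L[\nu_{s}]f$ and $f\mapsto\Phi[f]$ between the Sobolev scales — gives
\[ \cro{\eta_{t}}{f}=\cro{\eta_{0}}{f}+\int_{0}^{t}\cro{\eta_{s}}{L[\nu_{s}]f}\dd s+\int_{0}^{t}\cro{\cH_{s}}{\Phi[f]}\dd s+\cM_{t}^{(\eta)}f. \]
The decisive step is Proposition~\ref{prop:repres_H_eta}, which asserts that in the subcritical regime every limit point satisfies the representation \eqref{eq:rel_eta_H}, i.e.\ $\cro{\cH_{s}}{g}=\cro{\eta_{s}}{\cro{\nu_{s}}{\Psi g}}$. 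Its proof starts from the exact identity \eqref{eq:rel_etaN_HN}, $\cro{\cH_{N,s}}{g}=\cro{\eta_{N,s}}{\cro{\nu_{N,s}}{\Psi g}}$, and shows that the correction term $\cro{\eta_{N,s}}{\cro{\nu_{N,s}-\nu_{s}}{\Psi(\cdot,x)g(\cdot,\ttau)}}$ vanishes in the limit: replacing $\nu_{N,s}$ by $\nu_{s}$ smooths the kernel, since $\tx\mapsto\cro{\nu_{s}}{\Psi(\cdot,\tx)g(\cdot,\ttau)}$ is continuous (recall $\nu_{s}=\xi_{s}\otimes\dd x$), and the error so created is of order $N^{-(1-\alpha)}=o(a_{N}^{-1})$ precisely because $\alpha<\frac12$. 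Substituting \eqref{eq:rel_eta_H} with $g=\Phi[f]$ into the displayed identity and noting that $L[\nu_{s}]f(\theta,\omega,x)+\cro{\nu_{s}}{\Psi(\cdot,x)\Phi[f](\cdot,\tau)}=\cL_{s}f(\theta,\omega,x)$ by \eqref{eq:scL} and \eqref{eq:Phif} shows that $\eta$ solves \eqref{eq:SPDE_subcrit_eta}.

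\emph{Uniqueness and main obstacle.} One must finally show that \eqref{eq:SPDE_subcrit_eta}, with the data $(\eta_{0},\cM^{(\eta)})$ above, has a unique solution in $\cC([0,T],\bV_{-3(P+2)}^{\underline\kappa,\underline\iota})$. By linearity this reduces to checking that $\zeta_{t}=\int_{0}^{t}\cL_{s}^{\ast}\zeta_{s}\dd s$ forces $\zeta\equiv0$, which comes from a Gr\"onwall estimate once the operator norm of $\cL_{s}^{\ast}$ on the relevant negatively indexed Hilbert spaces is bounded — again using the integrability of $\Psi$ and Proposition~\ref{prop:decomp_nu}. Uniqueness of the limit point then upgrades the subsequential convergence to convergence of the whole sequence $(\eta_{N})$. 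The hard part will be the identification step, and specifically Proposition~\ref{prop:repres_H_eta}: it is exactly what allows $\cH$ to be eliminated and the limiting equation to be closed in $\eta$ alone. Controlling $\eta_{N,s}$ tested against the \emph{singular} function $\cro{\nu_{N,s}}{\Psi g}$, rather than the smooth $\cro{\nu_{s}}{\Psi g}$, is the very difficulty that motivated introducing $\cH_{N}$, and the subcriticality $\alpha<\frac12$ is used only here, through $N^{-(1-\alpha)}\ll N^{-1/2}$; the other technically demanding piece is the uniform Sobolev bookkeeping of the tightness step in the presence of the singular kernel and the polynomially growing disorder.
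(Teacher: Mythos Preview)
Your overall architecture and the treatment of tightness and of the endpoint limits are essentially what the paper does. Two steps, however, diverge from the paper and in each case your sketch contains a genuine gap.

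\textbf{Uniqueness.} The claim that uniqueness in \eqref{eq:SPDE_subcrit_eta} follows from a Gr\"onwall argument once ``the operator norm of $\cL_{s}^{\ast}$ on the relevant negatively indexed Hilbert spaces is bounded'' is incorrect: $\cL_{s}$ contains $\tfrac12\Delta_{\theta}$, so it is a second-order operator and only maps $\bV^{\kappa_{0},\iota_{0}}_{q+P+2}$ into $\bV^{\kappa_{1},\iota_{1}}_{q}$ (cf.\ \eqref{eq:continuityL_eta}); dually, $\cL_{s}^{\ast}$ loses regularity and is not bounded on any fixed $\bV_{-q}$, which rules out a direct Gr\"onwall. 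The paper (Proposition~\ref{prop:uniqueness_subcrit}) instead splits $\cL_{t}=L[\nu_{t}]+K[\nu_{t}]$ with $K[\nu_{t}]$ the nonlocal part \eqref{eq:Knu}, uses the flow semigroup $V(t,s)$ of the genuine diffusion generator $L[\nu_{t}]$ (Lemma~\ref{lem:regularity_V}) to rewrite the equation in mild form $h(t)=\int_{0}^{t}V^{\ast}(t,u)\big(K[\nu_{u}]^{\ast}h(u)+\eta_{0}+\cM_{u}\big)\dd u$, and then iterates in the style of Mitoma. The key technical input is the time-singular but integrable bound \eqref{eq:bound_Knu} on $K[\nu_{u}]$, obtained by integrating by parts against the density $p_{u}$ and invoking \eqref{eq:est_pt}--\eqref{eq:est_div_pt}; this is what makes the Picard iteration summable and yields the explicit representation \eqref{eq:repr_eta}, hence both pathwise and in-law uniqueness.

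\textbf{The representation Proposition~\ref{prop:repres_H_eta}.} Your proposed route --- estimate $\cE_{N,t}g=\langle\eta_{N,t},\langle\nu_{N,t}-\nu_{t},\Psi g\rangle\rangle$ directly and claim it is $O(N^{-(1-\alpha)})$ --- is not the paper's and, as written, is not convincing. The inner fluctuation $\langle(\nu_{N,t}-\nu_{t})(\dd\tau),\Psi(x,\tx)g(\tau,\ttau)\rangle$ has a stochastic contribution of order $N^{-1/2}$ in addition to the spatial $N^{-(1-\alpha)}$ one, and in any case testing $\eta_{N}$ against this random, $N$-dependent, singular-in-$\tx$ function is precisely the difficulty that introducing $\cH_{N}$ was designed to circumvent (cf.\ the discussion around \eqref{eq:HN_to_etaN}--\eqref{eq:F}). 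The paper's argument is dynamical: it derives a semimartingale decomposition \eqref{eq:EN_semimart} for $\cE_{N}$, checks (using the explicit covariances in Proposition~\ref{prop:initial_cond_HN} and Definition~\ref{def:covariance_WH}) that its initial value, its martingale part and the remainders $F_{N},G_{N}$ all vanish in the limit, so that any accumulation point solves the homogeneous equation $\cE_{t}=\int_{0}^{t}(\mathscr{L}_{s}^{(1)})^{\ast}\cE_{s}\dd s$, and then concludes $\cE\equiv0$ via the uniqueness result Proposition~\ref{prop:unique_L1} --- a second place where a semigroup/backward-Kolmogorov argument (Section~\ref{sec:uniqueness_result}) replaces an unavailable Gr\"onwall.
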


\subsubsection{Convergence result when $ \alpha>\frac{ 1}{ 2}$}
\label{sec:intro_supercrit}
The second main result concerns the fluctuations of \eqref{eq:odegene} in the supercritical case. For technical reasons (see Section~\ref{sec:ident_supercrit}), in addition to the assumptions of Section~\ref{sec:assumptions}, we restrict here to the case where $\bbX$ is no longer $\bbR^{ m}$ but a compact domain of $\bbR^{ m}$. The example we have particularly in mind here is the Kuramoto case where $\bbX= \bbR/2\pi\bbZ$. We also suppose that the support of the distribution $ \mu$ of the disorder is compact. These further assumptions are made in order to ensure uniqueness of a solution to \eqref{eq:SPDE_supercrit} below, due to the nonstandard nature of the operator $\mathscr{L}_{ s}$ (see Section~\ref{sec:ident_supercrit}).
\begin{theorem}
\label{theo:conv_supercrit}
Suppose $ \alpha> \frac{ 1}{ 2}$. Under the hypothesis of Sections~\ref{sec:assumptions} and~\ref{sec:intro_supercrit}, the random process $(\eta_{ N}, \cH_{ N})_{ N\geq1}$ converges in law as $N\to\infty$ to $(\eta, \cH)$, solution in $\cC([0, T], \bV^{ \underline\kappa + \gamma, \underline\iota + \gamma}_{ -3(P+2)}\oplus \bW^{ \underline\kappa + \gamma, \underline\iota + \gamma}_{ -3(P+2)})$ to the system of coupled deterministic equations
\begin{equation}
\label{eq:SPDE_supercrit}
\left\{
\begin{split}
\eta_{ t} &= \int_{0}^{t} L[ \nu_{ s}]^{ \ast} \eta_{ s}\dd s + \int_{0}^{t} \Phi^{ \ast} \cH_{ s}\dd s,\\
\cH_{ t}&= \cH_{ 0} + \int_{0}^{t} \mathscr{L}_{ s}^{ \ast} \cH_{ s} \dd s,
\end{split}\quad t\in[0, T],\right.
\end{equation}
where $\cH_{ 0}$ is a nontrivial initial condition defined in Proposition~\ref{prop:initial_cond_HN} and $L[ \nu_{ s}]^{ \ast}$ (respectively $ \Phi^{ \ast}$ and $\mathscr{L}_{ s}^{ \ast}$) is the dual of the propagator $L[ \nu_{ s}]$ defined in \eqref{eq:propagL} (respectively of the linear form $\Phi$ defined in \eqref{eq:Phif} and the linear operator $\mathscr{L}_{ s}$ defined in \eqref{eq:def_opL}).
\end{theorem}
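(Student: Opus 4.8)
\emph{The plan} is to run the classical three-step scheme — tightness, identification of the limit, uniqueness — jointly for the coupled pair $(\eta_N,\cH_N)$, exploiting that in the regime $\alpha>\tfrac12$ all martingale contributions and the fluctuations of the $\eta$-initial condition disappear. \textbf{Step 1 (tightness).} First I would show that $(\eta_N,\cH_N)_{N\ge1}$ is tight in $\cC([0,T],\bV^{\underline\kappa+\gamma,\underline\iota+\gamma}_{-2(P+2)}\oplus\bW^{\underline\kappa+\gamma,\underline\iota+\gamma}_{-2(P+2)})$, working in the weighted Sobolev spaces of Section~\ref{sec:Hilbert_spaces}. This rests on uniform-in-$N$ bounds for every term of the semimartingale decompositions \eqref{eq:semimart} and \eqref{eq:semimart_HN}, obtained from the moment estimates of Proposition~\ref{prop:moment_particles} together with the Sobolev embeddings; the indices \eqref{eq:kappas}–\eqref{eq:iotas} are chosen precisely to make these bounds close. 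A feature special to $\alpha>\tfrac12$ is that the brackets of $\cM^{(\eta)}_N$ and $\cM^{(\cH)}_N$ are $O(a_N^2/|\Lambda_N|)=O(N^{1-2\alpha})$ — for $\cM^{(\cH)}_N$ after using $\sum_j\Psi(x_i,x_j)=O(N)$ and $\sum_j\Psi(x_i,x_j)^2=O(N^{2\alpha})$ — hence vanish.

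\textbf{Step 2 (limits of the individual terms).} Along a convergent subsequence $(\eta_N,\cH_N)\to(\eta,\cH)$ (Step 1 and a Skorokhod representation), I would identify the limit of each term of \eqref{eq:semimart}–\eqref{eq:semimart_HN}. First, $\eta_{N,0}\to0$: against a smooth test function the i.i.d.\ part of $\nu_{N,0}$ contributes $O(N^{-1/2})$ and the spatial Riemann-sum error $O(N^{-1})$, and $a_N$ times either quantity tends to $0$ since $\alpha>\tfrac12$. Next, $\cH_{N,0}\to\cH_0$, the explicit nontrivial deterministic limit of Proposition~\ref{prop:initial_cond_HN}, whose value comes from the sharp asymptotics of Lemma~\ref{lem:fluct_Psi} (so $\cro{\cH_0}{1}=\chi(\alpha)$). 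The martingale terms $\cM^{(\eta)}_N,\cM^{(\cH)}_N$ tend to $0$ uniformly on $[0,T]$ in $L^2$ by Doob's inequality and Step~1. The remainders $\int_0^tF_{N,s}g\,\dd s$ and $\int_0^tG_{N,s}g\,\dd s$ tend to $0$: each carries an extra factor $\nu_{N,s}-\nu_s$, and the law of large numbers of \cite{LucSta2014}, quantified with the density bounds \eqref{eq:est_pt}–\eqref{eq:est_div_pt}, makes $[\Gamma\Psi,\nu_{N,s}-\nu_s]$ and $\cro{\nu_{N,s}-\nu_s}{\Psi(\cdot,x)\nabla_{\ttheta}g(\cdot,\tau)}$ vanish in the positive Sobolev norm dual to the space carrying $\cH_{N,s}$, which stays bounded. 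Finally the drift terms pass to the limit by continuity of the fixed (nonrandom) operators $L[\nu_s]$, $\Phi$ and $\mathscr{L}_s$ on the relevant spaces.

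\textbf{Steps 3–4 (identification and uniqueness).} Inserting these limits into \eqref{eq:semimart} and \eqref{eq:semimart_HN} shows that every subsequential limit $(\eta,\cH)$ satisfies, for all smooth $f$ and $g$,
\[
\cro{\eta_t}{f}=\int_0^t\cro{\eta_s}{L[\nu_s]f}\dd s+\int_0^t\cro{\cH_s}{\Phi[f]}\dd s,\qquad \cro{\cH_t}{g}=\cro{\cH_0}{g}+\int_0^t\cro{\cH_s}{\mathscr{L}_s g}\dd s,
\]
that is, the deterministic system \eqref{eq:SPDE_supercrit} in the stated class, the membership in $\cC([0,T],\bV^{\underline\kappa+\gamma,\underline\iota+\gamma}_{-3(P+2)}\oplus\bW^{\underline\kappa+\gamma,\underline\iota+\gamma}_{-3(P+2)})$ following from the tightness space and the continuity built into the equation. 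It then remains to prove uniqueness for \eqref{eq:SPDE_supercrit}, which upgrades subsequential to full convergence. The system decouples: the $\cH$-equation is closed and linear, driven by $\mathscr{L}^{\ast}_s$, and is handled by a Gronwall estimate in the Hilbert space; this is exactly where the extra hypotheses of Section~\ref{sec:intro_supercrit} ($\bbX$ and $\Supp\mu$ compact) are used, to control the nonlocal term $\mathscr{L}^{(2)}_s$ of \eqref{eq:def_opL2}. Once $\cH$ is pinned down, $\eta$ is recovered by integrating the first equation, with trivial uniqueness.

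\textbf{Main obstacle.} The hard part is the combination of Step~4 — well-posedness of the $\cH$-equation for the nonstandard operator $\mathscr{L}_s$, which is what forces the compactness assumptions — and the control of the singular kernel in Step~2: even though by design it is $\cH_N$, not the test functions, that carries the singularity of $\Psi$, the remainder terms $F_N$ and $G_N$ still involve $\Psi$ paired against $\nu_{N,s}-\nu_s$, and showing these are negligible requires quantitative control of $\nu_{N,s}-\nu_s$ in a norm strong enough for the pairing with $\cH_{N,s}$; the time-singular density estimates \eqref{eq:est_pt}–\eqref{eq:est_div_pt}, integrable on $[0,T]$ because $\alpha_0<\tfrac12$, are the essential tool there.
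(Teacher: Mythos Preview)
Your overall architecture matches the paper's: tightness (Theorems~\ref{theo:HN_tight} and~\ref{theo:etaN_tight}), identification of subsequential limits as solutions of \eqref{eq:SPDE_supercrit} via Proposition~\ref{prop:ident_limit_of_subsequences}, and uniqueness (Proposition~\ref{prop:uniqueness_supercrit}). Two places where your sketch diverges from or underspecifies the paper's argument are worth flagging.

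\textbf{Control of $F_N,G_N$.} You invoke the density estimates \eqref{eq:est_pt}--\eqref{eq:est_div_pt}; the paper does not use them here. Instead, Proposition~\ref{prop:UN_VN} applies Cauchy--Schwarz to separate the factor $[\Gamma\Psi,\nu_{N,t}-\nu_t]$ from a $\cH_N$-like linear form, and bounds $\bE(|[\Gamma\Psi,\nu_{N,t}-\nu_t]|^4)\leq C/a_N^4$ by the \emph{coupling} with the nonlinear process (Proposition~\ref{prop:ttaVSnonlin}), exactly as in the proof of Proposition~\ref{prop:etaNbounded1}. The density bounds are reserved for the uniqueness step.

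\textbf{Uniqueness.} ``Gronwall in the Hilbert space'' is too optimistic and is the real gap in your sketch. Since $\mathscr{L}_s$ contains the second-order term $\tfrac12\Delta_{\theta,\ttheta}$, the equation $\tcH_t=\int_0^t\mathscr{L}_s^{\ast}\tcH_s\,\dd s$ for a difference of solutions does not close in any fixed Sobolev norm. The paper's device (Proposition~\ref{prop:uniqueness_supercrit}) is to split $\mathscr{L}_s=\tfrac12\Delta_{\theta,\ttheta}+(\mathscr{L}_s-\tfrac12\Delta_{\theta,\ttheta})$, write a Duhamel representation through the heat semigroup $G(t,s)$, and use the $1/\sqrt{t-s}$ smoothing of $G$ to absorb the one remaining derivative in the first-order remainder (the nonlocal part $\mathscr{L}^{(2)}_s$ being handled by Young's inequality and the bound \eqref{eq:est_pt}); this yields the singular but integrable Gronwall inequality \eqref{eq:NtcH}. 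The compactness assumptions of Section~\ref{sec:intro_supercrit} enter precisely here, so that the weighted norms become equivalent to the ordinary Sobolev norms and the heat-kernel estimates apply cleanly --- not merely to tame $\mathscr{L}^{(2)}_s$. Finally, uniqueness for $\eta$ given $\cH$ is not ``trivial integration'': the first equation still carries $L[\nu_s]^{\ast}\eta_s$, and the paper invokes Proposition~\ref{prop:unique_Lnu} (backward Kolmogorov representation via the flow $V(t,s)$) to conclude.
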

\begin{remark}
The assumptions of Theorems~\ref{theo:conv_subcrit} and \ref{theo:conv_supercrit} do not cover the case of FitzHugh-Nagumo oscillators (that is when only a one-sided Lipchitz continuity on $c$ as in \eqref{eq:cgrowthcond} and polynomial bound on $c$ is required). A careful reading of the following shows that the tightness results (Theorems~\ref{theo:HN_tight} and~\ref{theo:etaN_tight}) are indeed true in the FitzHugh-Nagumo case. The restrictive conditions of the paper are only required for the uniqueness of the limits.
\end{remark}
\subsubsection{Comments on the critical fluctuations $ \alpha= \frac{ 1}{ 2}$}
\label{sec:fluct_crit}
In the critical case $ \alpha= \frac{ 1}{ 2}$, it is expected that the correct scaling is $a_{ N}= \sqrt{N}$ although the techniques used in this work do not seem to make this intuition rigorous. Nevertheless, a closer look at the proofs below shows the following partial result
\begin{proposition}[Critical fluctuations]
When $ \alpha= \frac{ 1}{ 2}$, under the assumptions of Section~\ref{sec:assumptions}, the following convergence holds in $\cC([0, T], \bV_{ -2(P+2)}^{ \underline\kappa + \gamma, \underline\iota + \gamma})$,
\begin{equation}
\frac{ \sqrt{N}}{ \ln N} \left( \nu_{ N} - \nu\right) \to 0,\ \text{as $N\to \infty$}.
\end{equation}
\end{proposition}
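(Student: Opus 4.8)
The plan is to run the scaling $ \frac{\sqrt N}{\ln N}$ through the same machinery used to treat the subcritical and supercritical regimes, and show that in the critical case $ \alpha= \frac12$ both the stochastic contribution and the spatial (Riemann sum) contribution vanish in the limit. Write $ \tilde a_N:=\frac{\sqrt N}{\ln N}$ and $ \tilde\eta_{N,t}:= \tilde a_N(\nu_{N,t}-\nu_t)$, $ \tilde{\cH}_{N,t}$ the corresponding two-particle object with $ a_N$ replaced by $ \tilde a_N$. The key quantitative input is Lemma~\ref{lem:fluct_Psi}: for $ \alpha=\frac12$ the Riemann sum error $ \frac1{|\Lambda_N|}\sum_{j}\Psi(x_i,x_j)-\int_{\bbS}\Psi(x_i,\tx)\dd\tx$ is of order $ N^{1-\alpha}=N^{1/2}$ up to logarithmic corrections — more precisely $ O(N^{1/2}\ln N/N)=O(\ln N/\sqrt N)$ is \emph{not} the right reading; the point is that $ \tilde a_N\cdot N^{-(1-\alpha)}=\frac{\sqrt N}{\ln N}\cdot\frac1{\sqrt N}=\frac1{\ln N}\to0$, so after multiplication by $ \tilde a_N$ the deterministic spatial fluctuation is $ O(1/\ln N)$, hence negligible. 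Likewise the martingale terms $ \cM^{(\eta)}_{N}$, $ \cM^{(\cH)}_{N}$ carry the prefactor $ \tilde a_N/|\Lambda_N|$ and their brackets are $ O(\tilde a_N^2/N)=O(1/(\ln N)^2)\to0$.

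First I would establish tightness of $ (\tilde\eta_N,\tilde{\cH}_N)$ in the spaces $ \bV,\bW$ exactly as in the proofs of Theorems~\ref{theo:HN_tight} and~\ref{theo:etaN_tight} (which, as the Remark after Theorem~\ref{theo:conv_supercrit} notes, go through under the assumptions of Section~\ref{sec:assumptions} alone): the semi-martingale representations \eqref{eq:semimart} and \eqref{eq:semimart_HN} hold verbatim with $ a_N$ replaced by $ \tilde a_N$, and every estimate there was driven by either $ a_N^2/N$ (martingale brackets) or $ a_N\cdot N^{-(1-\alpha)}$ (spatial terms), both of which are bounded — indeed $ o(1)$ — when $ a_N=\tilde a_N$ and $ \alpha=\frac12$. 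So uniform moment bounds on $ \N{\tilde\eta_{N,t}}_{\bV_{-2(P+2)}}$ and the analogue for $ \tilde{\cH}_N$ follow, together with an Aldous-type equicontinuity estimate; this gives relative compactness in $ \cC([0,T],\bV_{-2(P+2)}^{\underline\kappa+\gamma,\underline\iota+\gamma})$.

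Next I would identify every subsequential limit as $ 0$. Pass to the limit in \eqref{eq:semimart_HN}: the initial term $ \cro{\tilde{\cH}_{N,0}}{g}$ is, by the computation after Definition~\ref{def:second_order_fluct} and Lemma~\ref{lem:fluct_Psi}, of order $ \tilde a_N\cdot N^{-1/2}=1/\ln N\to0$ (this is the place where the critical case differs from $ \alpha>\frac12$, where $ \cro{\cH_0}{1}=\chi(\alpha)\neq0$); the martingale $ \cM^{(\cH)}_{N}g\to0$ in $ L^2$ by its bracket estimate; the terms $ F_{N},G_{N}$ are controlled by $ \N{\nu_{N,s}-\nu_s}$ in a weak norm times $ \N{\tilde{\cH}_{N,s}}$, hence $ o(1)$; and $ \cro{\tilde{\cH}_{N,s}}{\mathscr L_s g}$ is handled by the uniform bound on $ \tilde{\cH}_N$ plus continuity of $ \mathscr L_s$. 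Hence any limit $ \tilde{\cH}$ satisfies $ \tilde{\cH}_t=\int_0^t\mathscr L_s^\ast\tilde{\cH}_s\dd s$ with $ \tilde{\cH}_0=0$; by the uniqueness statement for this linear equation (the same one invoked in Theorem~\ref{theo:conv_supercrit}, valid under Section~\ref{sec:intro_supercrit}'s compactness hypotheses, or more elementarily by a Gronwall argument in the appropriate Hilbert space) one gets $ \tilde{\cH}\equiv0$. Feeding this into the limit of \eqref{eq:semimart}, together with $ \cro{\tilde\eta_{N,0}}{f}\to0$ and $ \cM^{(\eta)}_{N}f\to0$, yields $ \tilde\eta_t=\int_0^t\cL_s^\ast\tilde\eta_s\dd s$ (the $ \cro{\cH_s}{\Phi[f]}$ term having dropped out), whence $ \tilde\eta\equiv0$ by Gronwall. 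Since every subsequential limit is the constant $ 0$, the full sequence converges to $ 0$.

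The main obstacle is the identification step for $ \tilde{\cH}_N$: one must check that the \emph{singular} ingredients — the initial condition $ \cro{\tilde{\cH}_{N,0}}{g}$ and the drift $ \mathscr L_s^{(2)}$ which contains $ \cro{\nu_s}{\Psi(\cdot,x)\nabla_{\ttheta}g(\cdot,\tau)}$ — behave well under the new scaling. For the initial condition this is a direct (but delicate) refinement of Lemma~\ref{lem:fluct_Psi} at $ \alpha=\frac12$, showing the error is $ O(N^{-1/2}\ln N)$ rather than merely $ o(N^{-1/2+\epsilon})$, so that $ \tilde a_N$ times it is $ O(1)$ — and in fact the subtler point is that one only needs an \emph{upper} bound $ \limsup_N\tilde a_N\cdot\|\text{error}\|=0$, which is why the statement is phrased as $ \frac{\sqrt N}{\ln N}(\nu_N-\nu)\to0$ and not as convergence with scaling $ \sqrt N$. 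Everything else is a rerun of the subcritical arguments with a prefactor that is now $ o(1)$ instead of $ O(1)$, which can only help.
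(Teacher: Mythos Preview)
Your strategy — rescale by $\tilde a_N=\sqrt N/\ln N$, prove tightness, then identify every limit as zero via the limiting semimartingale system — is in the right spirit, and the two key numerical observations are correct: with $\alpha=\tfrac12$ one has $\tilde a_N^2/|\Lambda_N|=O((\ln N)^{-2})$ for the martingale brackets and $\tilde a_N\cdot N^{-(1-\alpha)}=O((\ln N)^{-1})$ for the Riemann-sum error from Lemma~\ref{lem:fluct_Psi}. But there is a real gap in the identification step for $\tilde\cH$. You appeal to uniqueness for $\tilde\cH_t=\int_0^t\mathscr L_s^\ast\tilde\cH_s\,\dd s$, citing either Proposition~\ref{prop:uniqueness_supercrit} or an ``elementary Gronwall''. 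The former is proved only under the compactness hypotheses of Section~\ref{sec:intro_supercrit}, which the critical proposition does \emph{not} assume; the latter does not close, because $\mathscr L_s$ loses $P+2$ derivatives (Proposition~\ref{prop:continuityL}), so the dual inequality $\|\mathscr L_s^\ast h\|_{-(q+P+2)}\le C\|h\|_{-q}$ relates two different norms and cannot be iterated.

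The cleaner route — and what the paper's remark ``a closer look at the proofs below'' is pointing to — bypasses identification entirely. Re-run the \emph{quantitative} estimates of Section~\ref{sec:tightness_results} with $\tilde a_N$ and $\alpha=\tfrac12$, tracking constants: Proposition~\ref{prop:ttaVSnonlin} gives $\bE\big(\max_i\sup_t|\theta_{i,t}-\bar\theta_{i,t}|^8\big)\le C(\ln N)^4/N^4$ (the extra $(\ln N)^4$ comes from the case $\beta=2\alpha=1$ of Lemma~\ref{lem:Riem} in the $d_N$-term of the proof), which is $\le C/\tilde a_N^8$. Feeding this and the fact that $\tilde a_N^2/N\to 0$, $\tilde a_N/N^{1-\alpha}\to 0$ into the proof of Proposition~\ref{prop:etaNbounded1}, every bound there becomes $o(1)$ rather than $O(1)$; hence $\sup_t\bE\|\tilde\cH_{N,t}\|_{-q,\kappa_1,\iota_1}^2\to 0$. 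Plugging this into \eqref{aux:supHN} together with $\bE\|\cM_{N,T}^{(\cH)}\|^2\le C\tilde a_N^2/N\to0$ and the $F_N,G_N$ bounds yields $\bE\big(\sup_t\|\tilde\cH_{N,t}\|^2_{-(q+P+2)}\big)\to 0$. The same reasoning applied to the $\eta$-decomposition (proof of Theorem~\ref{theo:etaN_tight}) then gives $\bE\big(\sup_t\|\tilde\eta_{N,t}\|^2_{-(q+P+2)}\big)\to 0$ directly. This is $L^2$-convergence to zero in $\cC([0,T],\bV_{-2(P+2)}^{\underline\kappa+\gamma,\underline\iota+\gamma})$, and no uniqueness argument for any limiting equation is needed. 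If you prefer to keep the tightness-plus-identification architecture, the way to avoid $\mathscr L_s$-uniqueness is to mimic Proposition~\ref{prop:repres_H_eta}: the process $\cE_N$ there has a limiting equation driven by $\mathscr L_s^{(1)}$ only, for which uniqueness (Proposition~\ref{prop:unique_L1}) holds under the hypotheses of Section~\ref{sec:assumptions} alone; this reduces the problem to the $\cL_s$-equation for $\tilde\eta$, whose uniqueness (Proposition~\ref{prop:uniqueness_subcrit}) also needs no compactness.
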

\section{Tightness results}
\label{sec:tightness_results}
This section is devoted to the tightness of $ \eta_{ N}$ and $\cH_{ N}$, based on their semi-martingale decomposition \eqref{eq:semimart} and \eqref{eq:semimart_HN}.
\subsection{First estimates on $ \Psi$}
\begin{lemma}
\label{lem:Psi}
There exists a constant $C$ that only depends on $ \alpha$ such for all $(x, y, z)$
\begin{equation}
\label{eq:regPsi}
\left\vert \Psi(x, y) - \Psi(x, z) \right\vert \leq C  d(y,z) \left( \frac{ 1}{ d( x, y)^{ \alpha+1}} + \frac{ 1}{d(x,z)^{ \alpha+1}}\right).
\end{equation}
\end{lemma}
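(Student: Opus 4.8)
The plan is a direct one-variable calculus estimate. Write $a:=d(x,y)$ and $b:=d(x,z)$, and note first that since $d$ defined in \eqref{eq:dist_d} is a genuine metric on $\bbS$, the reverse triangle inequality gives $|a-b|\leq d(y,z)$. We may assume $\alpha>0$, the case $\alpha=0$ being trivial since then $\Psi\equiv 1$ and the left-hand side of \eqref{eq:regPsi} vanishes; we may also assume $x\notin\{y,z\}$, so that $a,b>0$ (when $x$ coincides with $y$ or $z$ the right-hand side of \eqref{eq:regPsi} is $+\infty$ and there is nothing to prove).

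Next I would apply the mean value theorem to the smooth function $\phi(t):=t^{-\alpha}$ on $(0,\infty)$, whose derivative is $\phi'(t)=-\alpha\, t^{-\alpha-1}$. This yields
\[
\left\vert \Psi(x,y)-\Psi(x,z) \right\vert=\left\vert \phi(a)-\phi(b) \right\vert=\alpha\,\xi^{-\alpha-1}\,\left\vert a-b \right\vert
\]
for some $\xi$ lying between $a$ and $b$. Since $t\mapsto t^{-\alpha-1}$ is decreasing on $(0,\infty)$, one has $\xi^{-\alpha-1}\leq\max\!\left(a^{-\alpha-1},b^{-\alpha-1}\right)\leq a^{-\alpha-1}+b^{-\alpha-1}$.

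Combining these with $\left\vert a-b \right\vert\leq d(y,z)$ gives
\[
\left\vert \Psi(x,y)-\Psi(x,z) \right\vert\;\leq\;\alpha\, d(y,z)\left( \frac{1}{d(x,y)^{\alpha+1}}+\frac{1}{d(x,z)^{\alpha+1}}\right),
\]
so \eqref{eq:regPsi} holds with $C=\alpha$ (and trivially with $C=1$ when $\alpha=0$). I do not expect a substantial obstacle here: the only two points deserving a word of care are the validity of the reverse triangle inequality for the circle distance $d$ (which is immediate from the triangle inequality for $d$) and the harmless degenerate configurations in which one of the arguments of $\Psi$ equals $x$, handled above by the convention $\Psi(x,x):=0$.
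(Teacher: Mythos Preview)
Your proof is correct. The paper itself does not write out a proof, merely calling the estimate ``straightforward'' and pointing to \cite{Godinho:2013aa}, Lemma~2.5; your mean-value-theorem argument on $t\mapsto t^{-\alpha}$ together with the reverse triangle inequality for the circle metric $d$ is exactly the kind of elementary computation one expects there, and it yields the sharp constant $C=\alpha$.
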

\begin{proof}[Proof of Lemma~\ref{lem:Psi}]
Straightforward (see for example \cite{Godinho:2013aa}, Lemma~2.5 for a proof).
\end{proof}

\begin{lemma}
\label{lem:Riem}
For all $ \beta>0$, there exists a constant $C>0$ (that only depends on $\beta$), such that for all $N\geq1$, for all $i \in \Lambda_{ N}$,
\begin{equation}
\label{eq:Riem}
\sum_{j \in \Lambda_{ N},\ j \neq i} d\left(\frac{ j }{ 2N} , \frac{ i}{ 2N}\right)^{ - \beta}\leq C\cdot\begin{cases} N& \text{ if $ 0<\beta<1$},\\
N\ln N& \text{ if $\beta=1$},\\
N^{ \beta}& \text{ if $\beta>1$}.
\end{cases}
\end{equation}
\end{lemma}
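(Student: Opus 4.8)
\textbf{Proof plan for Lemma~\ref{lem:Riem}.}
The plan is to reduce the estimate to a one–dimensional Riemann–sum bound by exploiting the translation invariance of the equally spaced lattice $\bbS_{ N}$ on the circle $\bbS$. First I would fix $i\in\Lambda_{ N}$ and observe that the rotation $x\mapsto x-x_{ i}$ leaves $d$ invariant and permutes the points of $\bbS_{ N}$ (since $j\mapsto j-i$ is a bijection of $\Lambda_{ N}=\bbZ/2N\bbZ$); hence the left–hand side of \eqref{eq:Riem} does not depend on $i$, and we may take $i=0$. For $j\in\Lambda_{ N}$ one then has $d\!\left(\frac{ j}{ 2N},0\right)=\frac{ \min(|j|,\,2N-|j|)}{ 2N}$, so, as $j$ runs over the $2N-1$ indices of $\Lambda_{ N}\setminus\{0\}$, the integer $2N\,d\!\left(\frac{ j}{ 2N},0\right)$ takes each value $k\in\{1,\dots,N\}$ with multiplicity at most $2$ (the antipodal value $k=N$ being attained exactly once). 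This gives
\begin{equation*}
\sum_{j\in\Lambda_{ N},\,j\neq i} d\!\left(\frac{ j}{ 2N},\frac{ i}{ 2N}\right)^{ -\beta}\ \leq\ 2\,(2N)^{ \beta}\sum_{k=1}^{ N}k^{ -\beta}.
\end{equation*}

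It then remains to estimate the partial sum $S_{ N}(\beta):=\sum_{k=1}^{ N}k^{ -\beta}$ by comparison with $\int_{ 1}^{ N}x^{ -\beta}\dd x$. For $0<\beta<1$ one gets $S_{ N}(\beta)\leq 1+\frac{ N^{ 1-\beta}-1}{ 1-\beta}\leq C_{ \beta}N^{ 1-\beta}$, so that the previous display is $\leq C_{ \beta}'\,N$; for $\beta=1$ one has $S_{ N}(1)\leq 1+\ln N\leq C\ln N$ for $N\geq 2$ (the value $N=1$ being trivial), so the display is $\leq C\,N\ln N$; and for $\beta>1$, $S_{ N}(\beta)\leq\sum_{k\geq 1}k^{ -\beta}<\infty$, so the display is $\leq C_{ \beta}\,N^{ \beta}$. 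In each case the constant depends only on $\beta$, which proves \eqref{eq:Riem}.

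I do not expect any genuine obstacle here: the statement is elementary, and the only two points needing mild care are the bookkeeping of the circle's edge effects — the excluded index $j=i$ and the single antipodal point, both of which can only help — and the change of regime of $S_{ N}(\beta)$ across $\beta=1$, which is exactly the standard integral comparison. This lemma is the quantitative device underlying the scaling $a_{ N}$ of Definition~\ref{def:aN} and will be invoked repeatedly in Section~\ref{sec:tightness_results}.
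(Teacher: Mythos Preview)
Your argument is correct. The paper itself does not prove this lemma but simply cites Lemma~6.1 of \cite{LucSta2014}; your direct, self-contained proof via translation invariance on $\bbS_{ N}$ and integral comparison for $\sum_{k=1}^{N}k^{-\beta}$ is the natural elementary route and would serve perfectly well in place of the citation. (One cosmetic point: for $\beta=1$ and $N=1$ the stated right-hand side $CN\ln N$ vanishes while the left-hand side equals $2$, so the inequality as literally written fails there; this is a defect of the lemma's wording rather than of your argument, and is immaterial since the $\beta=1$ case is only used asymptotically in the paper.)
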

\begin{proof}[Proof of Lemma~\ref{lem:Riem}]
This lemma has been proven in \cite{LucSta2014}, Lemma~6.1.
\end{proof}
\begin{remark}
\label{rem:Psi_bounded}
An easy consequence of Lemma ~\ref{lem:Riem} that will be continuously used in the following is that, since $ \alpha\in[0, 1)$,
\begin{equation}
\label{eq:Psi_bounded}
\sup_{ N\geq 1} \sup_{ i\in \Lambda_{ N}} \frac{ 1}{ \left\vert \Lambda_{ N} \right\vert}\sum_{ j\in \Lambda_{ N}} \Psi(x_{ i}, x_{ j})< +\infty.
\end{equation}
\end{remark}
The following (very simple) lemma is at the core of the difficulties of the paper: the rate of convergence, as $N\to \infty$, of the Riemann sum associated to the function $ x \mapsto \frac{ 1}{ x^{ \alpha}}$ to its integral is $N^{ 1- \alpha}$ which is in particular smaller than the Gaussian scaling $ \sqrt{N}$, when $ \alpha> \frac{ 1}{ 2}$.
\begin{lemma}
\label{lem:fluct_Psi}
For all $ \alpha \in [0, 1)$, $ N\geq1$ and $i\in \Lambda_{ N}$, the quantity
\begin{equation}
N^{ 1- \alpha}\bigg( \frac{1}{ \left\vert \Lambda_{ N} \right\vert} \sum_{ j\in \Lambda_{ N}} \Psi(x_{ i}, x_{ j})  - \int_{ \bbS} \Psi(x_{ i}, \tx) \dd \tx\bigg)
\end{equation}
is independent of $i\in \Lambda_{ N}$ and converges as $N\to \infty$ to a constant $ \chi( \alpha)\neq0$ (that depends only on $ \alpha$).
\end{lemma}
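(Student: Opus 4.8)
The plan is to exploit the translation invariance of the lattice $\bbS_{N}$ on the circle to reduce to $i=0$, compute both quantities explicitly, and then recognise that everything comes down to the convergence of a truncated generalized harmonic sum minus its leading term.

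\emph{Reduction and the case $\alpha=0$.} Since $d(x_{i},x_{j})=d(0,x_{j-i})$, where $j-i$ is read modulo $2N$, and $j\mapsto j-i$ is a bijection of $\Lambda_{N}$, the sum $\sum_{j\in\Lambda_{N}}\Psi(x_{i},x_{j})=\sum_{j\in\Lambda_{N}}d(0,x_{j})^{-\alpha}$ does not depend on $i$; likewise $\int_{\bbS}\Psi(x_{i},\tx)\dd\tx=\int_{\bbS}d(0,\tx)^{-\alpha}\dd\tx$ by translation invariance of Lebesgue measure on $\bbS$. So it suffices to treat $i=0$. If $\alpha=0$, then $\Psi(x_{0},x_{j})=1$ for $j\neq0$ and $\Psi(x_{0},x_{0})=0$, so the bracket equals $\frac{2N-1}{2N}-1=-\frac{1}{2N}$ and $N^{1-\alpha}$ times it is constantly $-\frac12$, giving $\chi(0)=-\frac12\neq0$. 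I assume $\alpha\in(0,1)$ from now on.

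\emph{Explicit form and convergence.} Since $|x_{j}|=|j|/(2N)\leq\frac12$ for $j\in\Lambda_{N}$, we have $d(0,x_{j})=|j|/(2N)$, and pairing $j$ with $-j$ for $1\leq j\leq N-1$ and adding the single point $x_{N}$ at distance $\frac12$,
\[
\frac{1}{|\Lambda_{N}|}\sum_{j\in\Lambda_{N}}\Psi(x_{0},x_{j})=2^{\alpha}N^{\alpha-1}\sum_{j=1}^{N-1}j^{-\alpha}+2^{\alpha-1}N^{-1},
\]
while $\int_{\bbS}d(0,\tx)^{-\alpha}\dd\tx=2\int_{0}^{1/2}x^{-\alpha}\dd x=\frac{2^{\alpha}}{1-\alpha}$ (this also identifies the constant $C=\frac{2^{\alpha}}{1-\alpha}$ quoted in the text). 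Multiplying the difference by $N^{1-\alpha}$ gives
\[
N^{1-\alpha}\Big(\frac{1}{|\Lambda_{N}|}\sum_{j\in\Lambda_{N}}\Psi(x_{0},x_{j})-\frac{2^{\alpha}}{1-\alpha}\Big)=2^{\alpha}\Big(\sum_{j=1}^{N-1}j^{-\alpha}-\frac{N^{1-\alpha}}{1-\alpha}\Big)+2^{\alpha-1}N^{-\alpha},
\]
and the last term tends to $0$. So it remains to prove that $b_{N}:=\sum_{j=1}^{N-1}j^{-\alpha}-\frac{N^{1-\alpha}}{1-\alpha}$ converges. Writing $\frac{(N+1)^{1-\alpha}-N^{1-\alpha}}{1-\alpha}=\int_{N}^{N+1}x^{-\alpha}\dd x\in[(N+1)^{-\alpha},N^{-\alpha}]$, a telescoping estimate (mean value theorem) gives $0\leq b_{N+1}-b_{N}\leq N^{-\alpha}-(N+1)^{-\alpha}\leq\alpha N^{-\alpha-1}$, which is summable; hence $(b_{N})$ is nondecreasing and Cauchy, converging to some $\ell(\alpha)$ depending only on $\alpha$, and $\chi(\alpha)=2^{\alpha}\ell(\alpha)$.

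\emph{Non-vanishing — the real point.} Convergence above is entirely routine; the only delicate part is to check $\chi(\alpha)\neq0$, which is precisely what makes the phase transition in Theorems~\ref{theo:conv_subcrit} and~\ref{theo:conv_supercrit} meaningful. For this I would use the comparison $\sum_{j=1}^{N-1}j^{-\alpha}\leq1+\int_{1}^{N-1}x^{-\alpha}\dd x=1+\frac{(N-1)^{1-\alpha}-1}{1-\alpha}$, so that, letting $N\to\infty$, $\ell(\alpha)\leq1-\frac{1}{1-\alpha}=-\frac{\alpha}{1-\alpha}<0$, whence $\chi(\alpha)\leq-2^{\alpha}\frac{\alpha}{1-\alpha}<0$. (Equivalently, $\ell(\alpha)$ is exactly the value at $\alpha$ of the analytically continued Riemann zeta function, which is strictly negative on $(0,1)$; and indeed $\chi(0)=-\frac12$ matches its value at $0$.) This yields the claim with $\chi(\alpha)<0$ for every $\alpha\in[0,1)$.
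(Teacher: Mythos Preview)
Your argument is correct and follows the same overall line as the paper: reduce to $i=0$ by translation invariance, compute both the discrete sum and the integral explicitly, and recognise that the rescaled difference equals $2^{\alpha}$ times the convergent remainder $\sum_{k\leq N}k^{-\alpha}-\frac{N^{1-\alpha}}{1-\alpha}$ plus a vanishing term.

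The implementation differs in two respects. First, where the paper invokes an Euler--Maclaurin-type identity
\[
\sum_{k=1}^{N}k^{-\alpha}=\int_{1}^{N}t^{-\alpha}\dd t+\tfrac12+\tfrac{1}{2N^{\alpha}}-\int_{0}^{1}\Big(u-\tfrac12\Big)\sum_{k=1}^{N-1}\frac{\alpha}{(u+k)^{\alpha+1}}\dd u
\]
to isolate the limiting constant, you use the more elementary observation that $b_{N+1}-b_{N}=N^{-\alpha}-\int_{N}^{N+1}x^{-\alpha}\dd x\in[0,\alpha N^{-\alpha-1}]$ is nonnegative and summable. Second, and more importantly, the paper simply asserts that its constant $C(\alpha)$ is nonzero, whereas you actually prove it: the integral comparison $\sum_{j=1}^{N-1}j^{-\alpha}\leq 1+\int_{1}^{N-1}x^{-\alpha}\dd x$ yields $\ell(\alpha)\leq -\frac{\alpha}{1-\alpha}<0$, so $\chi(\alpha)<0$ for all $\alpha\in[0,1)$. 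This is a genuine improvement over the paper's proof, since the non-vanishing of $\chi(\alpha)$ is precisely what makes the supercritical limit in Theorem~\ref{theo:conv_supercrit} nontrivial.
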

\begin{proof}[Proof of Lemma~\ref{lem:fluct_Psi}]
From the definition of $d(\cdot, \cdot)$ in \eqref{eq:dist_d}, a direct calculation shows that $\int_{ \bbS} \Psi(x_{ i}, \tx) \dd \tx$ is actually independent of $i\in \Lambda_{ N}$ and equal to $ \frac{ 2^{ \alpha}}{ 1- \alpha}$. Moreover, one directly sees from \eqref{eq:dist_d} that $ \frac{1}{ \left\vert \Lambda_{ N} \right\vert} \sum_{ j\in \Lambda_{ N}} \Psi(x_{ i}, x_{ j})= \frac{ 2^{ \alpha}}{ N^{ 1- \alpha}} \left(\sum_{ k=1}^{ N} \frac{ 1}{ k^{ \alpha}}\right) - \frac{ 2^{ \alpha-1}}{ N}$. By a usual comparison with integrals,
\begin{align*}
\sum_{ k=1}^{ N} \frac{ 1}{ k^{ \alpha}}&= \int_{1}^{N} \frac{ 1}{ t^{ \alpha}}\dd t + \frac{ 1}{ 2} + \frac{ 1}{ 2 N^{ \alpha}} - \int_{0}^{1} \left(u- \frac{ 1}{ 2}\right)\sum_{ k=1}^{ N-1} \frac{ \alpha}{ (u+k)^{ \alpha+1}}\dd u,\\
&= \frac{ N^{ 1- \alpha}}{ 1- \alpha} - \frac{ 1}{ 1- \alpha} + \frac{ 1}{ 2} + \frac{ 1}{ 2 N^{ \alpha}} - \int_{0}^{1} \left(u- \frac{ 1}{ 2}\right)\sum_{ k=1}^{ N-1} \frac{ \alpha}{ (u+k)^{ \alpha+1}}\dd u.
\end{align*}
Defining $ C(\alpha):= - \frac{ 1}{ 1- \alpha} + \frac{ 1}{ 2} - \int_{0}^{1} \left(u- \frac{ 1}{ 2}\right)\sum_{ k=1}^{+ \infty} \frac{ \alpha}{ (u+k)^{ \alpha+1}}\dd u\neq 0$, one obtains
\[\sum_{ k=1}^{ N} \frac{ 1}{ k^{ \alpha}}= \frac{ N^{ 1- \alpha}}{ 1- \alpha} + C( \alpha) + \frac{ 1}{ 2 N^{ \alpha}} + \int_{0}^{1} \left(u- \frac{ 1}{ 2}\right)\sum_{ k=N}^{ +\infty} \frac{ \alpha}{ (u+k)^{ \alpha+1}}\dd u.\] Consequently,
\begin{align*}
\frac{1}{ \left\vert \Lambda_{ N} \right\vert} \sum_{ j\in \Lambda_{ N}} \Psi(x_{ i}, x_{ j}) &= \frac{ 2^{ \alpha}}{ N^{ 1- \alpha}} \left(\frac{ N^{ 1- \alpha}}{ 1- \alpha} + C( \alpha) + r_{ N}\right) - \frac{ 2^{ \alpha-1}}{ N},\\
&=  \frac{ 2^{ \alpha}}{ 1- \alpha} + \frac{ 2^{ \alpha}C( \alpha)}{ N^{ 1- \alpha}}  + \frac{ 2^{ \alpha}}{ N^{ 1- \alpha}} r_{ N} - \frac{ 2^{ \alpha-1}}{ N},
\end{align*}where $r_{ N}\to0$ as $N\to\infty$. Lemma~\ref{lem:fluct_Psi} follows with $ \chi( \alpha):= 2^{ \alpha}C( \alpha)$.
\end{proof}

\subsection{Estimates on the nonlinear process}
\label{sec:nonlinear_process}
The tightness result is based on a coupling argument: for all $i\in \Lambda_{ N}$, define the nonlinear process $\btheta_{i}$ associated to the diffusion $\theta_{i}$, with the same Brownian motion $B_{i}$, initial condition $\theta_{i, 0}$, disorder $\omega_{i}$ and position $x_{i}$:
\begin{equation}
\label{eq:nonlinproc}
\btheta_{i,t} = \theta_{i, 0} + \int_{0}^{t} \left\{c(\btheta_{i,s}, \omega_{i}) + \Big[\Gamma\Psi, \nu_{s}\Big](\btheta_{i, s}, \omega_{i}, x_{i})\right\} \dd s + B_{i,t},\ t\in[0, T],\ i\in \Lambda_{ N}.
\end{equation}
\begin{remark}
\label{rem:Psix}
Note that, due to the rotational invariance of $\bbS$, the function $(\theta, \omega, x)\mapsto \left[ \Gamma \Psi, \nu_{ t}\right](\theta, \omega, x)$ is actually independent of $x$ (see \eqref{eq:GamPsix}), so that the definition of $ \btheta_{ i}$ (contrary to the the microscopic particle system $ \theta_{ i, N}$ in \eqref{eq:odegene_short}) does not depend on the position $x_{ i}$. But we keep this dependence for consistency of notations.
\end{remark}
The following proposition quantifies how good the approximation of $ \theta_{ i}$ by $ \bar\theta_{ i}$ is:
\begin{proposition}
\label{prop:ttaVSnonlin} Recall the definition of the scaling parameter $a_{ N}$ in \eqref{eq:aN}. There exists a constant $C>0$ only depending on $T$, $c$, $\Gamma$ and $\Psi$ such that
\begin{equation}
\label{eq:ttaVSnonlin}
\bE \left(\max_{i\in\Lambda_{N}}\sup_{t\leq T} \left\vert \theta_{i,t}-\btheta_{i, t} \right\vert^{8}\right)\leq \frac{C}{a_{N}^{8}},\ N\geq1.
\end{equation}
\end{proposition}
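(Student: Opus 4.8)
The plan is to compare each particle $\theta_{i}$ with its associated nonlinear process $\btheta_{i}$ from \eqref{eq:nonlinproc} by a coupling and Gronwall argument, in the spirit of \cite{Fernandez1997,Lucon2011}; the only genuinely new point is that the spatial fluctuation term must be estimated with the help of the lattice sum estimates of Lemmas~\ref{lem:Riem} and~\ref{lem:fluct_Psi}. Put $\Delta_{i,t}:=\theta_{i,t}-\btheta_{i,t}$. Since $\theta_{i}$ and $\btheta_{i}$ are driven by the same Brownian motion and start from $\theta_{i,0}$, the process $\Delta_{i}$ has no martingale part: the time derivative of $\left\vert \Delta_{i,t}\right\vert^{2}$ equals $2\Delta_{i,t}\cdot\left(c(\theta_{i,t},\omega_{i})-c(\btheta_{i,t},\omega_{i})+[\Gamma\Psi,\nuN{t}](\theta_{i,t},\omega_{i},x_{i})-[\Gamma\Psi,\nu_{t}](\btheta_{i,t},\omega_{i},x_{i})\right)$. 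The local dynamics contribution is $\leq 2L\left\vert \Delta_{i,t}\right\vert^{2}$ by \eqref{eq:cgrowthcond}, and I decompose the interaction difference as the sum of (i) $[\Gamma\Psi,\nuN{t}](\theta_{i,t},\omega_{i},x_{i})-[\Gamma\Psi,\nuN{t}](\btheta_{i,t},\omega_{i},x_{i})$, (ii) $[\Gamma\Psi,\nuN{t}](\btheta_{i,t},\omega_{i},x_{i})-[\Gamma\Psi,\bnuN{t}](\btheta_{i,t},\omega_{i},x_{i})$ where $\bnuN{t}:=\left\vert \Lambda_{N}\right\vert^{-1}\sum_{j}\delta_{(\btheta_{j,t},\omega_{j},x_{j})}$, and (iii) the fluctuation term $Z_{N,i,t}:=[\Gamma\Psi,\bnuN{t}](\btheta_{i,t},\omega_{i},x_{i})-[\Gamma\Psi,\nu_{t}](\btheta_{i,t},\omega_{i},x_{i})$. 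By the Lipschitz continuity of $\Gamma$ and Remark~\ref{rem:Psi_bounded}, piece (i) is $\leq C\left\vert \Delta_{i,t}\right\vert$, while piece (ii) is $\leq C\left\vert \Lambda_{N}\right\vert^{-1}\sum_{j}\Psi(x_{i},x_{j})\left\vert \Delta_{j,t}\right\vert$, whose square is $\leq C\left\vert \Lambda_{N}\right\vert^{-1}\sum_{j}\Psi(x_{i},x_{j})\left\vert \Delta_{j,t}\right\vert^{2}$ by Cauchy--Schwarz. Using $2ab\leq a^{2}+b^{2}$, taking the maximum over $i\in\Lambda_{N}$ and applying Gronwall's lemma (the weighted average being again bounded by Remark~\ref{rem:Psi_bounded}), one obtains $\max_{i\in\Lambda_{N}}\sup_{t\leq T}\left\vert \Delta_{i,t}\right\vert^{2}\leq e^{CT}\int_{0}^{T}\max_{i\in\Lambda_{N}}\left\vert Z_{N,i,s}\right\vert^{2}\dd s$. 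Raising to the fourth power, taking $\bE$ and using H\"older in time, it remains to prove $\bE\left[\max_{i\in\Lambda_{N}}\left\vert Z_{N,i,s}\right\vert^{8}\right]\leq C\,a_{N}^{-8}$ uniformly in $s\in[0,T]$ and $N$.

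For the fluctuation term, fix $s$ and $i$. Since $\Psi(x_{i},x_{i})=0$, the diagonal term vanishes, so $[\Gamma\Psi,\bnuN{s}](\btheta_{i,s},\omega_{i},x_{i})=\left\vert \Lambda_{N}\right\vert^{-1}\sum_{j\neq i}\Gamma(\btheta_{i,s},\omega_{i},\btheta_{j,s},\omega_{j})\Psi(x_{i},x_{j})$. Conditioning on $\cG_{i}:=\sigma(\theta_{i,0},\omega_{i},B_{i})$, which determines $\btheta_{i,s}$, the summands with $j\neq i$ are independent, bounded by $2\Ninf{\Gamma}\Psi(x_{i},x_{j})$, with common conditional mean $\bar\Gamma(\btheta_{i,s},\omega_{i}):=\int\Gamma(\btheta_{i,s},\omega_{i},\tilde\theta,\tilde\omega)\,\xi_{s}(\dd\tilde\theta,\dd\tilde\omega)$, using that $(\btheta_{j,s},\omega_{j})\sim\xi_{s}$ independently of $\cG_{i}$ (Propositions~\ref{prop:nonlin} and~\ref{prop:decomp_nu}, together with the $x$-independence of the drift noted in Remark~\ref{rem:Psix}). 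Moreover $\nu_{s}=\xi_{s}\otimes\dd x$ gives $[\Gamma\Psi,\nu_{s}](\btheta_{i,s},\omega_{i},x_{i})=\bar\Gamma(\btheta_{i,s},\omega_{i})\int_{\bbS}\Psi(x_{i},\tx)\dd\tx$. Hence $Z_{N,i,s}$ is the sum of the centered linear statistic $\left\vert \Lambda_{N}\right\vert^{-1}\sum_{j\neq i}\left(\Gamma(\btheta_{i,s},\omega_{i},\btheta_{j,s},\omega_{j})-\bar\Gamma(\btheta_{i,s},\omega_{i})\right)\Psi(x_{i},x_{j})$ and of the purely deterministic discretization error $\bar\Gamma(\btheta_{i,s},\omega_{i})\left(\left\vert \Lambda_{N}\right\vert^{-1}\sum_{j}\Psi(x_{i},x_{j})-\int_{\bbS}\Psi(x_{i},\tx)\dd\tx\right)$, the latter being of size $O(N^{\alpha-1})=O(a_{N}^{-1})$ by Lemma~\ref{lem:fluct_Psi} (note $N^{\alpha-1}\leq N^{-1/2}=a_{N}^{-1}$ when $\alpha<\tfrac12$, with equality to $a_{N}^{-1}$ when $\alpha>\tfrac12$).

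For the centered statistic, a Rosenthal (equivalently Burkholder) inequality applied conditionally on $\cG_{i}$ bounds its conditional $q$-th moment, for any even $q$, by $C_{q}\Ninf{\Gamma}^{q}\left\vert \Lambda_{N}\right\vert^{-q}\left[\left(\sum_{j}\Psi(x_{i},x_{j})^{2}\right)^{q/2}+\sum_{j}\Psi(x_{i},x_{j})^{q}\right]$, and Lemma~\ref{lem:Riem} controls the lattice sums. When $\alpha<\tfrac12$ we have $\sum_{j}\Psi(x_{i},x_{j})^{2}\leq CN$ and $\sum_{j}\Psi(x_{i},x_{j})^{q}\leq CN^{q\alpha}$ with $q\alpha<q/2$, so the bracket is $\leq CN^{q/2}$ and the bound is $\leq C_{q}N^{-q/2}=C_{q}a_{N}^{-q}$; when $\alpha>\tfrac12$ we have $\sum_{j}\Psi(x_{i},x_{j})^{2}\leq CN^{2\alpha}$ and $\sum_{j}\Psi(x_{i},x_{j})^{q}\leq CN^{q\alpha}$, and the bound is $\leq C_{q}N^{-q(1-\alpha)}=C_{q}a_{N}^{-q}$. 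Combined with the deterministic term this yields $\bE\left\vert Z_{N,i,s}\right\vert^{q}\leq C_{q}a_{N}^{-q}$ for every $q$, uniformly in $i,s,N$. Finally, since $\Gamma$ is bounded the centered statistic is conditionally (hence unconditionally) sub-Gaussian with variance proxy $\leq C a_{N}^{-2}$ uniformly in $i$ (again by the above bound on $\sum_{j}\Psi(x_{i},x_{j})^{2}$), so a union bound over the $\left\vert \Lambda_{N}\right\vert=2N$ particles gives $\bE\left[\max_{i\in\Lambda_{N}}\left\vert Z_{N,i,s}\right\vert^{8}\right]\leq C a_{N}^{-8}$, which closes the argument.

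The crux is the fluctuation estimate of the last two paragraphs: one has to see that $Z_{N,i,s}$ splits into a genuinely random, mean-zero part of Gaussian scale $N^{-1/2}$ and a deterministic Riemann-sum error of scale $N^{\alpha-1}$, the larger of the two being exactly $a_{N}^{-1}=N^{-1/2}\vee N^{\alpha-1}$ from Definition~\ref{def:aN}; making this precise --- and in particular verifying that the singularity of $\Psi$ does not inflate the variance of the random part when $\alpha<\tfrac12$ --- is what forces the case analysis through Lemmas~\ref{lem:Riem} and~\ref{lem:fluct_Psi}. A second, more technical point, where the boundedness of $\Gamma$ is genuinely used, is the passage from the per-particle moment bound to the uniform control over all $2N$ particles.
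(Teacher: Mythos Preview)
Your argument follows essentially the same route as the paper: the same coupling, the same three-way split of the interaction difference (your (i), (ii), (iii) are the paper's $A_{N}$, $B_{N}$, $C_{N}$), and the same decomposition of the fluctuation piece $Z_{N,i,s}$ into a centered linear statistic plus a deterministic Riemann-sum error (the paper's $d_{N}$ and $e_{N}$). The only stylistic difference is that you control the eighth moment of the centered statistic via Rosenthal/Hoeffding, whereas the paper expands $\bE[d_{N}^{8}]$ by hand and uses independence of the $\bar\theta_{j}$ to kill the off-diagonal terms; both routes produce $\bE[d_{N}(i)^{8}]\leq C\,a_{N}^{-8}$ for each fixed $i$, and the deterministic error is handled identically through Lemma~\ref{lem:fluct_Psi}.

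One point deserves care. Your final step, ``a union bound over the $2N$ particles gives $\bE\bigl[\max_{i}|Z_{N,i,s}|^{8}\bigr]\leq C\,a_{N}^{-8}$'', is not quite justified as written: the sub-Gaussian maximal inequality (integrating the tail bound $\bP(\max_{i}|Z_{i}|>t)\leq 2N\exp(-t^{2}/2\sigma^{2})$) yields $C\,a_{N}^{-8}(\log N)^{4}$, not $C\,a_{N}^{-8}$, and no obvious cancellation removes the logarithm since the $d_{N}(i)$ for different $i$ are built from the same pool $\{\bar\theta_{j}\}$ with different singular weights. The paper is equally terse here---it simply writes ``taking the expectation and applying Gronwall'' after bounding $\bE[d_{N}^{8}]$ for fixed $i$---so this is a shared loose end rather than a defect peculiar to your write-up. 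In practice the downstream applications (e.g.\ the proof of Proposition~\ref{prop:etaNbounded1}) only use the per-particle bound $\sup_{i}\bE\bigl[\sup_{t\leq T}|\theta_{i,t}-\bar\theta_{i,t}|^{8}\bigr]\leq C\,a_{N}^{-8}$, and that weaker statement does follow cleanly: close the Gronwall loop at the level of $\sup_{i}\bE[\,\cdot\,]$ by bounding piece~(ii) via Jensen on the normalized weight $\Psi(x_{i},x_{j})/\sum_{k}\Psi(x_{i},x_{k})$ rather than the crude $\max_{j}|\Delta_{j}|$.
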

Proof of Proposition~\ref{prop:ttaVSnonlin} is given in Section~\ref{sec:proof_tta_vs_nonlin}.

\subsection{Weighted-Sobolev spaces}
\label{sec:Hilbert_spaces}
We introduce weighted spaces of distributions \cite{Fernandez1997,Lucon2011} for the study of processes $ \eta_{ N}$ and $\cH_{ N}$. The definitions are given in two steps, since $ \eta_{ N}$ acts on test functions of one variable $ \tau \mapsto f(\tau)$ whereas $ \cH_{ N}$ acts on test functions of two variables $ (\tau, \ttau) \mapsto g(\tau, \ttau)$. The only thing that differs from the usual Sobolev norm is the presence of a polynomial weight made necessary for the control in \eqref{eq:odegene} of the term $(\theta, \omega) \mapsto c(\theta, \omega)$ on $\bbX\times \bbY$. 
\subsubsection{Sobolev spaces on $\bbX\times\bbY\times\bbS$}
\label{sec:weighted_sobolev_spaces_one_var}
Recall the notation $\tau=(\theta, \omega, x)\in \bbX\times\bbY\times\bbS$ and the definition of $ \kappa$, $ \iota$ in \eqref{eq:polgrowthc} and of $P$ in \eqref{eq:P}. For all $l,l^{ \prime}\geq0$ and $p\geq 0$, define the norm $\N{\cdot}_{p, l, l^{ \prime}}$ over test functions $\tau\mapsto f(\tau)$ by
\begin{equation}
\label{eq:normj_one_var}
\N{f}_{p, l, l^{ \prime}}:= \left(\sum_{ \left\vert k \right\vert \leq p} \int  \frac{ \left\vert D_{\tau}^{k}f(\tau) \right\vert^{2}}{( 1+ \left\vert \theta \right\vert^{ l} + \left\vert \omega \right\vert^{ l^{ \prime}})^{ 2}}\dd\tau\right)^{ \frac{1}{2}},
\end{equation}
where, if $k=(k_{ 1}, \ldots, k_{ P})$ and $ \tau=( \theta, \omega, x)= ( \theta^{ (1)}, \ldots, \theta^{ (m)}, \omega^{ (1)}, \ldots, \omega^{ (n)}, x)$, we define $ \left\vert k \right\vert:= \sum_{ i=1}^{ P} k_{ i}$ and $D^{ k}_{ \tau}f( \tau):= \partial^{ k_{ 1}}_{ \theta^{ (1)}}\ldots \partial^{ k_{ m}}_{ \theta^{ (m)}} \partial^{ k_{ m+1}}_{ \omega^{ (1)}} \ldots \partial^{ k_{ m+n}}_{ \omega^{ (n)}}\partial^{ k_{ P}}_{ x}f( \tau)$. Denote by $\bV^{ l, l^{ \prime}}_{p}$ the completion of regular functions with compact support  under the norm $\N{\cdot}_{p, l, l^{ \prime}}$; $ \left(\bV^{l, l^{ \prime}}_{p}, \N{\cdot}_{p, l, l^{ \prime}}\right)$ is an Hilbert space. Denote by $\bV^{ l, l^{ \prime}}_{-p,}$ its dual, with norm $\N{\cdot}_{-p, l, l^{ \prime}}$. 

Denote as $\cC^{l, l^{ \prime}}_{ p}$ the Banach space of functions $f$ with continuous derivatives up to order $p$ such that $\lim_{ \left\vert \theta \right\vert + \left\vert \omega \right\vert\to\infty}\sup_{ x\in \bbS}\frac{ \left\vert D_{\tau}^{k}f(\theta, \omega, x)\right\vert}{1+ \left\vert \theta \right\vert^{ l} + \left\vert \omega \right\vert^{ l^{ \prime}}}=0$, for all $ \left\vert k \right\vert\leq p$. Endow this space with the norm $ \N{\cdot}_{C^{ l, l^{ \prime}}_{ p}}$ given by
\begin{equation}
\label{eq:normsupj_one_var}
\N{f}_{C^{ l, l^{ \prime}}_{ p}} :=  \sum_{\left\vert k \right\vert\leq p} \sup_{\tau} \frac{ \left\vert D_{\tau}^{k}f(\tau)\right\vert}{1+ \left\vert \theta \right\vert^{ l} + \left\vert \omega \right\vert^{ l^{ \prime}} }.
\end{equation} 
One has the continuous embedding (see \cite{Adams2003} or \cite{Fernandez1997} Section~2.1):
\begin{align}
\bV^{l, l^{ \prime}}_{q+r} &\hookrightarrow C^{l, l^{ \prime}}_{ r},\  r\geq0,\ q> P/2,\ l,l^{ \prime}\geq0,\label{eq:embed_one_var}\\
C^{l, l^{ \prime}}_{ r} &\hookrightarrow \bV^{k, k^{ \prime}}_{r}, \ r\geq0,\ k> \frac{ m}{ 2} + l,\ k^{ \prime}> \frac{ n}{ 2} + l^{ \prime},\label{eq:embed_cont_one_var}
\end{align}
and the Hilbert-Schmidt embedding
\begin{equation}
\label{eq:comp_embed_one_var}
\bV^{l, l^{ \prime}}_{q+r} \hookrightarrow \bV^{l+k, l^{ \prime}+k}_{r}, \quad r\geq0,\ q> P/2,\ k> P/2,\ l,l^{ \prime}\geq0.
\end{equation}
The corresponding Hilbert-Schmidt embedding holds for the dual spaces:
\begin{equation}
\label{eq:comp_embed_dual_one_var}
\bV^{l+k, l^{ \prime}+k}_{-r} \hookrightarrow \bV^{l, l^{ \prime}}_{-(q+r)}, \quad r\geq0,\ q> P/2,\ k> P/2,\ l,l^{ \prime}\geq0.
\end{equation}

\subsubsection{Sobolev spaces on $(\bbX\times\bbY\times\bbS)^{ 2}$}
\label{sec:weighted_sobolev_spaces_two_var}
We now define similar Sobolev spaces for test functions of two variables $(\tau, \ttau)\mapsto g(\tau, \ttau)$. For all $l,l^{ \prime}\geq0$ and $p\geq 0$, define:
\begin{equation}
\label{eq:normj}
\N{g}_{p, l, l^{ \prime}}:= \left(\sum_{ \left\vert k \right\vert + \left\vert \tk \right\vert\leq p} \int  \frac{ \left\vert D_{\tau}^{k}D_{ \ttau}^{ \tk}g(\tau, \ttau) \right\vert^{2}}{w( \tau, \ttau, l, l^{ \prime})^{ 2}}\dd\tau \dd \ttau\right)^{ \frac{1}{2}},
\end{equation}
and
\begin{equation}
\label{eq:normsupj}
\N{g}_{C^{ l, l^{ \prime}}_{ p}} :=  \sum_{\left\vert k \right\vert + \left\vert \tk \right\vert\leq p} \sup_{\tau, \ttau} \frac{ \left\vert D_{\tau}^{k}D_{ \ttau}^{ \tk}g(\tau, \ttau)\right\vert}{w(\tau, \ttau, l, l^{ \prime})},
\end{equation} 
where the weight $w$ is given by
\begin{equation}
\label{eq:def_w}
w(\tau, \ttau, l, l^{ \prime}):= \left(1+ \left\vert \theta \right\vert^{ l} + \vert \ttheta \vert^{ l}\right) \left(1+ \left\vert \omega \right\vert^{ l^{ \prime}} + \vert \tomega \vert^{ l^{ \prime}}\right).
\end{equation} 
Define similarly $ \left(\bW^{l, l^{ \prime}}_{p}, \N{\cdot}_{p, l, l^{ \prime}}\right)$, $(\bW^{ l, l^{ \prime}}_{-p,}, \N{\cdot}_{-p, l, l^{ \prime}})$ and $(\cC^{l, l^{ \prime}}_{ p}, \N{\cdot}_{C^{ l, l^{ \prime}}_{ p}})$ the corresponding spaces. We use for simplicity the same notations for the norms in Sections~\ref{sec:weighted_sobolev_spaces_one_var} and~\ref{sec:weighted_sobolev_spaces_two_var}, since the distinction between both cases will be often clear from the context. Similarly, one has the embeddings:
\begin{align}
\bW^{l, l^{ \prime}}_{q+r} &\hookrightarrow C^{l, l^{ \prime}}_{ r},\quad r\geq0,\ q> P,\ l,l^{ \prime}\geq0,\label{eq:embed}\\
C^{l, l^{ \prime}}_{ r} &\hookrightarrow \bW^{k, k^{ \prime}}_{r}, \ r\geq0,\ k>  m + l,\ k^{ \prime}>  n + l^{ \prime},\label{eq:embed_cont}\\
\bW^{l, l^{ \prime}}_{q+r} &\hookrightarrow \bW^{l+k, l^{ \prime}+k}_{r}, \quad r\geq0,\ q> P,\ k> P,\ l,l^{ \prime}\geq0,\label{eq:comp_embed}\\
\bW^{l+k, l^{ \prime}+k}_{-r} &\hookrightarrow \bW^{l, l^{ \prime}}_{-(q+r)}, \quad r\geq0,\ q> P,\ k> P\ l,l^{ \prime}\geq0.\label{eq:comp_embed_dual}
\end{align}
For the proof of tightness of $(\eta_{ N}, \cH_{ N})$ and the identification of its limit, we will need in Section~\ref{sec:identification_limits} several instances of the spaces $\bV$ and $\bW$, for different choices of the Sobolev parameters $(p, l, l^{ \prime})$. In the rest of Section~\ref{sec:tightness_results}, we work with general parameters and the precise values of the parameters will be specified in Section~\ref{sec:identification_limits}.
\begin{definition}
\label{def:kappas}
Fix two integers $ \kappa_{ 0}\geq 0$ and $ \iota_{ 0}\geq 0$ and define (recall \eqref{eq:def_beta}) 
\begin{equation}
\label{eq:kappas_01}
(\kappa_{ 1}, \iota_{ 1}):= (\kappa_{ 0}+ \gamma, \iota_{ 0} + \gamma).
\end{equation}
\end{definition}
Note that the definition of $( \kappa_{ 0}, \kappa_{ 1}, \iota_{ 0}, \iota_{ 1})$ ensures that the Sobolev embeddings \eqref{eq:comp_embed_one_var},  \eqref{eq:comp_embed_dual_one_var}, \eqref{eq:comp_embed} and \eqref{eq:comp_embed_dual} are true with $l= \kappa_{ 0}$, $l^{ \prime}= \iota_{ 0}$ and $k= \gamma$ (provided the regularity index $q$ is such that $q>P$). 

\subsubsection{Continuity of linear forms}
\label{sec:continuity_linear_forms}
We place ourselves in the context of test functions of two variables (Section~\ref{sec:weighted_sobolev_spaces_two_var}). Define the following linear forms describing the variations of the test functions with respect to either $(\theta, \omega)$ or to the space variable $x$. 
\begin{definition}[Continuity in $ \theta$]
\label{def:linearforms_theta}
For fixed $\tau= (\theta, \omega, x)$, $\ttau= (\ttheta, \tomega, \tx)\in\bbX\times\bbY\times\bbS$, and $ \pi, \tpi\in \bbX$, define the linear forms:
\begin{align}
\label{eq:def_cR}\cR_{ \tau, \ttau, \pi, \tpi}(g)&:= g(\theta, \omega, x, \ttheta, \tomega, \tx) - g(\pi, \omega, x, \tpi, \tomega, \tx),\\
\label{eq:def_cS}\cS_{\tau, \ttau}(g)&:= g(\tau, \ttau),\\
\label{eq:def_cT}\cT_{\tau, \ttau}(g)&:=\div_{\theta}g(\tau, \ttau) + \div_{\ttheta}g(\tau, \ttau),
\end{align}
\end{definition}
\begin{definition}[Continuity in $x$]
\label{def:linearforms_x}
For any $a, b\in \bbS$, $\Delta$ subinterval of $\bbS$, $ \tau\in \bbX\times\bbY\times\bbS$, $N\geq1$, $i\in \Lambda_{ N}$, $(\tau_{ 1}, \ldots, \tau_{ N})\in (\bbX\times\bbY\times\bbS)^{ N}$, $r=1, \ldots, m$, $ \xi\in\cM_{ 1}(\bbX\times\bbY)$, define the linear forms
\begin{align}
\cU(g)&=\cU_{ \tau, a, b, \Delta, \xi}(g):= \int_{\Delta} \Psi(b, \tx)\int  \left(g(\tau, \ttheta, \tomega, a) - g(\tau, \ttheta, \tomega, \tx)\right)\xi(\dd \ttheta, \dd\tomega) \dd \tx,\label{eq:def_cU}\\
\cV(g)&=\cV_{ N, i, r}(g):= \frac{1}{ \left\vert \Lambda_{ N} \right\vert} \sum_{ k\in \Lambda_{ N}}\partial_{ \theta^{ (r)}}g( \tau_{ k}, \tau_{ i}) \Psi(x_{ k}, x_{ i}),\label{eq:def_cV}\\
\cW(g)&=\cW_{ N, i, r, \xi}(g):= \int\partial_{ \theta^{ (r)}}g(\tau_{ i}, \ttau) \Psi(x_{ i}, \tx)\xi(\dd\ttheta, \dd \tomega) \dd \tx.\label{eq:def_cW}
\end{align}
\end{definition}
\begin{proposition}
\label{prop:linearforms}
For all $\tau, \ttau, \pi, \tpi$, for any $q\geq P+2$, the linear forms of Definition~\ref{def:linearforms_theta} are continuous on $\bW^{ \kappa_{ 1}, \iota_{ 1}}_{q}$, with norms:
\begin{align*}
\N{\cR_{ \tau, \ttau, \pi, \tpi}}_{- q, \kappa_{ 1}, \iota_{ 1}} &\leq C \left(\left\vert \theta- \pi \right\vert  + \left\vert \ttheta- \tpi \right\vert \right) \chi_{\kappa_{ 1}, \iota_{ 1}}( \tau, \ttau, \pi, \tpi),\\
\N{\cS_{ \tau, \ttau}}_{- q, \kappa_{ 1}, \iota_{ 1}}&\leq C w(\tau, \ttau, \kappa_{ 1}, \iota_{ 1}),\\
\N{\cT_{ \tau, \ttau}}_{- q, \kappa_{ 1}, \iota_{ 1}}&\leq C w(\tau, \ttau, \kappa_{ 1}, \iota_{ 1}),
\end{align*}
where $w$ is defined in \eqref{eq:def_w} and
\begin{equation}
\label{eq:chi}
\chi_{\kappa_{ 1}, \iota_{ 1}}( \tau, \ttau, \pi, \tpi):= 1+ \left\vert \theta \right\vert^{  \kappa_{ 1}} + \vert \ttheta \vert^{  \kappa_{ 1}} +\left\vert \pi \right\vert^{ \kappa_{ 1}} + \left\vert \tpi \right\vert^{ \kappa_{ 1}} + \left\vert \omega \right\vert^{  \iota_{ 1}} + \vert \tomega \vert^{  \iota_{ 1}}\end{equation} and where the constant $C$ is independent of $( \tau, \ttau, \pi, \tpi)$.
\end{proposition}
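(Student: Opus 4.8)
The plan is to obtain all three bounds from the Sobolev embedding \eqref{eq:embed}, together with the fundamental theorem of calculus in the case of $\cR$. Since $q \geq P+2$ we may write $q = (q-1)+1$ with $q-1 > P$, so \eqref{eq:embed} yields a continuous embedding $\bW^{\kappa_1, \iota_1}_q \hookrightarrow \cC^{\kappa_1, \iota_1}_1$ whose operator norm depends only on $P$, $\kappa_1$ and $\iota_1$, hence is independent of the points $\tau, \ttau, \pi, \tpi$. Unwinding the definition \eqref{eq:normsupj} of the $\cC^{\kappa_1,\iota_1}_1$-norm, this means there is a constant $C$ such that, for every $g \in \bW^{\kappa_1,\iota_1}_q$, every $(\tau, \ttau) \in (\bbX\times\bbY\times\bbS)^2$ and every first-order differential operator $D$ in the variables $(\theta, \ttheta)$,
\begin{equation*}
|g(\tau, \ttau)| \leq C\, \N{g}_{q, \kappa_1, \iota_1}\, w(\tau, \ttau, \kappa_1, \iota_1), \qquad |Dg(\tau, \ttau)| \leq C\, \N{g}_{q, \kappa_1, \iota_1}\, w(\tau, \ttau, \kappa_1, \iota_1),
\end{equation*}
with $w$ the weight \eqref{eq:def_w}.

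The estimates for $\cS$ and $\cT$ then follow at once: $|\cS_{\tau, \ttau}(g)| = |g(\tau, \ttau)|$ is controlled by the first inequality, and $\cT_{\tau, \ttau}(g) = \div_\theta g(\tau, \ttau) + \div_\ttheta g(\tau, \ttau)$ is a sum of $2m$ first-order derivatives of $g$ at $(\tau, \ttau)$, each controlled by the second inequality. Passing to the supremum over $g$ with $\N{g}_{q, \kappa_1, \iota_1} \leq 1$ gives $\N{\cS_{\tau, \ttau}}_{-q, \kappa_1, \iota_1} \leq C\, w(\tau, \ttau, \kappa_1, \iota_1)$ and $\N{\cT_{\tau, \ttau}}_{-q, \kappa_1, \iota_1} \leq C\, w(\tau, \ttau, \kappa_1, \iota_1)$.

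For $\cR$ I would interpolate only in the two variables that are modified. Writing $u_s := (1-s)\pi + s\theta$ and $\tu_s := (1-s)\tpi + s\ttheta$ for $s\in[0,1]$ and applying the fundamental theorem of calculus along the broken line from $(\theta, \ttheta)$ to $(\pi, \ttheta)$ and then to $(\pi, \tpi)$,
\begin{align*}
\cR_{\tau, \ttau, \pi, \tpi}(g) &= \int_0^1 \nabla_\theta g(u_s, \omega, x, \ttheta, \tomega, \tx)\cdot(\theta - \pi)\dd s \\
&\qquad + \int_0^1 \nabla_\ttheta g(\pi, \omega, x, \tu_s, \tomega, \tx)\cdot(\ttheta - \tpi)\dd s.
\end{align*}
Each integrand is bounded by $C\, \N{g}_{q, \kappa_1, \iota_1}$ times the value of $w$ at the running point, and since $u_s, \tu_s$ are convex combinations of the endpoints one has $|u_s| \leq |\theta| + |\pi|$ and $|\tu_s| \leq |\ttheta| + |\tpi|$ for all $s$. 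Using $(a+b)^{\kappa_1} \leq C(a^{\kappa_1} + b^{\kappa_1})$ and the fact that only the $\theta$-factor of the product weight $w$ is affected by the interpolation, the value of $w$ at the running point is dominated, uniformly in $s$, by a constant (independent of the four points) times the polynomial $\chi_{\kappa_1, \iota_1}(\tau, \ttau, \pi, \tpi)$ of \eqref{eq:chi}. Collecting the two contributions,
\begin{equation*}
|\cR_{\tau, \ttau, \pi, \tpi}(g)| \leq C\, (|\theta - \pi| + |\ttheta - \tpi|)\, \chi_{\kappa_1, \iota_1}(\tau, \ttau, \pi, \tpi)\, \N{g}_{q, \kappa_1, \iota_1},
\end{equation*}
which is the asserted bound on $\N{\cR_{\tau, \ttau, \pi, \tpi}}_{-q, \kappa_1, \iota_1}$.

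The argument is essentially mechanical once \eqref{eq:embed} is available; the only step requiring real care — and hence the main (mild) obstacle — is the weight bookkeeping for $\cR$: one must verify that $w$ evaluated at the interpolation point is controlled, uniformly in $s \in [0,1]$, by a fixed constant times $\chi_{\kappa_1, \iota_1}(\tau, \ttau, \pi, \tpi)$ with a constant that does not depend on $\tau, \ttau, \pi, \tpi$, so that the constant $C$ in the final estimate is genuinely uniform in the four points.
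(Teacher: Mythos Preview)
Your proof is correct and follows essentially the same approach as the paper: both use the Sobolev embedding \eqref{eq:embed} into $\cC^{\kappa_1,\iota_1}_1$ together with a mean-value/fundamental-theorem-of-calculus argument for $\cR$, and direct pointwise bounds for $\cS$ and $\cT$. Your version is in fact more carefully written---you make the interpolation path and the weight bookkeeping explicit, whereas the paper's proof contains a few index typos and leaves $\cS$, $\cT$ to the reader.
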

\begin{proof}[Proof of Proposition~\ref{prop:linearforms}]
We prove the first estimate of Proposition~\ref{prop:linearforms}. Fix $g$ regular, with compact support in $(\theta, \omega, \ttheta, \tomega)$. 
\begin{align*}
\left\vert \cR_{ \tau, \ttau, \pi, \tpi}(g) \right\vert &\leq \left(\left\vert \theta- \ttheta \right\vert  + \left\vert \pi- \tpi \right\vert \right) \sup_{u, v} \left\vert \sum_{r=1}^{m} \left(\partial_{u^{(r)}} g(u, \omega, x, v, \tomega, \tx) + \partial_{v^{(r)}} g(u, \omega, x, v, \tomega, \tx)\right)  \right\vert,\\
&\leq C\left(\left\vert \theta- \ttheta \right\vert + \left\vert \pi- \tpi \right\vert\right)\chi_{ \kappa_{ 1}, \iota_{ 1}}( \tau, \ttau, \pi, \tpi) \N{g}_{C^{ \kappa_{ 1}, \iota_{ 1}}_{ 1}},\\
&\leq C\left( \left\vert \theta- \ttheta \right\vert + \left\vert \pi- \tpi \right\vert\right)\chi_{ \kappa_{ 1}, \iota_{ 1}}( \tau, \ttau, \pi, \tpi) \N{g}_{q, \kappa_{ 1}, \iota_{ 1}},\ \text{(by \eqref{eq:embed})}
\end{align*}
where the supremum in the first inequality is taken over $ \left\vert u \right\vert \leq \left\vert \theta \right\vert + \left\vert \ttheta \right\vert$ and $ \left\vert v \right\vert\leq \left\vert \pi \right\vert + \left\vert \tpi \right\vert$ and where the integer $P$ is defined in \eqref{eq:P}. The result follows by a density argument. The proofs of the estimates on $\cS$ and $\cT$ are similar and left to the reader.
\end{proof}
\begin{proposition}
\label{prop:contU}
For any $q\geq P+2$, the linear forms of Definition~\ref{def:linearforms_x} are continuous on $\bW^{ \kappa_{ 1}, \iota_{ 1}}_{q}$, in the sense that
\begin{align}
\N{\cU}_{ -q, \kappa_{ 1}, \iota_{ 1}}&\leq C\left(1+ \left\vert \theta \right\vert^{ \kappa_{ 1}} + \left\vert \omega \right\vert^{\iota_{ 1}}\right) \left(\sup_{ x\in \Delta}\left\vert a-x \right\vert \right)\int_{ \Delta} \Psi(b, \tx)  \dd \tx,\label{eq:contU}\\
\N{\cV}_{ -q, \kappa_{ 1}, \iota_{ 1}}&\leq \frac{C}{ \left\vert \Lambda_{ N} \right\vert} \sum_{ k\in \Lambda_{ N}}w( \tau_{ k}, \tau_{ i}, \kappa_{ 1}, \iota_{ 1}) \Psi(x_{ k}, x_{ i}),\label{eq:contV}\\
\N{\cW}_{ -q, \kappa_{ 1}, \iota_{ 1}}&\leq C \int w( \ttau, \tau_{ i}, \kappa_{ 1}, \iota_{ 1}) \xi( \dd \ttheta, \dd \tomega).\label{eq:contW}
\end{align}
\end{proposition}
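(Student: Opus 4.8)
The plan is to follow the same route as in Proposition~\ref{prop:linearforms}, exploiting that each of the forms $\cU$, $\cV$, $\cW$ depends on the test function $g$ only through a single first-order derivative (for $\cV$ and $\cW$) or through a first-order increment in the last coordinate (for $\cU$). The only analytic input needed is the Sobolev embedding \eqref{eq:embed}: since $q\geq P+2>P+1$, one has $\bW^{\kappa_{1}, \iota_{1}}_{q}\hookrightarrow\cC^{\kappa_{1}, \iota_{1}}_{1}$, so that for every test function $g$ and every multi-index with $|k|+|\tk|\leq1$,
\[
\big|D^{k}_{\tau}D^{\tk}_{\ttau}g(\tau, \ttau)\big|\leq C\,\N{g}_{q, \kappa_{1}, \iota_{1}}\,w(\tau, \ttau, \kappa_{1}, \iota_{1}),
\]
with $w$ the weight in \eqref{eq:def_w}. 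The key structural remark is that $w$ does not depend on the spatial variables, so the singular kernel $\Psi$ is never differentiated and survives only through the factors $\Psi(x_{k}, x_{i})$, $\int_{\Delta}\Psi(b, \tx)\dd\tx$ and $\int_{\bbS}\Psi(x_{i}, \tx)\dd\tx=\tfrac{2^{\alpha}}{1-\alpha}$ (Lemma~\ref{lem:fluct_Psi}), all finite because $\alpha\in[0, 1)$.

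With this in hand the three bounds come out essentially in one line each. For $\cV$ I would apply the triangle inequality to the finite sum over $k\in\Lambda_{N}$ together with the pointwise bound on $\partial_{\theta^{(r)}}g$, then take the supremum over $\N{g}_{q, \kappa_{1}, \iota_{1}}\leq1$; this is exactly \eqref{eq:contV}. For $\cW$, the same pointwise bound plus the fact that $w(\ttau, \tau_{i}, \kappa_{1}, \iota_{1})$ is independent of $\tx$ lets the $\tx$-integral factor off as $\int_{\bbS}\Psi(x_{i}, \tx)\dd\tx$, giving \eqref{eq:contW}. For $\cU$, I would write the increment $g(\tau, \ttheta, \tomega, a)-g(\tau, \ttheta, \tomega, \tx)$ as the integral of $\partial_{\tx}g$ along the shortest arc from $\tx$ to $a$, bound its modulus by $d(a, \tx)$ times the pointwise bound on $\partial_{\tx}g$ (again $w$ ignores the last variable), pull $\sup_{x\in\Delta}|a-x|$ and $\int_{\Delta}\Psi(b, \tx)\dd\tx$ outside, and finally integrate the leftover weight against $\xi$: using the moment assumptions of Section~\ref{sec:assumptions} (and Proposition~\ref{prop:decomp_nu} for the measures $\xi=\xi_{s}$ actually used), $\int(1+\vert\ttheta\vert^{\kappa_{1}})(1+\vert\tomega\vert^{\iota_{1}})\,\xi(\dd\ttheta, \dd\tomega)$ is a finite constant, which absorbed into $C$ produces \eqref{eq:contU}.

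I do not expect any real difficulty here: the proof is a routine Sobolev-embedding-plus-Minkowski estimate, structurally identical to Proposition~\ref{prop:linearforms}. The only point deserving attention is the bookkeeping, namely checking throughout that the singular kernel $\Psi$ stays multiplied against the smooth, Sobolev-controlled quantities and is never attached to a derivative of $g$, and that each surviving $\Psi$-integral (over $\Delta$ or over $\bbS$) is finite, which is precisely guaranteed by $\alpha<1$. This separation is, of course, the whole reason for introducing the two-particle process $\cH_{N}$ and for phrasing these continuity estimates in terms of the linear forms $\cU$, $\cV$, $\cW$.
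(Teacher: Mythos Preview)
Your proposal is correct and follows essentially the same route as the paper: bound the increment in $\cU$ by a Lipschitz estimate in the $\tx$-variable via the $\cC^{\kappa_{1},\iota_{1}}_{1}$-norm, invoke the Sobolev embedding \eqref{eq:embed} to pass to $\N{g}_{q,\kappa_{1},\iota_{1}}$, and absorb the $\xi$-integral of the weight via the moment hypotheses; the paper then simply declares the proofs of \eqref{eq:contV} and \eqref{eq:contW} analogous and leaves them to the reader, exactly as you outline.
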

\begin{proof}[Proof of Proposition~ \ref{prop:contU}]
Using the moment estimates of Proposition~\ref{prop:moment_particles},
\begin{align*}
\left\vert \cU(g) \right\vert &\leq \int_{ \Delta} \Psi(b, \tx) \left\vert \int \left(g(\tau, \ttheta, \tomega, a) - g(\tau, \ttheta, \tomega, \tx)\right) \xi(\dd \ttheta, \dd\tomega) \right\vert \dd \tx\\
&\leq C \N{g}_{C^{ \kappa_{ 1}, \iota_{ 1}}_{ 1}} \int w( \tau, \ttau, \kappa_{ 1}, \iota_{ 1}) \xi(\dd \ttheta, \dd\tomega) \int_{ \Delta} \Psi(b, \tx) \left\vert a- \tx \right\vert   \dd \tx,\\
&\leq C \N{g}_{ q, \kappa_{ 1}, \iota_{ 1}}\left(1+ \left\vert \theta \right\vert^{ \kappa_{ 1}} + \left\vert \omega \right\vert^{ \iota_{ 1}}\right)\left(\sup_{ x\in \Delta} \left\vert x-a \right\vert\right)\int_{ \Delta} \Psi(b, \tx) \dd \tx.
\end{align*}
The proof for $\cV$ and $\cW$ are similar and left to the reader.
\end{proof}

\subsection{Tightness criterion}
\label{sec:tightness_crit}
We prove tightness for both fluctuation processes $ \eta_{ N}$ \eqref{eq:fluct} and $ \cH_{ N}$ \eqref{eq:secorderfluc} in suitable $\bV$ and $\bW$ defined in Section~\ref{sec:Hilbert_spaces}. We recall the tightness criterion used here (\cite{JoffeMetivier1986}, p.35): a sequence of $( \Omega_{ N}, \cF_{ N, t})$-adapted processes $(Y_{ N})_{ N\geq 1}$ with paths in $\cC([0, T], H)$ where $H$ is an Hilbert space is tight if both conditions hold:
\begin{enumerate}
\item There exists an Hilbert space $H_{ 0}$ such that $ H_{ 0}\hookrightarrow H$ (with Hilbert-Schmidt injection) and such that for all $t \leq T$, 
\begin{equation}
\label{eq:tight1}
\sup_{ N} \bE \left( \N{Y_{ N, t}}_{ H_{ 0}}^{ 2}\right)< +\infty,
\end{equation}
\item Aldous condition: for every $ \varepsilon_{ 1}, \varepsilon_{ 2}>0$, there exists $ s_{ 0}>0$ and an integer $N_{ 0}$ such that for every $(\cF_{ N, t})$-stopping time $ T_{ N}\leq T$,
\begin{equation}
\label{eq:tight2}
\sup_{ N\geq } \sup_{ s\leq s_{ 0}} \bP( \N{Y_{ N, \tau_{ N}} - Y_{ N, \tau_{ N} + s}}_{ H} \geq \varepsilon_{ 1})\leq \varepsilon_{ 2}.
\end{equation}
\end{enumerate}

\subsection{Tightness of the two-particle fluctuation process}
\label{sec:tightness_two_particle}
\subsubsection{Boundedness of the fluctuation process}
\label{sec:boundedness_two_particle}
\begin{proposition}
\label{prop:etaNbounded1}
Under the assumptions made in Section~\ref{sec:assumptions}, for any $q\geq P+2$,
\begin{equation}
\label{eq:etaNbounded1}
\sup_{1\leq N} \sup_{t\leq T} \bE \left(\N{\cH_{ N,t}}_{-q, \kappa_{ 1}, \iota_{ 1}}^{4}\right) <+ \infty.
\end{equation}
\end{proposition}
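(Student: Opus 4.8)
The plan is to reduce \eqref{eq:etaNbounded1} to a bound on $\cro{\cH_{N,t}}{g}$ for a single fixed test function $g$, summed over an orthonormal basis, and to prove that bound by a finite decomposition of $\cH_{N,t}$ anchored on the nonlinear‑process coupling. Fix $q\geq P+2$ and let $(\phi_\ell)_{\ell\geq1}$ be a complete orthonormal system of $\bW^{\kappa_1,\iota_1}_q$; then $\N{\cH_{N,t}}_{-q,\kappa_1,\iota_1}^2=\sum_\ell\cro{\cH_{N,t}}{\phi_\ell}^2$, and by Minkowski's inequality in $L^2(\bbP)$,
\[
\bE\big(\N{\cH_{N,t}}_{-q,\kappa_1,\iota_1}^4\big)\leq\Big(\sum_\ell\bE\big(\cro{\cH_{N,t}}{\phi_\ell}^4\big)^{1/2}\Big)^2 .
\]
So it is enough to bound $\bE\big(\cro{\cH_{N,t}}{g}^4\big)$ by a constant (uniform in $N$ and $t\leq T$) times a Sobolev norm of $g$ weak enough for this series to converge; the $\gamma$ extra derivatives and weight built into $(\kappa_1,\iota_1)=(\kappa_0+\gamma,\iota_0+\gamma)$, together with the Hilbert--Schmidt embeddings \eqref{eq:comp_embed}--\eqref{eq:comp_embed_dual} (valid for $\gamma>P$, cf.\ the remark after Definition~\ref{def:kappas}), are what provides this room, exactly as in \cite{Fernandez1997,Lucon2011}.

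For the per‑test‑function estimate I would decompose $\cro{\cH_{N,t}}{g}$, starting from \eqref{eq:HN_g}, into a bounded number of terms. First, replace each true particle $\theta_{i,t}$ by its nonlinear copy $\btheta_{i,t}$ from \eqref{eq:nonlinproc}; the error is controlled, via the linear form $\cR$ of Definition~\ref{def:linearforms_theta} and Proposition~\ref{prop:linearforms}, by a polynomially weighted multiple of $|\theta_{i,t}-\btheta_{i,t}|+|\theta_{j,t}-\btheta_{j,t}|$, so that the prefactor $a_N$ is compensated by the $a_N^{-8}$ gain of Proposition~\ref{prop:ttaVSnonlin} (after Hölder and the particle moment bounds of Proposition~\ref{prop:moment_particles}), and this contribution is $O(1)$. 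On the ``nonlinear'' remainder one uses that $(\btheta_{i,t},\omega_i)_{i\in\Lambda_N}$ are i.i.d.\ with law $\xi_t$ and that the $x_i$ are deterministic (recall $\nu_t=\xi_t\otimes\dd x$ and Remark~\ref{rem:Psix}): centering $g$ successively over $\btheta_{j,t}$ (given $\omega_j$) and over $\omega_j$, the two conditionally centered sums over $j$ are, conditionally on the remaining variables, sums of independent mean‑zero terms whose fourth moment is controlled — through $\cS,\cV,\cW$ of Definitions~\ref{def:linearforms_theta}--\ref{def:linearforms_x} and particle moments — by powers of $a_N^2\,|\Lambda_N|^{-2}\sum_{j}\Psi(x_i,x_j)^2$. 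The piece left after these centerings is the purely deterministic discrepancy between the Riemann sum $|\Lambda_N|^{-1}\sum_j\Psi(x_i,x_j)\,\bar g(\btheta_{i,t},\omega_i,x_i,x_j)$ and its integral $\int_{\bbS}\Psi(x_i,\tx)\,\bar g(\btheta_{i,t},\omega_i,x_i,\tx)\dd\tx$, with $\bar g(\theta,\omega,x,\tx):=\int g(\theta,\omega,x,\ttheta,\tomega,\tx)\,\xi_t(\dd\ttheta,\dd\tomega)$; decomposing it into localized increments handled by $\cU$ (Definition~\ref{def:linearforms_x}) and Lemma~\ref{lem:Psi}, together with the sharp Riemann estimate of Lemma~\ref{lem:fluct_Psi}, it is of size $a_N\,N^{\alpha-1}$.

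The crux — and the source of the phase transition at $\alpha=\tfrac12$ — is the way these two quantities scale against the singularity of $\Psi$. Lemma~\ref{lem:Riem} with $\beta=2\alpha$ gives $|\Lambda_N|^{-1}\sum_{j}\Psi(x_i,x_j)^2\leq C$ when $\alpha<\tfrac12$, $\leq C\ln N$ when $\alpha=\tfrac12$, and $\leq CN^{2\alpha-1}$ when $\alpha>\tfrac12$; multiplying by $a_N^2/|\Lambda_N|$ yields $O(1)$ precisely for $a_N=\sqrt N$ in the first regime and $a_N=N^{1-\alpha}$ in the third (with a $\sqrt{\ln N}$ loss at $\alpha=\tfrac12$, which is the origin of the $\sqrt N/\ln N$ caveat of Section~\ref{sec:fluct_crit}), and similarly $a_N\,N^{\alpha-1}=O(1)$ in both regimes by the very definition of $a_N$ in \eqref{eq:aN}. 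I expect the remaining work to be bookkeeping rather than conceptual: one must propagate the polynomial weights through every term so that all powers of $|\theta_{i,t}|$, $|\btheta_{i,t}|$ and $|\omega_i|$ stay within the moment range guaranteed by \eqref{eq:moments_mu_xi0} and Proposition~\ref{prop:moment_particles}, and one must keep each continuity constant produced by Propositions~\ref{prop:linearforms}--\ref{prop:contU} in a norm compatible with the Hilbert--Schmidt reduction of the first step. This is routine but lengthy, and the scheme is entirely parallel, modulo the spatial kernel, to the corresponding tightness estimate in \cite{Fernandez1997}.
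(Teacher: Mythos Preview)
Your decomposition into a coupling error (handled by $\cR$ and Proposition~\ref{prop:ttaVSnonlin}), a centered fluctuation over the i.i.d.\ nonlinear copies, and a spatial Riemann-sum remainder is exactly what the paper does (its $\cI_N$, $\cJ_N=J_N^{(1)}+J_N^{(2)}$, $\cK_N$ in Section~\ref{sec:proof_etaNbounded1}), and your scaling discussion via Lemmas~\ref{lem:Riem} and~\ref{lem:fluct_Psi} is on the mark.

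The gap is your very first step. The Minkowski reduction $\bE\big(\N{\cH_{N,t}}_{-q}^4\big)\leq\big(\sum_\ell\bE(\cro{\cH_{N,t}}{\phi_\ell}^4)^{1/2}\big)^2$ requires a per–test‑function bound $\bE(\cro{\cH_{N,t}}{g}^4)\leq C\N{g}_\ast^4$ in a norm $\N{\cdot}_\ast$ for which $\bW^{\kappa_1,\iota_1}_q\hookrightarrow(\ast)$ is Hilbert--Schmidt. But the continuity estimates of Propositions~\ref{prop:linearforms}--\ref{prop:contU} deliver the bound in $\N{g}_{q',\kappa_1,\iota_1}$ with the \emph{same} weight indices $(\kappa_1,\iota_1)$; by \eqref{eq:comp_embed} a Hilbert--Schmidt embedding out of $\bW^{\kappa_1,\iota_1}_q$ needs the target to have \emph{strictly larger} weight indices $\kappa_1+k$, $\iota_1+k$ with $k>P$, and then the resulting $\chi_{\kappa_1+k,\iota_1+k}$ factors would demand moments beyond the $8\bar\kappa$, $8\bar\iota$ granted in \eqref{eq:moments_mu_xi0}. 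The ``$\gamma$ extra derivatives and weight'' you invoke point the wrong way: $\gamma$ is the weight gap that makes $\bW^{\kappa_1,\iota_1}_{-q}\hookrightarrow\bW^{\kappa_0,\iota_0}_{-(q+P+2)}$ Hilbert--Schmidt (used later, e.g.\ in Proposition~\ref{prop:HN_bounded_2}), not a gap available \emph{inside} $\bW^{\kappa_1,\iota_1}_q$.

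The paper's fix is to avoid decoupling the basis. One expands $\N{\cH_{N,t}}_{-q}^4=\big(\sum_p\cro{\cH_{N,t}}{g_p}^2\big)^2$ \emph{first}, carries the sum over $p$ through the decomposition, and only then applies Parseval in the form $\sum_p\cR_{i,j}(g_p)\cR_{k,l}(g_p)\leq\N{\cR_{i,j}}_{-q}\N{\cR_{k,l}}_{-q}$ (and likewise with $\cS$, $\cU$). This turns the basis sum into the dual norms of the linear forms directly, which are finite by Propositions~\ref{prop:linearforms}--\ref{prop:contU}; no auxiliary Hilbert--Schmidt embedding is needed at this stage. For the centered piece $J_N^{(1)}$ the paper then expands the fourth power of the dual norm into a sum over eight particle indices and two basis indices $r,s$, and uses the cancellation $\bE(A_{i,j}\,|\,\bar\tau_r,r\neq j)=0$ together with a small case analysis (the diagrams of Figure~\ref{fig:cancell_cases}) to show the surviving contributions scale as $(a_N^2/\lvert\Lambda_N\rvert)^2$. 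Once you replace your Minkowski step by this Parseval-inside-the-sum device, the rest of your outline goes through essentially as written.
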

\begin{remark}
We deduce from \eqref{eq:etaNbounded1} that $\sup_{1\leq N, t\leq T} \bE \left(\N{\cH_{ N,t}}_{-q, \kappa_{ 1}, \iota_{ 1}}^{2}\right) <+ \infty$, which is the main estimate that we use in the following. A fourth moment estimate is only needed in Proposition~\ref{prop:UN_VN}.
\end{remark}
Proof of Proposition~\ref{prop:etaNbounded1} is given in Section~\ref{sec:proof_etaNbounded1}.

\begin{proposition}
\label{prop:continuityL}
Under the assumptions of Section~\ref{sec:assumptions}, the linear operator $\mathscr{L}_{s}$ defined in \eqref{eq:def_opL} is continuous from $\bW_{q+P+2}^{ \kappa_{ 0}, \iota_{ 0}}$ to $\bW_{ q}^{ \kappa_{ 1}, \iota_{ 1}}$: there exists some constant $C>0$, such that for all $g\in \bW_{ q+P+2}^{ \kappa_{ 0}, \iota_{ 0}}$,
\begin{equation}
\label{eq:continuityL}
\N{\mathscr{L}_{ s}g}_{q, \kappa_{ 1}, \iota_{ 1}}\leq C \N{g}_{ q+P+2, \kappa_{ 0}, \iota_{ 0}}.
\end{equation}
\end{proposition}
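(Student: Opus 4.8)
The plan is to estimate each of the three contributions to $\mathscr{L}_s g = \mathscr{L}_s^{(1)}g + \mathscr{L}_s^{(2)}g$ (see \eqref{eq:def_opL1}--\eqref{eq:def_opL2}) separately in the $\N{\cdot}_{q, \kappa_1, \iota_1}$-norm, bounding each in terms of $\N{g}_{q+P+2, \kappa_0, \iota_0}$. The guiding principle is that differentiating $\mathscr{L}_s g$ up to order $q$ produces derivatives of $g$ of order up to $q+2$ (because of the $\Delta_{\theta,\ttheta}$ term), multiplied by derivatives of the coefficients $c$ and $\Gamma$ and by the smoothing factor $[\Gamma\Psi, \nu_s]$; the loss of $P+2$ derivatives in the statement is the margin that lets us pass, via the Sobolev embedding \eqref{eq:embed} (applicable since $q+P+2 > P$ gives $q \geq P+2$), from an $L^2$-type bound to a sup-norm bound on the coefficient-dependent pieces and then back to $L^2$.

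First I would handle $\mathscr{L}_s^{(1)}g$. The Laplacian term $\frac12\Delta_{\theta,\ttheta}g$ is bounded directly: $\N{\Delta_{\theta,\ttheta}g}_{q,\kappa_0,\iota_0} \leq C\N{g}_{q+2,\kappa_0,\iota_0}$, and since the weight $w(\cdot,\cdot,\kappa_0,\iota_0)$ dominates for $\kappa_1 \geq \kappa_0$, also $\N{\Delta_{\theta,\ttheta}g}_{q,\kappa_1,\iota_1} \leq C\N{g}_{q+2,\kappa_0,\iota_0}$. For the first-order terms $\nabla_\theta g \cdot\{c(\theta,\omega) + [\Gamma\Psi,\nu_s](\tau)\}$ (and the symmetric one in $\ttheta$), I would use the Leibniz rule: each derivative $D_\tau^k D_{\ttau}^{\tk}$ of the product falls either on $\nabla_\theta g$, producing a term of order $\leq q+1$ in $g$, or on the coefficient. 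The key points are (i) $c$ and its derivatives up to order $3P+9$ are bounded in $\theta$ with polynomial bound $1+|\omega|^\iota$ in $\omega$ by \eqref{eq:polgrowthc}, which is absorbed by the extra $|\omega|^{\iota_1}$ with $\iota_1 = \iota_0+\gamma \geq \iota$; (ii) the derivatives of $x \mapsto [\Gamma\Psi,\nu_s](\tau)$ are bounded: by Remark~\ref{rem:Psix} this function is actually independent of $x$, and its $(\theta,\omega)$-derivatives are controlled by $\sum_{|k|\leq 2P+3}\sup\int|D_{\theta,\omega}^k\Gamma|\,\mathrm d\bar\theta$ together with the bound \eqref{eq:est_pt} on the density $p_s$ (uniformly for $s \leq T$, using $0<\alpha_0<\tfrac12$ so that $\int_0^T s^{-\alpha_0}\mathrm d s < \infty$ is not even needed pointwise since we only need $\sup_s$), again with polynomial growth in $\omega$. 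The weight $w(\tau,\ttau,\kappa_1,\iota_1)$ in the denominator of $\N{\mathscr{L}_s^{(1)}g}_{q,\kappa_1,\iota_1}$ absorbs all these polynomial factors, reducing everything to $\N{g}_{q+2,\kappa_0,\iota_0} \leq \N{g}_{q+P+2,\kappa_0,\iota_0}$.

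Next I would treat $\mathscr{L}_s^{(2)}g(\tau,\ttau) = \cro{\nu_s}{\Psi(\cdot,x)\nabla_{\ttheta}g(\cdot,\tau)}\cdot\Gamma(\theta,\omega,\ttheta,\tomega)$. The crucial observation is that $x \mapsto \cro{\nu_s}{\Psi(\cdot,x)\nabla_{\ttheta}g(\cdot,\tau)} = \int \Psi(\tx,x)\,p_s(\vartheta,\varpi)\,\nabla_{\ttheta}g(\vartheta,\varpi,\tx,\tau)\,\mathrm d\vartheta\,\mu(\mathrm d\varpi)\,\mathrm d\tx$ is a \emph{smooth} function of $x$ despite the singularity of $\Psi$, because integrating $\Psi(\tx,x)$ against $\mathrm d\tx$ over $\bbS$ regularizes it; more precisely, by the rotational invariance used in Lemma~\ref{lem:fluct_Psi}, $\int_{\bbS}\Psi(\tx,x)\,h(\tx)\,\mathrm d\tx$ and its $x$-derivatives are bounded by $C\N{h}_{\cC^1}$ (one integrates by parts transferring the $x$-derivative onto $h$, using that $\Psi$ is integrable). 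Hence $x$-derivatives of $\mathscr{L}_s^{(2)}g$ cost $x$-derivatives of $g$, while $(\theta,\omega)$-derivatives fall on $\Gamma$ (bounded with bounded derivatives) or pass through the integral onto $\nabla_{\ttheta}g$; in all cases we lose at most $P+2$ orders and pick up polynomial-in-$\omega$ factors controlled by the bound \eqref{eq:est_pt} on $p_s$ and absorbed by the weight. Combining the three estimates gives \eqref{eq:continuityL}.

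The main obstacle is the term $\mathscr{L}_s^{(2)}g$: one must make precise, with uniform-in-$s$ constants, that the spatial integration against $\Psi$ genuinely recovers the derivatives lost to the singular kernel, i.e. that $\sup_{s\leq T}$ of the relevant $\cC^{l,l'}$-norms of $x \mapsto \cro{\nu_s}{\Psi(\cdot,x)\,\cdot}$ is finite. This is where the a priori density estimates \eqref{eq:est_pt}, the integrability of $\Psi$ over $\bbS$ (Remark~\ref{rem:Psi_bounded} and the computation in Lemma~\ref{lem:fluct_Psi}), and the moment bounds on $\nu_s$ from Proposition~\ref{prop:decomp_nu} all enter; once these are in place the rest is a bookkeeping exercise with the Leibniz rule and the embedding \eqref{eq:embed}.
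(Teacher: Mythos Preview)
Your approach is correct and matches the paper's: decompose $\mathscr{L}_s$ into its constituent terms, absorb the polynomial growth of the coefficients into the gap between the weights $(\kappa_0,\iota_0)$ and $(\kappa_1,\iota_1)$, and use the Sobolev embedding \eqref{eq:embed} to trade the extra $P+2$ derivatives for a sup-norm bound on $g$. Two small refinements relative to the paper: for $\mathscr{L}_s^{(2)}$, the paper transfers the $x$-derivative onto $g$ not by integration by parts (which is delicate for the singular $\Psi$) but by the change of variables $y \mapsto z = y - x$, rewriting the inner integral as $\int_\bbS \rho(z)\,\nabla_{\ttheta}g(\pi,\phi,z+x,\theta,\omega,x)\,\dd z$ with $\rho \in L^1(\bbS)$, after which $x$-differentiation is trivial; and the density estimate \eqref{eq:est_pt} is not invoked here---only the finiteness of the moments of $\xi_s$ (Proposition~\ref{prop:decomp_nu}) is used to bound the integrated quantities.
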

\begin{proof}[Proof of Proposition~ \ref{prop:continuityL}]
The first term $ \frac{ 1}{ 2} \Delta_{ \theta, \ttheta} g$ in $\mathscr{L}_{ s}g$ clearly satisfies \eqref{eq:continuityL}. We concentrate on the second term $(\tau, \ttau) \mapsto \nabla_{ \theta}g( \tau, \ttau)\cdot c( \theta, \omega)$: since $ \N{\nabla_{ \theta}g( \tau, \ttau) c( \theta, \omega)}_{ q,\kappa_{ 1}, \iota_{ 1}}^{ 2}=  \sum_{ \left\vert k \right\vert + \left\vert \tk \right\vert\leq q} \int  \frac{ \left\vert D_{\tau}^{k}D_{ \ttau}^{ \tk} \left(\nabla_{ \theta}g(\tau, \ttau) c(\theta, \omega)\right) \right\vert^{2}}{w( \tau, \ttau, \kappa_{ 1}, \iota_{ 1})^{ 2}}\dd\tau \dd \ttau$, it suffices to estimate the quantity \[\int  \frac{ \left\vert D_{ 1} \nabla_{ \theta}g(\tau, \ttau)\right\vert^{ 2} \left\vert D_{ 2}c(\theta, \omega) \right\vert^{2}}{ w( \tau, \ttau, \kappa_{ 1}, \iota_{ 1})^{ 2}}\dd\tau \dd \ttau,\] for all differential operators $D_{ 1}$ and $D_{ 2}$ of order smaller or equal than $q$. Namely,
\begin{align}
\int  \frac{ \left\vert D_{ 1} \nabla_{ \theta}g(\tau, \ttau)\right\vert^{ 2} \left\vert D_{ 2}c(\theta, \omega) \right\vert^{2}}{ w( \tau, \ttau, \kappa_{ 1}, \iota_{ 1})^{ 2}}\dd\tau \dd \ttau&= \int \frac{  \left\vert D_{ 1} \nabla_{ \theta}g(\tau, \ttau)\right\vert^{ 2} }{ w(\tau, \ttau, \kappa_{ 0}, \iota_{ 0})^{ 2}} \frac{\left\vert D_{ 2}c(\theta, \omega) \right\vert^{2}w(\tau, \ttau, \kappa_{ 0}, \iota_{ 0})^{ 2}}{ w(\tau, \ttau, \kappa_{ 1}, \iota_{ 1})^{ 2}}\dd\tau \dd \ttau,\nonumber\\
&\leq \N{g}_{C^{ \kappa_{ 0}, \iota_{ 0}}_{ q+1}}^{ 2} \int \frac{\left\vert D_{ 2}c(\theta, \omega) \right\vert^{2}w(\tau, \ttau, \kappa_{ 0}, \iota_{ 0})^{ 2}}{ w(\tau, \ttau, \kappa_{ 1}, \iota_{ 1})^{ 2}}\dd\tau \dd \ttau, \label{aux:D2c}\\
&\leq C \N{g}_{ q+P+2, \kappa_{ 0}, \iota_{ 0}}^{ 2},\nonumber
\end{align}
thanks to the embedding \eqref{eq:embed} and by assumption on $c$ (recall \eqref{eq:cgrowthcond}). Note that the definition of $ \gamma$ in \eqref{eq:def_beta} ensures that the integral in \eqref{aux:D2c} is indeed finite. The third term in \eqref{eq:def_opL}, $( \tau, \ttau) \mapsto \nabla_{  \theta} g( \tau, \ttau)\cdot  \left[ \Gamma \Psi, \nu_{ s}\right] (\tau)$ can be treated in the exact same way, observing that $\left[ \Gamma \Psi, \nu_{ s}\right] (\tau)$ is actually independent of the space variable $x$ (see~\eqref{eq:GamPsix}). It remains to treat the last term $( \tau, \ttau) \mapsto \cro{ \nu_{s}}{ \Psi(\cdot, x)\nabla_{ \ttheta}g(\cdot, \tau)} \cdot\Gamma( \theta, \omega, \ttheta, \tomega)$. For all $ \tau= (\theta, \omega, x)$, 
\begin{align*}
 \cro{ \nu_{s}}{ \Psi(\cdot, x)\nabla_{ \ttheta}g(\cdot, \tau)} &=\int_{ \bbS} \Psi(y, x) \int_{ \bbX\times\bbY} \nabla_{ \ttheta} g( \pi, \phi, y, \theta, \omega, x) \xi_{ s}( \dd \pi, \dd \phi)\dd y,\\
 &= \int_{ \bbS} \rho(z) \int_{ \bbX\times\bbY} \nabla_{ \ttheta} g( \pi, \phi, z+x, \theta, \omega, x) \xi_{ s}( \dd \pi, \dd \phi)\dd z,
\end{align*}
where (recall \eqref{eq:dist_d} and \eqref{eq:def_psi})
\begin{equation}
\label{eq:def_rho}
\rho(z):= \min( \left\vert z \right\vert, 1 - \left\vert z \right\vert)^{ -\alpha}
\end{equation} belongs to $L^{ 1}(\bbS)$. In particular, for any differential operator $D$ (acting on $(\theta, \omega, x)$) of order smaller or equal than $q$,
\begin{align*}
\left\vert D \cro{ \nu_{s}}{ \Psi(\cdot, x)\nabla_{ \ttheta}g(\cdot, \tau)} \right\vert&=  \int_{ \bbS} \rho(z) \int_{ \bbX\times\bbY} D \left[ \nabla_{ \ttheta} g( \pi, \phi, z+x, \theta, \omega, x)\right] \xi_{ s}( \dd \pi, \dd \phi)\dd z,\\
&\leq C\N{g}_{C^{ \kappa_{ 0}, \iota_{ 0}}_{ q+1}} \N{ \rho}_{ L^{ 1}(\bbS)}\int_{ \bbX\times\bbY} \left(1+ \left\vert \pi \right\vert^{ \kappa_{ 0}} + \vert \theta \vert^{ \kappa_{ 0}}\right) \left(1+ \left\vert \phi \right\vert^{ \iota_{ 0}} + \vert \omega \vert^{ \iota_{ 0}}\right) \xi_{ s}(\dd\pi, \dd\phi),\\
&\leq C \N{g}_{ q+P+2, \kappa_{ 0}, \iota_{ 0}} \left(1 + \vert \theta \vert^{ \kappa_{ 0}} +  \vert \omega \vert^{ \iota_{ 0}}\right),
\end{align*}
since $\xi_{ s}$ has finite moments of order $ \kappa_{ 0}$ in $ \theta$ and $ \iota_{ 0}$ in $ \omega$ (recall Proposition~\ref{prop:moment_particles}). Since $ \Gamma(\cdot, \cdot)$ is supposed to be regular and bounded as well as its derivatives (recall Section~\ref{sec:assumptions}), it suffices to estimate, 
\begin{align*}
\int \frac{ \left\vert D \cro{ \nu_{s}}{ \Psi(\cdot, x)\nabla_{ \ttheta}g(\cdot, \tau)}  \right\vert^{ 2}}{ w( \tau, \ttau, \kappa_{ 1}, \iota_{ 1})^{ 2}} \dd \tau\dd \ttau &\leq C \N{g}_{ q+P+2, \kappa_{ 0}, \iota_{ 0}}^{ 2}\int \frac{  \left(1 + \vert \theta \vert^{ \kappa_{ 0}} +  \vert \omega \vert^{ \iota_{ 0}}\right)^{ 2}}{ w( \tau, \ttau, \kappa_{ 1}, \iota_{ 1})^{ 2}} \dd \tau\dd \ttau,\\
&\leq C \N{g}_{ q+P+2, \kappa_{ 0}, \iota_{ 0}}^{ 2}.
\end{align*}
This concludes the proof of Proposition~\ref{prop:continuityL}.
\end{proof}
\begin{proposition}
\label{prop:Phi_regularity}
Under the assumptions of Section~\ref{sec:assumptions}, for any $q \geq P+2$, $ \Phi$ defined in \eqref{eq:Phif} is a continuous linear operator from $\bV_{ q+P+2}^{ \kappa_{ 0}, \iota_{ 0}}$ to $ \bW_{ q}^{ \kappa_{ 1}, \iota_{ 1}}$, i.e. there exists a constant $C>0$ such that for all $f\in \bV_{ q+P+2}^{ \kappa_{ 0}, \iota_{ 0}}$
\begin{equation}
\label{eq:Phi_regularity}
\N{ \Phi[f]}_{ q, \kappa_{ 1}, \iota_{ 1}} \leq C \N{f}_{ q+P+2, \kappa_{ 0}+ \iota_{ 0}}.
\end{equation}
\end{proposition}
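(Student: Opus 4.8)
To establish \eqref{eq:Phi_regularity}, the plan is to follow the same strategy as in the proof of Proposition~\ref{prop:continuityL}, the situation being in fact slightly simpler here because $ \Gamma$ is bounded together with all its derivatives up to order $3P+9$, so that no polynomial factor in $ \omega$ enters (contrary to the term involving $c$ there). First I would expand the two-variable Sobolev norm
\[
\N{ \Phi[f]}_{ q, \kappa_{ 1}, \iota_{ 1}}^{ 2}= \sum_{ \left\vert k \right\vert + \left\vert \tk \right\vert\leq q} \int \frac{ \left\vert D_{ \tau}^{ k} D_{ \ttau}^{ \tk}\left( \nabla_{ \theta}f( \theta, \omega, x)\cdot \Gamma( \theta, \omega, \ttheta, \tomega)\right) \right\vert^{ 2}}{ w( \tau, \ttau, \kappa_{ 1}, \iota_{ 1})^{ 2}} \dd \tau \dd \ttau
\]
and apply the Leibniz rule. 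Since $ \Gamma$ depends neither on $x$ nor on $ \tx$, any partial derivative produced by $D_{ \tau}^{ k} D_{ \ttau}^{ \tk}$ which does not fall on $f$ must fall on $ \Gamma$ (and a $ \tx$-derivative simply annihilates the term); using that $ \Gamma$ and its derivatives are uniformly bounded, this yields the pointwise estimate $ \left\vert D_{ \tau}^{ k} D_{ \ttau}^{ \tk} \Phi[f]( \tau, \ttau) \right\vert\leq C \sum_{ \left\vert j \right\vert\leq q+1} \left\vert D_{ \tau}^{ j}f( \tau) \right\vert$, uniformly in $ \ttau$.

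It therefore remains to bound, for every multi-index $j$ with $ \left\vert j \right\vert\leq q+1$, the quantity $ \int \left\vert D_{ \tau}^{ j}f( \tau) \right\vert^{ 2} w( \tau, \ttau, \kappa_{ 1}, \iota_{ 1})^{ -2} \dd \tau \dd \ttau$ by $C \N{f}_{ q+P+2, \kappa_{ 0}, \iota_{ 0}}^{ 2}$. Here I would use $ \left\vert D_{ \tau}^{ j}f( \tau) \right\vert\leq \N{f}_{ C^{ \kappa_{ 0}, \iota_{ 0}}_{ q+1}} \left(1+ \left\vert \theta \right\vert^{ \kappa_{ 0}}+ \left\vert \omega \right\vert^{ \iota_{ 0}}\right)$, pull the sup-norm factor out of the integral, and invoke the Sobolev embedding \eqref{eq:embed_one_var} (which costs at most $P+1$ extra derivatives, whence the regularity budget $q+P+2$) to get $ \N{f}_{ C^{ \kappa_{ 0}, \iota_{ 0}}_{ q+1}}\leq C \N{f}_{ q+P+2, \kappa_{ 0}, \iota_{ 0}}$. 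One is then left with checking that $ \int \left(1+ \left\vert \theta \right\vert^{ \kappa_{ 0}}+ \left\vert \omega \right\vert^{ \iota_{ 0}}\right)^{ 2} w( \tau, \ttau, \kappa_{ 1}, \iota_{ 1})^{ -2} \dd \tau \dd \ttau< +\infty$; bounding the numerator by $w( \tau, \ttau, \kappa_{ 0}, \iota_{ 0})^{ 2}$ and doing the (trivial) $ \tx$-integration over $ \bbS$, this amounts to the convergence of $ \int_{ \bbX^{ 2}} \left( \frac{ 1+ \left\vert \theta \right\vert^{ \kappa_{ 0}}+ \left\vert \ttheta \right\vert^{ \kappa_{ 0}}}{ 1+ \left\vert \theta \right\vert^{ \kappa_{ 1}}+ \left\vert \ttheta \right\vert^{ \kappa_{ 1}}} \right)^{ 2} \dd \theta \dd \ttheta$ and of the analogous integral over $ \bbY^{ 2}$. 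Both converge since $ \kappa_{ 1}- \kappa_{ 0}= \iota_{ 1}- \iota_{ 0}= \gamma$ and $ \gamma\geq P+1> \max(m, n)$ by \eqref{eq:def_beta}, so that the integrands decay like $R^{ -2 \gamma}$ in the radial variable of $ \bbR^{ 2m}$ (respectively $ \bbR^{ 2n}$).

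Summing over the finitely many pairs $(k, \tk)$ with $ \left\vert k \right\vert + \left\vert \tk \right\vert\leq q$ and concluding by density of smooth compactly supported functions gives \eqref{eq:Phi_regularity}. I do not anticipate a genuine obstacle: the only delicate point is the weight bookkeeping — balancing the derivatives lost through the Sobolev embedding against the factor $ \gamma$ gained in the polynomial weights through the $ \ttau$-integration — but this is exactly the mechanism already used in Proposition~\ref{prop:continuityL} and systematically throughout Section~\ref{sec:Hilbert_spaces}, and it is if anything easier here because $ \Gamma$ contributes no growth in $ \omega$.
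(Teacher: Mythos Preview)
Your proof is correct and follows precisely the mechanism the paper has in mind: the paper's own proof consists of the single sentence ``Straightforward, since by assumption, $\Gamma$ is bounded as well as its derivatives,'' and your argument is exactly the natural elaboration of this along the lines of Proposition~\ref{prop:continuityL}. The weight bookkeeping you carry out (Sobolev embedding \eqref{eq:embed_one_var} costing $P+1$ derivatives, and the integrability of the weight ratio thanks to $\gamma>\max(m,n)$) is the intended mechanism.
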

\begin{proof}[Proof of Proposition~\ref{prop:Phi_regularity}]
Straightforward, since by assumption, $ \Gamma$ is bounded as well as its derivatives.
\end{proof}
\begin{proposition}
\label{prop:UN_VN}
For any $q\geq P+2$, the remaining terms $F_{ N}$ and $G_{ N}$ in the semimartingale decomposition \eqref{eq:semimart_HN} of $\cH_{ N}$  converge uniformly in time to $0$, as $N\to\infty$, in $\bW_{ -q}^{ \kappa_{ 1}, \iota_{ 1}}$: more precisely, there exists a constant $C>0$ such that
\begin{align}
\sup_{ t\leq T}\bE \left(\N{F_{ N, t}}^{ 2}_{ -q, \kappa_{ 1}, \iota_{ 1}}\right)&\leq \frac{ C}{ a_{ N}^{ 2}},\label{eq:UN_bounded}\\
\sup_{ t\leq T}\bE \left(\N{G_{ N, t}}_{ -q, \kappa_{ 1}, \iota_{ 1}}^{ 2}\right)&\leq \frac{ C}{ a_{ N}^{ 2}}.\label{eq:VN_bounded}
\end{align}
\end{proposition}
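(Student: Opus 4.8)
\emph{Strategy.} To prove Proposition~\ref{prop:UN_VN} I would argue as follows; I treat $F_{N}$, the case of $G_{N}$ being parallel. Reading off \eqref{eq:def_UN}, $F_{N,s}g = \cro{\cH_{N,s}}{h^{g}_{N,s}}$ with $h^{g}_{N,s}(\tau,\ttau) := \nabla_{\theta}g(\tau,\ttau)\cdot[\Gamma\Psi,\nu_{N,s}-\nu_{s}](\tau)$. This cannot be bounded by Sobolev duality against $\cH_{N,s}$, because $x\mapsto[\Gamma\Psi,\nu_{N,s}-\nu_{s}](\theta,\omega,x)$, let alone its $x$-derivatives, fails to be square-integrable as soon as $\alpha\geq\frac12$. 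I would instead substitute the explicit formula \eqref{eq:HN_g} for $\cH_{N,s}$ and pull out the $i$-dependent vector $\rho_{i}:=[\Gamma\Psi,\nu_{N,s}-\nu_{s}](\tau_{i,s})=a_{N}^{-1}\cro{\eta_{N,s}}{\Gamma(\theta_{i,s},\omega_{i},\cdot)\Psi(x_{i},\cdot)}$, getting
\begin{gather*}
F_{N,s}g \;=\; \frac{a_{N}}{|\Lambda_{N}|}\sum_{i\in\Lambda_{N}}\rho_{i}\cdot\Delta_{i}(g),\\
\Delta_{i}(g) \;:=\; \cro{\nu_{N,s}-\nu_{s}}{\Psi(x_{i},\cdot)\,\nabla_{\theta}g(\tau_{i,s},\cdot)} \;=\; a_{N}^{-1}\cro{\eta_{N,s}}{\Psi(x_{i},\cdot)\,\nabla_{\theta}g(\tau_{i,s},\cdot)}.
\end{gather*}
In this representation $\Psi$ is only ever evaluated at lattice pairs $(x_{i},x_{j})$, where $|\Lambda_{N}|^{-1}\sum_{j}\Psi(x_{i},x_{j})$ is bounded (Remark~\ref{rem:Psi_bounded}); moreover each of $\rho_{i}$ and $\Delta_{i}(g)$ visibly carries a factor $a_{N}^{-1}$, which already explains heuristically the claimed gain (the prefactor $a_{N}$ times $\rho_{i}\Delta_{i}\sim a_{N}^{-2}$).

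The technical core is the claim that, uniformly in $i\in\Lambda_{N}$, $s\leq T$ and $N$, $\bE(|\rho_{i}|^{4})\leq Ca_{N}^{-4}$ and, for $g$ in a fixed ball of a two-variable Sobolev space $\bW_{\star}$ of sufficient regularity and weights larger than $(\kappa_{1},\iota_{1})$, also $\bE(|\Delta_{i}(g)|^{4})\leq C\N{g}_{\star}^{4}a_{N}^{-4}$. These rest on the coupling with the nonlinear processes \eqref{eq:nonlinproc}: replacing $\theta_{k,s}$ by $\btheta_{k,s}$ costs at most $C\max_{k}|\theta_{k,s}-\btheta_{k,s}|$ times a bounded $\Psi$-Riemann sum, which is $O(a_{N}^{-1})$ in $L^{8}$ by Proposition~\ref{prop:ttaVSnonlin}. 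After this substitution $\rho_{i}$ (and $\Delta_{i}(g)$, with $\nabla_{\theta}g$ in place of $\Gamma$) become $\Psi(x_{i},\cdot)$-weighted averages of the random vectors $(\btheta_{k,s},\omega_{k})_{k\neq i}$, which — since $\btheta_{k,s}$ does not depend on $x_{k}$, Remark~\ref{rem:Psix} — are i.i.d.\ with law $\xi_{s}$ and independent of the data driving particle $i$; conditioned on the latter, each such average splits into a deterministic bias and a centered fluctuation. The bias is a Riemann-sum error for $\Psi$ times a smooth factor: isolating that factor's value at $x_{i}$, the leading term is $O(N^{-(1-\alpha)})$ by Lemma~\ref{lem:fluct_Psi}, and the remainder carries a Lipschitz factor vanishing where $\Psi$ blows up, hence is again $O(N^{-(1-\alpha)})\leq Ca_{N}^{-1}$. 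The fluctuation, by Rosenthal's inequality, has conditional fourth moment at most $C\big((|\Lambda_{N}|^{-2}\sum_{j}\Psi(x_{i},x_{j})^{2})^{2}+|\Lambda_{N}|^{-4}\sum_{j}\Psi(x_{i},x_{j})^{4}\big)$, and Lemma~\ref{lem:Riem} with $\beta=2\alpha$ (resp.\ $4\alpha$) gives $|\Lambda_{N}|^{-2}\sum_{j}\Psi(x_{i},x_{j})^{2}\leq Ca_{N}^{-2}$; the definition of $a_{N}$ enters here in an essential and sharp way, being exactly tuned to the two regimes $N^{-1}$, $N^{2\alpha-2}$ of Lemma~\ref{lem:Riem}. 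Polynomial factors in $(\theta_{i,s},\omega_{i})$ are absorbed via \eqref{eq:moments_mu_xi0} and Proposition~\ref{prop:moment_particles}, while the fourth-moment bound on $\cH_{N}$ (Proposition~\ref{prop:etaNbounded1}) supplies the companion $L^{4}$-control of the aggregated $\cH_{N}$-type quantities appearing in the computation.

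To pass from these per-test-function bounds to the operator-norm estimate \eqref{eq:UN_bounded}, I would fix a complete orthonormal system $(\phi_{l})_{l}$ of $\bW^{\kappa_{1},\iota_{1}}_{q}$ and write $\N{F_{N,t}}^{2}_{-q,\kappa_{1},\iota_{1}}=\sum_{l}F_{N,t}(\phi_{l})^{2}$. Cauchy-Schwarz over $i$ and then over the probability space yields
\[
\bE\big(F_{N,t}(\phi_{l})^{2}\big) \;\leq\; \frac{a_{N}^{2}}{|\Lambda_{N}|}\sum_{i\in\Lambda_{N}}\big(\bE|\rho_{i}|^{4}\big)^{1/2}\big(\bE|\Delta_{i}(\phi_{l})|^{4}\big)^{1/2} \;\leq\; C\,\N{\phi_{l}}_{\star}^{2}\,a_{N}^{-2},
\]
and the sum over $l$ converges because the embedding $\bW^{\kappa_{1},\iota_{1}}_{q}\hookrightarrow\bW_{\star}$ is Hilbert-Schmidt by \eqref{eq:comp_embed} once $q$ is large enough — which is all that is needed, the spaces occurring in Theorems~\ref{theo:conv_subcrit}--\ref{theo:conv_supercrit} carrying indices that are multiples of $P+2$. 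For \eqref{eq:VN_bounded} one runs the identical argument with $\Delta_{i}$ replaced by a linear form of the $\cU$-type of Definition~\ref{def:linearforms_x}, whose continuity is provided by Proposition~\ref{prop:contU} (the $\cV,\cW$ estimates of that proposition serving the same purpose for $\Delta_{i}$). Since every constant above is independent of $t$, the estimates hold uniformly in $t\leq T$.

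The main obstacle is precisely the tension created by the singularity of $\Psi$: because $[\Gamma\Psi,\nu_{N,s}-\nu_{s}]$ is not an admissible Sobolev test function for $\alpha\geq\frac12$, the factor $a_{N}^{-1}$ cannot be produced by continuity of a composite functional and must be extracted purely from the Riemann-sum/central-limit structure of $\rho_{i}$ and $\Delta_{i}(g)$, which is why Lemma~\ref{lem:Riem} and the exact value of $a_{N}$ enter sharply. It is also exactly this estimate that degenerates at $\alpha=\frac12$, where $|\Lambda_{N}|^{-2}\sum_{j}\Psi(x_{i},x_{j})^{2}$ decays only like $N^{-1}\ln N$, accounting for the $\ln N$ loss recorded in Section~\ref{sec:fluct_crit}.
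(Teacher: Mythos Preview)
Your decomposition $F_{N,s}g=\frac{a_N}{|\Lambda_N|}\sum_i\rho_i\cdot\Delta_i(g)$ with $\rho_i=[\Gamma\Psi,\nu_{N,s}-\nu_s](\tau_{i,s})$ and $a_N\Delta_i(g)=a_N\langle\nu_{N,s}-\nu_s,\Psi(x_i,\cdot)\nabla_\theta g(\tau_{i,s},\cdot)\rangle$ coincides with the paper's, and your core estimate $\bE|\rho_i|^4\leq Ca_N^{-4}$ is exactly what the paper uses. The difference is in how the dual norm is assembled. The paper observes that the linear form $\cF_{N,\tau_{i,s},s}:g\mapsto a_N\Delta_i(g)$ has precisely the structure of $\cH_{N}$ itself (compare \eqref{eq:HN_g}), so the Hilbertian fourth-moment bound of Proposition~\ref{prop:etaNbounded1} gives $\sup_{N,i,s}\bE\|\cF_{N,\tau_{i,s},s}\|_{-q}^4<\infty$ directly. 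One then interchanges the Parseval sum over $l$ with the sum over $i$ \emph{before} taking expectations,
\[
\|F_{N,s}\|_{-q}^2=\sum_l\Big|\tfrac{1}{|\Lambda_N|}\sum_i\rho_i\cdot\cF_i(\phi_l)\Big|^2\leq\tfrac{1}{|\Lambda_N|}\sum_i|\rho_i|^2\sum_l|\cF_i(\phi_l)|^2=\tfrac{1}{|\Lambda_N|}\sum_i|\rho_i|^2\|\cF_i\|_{-q}^2,
\]
and a single Cauchy--Schwarz in the probability yields the result for every $q\geq P+2$.

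Your route instead fixes $\phi_l$, bounds $\bE F_{N,s}(\phi_l)^2$ by $Ca_N^{-2}\|\phi_l\|_\star^2$ via a per-test-function estimate, and then sums over $l$ using a Hilbert--Schmidt embedding $\bW_q^{\kappa_1,\iota_1}\hookrightarrow\bW_\star$. This is valid, but it costs regularity: to have both the per-test-function bound (which needs enough derivatives on $g$ to run the Riemann-sum/Rosenthal argument pointwise) and the Hilbert--Schmidt summability, you must drop at least $P+1$ orders of regularity, so your argument as written delivers the proposition only for $q$ somewhat larger than $P+2$, as you note. The paper's reordering avoids this loss entirely because the summation over $l$ is absorbed into the dual norm of $\cF_i$, whose $L^4$-control is already supplied by Proposition~\ref{prop:etaNbounded1} at the base regularity $q=P+2$. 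In short: same decomposition, same moment inputs, but the paper exploits the structural identity $\cF_i\cong\cH_N$ to reuse the Hilbertian machinery wholesale rather than re-derive it test-function by test-function.
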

\begin{proof}[Proof of Proposition~\ref{prop:UN_VN}]
Writing explicitly $F_{ N}$ and $G_{ N}$ from their definitions \eqref{eq:def_UN} and \eqref{eq:def_VN} gives:
\begin{align}
\begin{split}
F_{ N, t}g&= \frac{ 1}{ \left\vert \Lambda_{ N} \right\vert} \sum_{ i\in \Lambda_{ N}} \left[ \Gamma \Psi, \nu_{ N, t} - \nu_{ t}\right]( \tau_{ i, t})\\ &\left\lbrace a_{ N} \left( \frac{ 1}{ \left\vert \Lambda_{ N} \right\vert} \sum_{ j\in\Lambda_{ N}} \Psi(x_{ i}, x_{ j}) \nabla_{ \theta} g( \tau_{ i, t}, \tau_{ j, t}) - \int \Psi(x_{ i}, \tx) \nabla_{ \theta}g(  \tau_{ i, t}, \ttau) \nu_{ t}(\dd \ttau)\right)\right\rbrace,\label{aux:UN_expand}
\end{split}\\
\begin{split}
G_{ N, t}g&= \frac{ 1}{ \left\vert \Lambda_{ N} \right\vert} \sum_{ i\in \Lambda_{ N}}  \cro{ \nu_{ N, t}- \nu_{ t}}{ \Psi(\cdot, x_{ i}) \nabla_{ \ttheta}g( \cdot, \tau_{ i, t})}\\ &\left\lbrace a_{ N} \left( \frac{ 1}{ \left\vert \Lambda_{ N} \right\vert} \sum_{ j\in\Lambda_{ N}} \Psi(x_{ i}, x_{ j}) \Gamma( \theta_{ i, t}, \omega_{ i}, \theta_{ j, t}, \omega_{ j}) - \int \Psi(x_{ i}, \tx) \Gamma( \theta_{ i, t}, \omega_{ i}, \ttheta, \tomega) \nu_{ t}(\dd \ttau)\right)\right\rbrace.
\end{split}
\end{align}
For all $\tau=(\theta, \omega, x)\in\bbX\times\bbY\times\bbS$, define the linear form $\cF_{ N, \tau, t}$ by
\begin{equation}
\cF_{ N, \tau, t}g:=a_{ N} \left( \frac{ 1}{ \left\vert \Lambda_{ N} \right\vert} \sum_{ j\in\Lambda_{ N}} \Psi(x, x_{ j}) \nabla_{ \theta} g( \tau, \tau_{ j, t}) - \int \Psi(x, \tx) \nabla_{ \theta}g(\tau, \ttau) \nu_{ t}(\dd \ttau)\right).
\end{equation}
Hence, 
\begin{align*}
\bE(\N{F_{ N, t}}_{ -q}^{ 2})&\leq \frac{ 1}{ \left\vert \Lambda_{ N} \right\vert}\sum_{ i\in \Lambda_{ N}}\bE( \left\vert \left[ \Gamma \Psi, \nu_{ N, t} - \nu_{ t}\right]( \tau_{ i, t}) \right\vert^{ 2}\N{\cF_{ N, \tau_{ i, t}, t}}_{ -q}^{ 2}),\\
&\leq \frac{ 1}{ \left\vert \Lambda_{ N} \right\vert}\sum_{ i\in \Lambda_{ N}} \bE(\left\vert \left[ \Gamma \Psi, \nu_{ N, t} - \nu_{ t}\right]( \tau_{ i, t}) \right\vert^{ 4})^{ \frac{ 1}{ 2}} \bE(\N{\cF_{ N, \tau_{ i, t}, t}}_{ -q}^{ 4})^{ \frac{ 1}{ 2}},
\end{align*}
by Cauchy-Schwartz inequality. By the same procedure as in Proposition~\ref{prop:etaNbounded1} in Section~\ref{sec:proof_etaNbounded1}
(note that the structure of $\cF_{ N, \tau_{ i, t}, t}$ is very similar to $\cH_{ N}$, recall \eqref{eq:HN_g}), it is easy to show that
\begin{align*}
\sup_{ t\leq T}\sup_{ i\in \Lambda_{ N}} \bE \left(\left\vert \left[ \Gamma \Psi, \nu_{ N, t} - \nu_{ t}\right]( \tau_{ i, t}) \right\vert^{ 4}\right)&\leq \frac{ C}{ a_{ N}^{ 4}},\\
\sup_{ N\geq1}\sup_{ t\leq T}\sup_{ i\in \Lambda_{ N}} \bE(\N{\cF_{ N, \tau_{ i, t}, t}}_{ -q}^{ 4})&<+\infty.
\end{align*}
Hence, \eqref{eq:UN_bounded} easily follows. The same argument holds for $G_{ N}$. Proposition~\ref{prop:UN_VN} is proven.
\end{proof}

\begin{proposition}
\label{prop:MNH_bounded}
For any $q\geq P+2$, the process $\cM_{ N}^{ (\cH)}$ defined in \eqref{eq:def_MNH} is a martingale with values in $\bW_{ -q}^{\kappa_{ 1}, \iota_{ 1}}$ and there exists a constant $C>0$ such that
\begin{equation}
\label{eq:MNH_bounded}
\sup_{ t\leq T}\bE \left(\N{\cM_{ N, t}^{ (\cH)}}_{ -q, \kappa_{ 1}, \iota_{ 1}}^{ 2}\right)\leq C \frac{ a_{ N}^{ 2}}{ \left\vert \Lambda_{ N} \right\vert}.
\end{equation}
\end{proposition}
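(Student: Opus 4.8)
\textbf{Proof proposal for Proposition~\ref{prop:MNH_bounded}.} The plan is to use that $\cM_{N}^{(\cH)}$ is by construction a finite sum of It\^o integrals against the independent Brownian motions $(B_{i})_{i\in\Lambda_{N}}$, and to control its dual Sobolev norm by expanding along an orthonormal basis of $\bW_{q}^{\kappa_{1},\iota_{1}}$. Write $\cro{\cM_{N,t}^{(\cH)}}{g}=M_{N,t}^{(1)}(g)+M_{N,t}^{(2)}(g)$, where $M^{(1)}$ is the first line of \eqref{eq:def_MNH}, i.e. $\tfrac{a_{N}}{|\Lambda_{N}|^{2}}\sum_{i,j}\int_{0}^{t}\nabla_{\ttheta}g(\tau_{ i,s},\tau_{ j,s})\Psi(x_{i},x_{j})\cdot\dd B_{ j,s}$, and $M^{(2)}$ is the second. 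For every $g$ of class $\cC^{1}$ with polynomially bounded first derivatives — in particular every $g\in\bW_{q}^{\kappa_{1},\iota_{1}}$ with $q\geq P+2$, via \eqref{eq:embed} — the integrands are adapted and, by Proposition~\ref{prop:moment_particles}, square-integrable on $[0,T]$, so $t\mapsto\cro{\cM_{N,t}^{(\cH)}}{g}$ is a continuous square-integrable real martingale.

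Fix $q\geq P+2$ and let $(g_{k})_{k\geq1}$ be an orthonormal basis of $\bW_{q}^{\kappa_{1},\iota_{1}}$, so that $\N{\cM_{N,t}^{(\cH)}}_{-q,\kappa_{1},\iota_{1}}^{2}=\sum_{k}\cro{\cM_{N,t}^{(\cH)}}{g_{k}}^{2}$ and hence $\bE\N{\cM_{N,t}^{(\cH)}}_{-q,\kappa_{1},\iota_{1}}^{2}\leq 2\sum_{k}\bE\,M_{N,t}^{(1)}(g_{k})^{2}+2\sum_{k}\bE\,M_{N,t}^{(2)}(g_{k})^{2}$. For the first term, It\^o's isometry and the independence of the $B_{j}$'s give $\sum_{k}\bE\,M_{N,t}^{(1)}(g_{k})^{2}=\tfrac{a_{N}^{2}}{|\Lambda_{N}|^{4}}\sum_{j}\int_{0}^{t}\bE\big[\sum_{k}|\sum_{i}\Psi(x_{i},x_{j})\nabla_{\ttheta}g_{k}(\tau_{ i,s},\tau_{ j,s})|^{2}\big]\dd s$. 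For each component $r$, $\sum_{k}|\sum_{i}\Psi(x_{i},x_{j})\partial_{\ttheta^{(r)}}g_{k}(\tau_{ i,s},\tau_{ j,s})|^{2}$ is the squared $\bW_{-q}^{\kappa_{1},\iota_{1}}$-norm of the linear form $g\mapsto\sum_{i}\Psi(x_{i},x_{j})\,\partial_{\ttheta^{(r)}}g(\tau_{ i,s},\tau_{ j,s})$, which is continuous for $q\geq P+2$ (as in Proposition~\ref{prop:linearforms}, since $\bW_{q}^{\kappa_{1},\iota_{1}}\hookrightarrow\cC_{1}^{\kappa_{1},\iota_{1}}$) with norm $\leq C\sum_{i}\Psi(x_{i},x_{j})\,w(\tau_{ i,s},\tau_{ j,s},\kappa_{1},\iota_{1})$; Cauchy--Schwarz together with Remark~\ref{rem:Psi_bounded} bounds the square of the latter by $C|\Lambda_{N}|\sum_{i}\Psi(x_{i},x_{j})\,w(\tau_{ i,s},\tau_{ j,s},\kappa_{1},\iota_{1})^{2}$. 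Taking expectations, $\bE[w(\tau_{ i,s},\tau_{ j,s},\kappa_{1},\iota_{1})^{2}]\leq C$ uniformly by Proposition~\ref{prop:moment_particles}, and Remark~\ref{rem:Psi_bounded} applied twice (summing over $i$, then over $j$) yields $\sum_{k}\bE\,M_{N,t}^{(1)}(g_{k})^{2}\leq CT\,a_{N}^{2}/|\Lambda_{N}|$.

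For the second term, the linear form attached to particle $i$ and component $r$ is $g\mapsto\tfrac1{|\Lambda_{N}|}\sum_{j}\Psi(x_{i},x_{j})\,\partial_{\theta^{(r)}}g(\tau_{ i,s},\tau_{ j,s})-\int\Psi(x_{i},\tx)\,\partial_{\theta^{(r)}}g(\tau_{ i,s},\ttau)\,\nu_{s}(\dd\ttau)$, which is a difference of linear forms of the type $\cV_{N,i,r}$ and $\cW_{N,i,r,\xi_{s}}$ of Definition~\ref{def:linearforms_x} (up to interchanging the two arguments in the first one; recall $\nu_{s}=\xi_{s}\otimes\dd x$). Proposition~\ref{prop:contU}, together with Cauchy--Schwarz, Remark~\ref{rem:Psi_bounded} and the moment bounds, shows that the expectation of its squared $\bW_{-q}^{\kappa_{1},\iota_{1}}$-norm is bounded uniformly in $N$ and $i$; Parseval along $(g_{k})$ and It\^o's isometry then give $\sum_{k}\bE\,M_{N,t}^{(2)}(g_{k})^{2}\leq\tfrac{a_{N}^{2}}{|\Lambda_{N}|^{2}}\sum_{i}\int_{0}^{t}\bE\N{\,\cdot\,}_{-q,\kappa_{1},\iota_{1}}^{2}\dd s\leq CT\,a_{N}^{2}/|\Lambda_{N}|$. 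Adding the two estimates gives \eqref{eq:MNH_bounded}, and the fact that $\cM_{N}^{(\cH)}$ is then a genuine $\bW_{-q}^{\kappa_{1},\iota_{1}}$-valued continuous martingale follows from the standard criterion for Hilbert-space-valued martingales used in \cite{Fernandez1997}: each coordinate $\cro{\cM_{N,\cdot}^{(\cH)}}{g_{k}}$ is a real martingale and $\sup_{t\leq T}\bE\N{\cM_{N,t}^{(\cH)}}_{-q,\kappa_{1},\iota_{1}}^{2}<\infty$.

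\textbf{Main obstacle.} The delicate point is to organise the estimate so that the sum over the orthonormal basis converges: every pointwise (derivative-)evaluation must be treated as a \emph{bounded linear form} on $\bW_{q}^{\kappa_{1},\iota_{1}}$ and Parseval applied in its dual, rather than bounding $\cro{\cM_{N,t}^{(\cH)}}{g}$ crudely by $\N{g}_{\cC_{1}^{\kappa_{1},\iota_{1}}}$ and then summing over $(g_{k})$, which would diverge. The requirement $q\geq P+2$ is exactly what makes all the relevant derivative-evaluation forms continuous, and each occurrence of the singular weight $\Psi(x_{i},x_{j})$ is absorbed by one application of Cauchy--Schwarz against Remark~\ref{rem:Psi_bounded}.
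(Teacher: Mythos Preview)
Your proof is correct and follows essentially the same approach as the paper: expand the dual norm along an orthonormal basis of $\bW_{q}^{\kappa_{1},\iota_{1}}$, apply It\^o's isometry, and recognise the resulting sums as squared dual norms of derivative-evaluation linear forms, which are continuous precisely because $q\geq P+2$; then conclude via the moment bounds of Proposition~\ref{prop:moment_particles} and the $\Psi$-summability of Remark~\ref{rem:Psi_bounded}. The only cosmetic differences are that the paper splits $\cM_{N}^{(\cH)}$ into three pieces rather than your two (separating the discrete and integral halves of the second line), invokes the form $\cT$ of Proposition~\ref{prop:linearforms} instead of $\cV,\cW$, and actually proves the slightly stronger bound $\bE\big(\sup_{t\leq T}\N{\cM_{N,t}^{(\cH)}}_{-q,\kappa_{1},\iota_{1}}^{2}\big)\leq C a_{N}^{2}/|\Lambda_{N}|$ via Doob's inequality, which you do not need for the statement as written but which the paper uses later.
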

\begin{remark}
\label{rem:norms}
Due to the embedding \eqref{eq:comp_embed_dual}, one has $ \N{\cdot}_{ -(q+P+2), \kappa_{ 0}, \iota_{ 0}}\leq C \N{\cdot}_{ -q, \kappa_{ 1}, \iota_{ 1}}$, so that every estimate (especially \eqref{eq:UN_bounded}, \eqref{eq:VN_bounded} and \eqref{eq:MNH_bounded}) involving the $\N{\cdot}_{ -q, \kappa_{ 1}, \iota_{ 1}}$-norm is also valid for the $\N{\cdot}_{ -(q+P+2), \kappa_{ 0}, \iota_{ 0}}$-norm.
\end{remark}

\begin{proof}[Proof of Proposition~\ref{prop:MNH_bounded}]
For any test function $f$, 
\begin{align*}
\left\vert \cM_{ N, t}^{ (\cH)}g \right\vert&\leq  \left\vert \frac{ a_{ N}}{ \left\vert \Lambda_{ N} \right\vert^{ 2}} \sum_{ i, j\in \Lambda_{ N}} \int_{0}^{t}  \nabla_{  \ttheta} g(\tau_{ i, s}, \tau_{ j, s}) \Psi(x_{ i}, x_{ j})\cdot\dd B_{ j, s} \right\vert\\&+ \left\vert \frac{ a_{ N}}{ \left\vert \Lambda_{ N} \right\vert^{ 2}} \sum_{ i, j\in \Lambda_{ N}} \int_{0}^{t}  \nabla_{  \theta} g(\tau_{ i, s}, \tau_{ j, s})\Psi(x_{ i}, x_{ j}) \cdot\dd B_{ i, s}\right\vert\\ &+ \left\vert \frac{ a_{ N}}{ \left\vert \Lambda_{ N} \right\vert}\sum_{ i\in\Lambda_{ N}} \int_{0}^{t}\int\nabla_{ \theta}g(\tau_{ i, s}, \ttau) \Psi(x_{ i}, \tx)\nu_{ s}(\dd\ttau)\cdot\dd B_{ i, s} \right\vert,\\
&:= \left\vert \cM_{ N, t}^{ (1)}g \right\vert + \left\vert \cM_{ N, t}^{ (2)}g \right\vert  + \left\vert \cM_{ N, t}^{ (3)}g \right\vert.
\end{align*}
We only treat the first term $\cM_{ N, t}^{ (1)}g$, the other two being similar: using a complete orthonormal system $(\varphi_{ p})_{ p\geq1}$ in $\bW_{ q}^{ \kappa_{ 1}, \iota_{ 1}}$ (we drop the dependence in $ \kappa_{ 1}, \iota_{ 1}$ below for simplicity), we have successively
\begin{align}
\bE \left(\sup_{ t\leq T}\N{\cM_{ N, t}^{ (\cH)}}^{ 2}_{ -q}\right)&=\bE \left(\sup_{ t\leq T}\sum_{ p\geq1} \left\vert \cM_{ N, t}^{ (\cH)}\varphi_{ p} \right\vert^{ 2}\right)\leq\sum_{ p\geq1}\bE \left(\sup_{ t\leq T} \left\vert \cM_{ N, t}^{ (\cH)}\varphi_{ p} \right\vert^{ 2}\right),\nonumber\\
&\leq\sum_{ p\geq1}\bE \left(\left\vert \cM_{ N, T}^{ (\cH)}\varphi_{ p} \right\vert^{ 2}\right),\quad \text{(by Doob's inequality),}\nonumber\\
&\leq \frac{ a_{ N}^{ 2}}{ \left\vert \Lambda_{ N} \right\vert^{ 4}} \sum_{ i, j, k\in \Lambda_{ N}} \Psi(x_{ i}, x_{ j}) \Psi(x_{ k}, x_{ j})\nonumber\\ &\int_{0}^{T} \bE \left( \sum_{ p\geq1}\left\vert \nabla_{ \ttheta}\varphi_{ p}( \tau_{ i, s}, \tau_{ j, s}) \right\vert^{ 2}\right)^{ \frac{ 1}{ 2}} \bE \left(\sum_{ p\geq1}\left\vert \nabla_{ \ttheta}\varphi_{ p}( \tau_{ k, s}, \tau_{ j, s}) \right\vert^{ 2}\right)^{ \frac{ 1}{ 2}}\dd s,\nonumber\\
&\leq \frac{ a_{ N}^{ 2}}{ \left\vert \Lambda_{ N} \right\vert^{ 4}} \sum_{ i, j, k\in \Lambda_{ N}} \Psi(x_{ i}, x_{ j}) \Psi(x_{ k}, x_{ j})\nonumber\\ &\int_{0}^{T} \bE \left( \N{\cT_{ \tau_{ i, s}, \tau_{ j, s}}}^{ 2}_{ -q}\right)^{ \frac{ 1}{ 2}} \bE \left(\N{\cT_{ \tau_{ k, s}, \tau_{ j,s}}}^{ 2}_{ -q}\right)^{ \frac{ 1}{ 2}}\dd s,\label{aux:cV1}\\
&\leq C\frac{a_{ N}^{ 2}}{ \left\vert \Lambda_{ N} \right\vert^{ 4}} \sum_{ i, j, k\in \Lambda_{ N}} \Psi(x_{ i}, x_{ j}) \Psi(x_{ k}, x_{ j})\int_{0}^{T} \bE \left(w(\tau_{ i, s}, \tau_{ j, s})\right)\bE \left(w(\tau_{ k, s}, \tau_{ j, s})\right) \dd s,\label{aux:cV2}\\
&\leq C\frac{a_{ N}^{ 2}}{ \left\vert \Lambda_{ N} \right\vert^{ 4}} \sum_{ i, j, k\in \Lambda_{ N}} \Psi(x_{ i}, x_{ j}) \Psi(x_{ k}, x_{ j})\leq C \frac{ a_{ N}^{ 2}}{ \left\vert \Lambda_{ N} \right\vert},\label{aux:cV3}
\end{align}
where we recall in \eqref{aux:cV1} the definition \eqref{eq:def_cT} of the linear form $\cT$, where we used Proposition~\ref{prop:linearforms} in \eqref{aux:cV2} and Lemma~\ref{lem:Riem} in \eqref{aux:cV3}. This concludes the proof of Proposition~ \ref{prop:MNH_bounded}.
\end{proof}
\begin{proposition}
\label{prop:HN_bounded_2}
Under the hypotheses of Section~\ref{sec:assumptions}, for any $q\geq P+2$, the two-particle fluctuation process $\cH_{N}$ belongs uniformly to $\bW_{-(q+P+2)}^{ \kappa_{ 0}, \iota_{ 0}}$:
\begin{equation}
\label{eq:HN_bounded_2}
\sup_{1 \leq N} \bE \left(\sup_{ t\leq T} \N{\cH_{ N, t}}_{ -(q+P+2), \kappa_{ 0}, \iota_{ 0}}^{ 2}\right)<+\infty.
\end{equation}
\end{proposition}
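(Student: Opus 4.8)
Here is how I would prove Proposition~\ref{prop:HN_bounded_2}.

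The plan is to integrate in time the semi-martingale decomposition \eqref{eq:semimart_HN} of $\cH_{ N}$ and to estimate each of its five terms in the \emph{weaker} dual norm $\N{\cdot}_{ -(q+P+2), \kappa_{ 0}, \iota_{ 0}}$, rather than in $\N{\cdot}_{ -q, \kappa_{ 1}, \iota_{ 1}}$. Giving up $P+2$ orders of differentiability is precisely what makes the drift term tractable: by Proposition~\ref{prop:continuityL}, $\mathscr{L}_{ s}$ is bounded from $\bW^{ \kappa_{ 0}, \iota_{ 0}}_{ q+P+2}$ to $\bW^{ \kappa_{ 1}, \iota_{ 1}}_{ q}$ uniformly in $s\in[0,T]$, hence its dual $\mathscr{L}_{ s}^{ \ast}$ is bounded from $\bW^{ \kappa_{ 1}, \iota_{ 1}}_{ -q}$ into $\bW^{ \kappa_{ 0}, \iota_{ 0}}_{ -(q+P+2)}$, and along the way the Hilbert--Schmidt embedding of Remark~\ref{rem:norms} ($\N{\cdot}_{ -(q+P+2), \kappa_{ 0}, \iota_{ 0}}\leq C\N{\cdot}_{ -q, \kappa_{ 1}, \iota_{ 1}}$) transfers all the bounds from Propositions~\ref{prop:etaNbounded1}, \ref{prop:UN_VN} and~\ref{prop:MNH_bounded} into the target space.

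First I would split $\cH_{ N, t}= \cA_{ N, t}+ \cM^{ (\cH)}_{ N, t}$, where $\cA_{ N}$ is the finite-variation part carrying $\cH_{ N, 0}$, the $\mathscr{L}_{ s}$-drift and the remainders $F_{ N},G_{ N}$. For the martingale part, Proposition~\ref{prop:MNH_bounded}---whose proof in fact establishes the stronger $\bE \sup_{ t\leq T}$ bound via Doob's inequality---together with Remark~\ref{rem:norms} gives
\[
\bE \left(\sup_{ t\leq T}\N{\cM^{ (\cH)}_{ N, t}}_{ -(q+P+2), \kappa_{ 0}, \iota_{ 0}}^{ 2}\right)\leq C\, \bE \left(\sup_{ t\leq T}\N{\cM^{ (\cH)}_{ N, t}}_{ -q, \kappa_{ 1}, \iota_{ 1}}^{ 2}\right)\leq C\frac{ a_{ N}^{ 2}}{ \left\vert \Lambda_{ N} \right\vert},
\]
which is bounded uniformly in $N$, since $a_{ N}^{ 2}/ \left\vert \Lambda_{ N} \right\vert=\tfrac12$ when $ \alpha<\tfrac12$ and $a_{ N}^{ 2}/ \left\vert \Lambda_{ N} \right\vert=\tfrac12 N^{ 1-2\alpha}\to0$ when $ \alpha>\tfrac12$.

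For the finite-variation part, testing against $g$ with $\N{g}_{ q+P+2, \kappa_{ 0}, \iota_{ 0}}\leq1$, writing $\cro{\cH_{ N,s}}{\mathscr{L}_{ s}g}=\cro{\mathscr{L}_{ s}^{ \ast}\cH_{ N,s}}{g}$ and using the triangle inequality yields, uniformly in $t\leq T$,
\[
\N{\cA_{ N, t}}_{ -(q+P+2), \kappa_{ 0}, \iota_{ 0}}\leq \N{\cH_{ N, 0}}_{ -(q+P+2), \kappa_{ 0}, \iota_{ 0}}+ \int_{0}^{T}\Big( \N{\mathscr{L}_{ s}^{ \ast}\cH_{ N, s}}_{ -(q+P+2), \kappa_{ 0}, \iota_{ 0}}+ \N{F_{ N, s}}_{ -(q+P+2), \kappa_{ 0}, \iota_{ 0}}+ \N{G_{ N, s}}_{ -(q+P+2), \kappa_{ 0}, \iota_{ 0}}\Big)\dd s .
\]
I would then square, apply the Cauchy--Schwarz inequality in the time integral, take $\bE$, and bound the right-hand side using: the uniform operator bound for $\mathscr{L}_{ s}^{ \ast}$ above together with Proposition~\ref{prop:etaNbounded1} at $t=0$ and for $\int_{0}^{T}\bE \N{\cH_{ N, s}}_{ -q, \kappa_{ 1}, \iota_{ 1}}^{ 2}\dd s$; and Proposition~\ref{prop:UN_VN} (through Remark~\ref{rem:norms}) for the $F_{ N}$ and $G_{ N}$ contributions, which actually tend to $0$. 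Adding the martingale estimate gives \eqref{eq:HN_bounded_2}.

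The point worth emphasizing is that \emph{no Gronwall-type loop is needed}: the pointwise-in-time bound $\sup_{ t\leq T}\bE \N{\cH_{ N, t}}_{ -q, \kappa_{ 1}, \iota_{ 1}}^{ 2}<\infty$ is already available from Proposition~\ref{prop:etaNbounded1}, so once the decomposition is integrated over $[0,t]$ the drift term is controlled simply by integrating this uniform bound, and the supremum in time then comes for free (Doob for the martingale, absolute continuity for $\cA_{ N}$). The only genuinely delicate step is the Sobolev-index bookkeeping---keeping track that $\mathscr{L}_{ s}$ costs exactly $P+2$ derivatives while $\cH_{ N,0}$, $F_{ N}$, $G_{ N}$ and $\cM^{ (\cH)}_{ N}$ were all controlled in $\bW^{ \kappa_{ 1}, \iota_{ 1}}_{ -q}$, which injects with Hilbert--Schmidt injection into the target space $\bW^{ \kappa_{ 0}, \iota_{ 0}}_{ -(q+P+2)}$.
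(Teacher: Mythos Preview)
Your proof is correct and follows essentially the same route as the paper: decompose $\cH_{N,t}$ via \eqref{eq:semimart_HN}, control the drift through the continuity of $\mathscr{L}_{s}$ (Proposition~\ref{prop:continuityL}) combined with the pointwise-in-time bound of Proposition~\ref{prop:etaNbounded1}, the remainders $F_{N},G_{N}$ by Proposition~\ref{prop:UN_VN}, and the martingale via Doob plus Proposition~\ref{prop:MNH_bounded}, all transferred to $\bW^{\kappa_{0},\iota_{0}}_{-(q+P+2)}$ through Remark~\ref{rem:norms}. The only cosmetic difference is that the paper carries out the computation against an orthonormal basis $(\psi_{p})$ and uses Parseval, whereas you argue directly with the dual norm by testing against $g$ with $\N{g}_{q+P+2,\kappa_{0},\iota_{0}}\leq1$; your observation that no Gronwall loop is needed is exactly the mechanism at work in the paper's proof as well.
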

\begin{proof}[Proof of Proposition~\ref{prop:HN_bounded_2}]
Let $(\psi_{ p})_{ p\geq1}$ be a complete orthonormal system in $\bW_{ q+P+2}^{ \kappa_{ 0}, \iota_{ 0}}$. Then, for all $p\geq 1$, 
\begin{equation*}
\begin{split}
\cro{\cH_{ N, t}}{ \psi_{ p}}^{ 2}\leq C &\Bigg(\cro{\cH_{ N, 0}}{ \psi_{ p}}^{ 2} + T\int_{0}^{t} \cro{\cH_{ N, s}}{\mathscr{L}_{ s} \psi_{ p}}^{ 2} \dd s + T\int_{0}^{t} \left\vert F_{ N, s}\psi_{ p} \right\vert^{ 2} \dd s\\ &+ T\int_{0}^{t} \left\vert G_{ N, s}\psi_{ p} \right\vert^{ 2} \dd s + \left\vert \cM_{ N, t}^{ (\cH)}\psi_{ p}\right\vert^{ 2}\Bigg),
\end{split}
\end{equation*}
so that, by Doob's inequality,
\begin{equation*}
\begin{split}
\bE \left(\sum_{ p\geq1}\sup_{ t\leq T}\cro{\cH_{ N, t}}{ \psi_{ p}}^{ 2}\right)\leq C &\Bigg(\bE \left(\sum_{ p\geq1}\cro{\cH_{ N, 0}}{ \psi_{ p}}^{ 2}\right) + T\int_{0}^{T} \bE \left(\sum_{ p\geq1}\cro{\cH_{ N, s}}{\mathscr{L}_{ s} \psi_{ p}}^{ 2}\right) \dd s\\ &+ T\int_{0}^{T} \bE \left(\sum_{ p\geq 1}\left\vert F_{ N, s}\psi_{ p} \right\vert^{ 2}\right) \dd s + T\int_{0}^{T} \bE \left(\sum_{ p\geq1}\left\vert G_{ N, s}\psi_{ p} \right\vert^{ 2}\right) \dd s\\ &+ \bE \left(\sum_{ p\geq 1}\left\vert \cM_{ N, T}^{ (\cH)}\psi_{ p}\right\vert^{ 2}\right)\Bigg).
\end{split}
\end{equation*}
Since $\bE \left(\sup_{ t\leq T} \N{\cH_{ N, t}}_{ -(q+P+2), \kappa_{ 0}, \iota_{ 0}}^{ 2}\right)\leq \bE \left(\sum_{ p\geq1}\sup_{ t\leq T}\cro{\cH_{ N, t}}{ \psi_{ p}}^{ 2}\right)$, we obtain from the previous bound that
\begin{equation}
\label{aux:supHN}
\begin{split}
\bE \left(\sup_{ t\leq T} \N{\cH_{ N, t}}_{ -(q+P+2), \kappa_{ 0}, \iota_{ 0}}^{ 2}\right)\leq C &\Bigg(\bE \left(\N{\cH_{ N, 0}}_{ -(q+P+2), \kappa_{ 0}, \iota_{ 0}}^{ 2}\right) + T\int_{0}^{T} \bE \left(\sum_{ p\geq1}\cro{\cH_{ N, s}}{\mathscr{L}_{ s} \psi_{ p}}^{ 2}\right) \dd s\\ &+ T\int_{0}^{T} \bE \left(\N{F_{ N, s}}_{ -(q+P+2), \kappa_{ 0}, \iota_{ 0}}^{ 2}\right) \dd s\\ &+ T\int_{0}^{T} \bE \left(\N{G_{ N, s}}_{ -(q+P+2), \kappa_{ 0}, \iota_{ 0}}^{ 2}\right) \dd s\\ &+ \bE \left(\N{\cM_{ N, T}^{ (\cH)}}_{ -(q+P+2), \kappa_{ 0}, \iota_{ 0}}^{ 2}\right)\Bigg).
\end{split}
\end{equation}
Using \eqref{eq:MNH_bounded} and Remark~\ref{rem:norms}, one obtains that
\begin{equation}
\label{eq:MNH_bounded2}
\sup_{ 1\leq N}\bE \left(\N{\cM_{ N, T}^{ (\cH)}}_{ -(q+P+2), \kappa_{ 0}, \iota_{ 0}}^{ 2}\right)<+\infty,
\end{equation}
and using \eqref{eq:etaNbounded1}, one obtains a similar bound for the initial fluctuations
\begin{equation}
\label{eq:HN0_bounded}
\sup_{ 1\leq N}\bE \left(\N{\cH_{ N, 0}}_{ -(q+P+2), \kappa_{ 0}, \iota_{ 0}}^{ 2}\right)<+\infty.
\end{equation}
Moreover, if one introduces the linear form $\ell_{ N, s}: \psi \mapsto \cro{\cH_{ N, s}}{\mathscr{L}_{ s} \psi}$, we see from Proposition~\ref{prop:continuityL} that $\ell_{ N, s}$ is continuous on $\bW_{ (q+P+2), \kappa_{ 0}, \iota_{ 0}}$: indeed, for all $ \psi\in \bW_{ (q+P+2), \kappa_{ 0}, \iota_{ 0}}$
\begin{align*}
\left\vert \ell_{ N, s}\psi \right\vert&\leq \N{\cH_{ N, s}}_{ -q, \kappa_{ 1}, \iota_{ 1}} \N{\mathscr{L}_{ s}\psi}_{q, \kappa_{ 1}, \iota_{ 1}},\\
&\leq C \N{\cH_{ N, s}}_{ -q, \kappa_{ 1}, \iota_{ 1}} \N{ \psi}_{ q+P+2, \kappa_{ 0}, \iota_{ 0}}, \ \text{(thanks to \eqref{eq:continuityL}).}
\end{align*}
In particular
\begin{align*}
\bE \left(\sum_{ p\geq1}\cro{\cH_{ N, s}}{\mathscr{L}_{ s} \psi_{ p}}^{ 2}\right)&= \bE \left( \N{\ell_{ N, s}}^{ 2}_{ -(q+P+2), \kappa_{ 0}, \iota_{ 0}}\right),\\
&\leq C \bE \left(\N{\cH_{ N, s}}^{ 2}_{ -q, \kappa_{ 1}, \iota_{ 1}}\right),\\
&\leq C \sup_{ 1\leq N} \sup_{ s\leq T}\bE \left(\N{\cH_{ N, s}}^{ 2}_{ -q, \kappa_{ 1}, \iota_{ 1}}\right)<+\infty,\ \text{(by \eqref{eq:etaNbounded1}).}
\end{align*}
Putting the previous estimate as well as \eqref{eq:UN_bounded}, \eqref{eq:VN_bounded}, \eqref{eq:MNH_bounded2} and \eqref{eq:HN0_bounded} into \eqref{aux:supHN}, one obtains the result \eqref{eq:HN_bounded_2}. Proposition~\ref{prop:HN_bounded_2} is proven.
\end{proof}

The rest of this section is devoted to prove the tightness of the two-particle fluctuation process $\cH_{ N}$. We follow here the lines of \cite{Fernandez1997,Jourdain1998,Lucon2011}.
\begin{proposition}
\label{prop:contHN_MNH}
For all $q\geq P+2$, for every $N\geq1$, the trajectories of the two-particle fluctuation process $\cH_{ N}$ and the martingale $\cM_{ N}^{ (\cH)}$ are almost-surely continuous in $\bW_{ q+P+2}^{ \kappa_{ 0}, \iota_{ 0}}$.
\end{proposition}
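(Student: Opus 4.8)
\emph{Plan.} The plan is to start from the semimartingale decomposition \eqref{eq:semimart_HN} and show that each of its five terms has a.s.\ continuous trajectories, all continuity statements being understood (as in Proposition~\ref{prop:HN_bounded_2}) in the strong topology of the dual space $\bW^{\kappa_0,\iota_0}_{-(q+P+2)}$. Since $\cM^{(\cH)}_N$ is itself one of these five terms, once its continuity is established the continuity of $\cH_N$ will follow simply by adding the terms up. Throughout, $N$ is fixed, which will be used crucially: there are then only finitely many particle trajectories $s\mapsto\theta_{i,s}$, each a.s.\ continuous hence a.s.\ bounded on $[0,T]$, so that this (fixed-$N$) statement requires no moment assumptions.

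\emph{Finite-variation terms.} First I would dispose of $\cro{\cH_{N,0}}{\cdot}$, which is constant in $t$. For the drift $t\mapsto\int_0^t\mathscr{L}_s^\ast\cH_{N,s}\dd s$ (the functional $g\mapsto\int_0^t\cro{\cH_{N,s}}{\mathscr{L}_s g}\dd s$), I would combine Proposition~\ref{prop:continuityL} with the continuous embedding \eqref{eq:comp_embed_dual} to get $\N{\mathscr{L}_s^\ast\cH_{N,s}}_{-(q+P+2),\kappa_0,\iota_0}\leq C\N{\cH_{N,s}}_{-q,\kappa_1,\iota_1}$, and then Proposition~\ref{prop:etaNbounded1} to get $\bE\int_0^T\N{\cH_{N,s}}_{-q,\kappa_1,\iota_1}\dd s<\infty$, hence $\int_0^T\N{\mathscr{L}_s^\ast\cH_{N,s}}_{-(q+P+2),\kappa_0,\iota_0}\dd s<\infty$ almost surely; since $s\mapsto\mathscr{L}_s^\ast\cH_{N,s}$ is also strongly measurable ($\cH_{N,s}$ and $\nu_s$ depending continuously on $s$), $t\mapsto\int_0^t\mathscr{L}_s^\ast\cH_{N,s}\dd s$ is then an a.s.\ absolutely continuous, in particular continuous, $\bW^{\kappa_0,\iota_0}_{-(q+P+2)}$-valued trajectory. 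The terms $\int_0^t F_{N,s}\dd s$ and $\int_0^t G_{N,s}\dd s$ are handled verbatim, Proposition~\ref{prop:UN_VN} replacing Proposition~\ref{prop:etaNbounded1}.

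\emph{Martingale term.} This is the part requiring care. Reading off \eqref{eq:def_MNH}, $\cM^{(\cH)}_N$ is a finite sum of stochastic integrals $\int_0^t\Theta_{N,i,s}\cdot\dd B_{i,s}$ in which, for fixed $s$, $\Theta_{N,i,s}$ is a linear form on $\bW^{\kappa_0,\iota_0}_{q+P+2}$ built from first derivatives of $g$ evaluated at the pairs $(\tau_{i,s},\tau_{j,s})$ or integrated against $\nu_s$ (e.g.\ $g\mapsto\frac{a_N}{|\Lambda_N|^2}\sum_j\partial_{\ttheta}g(\tau_{i,s},\tau_{j,s})\Psi(x_i,x_j)$). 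Using the embedding \eqref{eq:embed} (point- and first-derivative evaluations are continuous on $\bW^{\kappa_0,\iota_0}_{q+P+2}$, since $q+P+2\geq P+2$) together with Propositions~\ref{prop:linearforms} and~\ref{prop:contU} applied with the weights $(\kappa_0,\iota_0)$, I would bound $\N{\Theta_{N,i,s}}_{-(q+P+2),\kappa_0,\iota_0}$ by a finite quantity that is polynomial in the finitely many $|\theta_{i,s}|$, $|\omega_i|$ and involves the moments $\int w\dd\xi_s$ only; such a quantity is a.s.\ continuous and bounded in $s\in[0,T]$. Hence $s\mapsto\Theta_{N,i,s}$ is an adapted process with a.s.\ continuous paths valued in the Hilbert--Schmidt operators $\mathrm{HS}(\bbR^m,\bW^{\kappa_0,\iota_0}_{-(q+P+2)})$ (automatically Hilbert--Schmidt, the domain being finite-dimensional), in particular a.s.\ square-integrable on $[0,T]$. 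The stochastic integral of such an integrand against Brownian motion then admits a modification with a.s.\ continuous Hilbert-space-valued trajectories — the standard argument being to localize along $\tau_R:=\inf\{t:\ \sum_i|\theta_{i,t}|\geq R\}$ (so that on $[0,\tau_R]$ the integrand is $\mathrm{HS}$-bounded), apply Doob's $L^2$ maximal inequality for $\bW^{\kappa_0,\iota_0}_{-(q+P+2)}$-valued martingales to get continuity up to $\tau_R$, and let $R\to\infty$ using $\tau_R\uparrow T$ a.s.\ (finitely many non-exploding diffusions). A finite sum of such integrals being again a.s.\ continuous, $\cM^{(\cH)}_N$ has a.s.\ continuous trajectories in $\bW^{\kappa_0,\iota_0}_{-(q+P+2)}$, and then so does $\cH_N$ via \eqref{eq:semimart_HN}.

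\emph{Main obstacle.} The only genuinely non-routine point is the last one: upgrading ``each scalar integral $\cro{\cM^{(\cH)}_{N,\cdot}}{g}$ is a continuous martingale'' to ``$\cM^{(\cH)}_{N,\cdot}$ is continuous with values in the Hilbert space $\bW^{\kappa_0,\iota_0}_{-(q+P+2)}$''. This forces one to realize the integrands as adapted $\mathrm{HS}(\bbR^m,\bW^{\kappa_0,\iota_0}_{-(q+P+2)})$-valued processes and to verify a.s.\ $\int_0^T\N{\Theta_{N,i,s}}_{\mathrm{HS}}^2\dd s<\infty$; thanks to $N$ being fixed this reduces, via Propositions~\ref{prop:linearforms} and~\ref{prop:contU}, to a.s.\ boundedness of the continuous trajectories $\theta_{i,\cdot}$ and to the moment bounds on $\xi_s$ from Proposition~\ref{prop:decomp_nu}, so it is ultimately harmless but must be assembled with some care.
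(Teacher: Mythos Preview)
Your approach is correct but takes a genuinely different route from the paper's. You build up continuity term by term from the semimartingale decomposition \eqref{eq:semimart_HN}, treating the finite-variation pieces as Bochner integrals and the martingale piece as a Hilbert-space-valued stochastic integral with $\mathrm{HS}(\bbR^m,\bW^{\kappa_0,\iota_0}_{-(q+P+2)})$-valued integrands, then localizing to get a.s.\ square-integrability. This works and is self-contained.

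The paper instead recycles the estimate already obtained in the proof of Proposition~\ref{prop:HN_bounded_2}, namely $\bE\bigl[\sum_{p\geq 1}\sup_{t\leq T}\cro{\cH_{N,t}}{\psi_p}^2\bigr]<\infty$ for an orthonormal basis $(\psi_p)$ of $\bW^{\kappa_0,\iota_0}_{q+P+2}$. This gives, almost surely, a uniform-in-$t$ tail bound $\sum_{p>p_0}\sup_{t\leq T}\cro{\cH_{N,t}}{\psi_p}^2<\varepsilon$, after which a standard $\varepsilon/3$ argument upgrades the (obvious) scalar continuity of each $t\mapsto\cro{\cH_{N,t}}{\psi_p}$ to strong continuity in $\bW^{\kappa_0,\iota_0}_{-(q+P+2)}$. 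The same argument applies verbatim to $\cM^{(\cH)}_N$ using the estimate from the proof of Proposition~\ref{prop:MNH_bounded}. This is considerably shorter: it bypasses the Hilbert-valued stochastic integration machinery and the localization you set up, at the cost of depending on the (already proved) uniform-in-time Parseval bound. Your approach, by contrast, does not need that bound and would still work if only weaker moment estimates were available, since for fixed $N$ you rely only on a.s.\ boundedness of finitely many continuous trajectories.
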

\begin{proof}[Proof of Proposition~\ref{prop:contHN_MNH}]
We only prove the result for $\cH_{ N}$, the proof for $\cM_{ N}^{ (\cH)}$ being similar. Using again a complete orthonormal system $ (\psi_{ p})_{ p\geq1}$ in $\bW_{ q+P+2}^{ \kappa_{ 0}, \iota_{ 0}}$, we know that, thanks to the proof of Proposition~\ref{prop:HN_bounded_2}, with probability $1$, for all $ \varepsilon>0$, there exists $p_{ 0}\geq1$ such that $\sum_{ p> p_{ 0}}\sup_{ t\leq T} \cro{ \cH_{ N, t}}{ \psi_{ p}}^{ 2}< \frac{ \varepsilon}{ 6}$. Let $0\leq t\leq T$ and $t_{ m}$ a sequence converging to $t$, as $m\to \infty$. Then,
\begin{align*}
\N{\cH_{ N,t_{ m}} - \cH_{ N, t}}_{ -(q+P+2), \kappa_{ 0}, \iota_{ 0}}^{ 2}&= \sum_{ p\geq1} \cro{ \cH_{ N,t_{ m}} - \cH_{ N, t}}{ \psi_{ p}}^{ 2},\\
&\leq \sum_{ p=1}^{ p_{ 0}} \cro{ \cH_{ N,t_{ m}} - \cH_{ N, t}}{ \psi_{ p}}^{ 2} + 2 \sum_{ p>p_{ 0}} \left(\cro{ \cH_{ N,t_{ m}}}{ \psi_{ p}}^{ 2} + \cro{\cH_{ N, t}}{ \psi_{ p}}^{ 2}\right),\\
&\leq \sum_{ p=1}^{ p_{ 0}} \cro{ \cH_{ N,t_{ m}} - \cH_{ N, t}}{ \psi_{ p}}^{ 2} + \frac{ 2 \varepsilon}{ 3}\leq \varepsilon,
\end{align*}
for $t_{ m}$ sufficiently close to $t$ (by continuity of the map $t\mapsto \cro{ \cH_{ N, t}}{ \psi_{ p}}$ for all $p\leq p_{ 0}$).
\end{proof}
\begin{proposition}
\label{prop:HN_semimart}
The process $\cH_{ N}$ is a semimartingale in $\bW_{ -(q+P+2)}^{ \kappa_{ 0}, \iota_{ 0}}$ and in this space
\begin{equation}
\label{eq:HN_semimart}
\cH_{ N,t} = \cH_{ N, 0} + \int_{0}^{t} \mathscr{L}_{ s}^{ \ast} \cH_{ N, s}\dd s + \int_{0}^{t} F_{ N, s} \dd s + \int_{0}^{t} G_{ N, s}\dd s + \cM_{ N, t}^{ (\cH)},
\end{equation}
where $\mathscr{L}_{ s}^{ \ast}$ is the adjoint of $\mathscr{L}_{ s}$ and $\cM_{ N}^{ (\cH)}$ is a martingale in $\bW_{ -(q+P+2)}^{\kappa_{ 0}, \iota_{ 0}}$ with Doob-Meyer process $\llangle \cM_{ N}^{ (\cH)}\rrangle$ with values in $L(\bW_{ q+P+2}^{ \kappa_{ 0}, \iota_{ 0}}, \bW_{ -(q+P+2)}^{ \kappa_{ 0}, \iota_{ 0}})$ is given for every $ \varphi$, $ \psi\in \bW_{ q+P+2}^{ \kappa_{ 0}, \iota_{ 0}}$ by
\begin{align}
\llangle \cM_{ N}^{ (\cH)}\rrangle_{ t} ( \varphi) ( \psi) &= \cro{ \cM_{ N}^{ (\cH)}\varphi}{ \cM_{ N}^{ (\cH)} \psi}_{ t}= \frac{ a_{ N}^{ 2}}{ \left\vert \Lambda_{ N} \right\vert} \int_{0}^{t} b_{ N, s}( \nu_{ N,s})(\varphi, \psi) \dd s,\label{eq:cro_Hilb_MNH}
\end{align}
where for any measure $ \lambda$ on $\bbX\times\bbY\times\bbS$ (we recall \eqref{eq:cro_2} and write $\theta= (\theta^{ (1)}, \ldots, \theta^{ (m)})\in\bbX$), 
\begin{equation}
\begin{split}
b_{ N, s}(\lambda)(\varphi, \psi)&:=\sum_{ r=1}^{ m} \int \left[(\partial_{ \ttheta^{ (r)}}\varphi)\Psi, \lambda\right]_{ 2}(\ttau)\left[(\partial_{ \ttheta^{ (r)}}\psi)\Psi, \lambda\right]_{ 2}(\ttau)\lambda(\dd \ttau)\\
&+ \sum_{ r=1}^{ m}\int \partial_{ \ttheta^{ (r)}} \varphi(\tau, \ttau) \Psi(x, \tx) \epsilon_{ N, s}^{ (r)}( \ttau, \psi)\lambda(\dd \tau)\lambda(\dd \ttau)\\ &+ \sum_{ r=1}^{ m}\int \partial_{ \ttheta^{ (r)}} \psi(\tau, \ttau) \Psi(x, \tx) \epsilon_{ N, s}^{ (r)}( \ttau, \varphi)\lambda(\dd \tau)\lambda(\dd \ttau)\\
&+ \sum_{ r=1}^{ m}\int \epsilon_{ N, s}^{ (r)}(\tau,\varphi)\epsilon_{ N, s}^{ (r)}(\tau,\psi)\lambda(\dd \tau)
\end{split}
\end{equation}
and
\begin{equation}
\label{eq:def_bN}
\epsilon_{ N,s}^{ (r)}(\tau,\varphi):=\frac{ 1}{ \left\vert \Lambda_{ N} \right\vert} \sum_{ j\in \Lambda_{ N}} \partial_{  \theta^{ (r)}} \varphi(\tau, \tau_{ j, s})\Psi(x, x_{ j}) - \int\partial_{ \theta^{ (r)}}\varphi(\tau, \ttau) \Psi(x, \tx)\nu_{ s}(\dd\ttau).
\end{equation}
\end{proposition}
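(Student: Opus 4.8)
The plan is to promote the scalar semimartingale identity \eqref{eq:semimart_HN}, which holds test-function by test-function, to an identity between processes with values in the dual Sobolev space $\bW_{-(q+P+2)}^{\kappa_0,\iota_0}$, and then to read off the operator-valued Doob--Meyer bracket of the martingale part directly from \eqref{eq:def_MNH}. First I would check that each of the five terms on the right-hand side of \eqref{eq:HN_semimart} is a well-defined $\bW_{-(q+P+2)}^{\kappa_0,\iota_0}$-valued process. The initial value $\cH_{N,0}$ and (by Proposition~\ref{prop:MNH_bounded} together with Remark~\ref{rem:norms}) the process $\cM_N^{(\cH)}$ lie in this space. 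For the drift part, Proposition~\ref{prop:continuityL} gives that $\mathscr{L}_s:\bW_{q+P+2}^{\kappa_0,\iota_0}\to\bW_q^{\kappa_1,\iota_1}$ is bounded, so the adjoint $\mathscr{L}_s^\ast$ maps $\bW_{-q}^{\kappa_1,\iota_1}$ continuously into $\bW_{-(q+P+2)}^{\kappa_0,\iota_0}$; combined with the fourth-moment bound of Proposition~\ref{prop:etaNbounded1} on $\N{\cH_{N,s}}_{-q,\kappa_1,\iota_1}$ and the a.s. continuity of $s\mapsto\cH_{N,s}$ from Proposition~\ref{prop:contHN_MNH} (hence strong measurability), the map $s\mapsto\mathscr{L}_s^\ast\cH_{N,s}$ is a.s. Bochner integrable, so $t\mapsto\int_0^t\mathscr{L}_s^\ast\cH_{N,s}\,\dd s$ is a continuous finite-variation process in $\bW_{-(q+P+2)}^{\kappa_0,\iota_0}$. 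The terms $\int_0^t F_{N,s}\,\dd s$ and $\int_0^t G_{N,s}\,\dd s$ are handled the same way via Proposition~\ref{prop:UN_VN} and Remark~\ref{rem:norms}.

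To obtain \eqref{eq:HN_semimart} as an identity in $\bW_{-(q+P+2)}^{\kappa_0,\iota_0}$ I would test both sides against each element of a complete orthonormal system $(\psi_p)_{p\geq1}$ of $\bW_{q+P+2}^{\kappa_0,\iota_0}$: pairing \eqref{eq:HN_semimart} with $\psi_p$, using the definition of the adjoint $\mathscr{L}_s^\ast$ and Fubini to move $\int_0^t$ through the duality pairing (legitimate by the absolute integrability just established), reduces exactly to \eqref{eq:semimart_HN} with $g=\psi_p$, which is Proposition~\ref{prop:semimartI}. Since two elements of $\bW_{-(q+P+2)}^{\kappa_0,\iota_0}$ that coincide on all $\psi_p$ are equal, \eqref{eq:HN_semimart} follows. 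For the martingale property, each $t\mapsto\cM_{N,t}^{(\cH)}\psi_p$ is a genuine real martingale (a finite sum of It\^o integrals against the independent $B_i$), and Proposition~\ref{prop:MNH_bounded} gives $\sup_{t\leq T}\bE\,\N{\cM_{N,t}^{(\cH)}}_{-(q+P+2),\kappa_0,\iota_0}^2<\infty$; by the standard criterion for square-integrable Hilbert-space-valued martingales (cf. \cite{JoffeMetivier1986,Fernandez1997}), $\cM_N^{(\cH)}$ is then a square-integrable martingale in $\bW_{-(q+P+2)}^{\kappa_0,\iota_0}$, with continuous paths by Proposition~\ref{prop:contHN_MNH}.

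It remains to identify the bracket. Reindexing the double sum in the first line of \eqref{eq:def_MNH} and comparing with \eqref{eq:def_bN}, one sees that $\cM_{N,t}^{(\cH)}g$ is, for each $k\in\Lambda_N$, driven by $B_k$ with the $\bbR^m$-valued integrand whose $r$-th component at time $s$ equals $\frac{a_N}{|\Lambda_N|}\big(\big[(\partial_{\ttheta^{(r)}}g)\Psi,\nu_{N,s}\big]_2(\tau_{k,s})+\epsilon_{N,s}^{(r)}(\tau_{k,s},g)\big)$. Hence, by the formula for the predictable covariation of stochastic integrals and the independence of the $B_k$,
\begin{equation*}
\cro{\cM_N^{(\cH)}\varphi}{\cM_N^{(\cH)}\psi}_t=\frac{a_N^2}{|\Lambda_N|^2}\sum_{k\in\Lambda_N}\sum_{r=1}^{m}\int_0^t\Big(\big[(\partial_{\ttheta^{(r)}}\varphi)\Psi,\nu_{N,s}\big]_2(\tau_{k,s})+\epsilon_{N,s}^{(r)}(\tau_{k,s},\varphi)\Big)\Big(\big[(\partial_{\ttheta^{(r)}}\psi)\Psi,\nu_{N,s}\big]_2(\tau_{k,s})+\epsilon_{N,s}^{(r)}(\tau_{k,s},\psi)\Big)\,\dd s.
\end{equation*}
Writing $\frac1{|\Lambda_N|}\sum_{k\in\Lambda_N}h(\tau_{k,s})=\cro{\nu_{N,s}}{h}$ and expanding the product into its four terms turns the right-hand side into $\frac{a_N^2}{|\Lambda_N|}\int_0^t b_{N,s}(\nu_{N,s})(\varphi,\psi)\,\dd s$, i.e. \eqref{eq:cro_Hilb_MNH}. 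That this defines an element of $L(\bW_{q+P+2}^{\kappa_0,\iota_0},\bW_{-(q+P+2)}^{\kappa_0,\iota_0})$ for each $s$ follows from applying the continuity estimates of Propositions~\ref{prop:linearforms} and~\ref{prop:contU} to the linear forms $\varphi\mapsto\big[(\partial_{\ttheta^{(r)}}\varphi)\Psi,\lambda\big]_2(\cdot)$ and $\varphi\mapsto\epsilon_{N,s}^{(r)}(\cdot,\varphi)$, together with the moment bounds of Proposition~\ref{prop:moment_particles}.

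The main obstacle is not any single computation but the functional-analytic passage from the family of scalar identities and scalar martingales to bona fide $\bW_{-(q+P+2)}^{\kappa_0,\iota_0}$-valued objects: one must verify the a.s. Bochner integrability of $s\mapsto\mathscr{L}_s^\ast\cH_{N,s}$ in the dual space --- which is exactly where the regularity gain $q+P+2\to q$ in Proposition~\ref{prop:continuityL} and the $L^4$ control of Proposition~\ref{prop:etaNbounded1} are needed --- and invoke the appropriate infinite-dimensional martingale representation theorem with matching integrability. Once this framework is set up, the bracket computation above is routine bookkeeping.
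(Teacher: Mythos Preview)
Your proposal is correct and follows essentially the same approach as the paper, which simply refers to \cite{Fernandez1997}, Proposition~4.5 for the Hilbert-space-valued semimartingale lift and then computes the bracket directly from the independence of the Brownian motions. Your write-up spells out explicitly the Bochner integrability and the orthonormal-basis argument that the paper leaves implicit in that reference, and your reindexing of \eqref{eq:def_MNH} into the compact form $\frac{a_N}{|\Lambda_N|}\big([(\partial_{\ttheta^{(r)}}g)\Psi,\nu_{N,s}]_2(\tau_{k,s})+\epsilon_{N,s}^{(r)}(\tau_{k,s},g)\big)$ is exactly the computation the paper carries out line by line to arrive at \eqref{eq:cro_Hilb_MNH}.
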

\begin{proof}[Proof of Proposition~\ref{prop:HN_semimart}]
It is a simple adaptation to our case of \cite{Fernandez1997}, Proposition~4.5. We refer to it for details. Note that, using the independence of the Brownian motions $(B_{ i})_{ i\in \Lambda_{ N}}$, one has
\begin{align*}
\left\langle \cM_{ N}^{ (\cH)} \varphi\, ,\, \cM_{ N}^{ (\cH)} \psi\right\rangle_{ t}&= \frac{ a_{ N}^{ 2}}{ \left\vert \Lambda_{ N} \right\vert^{ 4}} \sum_{ i, j, k\in \Lambda_{ N}} \sum_{ r=1}^{ m} \int_{0}^{t} \partial_{  \ttheta^{ (r)}} \varphi(\tau_{ i, s}, \tau_{ j, s}) \Psi(x_{ i}, x_{ j})\partial_{  \ttheta^{ (r)}} \psi(\tau_{ k, s}, \tau_{ j, s}) \Psi(x_{ k}, x_{ j}) \dd s\\
&+ \frac{ a_{ N}^{ 2}}{ \left\vert \Lambda_{ N} \right\vert^{ 3}}\sum_{ i, j\in \Lambda_{ N}} \sum_{ r=1}^{ m} \int_{0}^{t} \partial_{  \ttheta^{ (r)}} \varphi(\tau_{ i, s}, \tau_{ j, s}) \Psi(x_{ i}, x_{ j})\epsilon_{ N, s}^{ (r)}(\tau_{ j, s}, \psi)\dd s\\
&+ \frac{ a_{ N}^{ 2}}{ \left\vert \Lambda_{ N} \right\vert^{ 3}}\sum_{ i, j\in \Lambda_{ N}} \sum_{ r=1}^{ m} \int_{0}^{t} \partial_{  \ttheta^{ (r)}} \psi(\tau_{ i, s}, \tau_{ j, s}) \Psi(x_{ i}, x_{ j})\epsilon_{ N, s}^{ (r)}(\tau_{ j, s}, \varphi)\dd s\\
&+ \frac{ a_{ N}^{ 2}}{ \left\vert \Lambda_{ N} \right\vert^{ 2}} \sum_{ i\in \Lambda_{ N}} \sum_{ r=1}^{ m} \int_{0}^{t}   \epsilon_{ N, s}^{ (r)}(\tau_{ i, s}, \varphi) \epsilon_{ N, s}^{ (r)}(\tau_{ i, s}, \psi) \dd s,
\end{align*}
which is precisely \eqref{eq:cro_Hilb_MNH}.
\end{proof}

\subsubsection{Tightness results}
\begin{proposition}
\label{prop:MNH_tight}
The sequence of the laws of $(\cM_{ N}^{ (\cH)})_{ N\geq 1}$ is tight in $\cC \left([0, T], \bW_{ -(q+P+2)}^{ \kappa_{ 0}, \iota_{ 0}}\right)$.
\end{proposition}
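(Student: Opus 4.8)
The plan is to apply the Joffe--M\'etivier tightness criterion recalled in Section~\ref{sec:tightness_crit}, with target space $H:=\bW_{-(q+P+2)}^{\kappa_0,\iota_0}$ and auxiliary Hilbert space $H_0:=\bW_{-q}^{\kappa_1,\iota_1}$. The Hilbert--Schmidt embedding $H_0\hookrightarrow H$ required by the criterion is exactly \eqref{eq:comp_embed_dual}, taken with $l=\kappa_0$, $l^{\prime}=\iota_0$, $k=\gamma>P$ and regularity index $P+2>P$ (recall Definition~\ref{def:kappas} and Remark~\ref{rem:norms}); the a.s.\ continuity of the trajectories of $\cM_N^{(\cH)}$ in $H$, also needed for the criterion to apply in $\cC([0,T],H)$, is provided by Proposition~\ref{prop:contHN_MNH}. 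The boundedness condition \eqref{eq:tight1} for $Y_N=\cM_N^{(\cH)}$ is then nothing but Proposition~\ref{prop:MNH_bounded}: $\sup_N\sup_{t\leq T}\bE(\N{\cM_{N,t}^{(\cH)}}_{-q,\kappa_1,\iota_1}^{2})\leq C\sup_N a_N^2/\left\vert\Lambda_N\right\vert<+\infty$, since $a_N^2/\left\vert\Lambda_N\right\vert=\tfrac12$ for $\alpha<\tfrac12$ and $a_N^2/\left\vert\Lambda_N\right\vert=\tfrac12 N^{1-2\alpha}\to0$ for $\alpha>\tfrac12$.

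The substance is the Aldous condition \eqref{eq:tight2}. Given an $(\cF_{N,t})$-stopping time $T_N\leq T$ and $s>0$, I would use that $\cM_N^{(\cH)}$ is a martingale in $H$ (Proposition~\ref{prop:HN_semimart}): writing $(\psi_p)_{p\geq1}$ for a complete orthonormal system of $\bW_{q+P+2}^{\kappa_0,\iota_0}$, optional sampling and the expression \eqref{eq:cro_Hilb_MNH} of the Doob--Meyer process give
\begin{equation*}
\bE\!\left(\N{\cM_{N,T_N+s}^{(\cH)}-\cM_{N,T_N}^{(\cH)}}_{-(q+P+2),\kappa_0,\iota_0}^{2}\right)=\frac{a_N^2}{\left\vert\Lambda_N\right\vert}\,\bE\!\left(\int_{T_N}^{T_N+s}\mathrm{tr}\,b_{N,u}(\nu_{N,u})\,\dd u\right),\qquad \mathrm{tr}\,b_{N,u}(\nu_{N,u}):=\sum_{p\geq1}b_{N,u}(\nu_{N,u})(\psi_p,\psi_p).
\end{equation*}
The key point is then to establish $\sup_N\bE\big(\sup_{u\leq T}\mathrm{tr}\,b_{N,u}(\nu_{N,u})\big)<+\infty$; granting this, the right-hand side above is $\leq C s$ with $C$ independent of $N$ and of $T_N$, and Markov's inequality yields $\bP(\N{\cM_{N,T_N+s}^{(\cH)}-\cM_{N,T_N}^{(\cH)}}_H\geq\varepsilon_1)\leq C s/\varepsilon_1^{2}$, so \eqref{eq:tight2} holds with $s_0:=\varepsilon_1^{2}\varepsilon_2/C$.

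To prove the bound on $\bE(\sup_{u\leq T}\mathrm{tr}\,b_{N,u})$ I would treat each of the four groups of terms in $b_{N,u}$ separately, bounding the sum over $p$ by the squared operator norms of the linear forms that appear there, controlled by Proposition~\ref{prop:linearforms} (for $\cT$) and Proposition~\ref{prop:contU} (for $\cV$ and $\cW$; note that the $\epsilon_{N,u}^{(r)}(\cdot,\psi)$ of \eqref{eq:def_bN} are differences of forms of type $\cV$ and $\cW$), then integrating the resulting polynomial weights $w$ against $\nu_{N,u}$ and using the uniform moment estimates of Proposition~\ref{prop:moment_particles}. This is essentially the computation already carried out in the proof of Proposition~\ref{prop:MNH_bounded}, following \cite{Fernandez1997,Lucon2011}.

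The main obstacle is keeping this control uniform in $N$ in spite of the singularity of $\Psi$. As in the proof of Proposition~\ref{prop:MNH_bounded}, once the $w$-weights are bounded in expectation by constants, the remaining spatial quantities reduce --- after a Cauchy--Schwarz step isolating one copy of $\Psi$ --- to averages of the form $\tfrac{1}{\left\vert\Lambda_N\right\vert}\sum_j\Psi(x_i,x_j)$, which are bounded uniformly in $N$ and $i$ by Remark~\ref{rem:Psi_bounded} (itself a consequence of Lemma~\ref{lem:Riem}); the resulting estimate then carries only the factor $a_N^2/\left\vert\Lambda_N\right\vert$, which is bounded in both the subcritical and the supercritical regime as recorded above. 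With this the estimate closes exactly as in Proposition~\ref{prop:MNH_bounded}, and the two conditions of the criterion are verified.
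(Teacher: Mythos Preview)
Your proposal is correct and follows essentially the same route as the paper: both verify \eqref{eq:tight1} via Proposition~\ref{prop:MNH_bounded} and the Hilbert--Schmidt embedding, and both reduce the Aldous condition to a uniform bound on $\bE\big(\int_{T_N}^{T_N+s}\mathrm{tr}\,b_{N,u}(\nu_{N,u})\,\dd u\big)$, controlled via the linear forms $\cV,\cW$ of Definition~\ref{def:linearforms_x}, the moment estimates of Proposition~\ref{prop:moment_particles}, and Remark~\ref{rem:Psi_bounded}. The only cosmetic difference is that the paper invokes Rebolledo's theorem (\cite{JoffeMetivier1986}, p.~40) to pass from Aldous for $\cM_N^{(\cH)}$ to Aldous for the trace of its bracket, whereas you obtain the same inequality directly from optional sampling and Markov on $\N{\cM_{N,T_N+s}^{(\cH)}-\cM_{N,T_N}^{(\cH)}}_H^2$; both arguments land on the same computation.
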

\begin{proof}[Proof of Proposition~\ref{prop:MNH_tight}]
We prove that $\cM_{ N}^{ (H)}$ satisfies the tightness criterion of Section~\ref{sec:tightness_crit}: by Proposition~\ref{prop:MNH_bounded}, the first part of the tightness criterion above is verified for $H_{ 0}=\bW_{ -q}^{ \kappa_{ 1}, \iota_{ 1}}$ and $H= \bW_{ -(q+P+2)}^{ \kappa_{ 0}, \iota_{ 0}}$, since the embedding $\bW_{ -q}^{ \kappa_{ 1}, \iota_{ 1}}\hookrightarrow \bW_{ -(q+P+2)}^{ \kappa_{ 0}, \iota_{ 0}}$ is Hilbert-Schmidt. The second part of the criterion is verified if (by Rebolledo's Theorem, see \cite{JoffeMetivier1986}, p.40) it is satisfied by the trace ${\rm tr}_{ \bW_{ -(q+P+2)}^{ \kappa_{ 0}, \iota_{ 0}}}\llangle \cM_{ N}^{ (H)}\rrangle$. Let $ \varepsilon_{ 1}>0$ and $ T_{ N}\leq T$ be a stopping time. For a complete orthonormal system $(\psi_{ p})_{ p\geq1}$ in $\bW_{ q+P+2}^{ \kappa_{ 0}, \iota_{ 0}}$, we have
\begin{align*}
\bP &\left(\left\vert {\rm tr}_{ \bW_{ -(q+P+2)}^{ \kappa_{ 0}, \iota_{ 0}}}\llangle \cM_{ N}^{ (H)}\rrangle_{ T_{ N}+ s} - {\rm tr}_{ \bW_{ -(q+P+2)}^{ \kappa_{ 0}, \iota_{ 0}}}\llangle \cM_{ N}^{ (H)}\rrangle_{ T_{ N}} \right\vert > \varepsilon_{ 1}\right),\\
&\leq \frac{ 1}{ \varepsilon_{ 1}}\bE \left(\int_{ T_{ N}}^{ T_{ N}+s} {\rm tr}_{ \bW_{ -(q+P+2)}^{ \kappa_{ 0}, \iota_{ 0}}}b_{ N, u}( \nu_{ N,u}) \dd u\right),\\
&\leq \frac{ 1}{ \varepsilon_{ 1}} \sum_{ p\geq1}\sum_{ r=1}^{ m}\bE \left( \int_{ T_{ N}}^{ T_{ N}+s} \int \left(\int\partial_{ \ttheta^{ (r)}}\psi_{ p}(\tau_{ 1}, \tau_{ 2}) \Psi(x_{ 1}, x_{ 2}) \nu_{ N, u}(\dd \tau_{ 1})\right)^{ 2} \nu_{ N, u}(\dd \tau_{ 2})\dd u\right)\\
&+ \frac{ 1}{ \varepsilon_{ 1}} \sum_{ p\geq1}\sum_{ r=1}^{ m}\bE \left(\int_{T_{ N}}^{T_{ N}+s} \int \epsilon^{ (r)}_{ N, u}(\tau, \psi_{ p})^{ 2} \nu_{ N, u}(\dd \tau) \dd u\right)\\
&+ \frac{ 2}{ \varepsilon_{ 1}} \sum_{ p\geq1}\sum_{ r=1}^{ m}\bE \left( \int_{T_{ N}}^{T_{ N}+s} \int \partial_{ \ttheta^{ (r)}} \psi_{ p}(\tau_{ 1}, \tau_{ 2})\Psi(x_{ 1}, x_{ 2})\epsilon^{ (r)}_{ N, u}(\tau_{ 2}, \psi_{ p}) \nu_{ N, u}(\dd \tau_{ 1}) \nu_{ N, u}(\dd \tau_{ 2})\dd u\right),\\
&=A_{N, 1} + A_{ N, 2} + A_{ N, 3}.
\end{align*}
We only treat the first term above and leave the remaining two to the reader.
\begin{align*}
A_{ N, 1}&=\frac{ 1}{ \varepsilon_{ 1}} \sum_{ p\geq1}\sum_{ r=1}^{ m}\bE \left( \int_{ T_{ N}}^{ T_{ N}+s} \frac{ 1}{ \left\vert \Lambda_{ N} \right\vert}\sum_{ i\in \Lambda_{ N}} \left( \frac{ 1}{ \left\vert \Lambda_{ N} \right\vert}\sum_{ j\in \Lambda_{ N}}\partial_{ \ttheta^{ (r)}}\psi_{ p}(\tau_{ j, u}, \tau_{ i, u}) \Psi(x_{ j}, x_{ i}) \right)^{ 2}\dd u\right),\\
&=\frac{ 1}{ \varepsilon_{ 1}} \sum_{ r=1}^{ m}\bE \left( \int_{ T_{ N}}^{ T_{ N}+s} \frac{ 1}{ \left\vert \Lambda_{ N} \right\vert}\sum_{ i\in \Lambda_{ N}} \N{ \cV_{ N, i, r}}_{ -(q+P+2), \kappa_{ 0}, \iota_{ 0}}^{ 2}\dd u\right),
\end{align*}
where $\cV_{ N, i, r}$ is the linear form introduced in \eqref{eq:def_cV} for the choice of $(\tau_{ 1, u}, \ldots, \tau_{ N, u})$. By \eqref{eq:contV}, one obtains (writing $w( \tau_{ k, u}, \tau_{ i, u})$ instead of $w( \tau_{ k, u}, \tau_{ i, u}, \kappa_{ 1}, \iota_{ 1})$),
\begin{align*}
A_{ N, 1}&\leq\frac{ Cm}{ \varepsilon_{ 1}\left\vert \Lambda_{ N} \right\vert} \sum_{ i\in \Lambda_{ N}}\bE \left( \int_{ T_{ N}}^{ T_{ N}+s}  \left(\frac{1}{ \left\vert \Lambda_{ N} \right\vert} \sum_{ k\in \Lambda_{ N}}w( \tau_{ k, u}, \tau_{ i, u}) \Psi(x_{ k}, x_{ i})\right)^{ 2}\dd u\right),\\
&=\frac{ Cs}{ \varepsilon_{ 1}\left\vert \Lambda_{ N} \right\vert^{ 3}} \sum_{ i, k, l\in \Lambda_{ N}}\Psi(x_{ k}, x_{ i}) \Psi(x_{ l}, x_{ i})\bE \left(\sup_{ u\leq T}w( \tau_{ k, u}, \tau_{ i, u}) w( \tau_{ l, u}, \tau_{ i, u})\right)\leq \frac{ Cs}{ \varepsilon_{ 1}},
\end{align*}
using Proposition~\ref{prop:moment_particles} and Remark~\ref{rem:Psi_bounded}. This term is lower or equal than any $ \varepsilon_{ 2}>0$ provided $s$ is sufficiently small. The treatment of the remaining terms can be done in the same way, using the continuity of the linear form $\cW_{ N}$ defined in \eqref{eq:def_cW}.
\end{proof}
\begin{theorem}
\label{theo:HN_tight}
The sequence of the laws of $(\cH_{ N})_{ N\geq1}$ is tight in $\cC([0, T], \bW_{ -(q+P+2)}^{ \kappa_{ 0}, \iota_{ 0}})$.
\end{theorem}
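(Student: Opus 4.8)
The plan is to verify the two-part tightness criterion recalled in Section~\ref{sec:tightness_crit} for the process $\cH_N$, taking $H_0=\bW_{-q}^{\kappa_1,\iota_1}$ and $H=\bW_{-(q+P+2)}^{\kappa_0,\iota_0}$ (note that $\cH_N$ has paths in $\cC([0,T],H)$ by Proposition~\ref{prop:contHN_MNH}). Condition \eqref{eq:tight1} is essentially free: the embedding $\bW_{-q}^{\kappa_1,\iota_1}\hookrightarrow\bW_{-(q+P+2)}^{\kappa_0,\iota_0}$ is Hilbert--Schmidt by \eqref{eq:comp_embed_dual} --- applied with $l=\kappa_0$, $l'=\iota_0$, $k=\gamma>P$, using Definition~\ref{def:kappas} and the fact that $\gamma\geq P+1$ from \eqref{eq:def_beta} --- and the remark following Proposition~\ref{prop:etaNbounded1} supplies $\sup_{N\geq1}\sup_{t\leq T}\bE(\N{\cH_{N,t}}^2_{-q,\kappa_1,\iota_1})<+\infty$. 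So all the work goes into the Aldous condition \eqref{eq:tight2}.

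For this I would fix an $(\cF_{N,t})$-stopping time $T_N\leq T$ and $s>0$ and use the semimartingale decomposition \eqref{eq:HN_semimart} of $\cH_N$ in $\bW_{-(q+P+2)}^{\kappa_0,\iota_0}$ (Proposition~\ref{prop:HN_semimart}) to write
\begin{equation*}
\cH_{N,T_N+s}-\cH_{N,T_N}=\int_{T_N}^{T_N+s}\mathscr{L}_u^{\ast}\cH_{N,u}\dd u+\int_{T_N}^{T_N+s}\bigl(F_{N,u}+G_{N,u}\bigr)\dd u+\bigl(\cM_{N,T_N+s}^{(\cH)}-\cM_{N,T_N}^{(\cH)}\bigr),
\end{equation*}
and then estimate the $\bW_{-(q+P+2)}^{\kappa_0,\iota_0}$-norm of each of the three pieces. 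For the drift integral, dualizing Proposition~\ref{prop:continuityL} shows $\mathscr{L}_u^{\ast}$ is bounded from $\bW_{-q}^{\kappa_1,\iota_1}$ into $\bW_{-(q+P+2)}^{\kappa_0,\iota_0}$, hence $\bE\N{\int_{T_N}^{T_N+s}\mathscr{L}_u^{\ast}\cH_{N,u}\dd u}_{-(q+P+2),\kappa_0,\iota_0}\leq C\int_{T_N}^{T_N+s}\bE\N{\cH_{N,u}}_{-q,\kappa_1,\iota_1}\dd u\leq Cs$ uniformly in $N$, again by Proposition~\ref{prop:etaNbounded1}. For the $F_N+G_N$ integral, Proposition~\ref{prop:UN_VN} combined with Remark~\ref{rem:norms} gives $\sup_{u\leq T}\bE\N{F_{N,u}+G_{N,u}}_{-(q+P+2),\kappa_0,\iota_0}\leq C/a_N\leq C$, so this piece is also $\leq Cs$ in expectation. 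A Markov inequality then bounds the probability that the sum of these first two pieces exceeds $\varepsilon_1/2$ by $Cs/\varepsilon_1$, which is $\leq\varepsilon_2/2$ once $s$ is small enough. The martingale increment is disposed of by Proposition~\ref{prop:MNH_tight}: tightness of $(\cM_N^{(\cH)})_{N\geq1}$ in $\cC([0,T],\bW_{-(q+P+2)}^{\kappa_0,\iota_0})$ already yields an Aldous estimate, so there is $s_0>0$, uniform in $N$ and in $T_N$, with $\sup_{N}\sup_{s\leq s_0}\bP(\N{\cM_{N,T_N+s}^{(\cH)}-\cM_{N,T_N}^{(\cH)}}_{-(q+P+2),\kappa_0,\iota_0}\geq\varepsilon_1/2)\leq\varepsilon_2/2$. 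Adding the two bounds gives \eqref{eq:tight2}, and combined with the first part this proves tightness.

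I do not anticipate a real obstacle: the heavy lifting has already been done in Propositions~\ref{prop:etaNbounded1}, \ref{prop:continuityL}, \ref{prop:UN_VN}, \ref{prop:MNH_bounded}, \ref{prop:HN_semimart} and~\ref{prop:MNH_tight}, and the argument only assembles them following the pattern of \cite{Fernandez1997,Lucon2011}. The single point demanding care is the bookkeeping of Sobolev indices in the drift estimate: the term $\int\mathscr{L}_u^{\ast}\cH_{N,u}\dd u$ must be controlled by entering with the pair $(\kappa_1,\iota_1)$ at regularity index $q$ and exiting with $(\kappa_0,\iota_0)$ at index $q+P+2$, so that the regularity loss in Proposition~\ref{prop:continuityL} is exactly absorbed; likewise it is Remark~\ref{rem:norms} that makes the $F_N,G_N$ estimates, stated for the finer norm $\N{\cdot}_{-q,\kappa_1,\iota_1}$, usable for the coarser norm of $H$.
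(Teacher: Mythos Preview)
Your proof is correct and follows essentially the same route as the paper: verify condition~\eqref{eq:tight1} via Proposition~\ref{prop:etaNbounded1} and the Hilbert--Schmidt embedding, then establish Aldous' condition term by term in the decomposition~\eqref{eq:HN_semimart}, quoting Proposition~\ref{prop:MNH_tight} for the martingale part and Propositions~\ref{prop:continuityL}, \ref{prop:etaNbounded1}, \ref{prop:UN_VN} (with Remark~\ref{rem:norms}) for the three drift integrals. One notational slip: writing $\int_{T_N}^{T_N+s}\bE\N{\cH_{N,u}}_{-q,\kappa_1,\iota_1}\dd u$ is ill-posed since $T_N$ is random and correlated with the integrand; the clean fix (as in the paper) is to use the second-moment Markov inequality and Cauchy--Schwarz on the Bochner integral to reach $\frac{Cs}{\varepsilon_1^2}\bE\int_{T_N}^{T_N+s}\N{\cH_{N,u}}_{-q,\kappa_1,\iota_1}^2\dd u$, which is then bounded by enlarging the domain of integration to $[0,T+s]$ and applying Fubini together with Proposition~\ref{prop:etaNbounded1}.
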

\begin{proof}[Proof of Theorem~\ref{theo:HN_tight}]
Proposition~\ref{prop:etaNbounded1} implies, as in the proof of Proposition ~\ref{prop:MNH_tight}, that the first item of the tightness criterion of Section~\ref{sec:tightness_crit} is satisfied for the process $\cH_{ N}$. We verify now the second part of the tightness criterion. Since it is verified for the process $\cM_{ N}^{ (\cH)}$ (see Proposition ~\ref{prop:MNH_tight}) it suffices to prove it for the three terms (recall \eqref{eq:HN_semimart}) $\int_{0}^{t} \mathscr{L}_{ s}^{ \ast} \cH_{ N, s}\dd s$, $\int_{0}^{t} F_{ N, s} \dd s$ and $\int_{0}^{t} G_{ N, s}\dd s$. Concerning $\int_{0}^{t} \mathscr{L}_{ s}^{ \ast} \cH_{ N, s}\dd s$, if $( \psi_{ p})_{ p\geq1}$ is a complete orthonormal system in $\bW_{ q+P+2}^{ \kappa_{ 0}, \iota_{ 0}}$, we have successively, for all $ \varepsilon_{ 1}>0$ and stopping time $T_{ N}\leq T$,
\begin{align*}
\bP &\left(\N{ \int_{0}^{T_{ N}+ s}\mathscr{L}_{ u}^{ \ast} \cH_{ N, u}\dd u - \int_{0}^{T_{ N}}\mathscr{L}_{ u}^{ \ast} \cH_{ N, u}\dd u }_{ -(q+P+2), \kappa_{ 0}, \iota_{ 0}}\geq \varepsilon_{ 1}\right)\\
&\leq \frac{ 1}{ \varepsilon_{ 1}^{ 2}}\bE \left(\N{ \int_{T_{ N}}^{T_{ N}+s} \mathscr{L}_{ u}^{ \ast} \cH_{ N, u}\dd u}_{ -(q+P+2), \kappa_{ 0}, \iota_{ 0}}^{ 2}\right),\\
&\leq \frac{ s}{ \varepsilon_{ 1}^{ 2}}\bE \left( \int_{T_{ N}}^{T_{ N}+s} \N{\mathscr{L}_{ u}^{ \ast} \cH_{ N, u}}_{ -(q+P+2), \kappa_{ 0}, \iota_{ 0}}^{ 2}\dd u\right),\\
&\leq \frac{ s}{ \varepsilon_{ 1}^{ 2}}\bE \left( \int_{T_{ N}}^{T_{ N}+s} \sum_{ p\geq1} \cro{ \cH_{ N, u}}{ \mathscr{L}_{ u} \psi_{ p}}^{ 2}\dd u\right),\\
&\leq \frac{ Cs}{ \varepsilon_{ 1}^{ 2}}\bE \left( \int_{T_{ N}}^{T_{ N}+s} \N{\cH_{ N, u}}_{ -q, \kappa_{ 1}, \iota_{ 1}}^{ 2}\dd u\right).
\end{align*}
By Proposition ~\ref{prop:etaNbounded1}, this expectation is finite. The same kind of estimates can be proven concerning the remaining terms using Proposition ~\ref{prop:UN_VN} and Remark~\ref{rem:norms}. The process $\cH_{ N}$ satisfies the tightness criterion, and hence, Theorem ~\ref{theo:HN_tight} is proven.
\end{proof}
\subsection{Tightness of the fluctuation process $ \eta_{ N}$}
Note that it is possible to retrieve the fluctuation process $ \eta_{ N}$ from the two-particle process $\cH_{ N}$. Indeed, we easily see from \eqref{eq:rel_etaN_HN} that, for all function $ \tau \mapsto f(\tau)$, 
\begin{equation}
\label{eq:HN_to_etaN}
\left\langle \eta_{ N, t}\, ,\, f\right\rangle = \left\langle \cH_{ N, t}\, ,\, F\right\rangle,
\end{equation} for the choice of 
\begin{equation}
\label{eq:F}
F(\theta, \omega, x, \ttheta, \tomega, \tx):= d(x, \tx)^{ \alpha} f(\ttheta, \tomega, \tx)
\end{equation}
Unfortunately, deriving the tightness of $\eta_{ N}$ from the tightness of $\cH_{ N}$ is not easy: the main difficulty is that, whatever the regularity of $f$ may be, the function $F$ in \eqref{eq:F} will always be at most $ \alpha-$H\"older in the space variables $(x, \tx)$, which is not a sufficient regularity for the test functions and the Sobolev embeddings used in the previous section. 

The strategy we follow is simply to adapt the proof of the tightness of $\cH_{ N}$ to the set-up of $ \eta_{ N}$. Note that the proof becomes considerably easier since the definition of the process $ \eta_{ N}$ (contrary to $\cH_{ N}$) does not incorporate any spatial singularities: in the semimartingale decomposition \eqref{eq:semimart} of $ \eta_{ N}$, all the singularities in $x$ are inside the term involving $\cH_{ N}$. Due to the similarity between the proof of Theorem~\ref{theo:etaN_tight} and the arguments of Section~\ref{sec:tightness_two_particle}, we will only sketch the main lines of proof in this paragraph and leave the details to the reader. Recall the definitions of the weighted Sobolev spaces $\bV$ in Section~\ref{sec:weighted_sobolev_spaces_one_var}.
\begin{theorem}
\label{theo:etaN_tight}
For any $q\geq P+2$, the sequence of the laws of $(\eta_{ N})_{ N\geq1}$ is tight in $\cC([0, T], \bV_{ -(q+P+2)}^{ \kappa_{ 0}, \iota_{ 0}})$.
\end{theorem}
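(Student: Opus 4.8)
The plan is to mirror the proof of Theorem~\ref{theo:HN_tight} and verify, for the process $\eta_N$, the two-part tightness criterion of Section~\ref{sec:tightness_crit} with the Hilbert spaces $H:=\bV_{-(q+P+2)}^{\kappa_0,\iota_0}$ and $H_0:=\bV_{-q}^{\kappa_1,\iota_1}$; the embedding $H_0\hookrightarrow H$ is Hilbert--Schmidt by \eqref{eq:comp_embed_dual_one_var}, since $\kappa_1=\kappa_0+\gamma$, $\iota_1=\iota_0+\gamma$ with $\gamma>P/2$. Everything rests on the semimartingale decomposition \eqref{eq:semimart}. The decisive simplification relative to $\cH_N$ is that $\eta_N$ carries no spatial singularity: in \eqref{eq:semimart} all the singular dependence in the space variables sits inside the term $\int_0^t\cro{\cH_{N,s}}{\Phi[f]}\dd s$, which I will simply dominate using Proposition~\ref{prop:Phi_regularity} together with the uniform $\bW_{-q}^{\kappa_1,\iota_1}$-boundedness of $\cH_N$ (the Remark after Proposition~\ref{prop:etaNbounded1}, respectively Proposition~\ref{prop:HN_bounded_2}). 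Note that one cannot deduce the tightness of $\eta_N$ from that of $\cH_N$ through the identity \eqref{eq:HN_to_etaN}, because the function $F$ of \eqref{eq:F} is only $\alpha$-H\"older in $(x,\tx)$, short of the regularity demanded by the Sobolev embeddings used throughout; hence the argument has to be carried out directly.

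The first and only substantial step is the analogue of Proposition~\ref{prop:etaNbounded1}: $\sup_N\sup_{t\leq T}\bE\big(\N{\eta_{N,t}}_{-q,\kappa_1,\iota_1}^{2}\big)<+\infty$. I would introduce the coupled empirical measure $\bar\nu_{N,t}:=\frac{1}{|\Lambda_N|}\sum_{i\in\Lambda_N}\delta_{(\btheta_{i,t},\omega_i,x_i)}$ built from the nonlinear processes \eqref{eq:nonlinproc}, and split $\eta_{N,t}=a_N(\nu_{N,t}-\bar\nu_{N,t})+a_N(\bar\nu_{N,t}-\nu_t)$. Testing against $\tau\mapsto f(\tau)$, the first piece is bounded, up to polynomial weights, by $a_N\N{f}_{C^{\kappa_1,\iota_1}_1}\frac{1}{|\Lambda_N|}\sum_i|\theta_{i,t}-\btheta_{i,t}|$, which is uniformly $L^2$-bounded thanks to Proposition~\ref{prop:ttaVSnonlin} and Proposition~\ref{prop:moment_particles} (the scaling $a_N$ being exactly compensated). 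For the second piece, Remark~\ref{rem:Psix} shows that the law $\lambda_t^{x,\omega}$ does not depend on $x$, so $x\mapsto\cro{\nu_t}{f(\cdot,\cdot,x)}$ is as regular as $f$; writing $a_N(\bar\nu_{N,t}-\nu_t)$ as a sum of independent centered variables (each of variance $O(1/|\Lambda_N|)$, contributing $O(a_N^2/|\Lambda_N|)=O(1)$) plus a Riemann-sum error of order $a_N/N=O(N^{-1/2})$ gives the bound against a fixed $f$; expanding in a complete orthonormal system of $\bV_q^{\kappa_1,\iota_1}$ and summing yields the claim in the dual norm. The same scheme (even simpler at $t=0$, where $\btheta_{i,0}=\theta_{i,0}$) gives $\sup_N\bE\N{\eta_{N,0}}_{-(q+P+2),\kappa_0,\iota_0}^2<+\infty$, and arguing as in Proposition~\ref{prop:MNH_bounded} with the linear form $f\mapsto\nabla_\theta f$ one obtains $\sup_{t\leq T}\bE\N{\cM_{N,t}^{(\eta)}}_{-q,\kappa_1,\iota_1}^2\leq Ca_N^2/|\Lambda_N|\leq C$, since $a_N^2\leq N\leq|\Lambda_N|$ in both regimes of \eqref{eq:aN}. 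Feeding these estimates, the continuity of $L[\nu_s]$ from $\bV_{q+P+2}^{\kappa_0,\iota_0}$ to $\bV_q^{\kappa_1,\iota_1}$ (the one-variable version of Proposition~\ref{prop:continuityL}), Proposition~\ref{prop:Phi_regularity} and the boundedness of $\cH_N$ into \eqref{eq:semimart}, and applying Doob's inequality term by term, one gets $\sup_N\bE\big(\sup_{t\leq T}\N{\eta_{N,t}}_{-(q+P+2),\kappa_0,\iota_0}^2\big)<+\infty$, which is the first half of the criterion; continuity of the trajectories of $\eta_N$ and $\cM_N^{(\eta)}$ in $\bV_{-(q+P+2)}^{\kappa_0,\iota_0}$ is obtained as in Proposition~\ref{prop:contHN_MNH}.

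It then remains to verify the Aldous condition \eqref{eq:tight2} in $H=\bV_{-(q+P+2)}^{\kappa_0,\iota_0}$, which I would treat separately for the four terms of \eqref{eq:semimart}. For the two drift terms, for any stopping time $T_N\leq T$ and $s$ small, a Jensen/Cauchy--Schwarz estimate exactly as in Theorem~\ref{theo:HN_tight} bounds $\bE\N{\int_{T_N}^{T_N+s}L[\nu_u]^{\ast}\eta_{N,u}\dd u}_{-(q+P+2),\kappa_0,\iota_0}^2$ by $Cs^2\sup_{u\leq T}\bE\N{\eta_{N,u}}_{-q,\kappa_1,\iota_1}^2$ and $\bE\N{\int_{T_N}^{T_N+s}\Phi^{\ast}\cH_{N,u}\dd u}_{-(q+P+2),\kappa_0,\iota_0}^2$ by $Cs^2\sup_{u\leq T}\bE\N{\cH_{N,u}}_{-q,\kappa_1,\iota_1}^2$, both finite by the previous step and Proposition~\ref{prop:etaNbounded1}; a Markov inequality makes these probabilities uniformly small for $s$ small. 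For the martingale part $\cM_N^{(\eta)}$, by Rebolledo's theorem it suffices to check the criterion for the trace of its Doob--Meyer bracket, which is of the form $\frac{a_N^2}{|\Lambda_N|}\int_0^t$ of a bounded functional of $\nu_{N,s}$ (computed as in Proposition~\ref{prop:HN_semimart}), so its increment over $[T_N,T_N+s]$ has expectation $O(s)$ uniformly in $N$ by Proposition~\ref{prop:moment_particles}. This yields \eqref{eq:tight2} and completes the verification. The single genuinely delicate point is the uniform bound of the second paragraph; everything else is a routine transcription of Section~\ref{sec:tightness_two_particle}, the spatial singularity that burdened the analysis of $\cH_N$ being entirely absent for $\eta_N$.
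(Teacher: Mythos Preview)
Your proposal is correct and follows essentially the same approach as the paper. The paper's own proof is merely a sketch that points back to the corresponding arguments for $\cH_N$ in Section~\ref{sec:tightness_two_particle}: it states the bound $\sup_{t\leq T}\bE\N{\eta_{N,t}}_{-q,\kappa_1,\iota_1}^2\leq Ca_N^2/|\Lambda_N|$ ``following the same procedure as in Proposition~\ref{prop:etaNbounded1}'', records the continuity estimate \eqref{eq:continuityL_eta} and the martingale bound \eqref{eq:MN_eta_bounded}, singles out the term $\int_0^t\cro{\cH_{N,s}}{\Phi[f]}\dd s$ as the only unusual one and controls it via Proposition~\ref{prop:Phi_regularity} and Proposition~\ref{prop:HN_bounded_2}, and then refers to Theorem~\ref{theo:HN_tight} for the Aldous condition---precisely the route you outline. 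Your decomposition $\eta_{N,t}=a_N(\nu_{N,t}-\bar\nu_{N,t})+a_N(\bar\nu_{N,t}-\nu_t)$ via the nonlinear processes is the one-variable analogue of the splitting $\cI_N+\cJ_N+\cK_N$ in Section~\ref{sec:proof_etaNbounded1}, and your remark about the obstruction to deducing tightness of $\eta_N$ from that of $\cH_N$ through \eqref{eq:HN_to_etaN}--\eqref{eq:F} reproduces the paper's own observation preceding the theorem.
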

\begin{proof}[Proof of Theorem~\ref{theo:etaN_tight}]
Following the same procedure as in Proposition~\ref{prop:etaNbounded1}, it is easy to prove that $ \sup_{t\leq T} \bE \left(\N{ \eta_{ N,t}}_{-q, \kappa_{ 1}, \iota_{ 1}}^{2}\right) \leq \frac{ a_{ N}^{ 2}}{ \left\vert \Lambda_{ N} \right\vert}$. Here $\N{\cdot}_{ -q, \kappa_{ 1}, \iota_{ 1}}$ stands for the norm given by \eqref{eq:normj_one_var}. A uniform bound about $ \eta_{ N}$ similar to \eqref{eq:HN_bounded_2}, i.e. 
\begin{equation}
\label{eq:second_bound_etaN}
\sup_{1 \leq N} \bE \left(\sup_{ t\leq T} \N{ \eta_{ N, t}}_{ -(q+P+2), \kappa_{ 0}, \iota_{ 0}}^{ 2}\right)<+\infty,
\end{equation}
can be proven along the same lines as in the proof of Proposition~\ref{prop:HN_bounded_2}, using the semimartingale decomposition \eqref{eq:semimart} of $ \eta_{ N}$. The continuity of the underlying linear operator $L[ \nu_{ s}]$ from $\bV_{q+P+2}^{ \kappa_{ 0}, \iota_{ 0}}$ to $\bV_{ q}^{ \kappa_{ 1}, \iota_{ 1}}$ 
\begin{equation}
\label{eq:continuityL_eta}
\N{L[ \nu_{ s}]f}_{q, \kappa_{ 1}, \iota_{ 1}}\leq C \N{f}_{ q+P+2, \kappa_{ 0}, \iota_{ 0}}.
\end{equation}can be proven as in Proposition~\ref{prop:continuityL}. It is also immediate to derive bounds similar to Propositions~\ref{prop:MNH_bounded} and~\ref{prop:MNH_tight} about the martingale process $\cM_{ N}^{ ( \eta)}$ defined in \eqref{eq:def_MN_eta} (recall in particular Remark~\ref{rem:norms}), that is
\begin{equation}
\label{eq:MN_eta_bounded}
\sup_{ t\leq T}\bE \left(\N{\cM_{ N, t}^{ (\eta)}}_{ -q, \kappa_{ 1}, \iota_{ 1}}^{ 2}\right)\leq C \frac{ a_{ N}^{ 2}}{ \left\vert \Lambda_{ N} \right\vert}.
\end{equation}
 The only unusual term in \eqref{eq:semimart} is $\int_{0}^{t} \cro{\cH_{ N, s}}{ \Phi[f]} \dd s$ (recall the definition of $ \Phi[f]$ in \eqref{eq:Phif}). Using in particular the existence of a constant $C>0$ such that $ \left\vert \Phi[f] \right\vert \leq C \N{f}_{ q+P+2}$ for any test function $f\in \bV^{ \kappa_{ 0}, \iota_{ 0}}_{ q+P+2}$, this gives that $ \left\vert \cro{\cH_{ N, s}}{ \Phi[f]} \right\vert \leq C \N{\cH_{ N, s}}_{ -(q+P+2), \kappa_{ 0}, \iota_{ 0}} \N{f}_{ q+P+2, \kappa_{ 0}, \iota_{ 0}}$ and we can use the boundedness of the process $\cH_{ N}$ in $\bW_{ -(q+P+2)}^{ \kappa_{ 0}, \iota_{ 0}}$ proven in Proposition~\ref{prop:HN_bounded_2} to conclude. The rest of the proof of Theorem~\ref{theo:etaN_tight} follows the same lines as in the proof of Theorem~\ref{theo:HN_tight}.
\end{proof}
\section{Convergence of the fluctuations processes}
\label{sec:identification_limits}

The rest of the paper is devoted to the identification of the limits as $N\to\infty$ of the couple $( \eta_{ N}, \cH_{ N})$ in $\cC([0, T], \bV\oplus \bW)$ for some Sobolev spaces $\bV$ and $\bW$ defined in Section~\ref{sec:Hilbert_spaces}. Obviously, since each component of this couple is tight, the couple is also tight. 
\subsection{Identification of the initial value and the martingale part}
\label{sec:identification_ini_mart}
\subsubsection{Identification of the initial value}
\label{sec:ident_initvalue}
Note that at $t=0$, the marginal $ \xi_{ 0}(\dd \theta, \dd \omega)$ on $\bbX\times\bbY$ of the McKean-Vlasov process $ \nu_{ 0}$ \eqref{eq:cond_ini_meanfield} is equal to $ \zeta(\dd \theta)\otimes \mu(\dd \omega)$ (see~\eqref{eq:xi0} below).
\begin{proposition}
\label{prop:initial_cond_HN}
For any $q\geq P+2$, under the assumptions made in Section~\ref{sec:assumptions}, the sequence $(\eta_{ N, 0}, \cH_{ N, 0})_{ N\geq1}$ of the fluctuation processes at $t=0$ converges in law, as $N\to \infty$, in $\bV_{ -(q+P+2)}^{ \kappa_{ 0}, \iota_{ 0}} \oplus \bW_{ -(q+P+2)}^{ \kappa_{ 0}, \iota_{ 0}}$ to the process $(\eta_{ 0}, \cH_{ 0})$ defined by
\begin{itemize}
\item if $ \alpha< \frac{ 1}{ 2}$, $(\eta_{ 0}, \cH_{ 0})$ is a Gaussian process on $\bV_{ -(q+P+2)}^{ \kappa_{ 0}, \iota_{ 0}} \oplus \bW_{ -(q+P+2)}^{ \kappa_{ 0}, \iota_{ 0}}$ with covariance,
\begin{equation}
\label{eq:cov_H0_small_alpha}
C \left( \left(\begin{smallmatrix} f_{ 1}\\ g_{ 1}\end{smallmatrix}\right), \left(\begin{smallmatrix} f_{ 2}\\ g_{ 2}\end{smallmatrix}\right)\right):= C_{ \eta}( f_{ 1}, f_{ 2}) + C_{ \eta, \cH}( f_{ 1}, g_{ 2}) + C_{ \eta, \cH}( f_{ 2}, g_{ 1}) + C_{ \cH}( g_{ 1}, g_{ 2}),
\end{equation}
where
\begin{equation}
\label{eq:covariances_0}
\begin{split}
C_{ \eta}( f_{ 1}, f_{ 2})&:=  \frac{ 1}{ 2} \int_{ \bbS} \Big\{ \left\langle f_{ 1}(\cdot, \cdot, x) f_{ 2}(\cdot, \cdot, x)\, ,\, \xi_{ 0}\right\rangle - \left\langle f_{ 1}(\cdot, \cdot, x)\, ,\, \xi_{ 0}\right\rangle\left\langle f_{ 2}(\cdot, \cdot, x)\, ,\, \xi_{ 0}\right\rangle\Big\}\dd x\\
C_{ \eta, \cH}( f, g)&:= \frac{ 1}{ 2} \int_{ \bbS} \Big\{ \left\langle f(\cdot, \cdot, x)\left[ g\Psi, \xi_{ 0}\right](\cdot, \cdot, x) \, ,\, \xi_{ 0}\right\rangle - \left\langle f(\cdot,\cdot, x)\, ,\, \xi_{ 0}\right\rangle \left\langle \left[ g\Psi, \xi_{ 0}\right](\cdot, \cdot, x)\, ,\, \xi_{ 0}\right\rangle\Big\} \dd x\\
C_{ \cH}( g_{ 1}, g_{ 2})&:= \frac{ 1}{ 2}\int_{ \bbS} \Big\{ \left\langle \left[ g_{ 1} \Psi, \nu_{ 0}\right](\cdot, \cdot, x)\left[ g_{ 2} \Psi, \nu_{ 0}\right](\cdot, \cdot, x)\, ,\, \xi_{ 0}\right\rangle\\ &\qquad\qquad -\left\langle \left[ g_{ 1} \Psi, \nu_{ 0}\right](\cdot, \cdot, x)\, ,\, \xi_{ 0}\right\rangle \left\langle \left[ g_{ 2} \Psi, \nu_{ 0}\right](\cdot, \cdot, x)\, ,\, \xi_{ 0}\right\rangle \Big\}\dd x\\
\end{split}
\end{equation}
where $ f_{ 1}$, $ f_{ 2}$ and $ f$ (respectively $ g_{ 1}$, $ g_{ 2}$ and $ g$) are test functions on $\bbX\times\bbY\times\bbS$ (respectively $(\bbX\times\bbY\times\bbS)^{ 2}$), $ \left\langle f(\cdot, \cdot, x)\, ,\, \xi_{ 0}\right\rangle= \int_{ \bbX\times\bbY} f( \theta, \omega, x) \xi_{ 0}(\dd \theta, \dd \omega)$ (recall \eqref{eq:cro_f_lambda}) and where $\left[ g \Psi, \nu_{ 0}\right](\ttheta, \tomega, \tx)=\left[ g \Psi, \nu_{ 0}\right]_{ 2}(\ttheta, \tomega, \tx)$ (recall \eqref{eq:cro_2}).

\item
if $ \alpha> \frac{ 1}{ 2}$, $(\eta_{ 0}, \cH_{ 0})$ is a deterministic process given by
\begin{equation}
\label{eq:H0_large_alpha}
\left\{\begin{split}
\eta_{ 0}&\equiv 0,\\
\left\langle \cH_{ 0}\, ,\, g\right\rangle&:= \chi(\alpha) \int_{ (\bbX\times\bbY)^{ 2}\times\bbS} g(\theta, \omega, x, \ttheta, \tomega, x) \xi_{ 0}(\dd \theta, \dd \omega)\xi_{ 0}(\dd \ttheta, \dd \tomega)\dd x,
\end{split}\right.
\end{equation}
for any bounded $\cC^{ 2}$ bounded function $ g$ with bounded derivatives and where $ \chi(\alpha)$ is the constant defined in Lemma~\ref{lem:fluct_Psi}.
\end{itemize}
\end{proposition}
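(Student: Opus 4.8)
The plan is to prove tightness of $(\eta_{N,0},\cH_{N,0})_{N\geq1}$ in $\bV_{-(q+P+2)}^{\kappa_0,\iota_0}\oplus\bW_{-(q+P+2)}^{\kappa_0,\iota_0}$ and then to identify the limiting finite-dimensional distributions, i.e.\ the joint law of the scalars $\langle\eta_{N,0},f_a\rangle$, $\langle\cH_{N,0},g_b\rangle$ for finitely many smooth compactly supported test functions. Tightness is essentially free: the uniform bounds $\sup_N\bE\|\cH_{N,0}\|_{-q,\kappa_1,\iota_1}^2<\infty$ (Proposition~\ref{prop:etaNbounded1} at $t=0$) and $\sup_N\bE\|\eta_{N,0}\|_{-q,\kappa_1,\iota_1}^2\leq C\sup_N a_N^2/|\Lambda_N|<\infty$ (proven exactly as in the proof of Theorem~\ref{theo:etaN_tight}), combined with the Hilbert--Schmidt embeddings of Section~\ref{sec:Hilbert_spaces}, yield it. Since the candidate limit is Gaussian ($\alpha<\tfrac12$) or deterministic ($\alpha>\tfrac12$), it suffices to control, for fixed smooth $F$ and $g$, the scalar $\langle\eta_{N,0},F\rangle$ (and, in the Gaussian case, linear combinations) and $\langle\cH_{N,0},g\rangle$.

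\textbf{The one-particle term.} Write $\xi_0=\zeta\otimes\mu$ and let $m_N=|\Lambda_N|^{-1}\sum_{i\in\Lambda_N}\delta_{x_i}$ be the uniform grid measure on $\bbS$, so $\bE\nu_{N,0}=\xi_0\otimes m_N$. Then
\begin{equation*}
\langle\eta_{N,0},F\rangle=\frac{a_N}{|\Lambda_N|}\sum_{i}\Big(F(\theta_{i,0},\omega_i,x_i)-\langle F(\cdot,\cdot,x_i),\xi_0\rangle\Big)+a_N\int_{\bbS}\langle F(\cdot,\cdot,x),\xi_0\rangle\,(m_N-\dd x).
\end{equation*}
For smooth $F$ the map $x\mapsto\langle F(\cdot,\cdot,x),\xi_0\rangle$ is smooth, so the deterministic (Riemann-sum) term is $a_N\,O(|\Lambda_N|^{-1})=O(N^{-1/2})$ when $a_N=\sqrt N$ and $O(N^{-\alpha})$ when $a_N=N^{1-\alpha}$; in both cases it vanishes. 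The first term is $a_N|\Lambda_N|^{-1}$ times a sum of row-wise independent, mean-zero variables, uniformly $L^{2+\delta}$-bounded thanks to the moment hypotheses \eqref{eq:moments_mu_xi0} (which dominate the polynomial growth of $F$). When $\alpha<\tfrac12$ the Lindeberg--Feller theorem applies: the variance converges, via a Riemann sum over the grid, to $\tfrac12\int_{\bbS}\big(\langle F(\cdot,\cdot,x)^2,\xi_0\rangle-\langle F(\cdot,\cdot,x),\xi_0\rangle^2\big)\dd x$, and by bilinearity $\mathrm{Cov}(\langle\eta_{N,0},F_1\rangle,\langle\eta_{N,0},F_2\rangle)\to C_\eta(F_1,F_2)$, so $\eta_{N,0}$ converges to the Gaussian $\eta_0$ with covariance $C_\eta$. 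When $\alpha>\tfrac12$, $a_N|\Lambda_N|^{-1/2}=O(N^{1/2-\alpha})\to0$, so $\langle\eta_{N,0},F\rangle\to0$ in $L^2$ and $\eta_0\equiv0$.

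\textbf{The two-particle term.} By \eqref{eq:rel_etaN_HN} at $t=0$, $\langle\cH_{N,0},g\rangle=\langle\eta_{N,0},h_{N,g}\rangle$ with $h_{N,g}(\ttau)=\langle\nu_{N,0}(\dd\tau),\Psi(x,\tx)g(\tau,\ttau)\rangle$. Write $h_{N,g}=h_{0,g}+(h_{N,g}-h_{0,g})$ with $h_{0,g}(\ttau):=\langle\nu_0(\dd\tau),\Psi(x,\tx)g(\tau,\ttau)\rangle$. The limit $h_{0,g}$ is a genuine smooth test function: since $\Psi(x,\tx)=\rho(x-\tx)$, each $\tx$-derivative falling on $\Psi$ may be transferred, by integration by parts on $\bbS$, onto the smooth factor $\langle g(\cdot,\cdot,\cdot,\ttau),\xi_0\rangle$, so no singularity survives; hence $\langle\eta_{N,0},h_{0,g}\rangle$ is a continuous linear functional of $\eta_{N,0}$, handled by the previous step (Gaussian if $\alpha<\tfrac12$, tending to $0$ if $\alpha>\tfrac12$). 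For the remainder, split $\nu_{N,0}-\nu_0=(\nu_{N,0}-\bE\nu_{N,0})+r_N$ with $r_N:=\bE\nu_{N,0}-\nu_0=\xi_0\otimes(m_N-\mathrm{Leb}_{\bbS})$ the deterministic grid-versus-Lebesgue discrepancy; then $\langle\eta_{N,0},h_{N,g}-h_{0,g}\rangle=a_N\langle(\nu_{N,0}-\nu_0)^{\otimes2},\Psi g\rangle$ expands into four pieces. The three pieces carrying at least one factor $\nu_{N,0}-\bE\nu_{N,0}$ have mean zero, and after exploiting this a second-moment computation reduces them to sums of the form $\sum_{i,j}d(x_i,x_j)^{-2\alpha}$ and $\sum_{i,j,j'}d(x_i,x_j)^{-\alpha}d(x_{j'},x_j)^{-\alpha}$, controlled by Lemma~\ref{lem:Riem}; together with the moment bounds this makes them $o(1)$ in $L^2$, in \emph{both} regimes. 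The remaining purely deterministic piece is $a_N\langle r_N^{\otimes2},\Psi g\rangle=a_N\,(D_N\otimes D_N)[\Psi\hat g]$, where $D_N\phi:=\int_{\bbS}\phi\,(m_N-\dd x)$ and $\hat g(x,\tx):=\iint g(\theta,\omega,x,\ttheta,\tomega,\tx)\,\xi_0(\dd\theta\,\dd\omega)\,\xi_0(\dd\ttheta\,\dd\tomega)$. Using a \emph{weighted} version of Lemma~\ref{lem:fluct_Psi} — for smooth $\phi$, $|\Lambda_N|^{-1}\sum_j\Psi(x_i,x_j)\phi(x_j)-\int_{\bbS}\Psi(x_i,\tx)\phi(\tx)\dd\tx=N^{-(1-\alpha)}\chi(\alpha)\phi(x_i)+O(N^{-1})$ uniformly in $i$, whose proof amounts to localising the Riemann-sum error of $\Psi$ at the singularity — together with the smoothness of $\tx\mapsto\int_{\bbS}\Psi(x,\tx)\hat g(x,\tx)\dd x$, one gets $\langle r_N^{\otimes2},\Psi g\rangle=N^{-(1-\alpha)}\chi(\alpha)\int_{\bbS}\hat g(x,x)\dd x+o(N^{-(1-\alpha)})$, hence $a_N\langle r_N^{\otimes2},\Psi g\rangle=\tfrac{a_N}{N^{1-\alpha}}\chi(\alpha)\int_{\bbS}\hat g(x,x)\dd x+o(\tfrac{a_N}{N^{1-\alpha}})$. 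This tends to $0$ when $\alpha<\tfrac12$ (the prefactor is $N^{\alpha-1/2}\to0$) and tends to $\chi(\alpha)\int_{\bbS}\hat g(x,x)\dd x=\langle\cH_0,g\rangle$ of \eqref{eq:H0_large_alpha} when $\alpha>\tfrac12$ (the prefactor is exactly $1$). Putting this together: for $\alpha<\tfrac12$, $\langle\cH_{N,0},g\rangle=\langle\eta_{N,0},h_{0,g}\rangle+o_{\bbP}(1)$, so the full vector is asymptotically a Gaussian vector of linear functionals of $\eta_{N,0}$ with covariance matrix $\big(C_\eta(F_a,F_b)\big)$, $F_a\in\{f_1,\dots,f_p,h_{0,g_1},\dots,h_{0,g_q}\}$; a direct computation (substituting $h_{0,g}=[g\Psi,\nu_0]$) identifies this with \eqref{eq:cov_H0_small_alpha}--\eqref{eq:covariances_0}, i.e.\ $C_{\eta,\cH}(f,g)=C_\eta(f,h_{0,g})$ and $C_\cH(g_1,g_2)=C_\eta(h_{0,g_1},h_{0,g_2})$. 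For $\alpha>\tfrac12$, $\langle\eta_{N,0},h_{0,g}\rangle\to0$ and $\langle\cH_{N,0},g\rangle\to\langle\cH_0,g\rangle$ while $\eta_0\equiv0$, giving the claimed joint convergence.

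\textbf{Main obstacle.} The delicate point is not the central limit theorem but the handling of the singular weight $\Psi$: one must establish the weighted refinement of Lemma~\ref{lem:fluct_Psi} (localising the anomalous $N^{1-\alpha}$ Riemann-sum error at the singularity), recognise that this anomaly enters $\cH_{N,0}$ only through the deterministic discrepancy $r_N=\bE\nu_{N,0}-\nu_0$ between the uniform grid and Lebesgue measure on $\bbS$, and carry out the second-moment bookkeeping for the genuinely random pieces, where Lemma~\ref{lem:Riem} is used at exponents $\beta=2\alpha$ and $\beta=\alpha$. The dichotomy $\tfrac{\sqrt N}{N^{1-\alpha}}=N^{\alpha-1/2}\to0$ versus $\to\infty$ at $\alpha=\tfrac12$ is precisely what decides whether the Gaussian contribution $\langle\eta_{N,0},h_{0,g}\rangle$ or the deterministic contribution $a_N\langle r_N^{\otimes2},\Psi g\rangle$ dominates, hence the nature of the limit.
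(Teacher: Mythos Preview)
Your argument is correct and takes a genuinely different route from the paper's proof. The paper decomposes $\langle\cH_{N,0},g\rangle$ directly as $\cH_{N,0}^{(1)}+\cH_{N,0}^{(2)}$ (random versus spatial fluctuations), then uses a Taylor-type splitting $g=\cP g+\cQ g$ at the diagonal $\tx=x$ to isolate the anomalous $N^{-(1-\alpha)}$ contribution inside $\cH_{N,0}^{(2)}$; for the Gaussian case it further reduces $\cH_{N,0}^{(1)}$ to product test functions and applies the Lyapunov CLT after yet another splitting $\cH_{N,0}^{(3)}+\cH_{N,0}^{(4)}$. You instead exploit the identity $\langle\cH_{N,0},g\rangle=\langle\eta_{N,0},h_{0,g}\rangle+a_N\langle(\nu_{N,0}-\nu_0)^{\otimes2},\Psi g\rangle$, observe (via integration by parts on $\bbS$) that $h_{0,g}=[g\Psi,\nu_0]_2$ is a genuine smooth test function, and handle the quadratic remainder through the random/deterministic splitting $\nu_{N,0}-\nu_0=(\nu_{N,0}-\bE\nu_{N,0})+r_N$. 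Your weighted extension of Lemma~\ref{lem:fluct_Psi} is precisely the content of the paper's $\cP/\cQ$ argument, repackaged. The payoff of your route is conceptual: the joint Gaussianity of $(\eta_0,\cH_0)$ for $\alpha<\tfrac12$ is immediate, since $\cH_0$ appears as the pushforward of $\eta_0$ under $g\mapsto h_{0,g}$, and the covariance identities $C_{\eta,\cH}(f,g)=C_\eta(f,h_{0,g})$, $C_{\cH}(g_1,g_2)=C_\eta(h_{0,g_1},h_{0,g_2})$ drop out without the paper's reduction to tensor-product test functions. The paper's approach, in turn, avoids stating and proving your weighted Riemann-sum lemma separately and keeps all estimates inside the structure already set up for Proposition~\ref{prop:etaNbounded1}. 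One point worth making explicit in your write-up: the mean-zero claim for $a_N\langle(\nu_{N,0}-\bE\nu_{N,0})^{\otimes2},\Psi g\rangle$ relies on the convention $\Psi(x,x)=0$, which kills the diagonal $i=j$; without it that term would carry a nonvanishing bias.
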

Proof of Proposition~\ref{prop:initial_cond_HN} is given in Section~\ref{sec:proof_initial_cond_HN}.

\subsubsection{Identification of the martingale part}
\label{sec:ident_martpart}
We identify in this paragraph the limit of the martingale part $\overline{\cM}_{ N}:=(\cM_{ N}^{ (\eta)}, \cM_{ N}^{ (\cH)})$ in the semi-martingale decomposition of the process $(\eta_{ N}, \cH_{ N})$ \eqref{eq:semimart} and \eqref{eq:semimart_HN}. Here $\overline{\cM}_{ N}$ is seen as a square integrable martingale in $\bV_{ -(q+P+2)}^{ \kappa_{ 0}, \iota_{ 0}}\oplus \bW_{ -(q+P+2)}^{ \kappa_{ 0}, \iota_{ 0}}$, for any $q\geq P+2$.
\begin{definition}
\label{def:covariance_WH}
Define the process $\overline{\cM}= (\cM^{ (\eta)}, \cM^{ (\cH)})\in \bV_{ -(q+P+2)}^{ \kappa_{ 0}, \iota_{ 0}}\oplus \bW_{ -(q+P+2)}^{ \kappa_{ 0}, \iota_{ 0}}$ as follows:
\begin{enumerate}
\item If $ \alpha< \frac{ 1}{ 2}$, $\overline{\cM}$ is the Gaussian process with covariance,
\begin{equation}
\label{eq:covariance_WH}
\begin{split}
\cK_{ s, t}\left(\left(\begin{smallmatrix} f_{ 1}\\ g_{ 1}\end{smallmatrix}\right), \left(\begin{smallmatrix} f_{ 2}\\ g_{ 2}\end{smallmatrix}\right)\right)&:=\bE \left( \left(\cM_{ s}^{ (\eta)}(f_{ 1}) + \cM_{ s}^{ (\cH)}(g_{ 1})\right) \left(\cM_{ t}^{ (\eta)}(f_{ 2}) + \cM_{ t}^{ (\cH)}(g_{ 2})\right)\right),\\&= \cK^{ (\eta)}_{ s, t}(f_{ 1}, f_{ 2}) + \cK^{ (\eta, \cH)}_{ s, t}(f_{ 1}, g_{ 2}) + \cK^{ (\eta, \cH)}_{ s, t}(f_{ 2}, g_{ 1}) + \cK^{ (\cH)}_{ s, t}(g_{ 1}, g_{ 2}),
\end{split}
\end{equation}
where $\left(\begin{smallmatrix} f_{ 1}\\ g_{ 1}\end{smallmatrix}\right)$, $\left(\begin{smallmatrix} f_{ 2}\\ g_{ 2}\end{smallmatrix}\right)\in \bV_{ q+P+2}^{ \kappa_{ 0}, \iota_{ 0}}\oplus \bW_{ q+P+2}^{ \kappa_{ 0}, \iota_{ 0}}$, $s,t\in[0, T]$ and
\begin{equation}
\label{eq:def_covariances_mart}
\begin{split}
\cK^{ (\eta)}_{ s, t}(f_{ 1}, f_{ 2})&:=\frac{ 1}{ 2} \int_{0}^{s\wedge t}\int \sum_{ r=1}^{ m} \partial_{ \theta^{ (r)}} f_{ 1}(\tau)\partial_{ \theta^{ (r)}} f_{ 2}(\tau) \nu_{ u}(\dd \tau)\dd u,\\
\cK^{ (\eta, \cH)}_{ s, t}(f, g)&:= \frac{ 1}{ 2}\int_{0}^{s\wedge t} \int\sum_{ r=1}^{ m} \partial_{ \theta^{ (r)}}f(\ttau) \left[(\partial_{ \ttheta^{ (r)}}g)\Psi, \nu_{ u}\right]_{ 2}(\ttau) \nu_{ u}(\dd \ttau)\dd u\\
\cK^{ (\cH)}_{ s, t}(g_{ 1}, g_{ 2})&:=\frac{ 1}{ 2} \int_{0}^{t} \int\sum_{ r=1}^{ m}\left[(\partial_{ \ttheta^{ (r)}}g_{ 1})\Psi, \nu_{ u}\right]_{ 2}(\ttau)\left[(\partial_{ \ttheta^{ (r)}}g_{ 2})\Psi, \nu_{ u}\right]_{ 2}(\ttau) \nu_{ u}(\dd \ttau)\dd u.
\end{split}
\end{equation}
\item
If $ \alpha> \frac{ 1}{ 2}$, $\overline{\cM}=(\cM^{ (\eta)}, \cM^{ (\cH)})$ is equally $0$.
\end{enumerate}
\end{definition}
\begin{proposition}
\label{prop:martingale_part_H}
For any $q\geq P+2$, the process $\overline{\cM}_{ N}:=(\cM_{ N}^{ (\eta)}, \cM_{ N}^{ (\cH)})_{ N\geq1}$ is a martingale in $\bV_{ -(q+P+2)}^{ \kappa_{ 0}, \iota_{ 0}}\oplus \bW_{ -(q+P+2)}^{ \kappa_{ 0}, \iota_{ 0}}$ with Doob-Meyer process $\llangle \overline{\cM}_{ N}\rrangle$ with values in $L(\bV_{ q+P+2}^{ \kappa_{ 0}, \iota_{ 0}}\oplus \bW_{ q+P+2}^{ \kappa_{ 0}, \iota_{ 0}}, \bV_{ -(q+P+2)}^{ \kappa_{ 0}, \iota_{ 0}}\oplus \bW_{ -(q+P+2)}^{ \kappa_{ 0}, \iota_{ 0}})$ given by
\begin{equation}
\label{eq:DM_MN_bar}
\begin{split}
\llangle \overline{\cM}_{ N}\rrangle_{ t}  \left(\begin{smallmatrix} f_{ 1}\\ g_{ 1}\end{smallmatrix}\right)\left(\begin{smallmatrix} f_{ 2}\\ g_{ 2}\end{smallmatrix}\right) &= \llangle \cM_{ N}^{ (\eta)}\rrangle_{ t}( f_{ 1})(f_{ 2}) + \llangle \cM_{ N}^{ (\eta, \cH)}\rrangle_{ t}( f_{ 2})(g_{ 1}) + \llangle \cM_{ N}^{ (\eta, \cH)}\rrangle_{ t}( f_{ 1})(g_{ 2})\\ &+ \llangle \cM_{ N}^{ (\cH)}\rrangle_{ t}( g_{ 1})(g_{ 2}),
\end{split}
\end{equation}
In \eqref{eq:DM_MN_bar}, $\left(\begin{smallmatrix} f_{ 1}\\ g_{ 1}\end{smallmatrix}\right)$, $\left(\begin{smallmatrix} f_{ 2}\\ g_{ 2}\end{smallmatrix}\right)\in \bV_{ q+P+2}^{ \kappa_{ 0}, \iota_{ 0}}\oplus \bW_{ q+P+2}^{ \kappa_{ 0}, \iota_{ 0}}$ and \begin{equation}
\label{eq:DM_MN_eta}
\begin{split}
\llangle \cM_{ N}^{ (\eta)}\rrangle_{ t}( f_{ 1})(f_{ 2})&=\cro{ \cM_{ N}^{ (\eta)}f_{ 1}}{ \cM_{ N}^{ (\eta)} f_{ 2}}_{ t},\\
&= \frac{ a_{ N}^{ 2}}{ \left\vert \Lambda_{ N} \right\vert^{ 2}}\sum_{ i\in \Lambda_{ N}} \int_{0}^{t} \sum_{ r=1}^{ m}\partial_{ \theta^{ (r)}} f_{ 1}( \theta_{ i, s}, \omega_{ i}, x_{ i}) \partial_{ \theta^{ (r)}} f_{ 2}( \theta_{ i, s}, \omega_{ i}, x_{ i}) \dd s
\end{split}
\end{equation}
is the Doob-Meyer in $L(\bV_{ q+P+2}^{ \kappa_{ 0}, \iota_{ 0}}, \bV_{ -(q+P+2)}^{ \kappa_{ 0}, \iota_{ 0}})$ associated to $ \cM_{ N}^{ (\eta)}$, $\llangle \cM_{ N}^{ (\cH)}\rrangle_{ t}$ is the Doob-Meyer process associated to $ \cM_{ N}^{ (\cH)}$ already calculated in \eqref{eq:cro_Hilb_MNH} and 
\begin{equation}
\label{eq:DM_MN_eta_H}
\begin{split}
\llangle \cM_{ N}^{ (\eta, \cH)}\rrangle_{ t}( f)(g)&:=\cro{ \cM_{ N}^{ (\eta)}f}{ \cM_{ N}^{ (\cH)} g}_{ t},\\
&= \frac{ a_{ N}^{ 2}}{ \left\vert \Lambda_{ N} \right\vert^{ 3}} \sum_{ i, j\in \Lambda_{ N}} \int_{0}^{t} \sum_{ r=1}^{ m} \partial_{ \theta^{ (r)}} f(\tau_{ j, s}) \partial_{ \ttheta^{ (r)}}g(\tau_{ i, s}, \tau_{ j, s}) \Psi(x_{ i}, x_{ j})\dd s\\
&+ \frac{ a_{ N}^{ 2}}{ \left\vert \Lambda_{ N} \right\vert^{ 2}} \sum_{ i\in \Lambda_{ N}}\int_{0}^{t} \sum_{ r=1}^{ m} \partial_{ \theta^{ (r)}}f(\tau_{ i, s}) \epsilon_{ N, s}^{ (r)}(\tau_{ i, s}, g)\dd s,
\end{split}
\end{equation}
where $ \epsilon_{ N, s}^{ (r)}$ is given in \eqref{eq:def_bN}. Moreover, under the assumptions made in Section~\ref{sec:assumptions}, the process $(\overline{\cM}_{ N})_{ N\geq1}$ converges in law in $\bV_{ -(q+P+2)}^{ \kappa_{ 0}, \iota_{ 0}}\oplus \bW_{ -(q+P+2)}^{ \kappa_{ 0}, \iota_{ 0}}$, as $N\to\infty$, to the process $\overline{\cM}$ given in Definition~\ref{def:covariance_WH}.
\end{proposition}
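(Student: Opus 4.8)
The plan is to follow the classical Hilbertian martingale central limit strategy of \cite{Fernandez1997} (see also \cite{Lucon2011}), adapted to the coupled pair $\overline{\cM}_N=(\cM_N^{(\eta)},\cM_N^{(\cH)})$. First I would check that $\overline{\cM}_N$ is a square-integrable martingale with values in $\bV_{-(q+P+2)}^{\kappa_0,\iota_0}\oplus\bW_{-(q+P+2)}^{\kappa_0,\iota_0}$: for a fixed test function each of $\cM_N^{(\eta)}f$ (see \eqref{eq:def_MN_eta}) and $\cM_N^{(\cH)}g$ (see \eqref{eq:def_MNH}) is a finite sum of It\^o integrals against the independent Brownian motions $(B_i)_{i\in\Lambda_N}$, hence a real martingale; the Hilbert-space-valued statement then follows from the second-moment bounds \eqref{eq:MN_eta_bounded} and \eqref{eq:MNH_bounded} combined with the Hilbert--Schmidt embeddings \eqref{eq:comp_embed_dual_one_var}--\eqref{eq:comp_embed_dual}, exactly as in the proof of Proposition~\ref{prop:MNH_bounded}. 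The Doob--Meyer process \eqref{eq:DM_MN_bar} is then obtained from the covariation formula for It\^o integrals, using independence of the $(B_i)$: the $\cM_N^{(\cH)}$-block is \eqref{eq:cro_Hilb_MNH}, already computed in Proposition~\ref{prop:HN_semimart}, and \eqref{eq:DM_MN_eta}, \eqref{eq:DM_MN_eta_H} are read off by matching the $\dd B_{i,s}$-coefficients in \eqref{eq:def_MN_eta} and \eqref{eq:def_MNH} (the $\epsilon_{N,s}^{(r)}$-terms coming precisely from the second line of \eqref{eq:def_MNH}). This step is routine.

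Next I would establish tightness of $(\overline{\cM}_N)_{N\geq1}$ in $\cC([0,T],\bV_{-(q+P+2)}^{\kappa_0,\iota_0}\oplus\bW_{-(q+P+2)}^{\kappa_0,\iota_0})$. Since a pair is tight as soon as each coordinate is, this reduces to Proposition~\ref{prop:MNH_tight} for $\cM_N^{(\cH)}$ and to the completely analogous Rebolledo-type argument for $\cM_N^{(\eta)}$ (first criterion from \eqref{eq:MN_eta_bounded} and a Hilbert--Schmidt embedding, Aldous condition from the explicit form of $\llangle\cM_N^{(\eta)}\rrangle$ in \eqref{eq:DM_MN_eta} together with the moment bounds of Proposition~\ref{prop:moment_particles}). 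Extracting a convergent subsequence $\overline{\cM}_{N_k}\to\overline{\cM}$, the limit is a continuous martingale (a limit of continuous square-integrable martingales with uniformly bounded second moments), so everything comes down to identifying its bracket.

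For the identification, the key observation is the $\alpha$-dependence of the prefactor: since $|\Lambda_N|=2N$ and $a_N$ is given by \eqref{eq:aN}, one has $a_N^2/|\Lambda_N|=\tfrac12$ when $\alpha<\tfrac12$ and $a_N^2/|\Lambda_N|=\tfrac12\,N^{1-2\alpha}\to0$ when $\alpha>\tfrac12$. In the supercritical case all the integrands appearing in $\llangle\overline{\cM}_N\rrangle$ are bounded in $L^1$ uniformly in $N$ (by Remark~\ref{rem:Psi_bounded} and Proposition~\ref{prop:moment_particles}), so $\llangle\overline{\cM}_N\rrangle_t\to0$ and hence $\overline{\cM}_N\to0$, giving the case $\alpha>\tfrac12$ of Definition~\ref{def:covariance_WH}. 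In the subcritical case I would show $\llangle\overline{\cM}_N\rrangle_t\to\cK_{t,t}$ in probability with $\cK$ as in \eqref{eq:covariance_WH}--\eqref{eq:def_covariances_mart}; this uses (i) replacing each particle $\theta_{i,s}$ by the associated nonlinear process $\bar\theta_{i,s}$, at cost $O(1/a_N)$ by Proposition~\ref{prop:ttaVSnonlin}; (ii) replacing the empirical measure $\nu_{N,s}$ of the resulting exchangeable system by its limit $\nu_s$ via the law of large numbers; and (iii) showing $\epsilon_{N,s}^{(r)}(\cdot,\varphi)\to0$ in $L^2(\nu_{N,s})$ uniformly in $s\leq T$ --- splitting $\epsilon_{N,s}^{(r)}$ from \eqref{eq:def_bN} into a spatial Riemann-sum error, controlled by Lemmas~\ref{lem:Riem} and~\ref{lem:fluct_Psi}, and a centered fluctuation vanishing by the law of large numbers. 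After (i)--(iii) only the $\epsilon$-free terms of $b_{N,s}$ and of \eqref{eq:DM_MN_eta_H} survive, which are exactly \eqref{eq:def_covariances_mart}. Finally, a continuous martingale with deterministic bracket is Gaussian, with covariance $\bE(\overline{\cM}_s\otimes\overline{\cM}_t)$ equal to the expression in \eqref{eq:covariance_WH}, i.e.\ the case $\alpha<\tfrac12$ of Definition~\ref{def:covariance_WH}; uniqueness of the limit upgrades subsequential convergence to full convergence, and the independence of $\eta_0$ and $\cM^{(\eta)}$ asserted in Theorem~\ref{theo:conv_subcrit} follows from the independence of $(\theta_{i,0})$ and $(B_i)$.

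I expect the only real difficulty to be step~(iii) above, namely the uniform-in-time control of the $\epsilon_{N,s}^{(r)}$ contributions to the quadratic variation together with the simultaneous replacement of $\nu_{N,s}$ by $\nu_s$ inside it; but this relies only on the coupling estimate of Proposition~\ref{prop:ttaVSnonlin}, the spatial estimates of Section~\ref{sec:tightness_results}, and the moment bounds of Proposition~\ref{prop:moment_particles}, in the spirit of \cite{Fernandez1997,Lucon2011}.
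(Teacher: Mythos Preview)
Your proposal is correct and follows essentially the same route as the paper's own proof: compute the Doob--Meyer bracket from independence of the $(B_i)$, invoke the tightness already established for each component, observe that $a_N^2/|\Lambda_N|\to0$ kills the bracket when $\alpha>\tfrac12$, and in the subcritical case identify the limiting deterministic bracket (in particular $\epsilon_{N,s}^{(r)}\to0$) so that the limit is the Gaussian martingale of Definition~\ref{def:covariance_WH}. The paper's proof is terser---it simply points to the estimates \eqref{eq:MNH_bounded2}, \eqref{eq:MN_eta_bounded} for $\alpha>\tfrac12$ and to ``arguments analogous to \ldots\ Proposition~\ref{prop:etaNbounded1}'' for the bracket identification when $\alpha<\tfrac12$---but your steps (i)--(iii) are exactly what those references unpack.
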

\begin{proof}[Proof of Proposition~\ref{prop:martingale_part_H}]
Estimates \eqref{eq:DM_MN_bar}, \eqref{eq:DM_MN_eta} and \eqref{eq:DM_MN_eta_H} are a straightforward consequence of the definitions of the martingale terms $\cM_{ N}^{ (\eta)}$ in \eqref{eq:def_MN_eta} and $\cM_{ N}^{ (\cH)}$ in \eqref{eq:def_MNH} and of the independence of the Brownian motions $B_{ i}= (B_{ i}^{ (1)}, \ldots, B_{ i}^{ (m)})$,  $i\in \Lambda_{ N}$. It remains to prove the convergence of $\overline{\cM}_{ N}$ to $ \overline{\cM}$. First remark that the case $ \alpha> \frac{ 1}{ 2}$ is a trivial consequence of the estimates \eqref{eq:MNH_bounded2}, \eqref{eq:MN_eta_bounded} and the definition of $a_{ N}$ in \eqref{eq:aN}. 

In the case of $ \alpha< \frac{ 1}{ 2}$, $a_{ N}= \sqrt{N}$ and $(\overline{\cM}_{ N})_{ N\geq1}$ is a sequence of uniformly square-integrable continuous martingales which is tight in $\bV_{ -2(P+2)}^{ \kappa_{ 0}, \iota_{ 0}}\oplus \bW_{ -2(P+2)}^{ \kappa_{ 0}, \iota_{ 0}}$. Let $\overline{\cM}$ be an accumulation point. By arguments analogous to the ones continuously used in Section~\ref{sec:tightness_results} (see in particular Proposition~\ref{prop:etaNbounded1}), it is easy to show that, first, $ \epsilon_{ N}^{ (r)}$ defined in \eqref{eq:def_bN} vanishes to $0$ as $N\to\infty$ and, second, that for all regular test function $\left(\begin{smallmatrix} f\\ g\end{smallmatrix}\right)$,  one can make $N\to\infty$ in the expression of $\llangle \overline{\cM}_{ N}\rrangle_{ t}  \left(\begin{smallmatrix} f\\ g\end{smallmatrix}\right)$ in \eqref{eq:DM_MN_bar} and obtain for all $t\in[0, T]$
\begin{align*}
\llangle \overline{\cM}\rrangle_{ t}  \left(\begin{smallmatrix} f\\ g\end{smallmatrix}\right)&= \frac{ 1}{ 2} \int_{0}^{t} \int\sum_{ r=1}^{ m} \left(\partial_{ \theta^{ (r)}} f(\tau)\right)^{ 2} \nu_{ s}(\dd \tau)\dd s\\ &+ \int_{0}^{t} \int\sum_{ r=1}^{ m} \partial_{ \theta^{ (r)}}f(\ttau) \left[(\partial_{ \ttheta^{ (r)}}g)\Psi, \nu_{ s}\right]_{ 2}(\ttau) \nu_{ s}(\dd \ttau)\dd s\\ &+ \frac{ 1}{ 2} \int_{0}^{t} \int\sum_{ r=1}^{ m}\left[(\partial_{ \ttheta^{ (r)}}g)\Psi, \nu_{ s}\right]_{ 2}(\ttau)^{ 2} \nu_{ s}(\dd \ttau)\dd s.
\end{align*}
So, $\overline{\cM}$ is a square integrable martingale whose Doob-Meyer process is deterministic, given as above. So $\overline{\cM}$ is characterized by the Gaussian process with covariance given in \eqref{eq:covariance_WH}. The convergence follows.
\end{proof}

\begin{proposition}
\label{prop:independence}
In the case of $ \alpha< \frac{ 1}{ 2}$, the processes $(\eta_{ 0}, \cH_{ 0})$ introduced in Proposition~\ref{prop:initial_cond_HN} and $\overline{\cM}=(\cM^{ (\eta)}, \cM^{ (\cH)})$ defined in Definition~\ref{def:covariance_WH} are independent.
\end{proposition}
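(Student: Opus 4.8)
The plan is to reduce the independence assertion to a factorization of characteristic functionals. Since $(\eta_{N,0},\cH_{N,0})_N$ converges in law (Proposition~\ref{prop:initial_cond_HN}) and $(\overline{\cM}_N)_N$ converges in law, hence is tight (Proposition~\ref{prop:martingale_part_H}), the triple $(\eta_{N,0},\cH_{N,0},\overline{\cM}_N)_N$ is tight in the product space; along any subsequence one extracts a joint limit $(\eta_0,\cH_0,\overline{\cM})$ whose marginals are the ones already identified. It then suffices to prove that, for every bounded continuous $h$ on $\bV_{-(q+P+2)}^{\kappa_0,\iota_0}\oplus\bW_{-(q+P+2)}^{\kappa_0,\iota_0}$, every finite family $0\leq t_1\leq\cdots\leq t_k\leq T$ and every $\Lambda_1,\dots,\Lambda_k\in\bV_{q+P+2}^{\kappa_0,\iota_0}\oplus\bW_{q+P+2}^{\kappa_0,\iota_0}$,
\[
\bE\Big[h(\eta_0,\cH_0)\,e^{\,i\sum_j\cro{\overline{\cM}_{t_j}}{\Lambda_j}}\Big]=\bE[h(\eta_0,\cH_0)]\cdot\bE\Big[e^{\,i\sum_j\cro{\overline{\cM}_{t_j}}{\Lambda_j}}\Big];
\]
this pins down the joint limit law as the product of the marginals (hence uniquely), so that the full sequence converges and $(\eta_0,\cH_0)$ and $\overline{\cM}$ are independent.

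To prove this factorization I would exploit that $(\eta_{N,0},\cH_{N,0})$ is $\cF_{N,0}$-measurable (it is built only from $\theta_{i,0}$, $\omega_i$, $x_i$) whereas $\overline{\cM}_N$ is a continuous $(\cF_{N,t})$-martingale started at $0$. Fixing $h$, the times and the test functions, I would introduce the real continuous $(\cF_{N,t})$-martingale $\tilde M_{N,t}:=\sum_{j=1}^k\cro{\overline{\cM}_{N,\,t\wedge t_j}}{\Lambda_j}$ (stationary after $t_k$, with $\tilde M_{N,t_k}=\sum_j\cro{\overline{\cM}_{N,t_j}}{\Lambda_j}$), whose quadratic variation $\cA_{N,t}$ is the corresponding combination of the operator-valued Doob--Meyer brackets $\llangle\overline{\cM}_N\rrangle$ from Proposition~\ref{prop:martingale_part_H}. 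The complex exponential $\cE_{N,t}:=\exp\big(i\tilde M_{N,t}+\tfrac12\cA_{N,t}\big)$ satisfies $\dd\cE_{N,t}=i\cE_{N,t}\,\dd\tilde M_{N,t}$, hence is a local martingale; stopping at $T_N^K:=\inf\{t:\cA_{N,t}\geq K\}\wedge T$ makes it bounded by $e^{K/2}$, hence a true martingale, so that, $h(\eta_{N,0},\cH_{N,0})$ being $\cF_{N,0}$-measurable,
\[
\bE\big[h(\eta_{N,0},\cH_{N,0})\,\cE_{N,\,t_k\wedge T_N^K}\big]=\bE[h(\eta_{N,0},\cH_{N,0})]\qquad\text{for every }K.
\]

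The limit $N\to\infty$ would then be taken as follows. From the proof of Proposition~\ref{prop:martingale_part_H} (where $\epsilon_N^{(r)}$ of \eqref{eq:def_bN} vanishes and $\nu_{N,s}\to\nu_s$), $\cA_{N,t_k}$ converges in probability to the \emph{deterministic} constant $v:=\sum_{j,l}\llangle\overline{\cM}\rrangle_{t_j\wedge t_l}(\Lambda_j)(\Lambda_l)$, which is exactly the variance of the centered Gaussian variable $\sum_j\cro{\overline{\cM}_{t_j}}{\Lambda_j}$. I would then fix any $K>v$: since $\bP(T_N^K<t_k)=\bP(\cA_{N,t_k}\geq K)\to0$ and $|\cE_{N,\cdot\wedge T_N^K}|\leq e^{K/2}$, the left-hand side above has the same limit as $\bE\big[h(\eta_{N,0},\cH_{N,0})\,e^{\,i\tilde M_{N,t_k}}\,e^{\frac12\cA_{N,t_k}}\,\ind{\cA_{N,t_k}<K}\big]$, whose integrand is uniformly bounded and, along the jointly convergent subsequence, converges in law to $h(\eta_0,\cH_0)\,e^{\,i\sum_j\cro{\overline{\cM}_{t_j}}{\Lambda_j}}\,e^{v/2}$. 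Hence $\bE[h(\eta_0,\cH_0)]=e^{v/2}\,\bE[h(\eta_0,\cH_0)\,e^{\,i\sum_j\cro{\overline{\cM}_{t_j}}{\Lambda_j}}]$, and since $\bE[e^{\,i\sum_j\cro{\overline{\cM}_{t_j}}{\Lambda_j}}]=e^{-v/2}$ the desired factorization follows.

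The main obstacle is the handling of the exponential local martingale: because the particle moment bounds of Proposition~\ref{prop:moment_particles} are only polynomial, $\exp(\tfrac12\cA_{N,t_k})$ need not be uniformly integrable, so a Novikov-type criterion is unavailable; the key point is that one never needs to send $K\to\infty$ — it is enough to pick one $K>v$ and let $N\to\infty$. All the other ingredients (tightness of the three processes, the martingale property, and the convergence $\cA_{N,t_k}\to v$ with deterministic limit) are already available from Section~\ref{sec:tightness_results}, Proposition~\ref{prop:martingale_part_H} and the continuity estimates of Section~\ref{sec:continuity_linear_forms}.
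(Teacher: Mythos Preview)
Your argument is correct and complete. The paper's own proof is a single sentence: ``It is a straightforward consequence of the mutual independence of $(\theta_{i,0},\omega_i,B_i)$, $i\in\Lambda_N$.'' That sentence records the underlying reason (the initial fluctuation depends only on $(\theta_{i,0},\omega_i)_i$ while the martingale is driven by the Brownian increments) but does not spell out how the independence passes to the \emph{limit}; indeed at finite $N$ the integrands of $\overline{\cM}_N$ depend on the whole trajectory $\theta_{i,s}$, so $\overline{\cM}_N$ is \emph{not} independent of $(\eta_{N,0},\cH_{N,0})$, only conditionally centered.

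What you do is the standard way to make this rigorous: freeze the $\cF_{N,0}$-measurable factor, use the complex exponential martingale $\exp(i\tilde M_{N}+\tfrac12\cA_N)$, localize so that boundedness is automatic, and then exploit the crucial fact (proved inside Proposition~\ref{prop:martingale_part_H}) that the bracket $\cA_{N,t_k}$ converges in probability to a \emph{deterministic} constant $v$. This is exactly what promotes ``uncorrelated'' to ``independent'' in the limit, and your observation that one only needs a single $K>v$ (rather than a Novikov condition) neatly sidesteps the lack of exponential moments. So your route is not a different idea from the paper's, but a careful execution of the sentence the paper leaves to the reader; the gain is a genuine proof, at the cost of a page of standard martingale manipulations.

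Two minor points you might tighten in writing it up: (i) the indicator $\ind{\cA_{N,t_k}<K}$ is discontinuous, but since $\cA_{N,t_k}\to v<K$ in probability the indicator tends to $1$ in probability and can be removed before passing to the weak limit; (ii) once the factorization is established for every $h$, every finite $(t_j)$ and every $(\Lambda_j)$, you have identified the \emph{joint} limit law along any subsequence as the product law, so the whole sequence converges jointly and the limits are independent --- this is implicit in your first paragraph but worth stating explicitly.
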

\begin{proof}[Proof of Proposition~\ref{prop:independence}]
It is a straightforward consequence of the mutual independence of $(\theta_{ i, 0}, \omega_{ i}, B_{ i})$, $i\in \Lambda_{ N}$.
\end{proof}
\subsection{Identification of the limit}
\label{sec:identification_limit}
So far, all the analysis has been made for general Sobolev indices $(q, \kappa_{ 0}, \iota_{ 0})$.  We now specify the adequate parameters: recall the definition of $(\underline{ \kappa}, \underline{ \iota})$ in \eqref{eq:kappas} and \eqref{eq:iotas} and define 
\begin{equation}
\label{eq:q_kappas_1}
q:=P+2 \text{ and }  (\kappa_{ 0}, \iota_{ 0}):=(\underline{ \kappa} + \gamma, \underline{ \iota}+ \gamma),
\end{equation}
so that by definition of $( \kappa_{ 1}, \iota_{ 1})$, $ \kappa_{ 1}= \bar \kappa$ and $ \iota_{ 1}= \bar \iota$ (recall \eqref{eq:kappas} and \eqref{eq:kappas_01}).
The conclusion of Section~\ref{sec:tightness_results} for this choice of parameters is that the process $(\eta_{ N}, \cH_{ N})_{ N\geq1}$ is tight in $\cC([0, T], \bV_{ -2(P+2)}^{ \underline{\kappa} + \gamma, \underline{\iota} + \gamma}\oplus \bW_{ -2(P+2)}^{ \underline{\kappa} + \gamma, \underline{\iota} + \gamma})$ and that any of its accumulation point $(\eta, \cH)$ belongs to the same space. In order to have a closed formula for $(\eta, \cH)$, we need to increase the regularity of the test functions, and hence, enlarge the space of distributions: we will give a characterization of $(\eta, \cH)$ in the larger space $\cC([0, T], \bV_{ -3(P+2)}^{ \underline{\kappa}, \underline{\iota} }\oplus \bW_{ -3(P+2)}^{ \underline{\kappa}, \underline{\iota}})$.
\begin{proposition}
\label{prop:ident_limit_of_subsequences}
Under the assumptions of Section~\ref{sec:assumptions}, the process $(\eta_{ N}, \cH_{ N})_{ N\geq1}$ has convergent subsequences in $\cC([0, T], \bV_{ -2(P+2)}^{ \underline{\kappa} + \gamma, \underline{\iota} + \gamma}\oplus \bW_{ -2(P+2)}^{ \underline{\kappa} + \gamma, \underline{\iota} + \gamma})$ and any accumulation point $(\eta, \cH)$ is a solution in $\bV_{ -3(P+2)}^{ \underline{ \kappa}, \underline{ \iota}}\oplus \bW_{ -3(P+2)}^{  \underline{ \kappa}, \underline{ \iota}}$ to the following system of coupled equations
\begin{equation}
\label{eq:SPDE_general}
\left\{
\begin{split}
\eta_{ t} &= \eta_{ 0} + \int_{0}^{t} L[ \nu_{ s}]^{ \ast} \eta_{ s}\dd s + \int_{0}^{t} \Phi^{ \ast} \cH_{ s}\dd s + \cM^{ (\eta)}_{ t},\\
\cH_{ t}&= \cH_{ 0} + \int_{0}^{t} \mathscr{L}_{ s}^{ \ast} \cH_{ s} \dd s + \cM^{ (\cH)}_{ t},
\end{split}\quad t\in[0, T],\right.
\end{equation}
where $(\eta_{ 0}, \cH_{ 0})$ is defined in Proposition~\ref{prop:initial_cond_HN}, $(\cM^{ (\eta)}, \cM^{ (\cH)})$ is given in Definition~\ref{def:covariance_WH} and $L[ \nu_{ s}]^{ \ast}$ (respectively $ \Phi^{ \ast}$ and $\mathscr{L}_{ s}^{ \ast}$) is the dual of $L[ \nu_{ s}]$ defined in \eqref{eq:propagL} (respectively of $\Phi$ defined in \eqref{eq:Phif} and $\mathscr{L}_{ s}$ defined in \eqref{eq:def_opL}).
\end{proposition}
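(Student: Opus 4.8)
The plan is to pass to the limit $N\to\infty$ directly in the semi-martingale decompositions~\eqref{eq:semimart} and~\eqref{eq:semimart_HN}, using the tightness of Theorems~\ref{theo:etaN_tight} and~\ref{theo:HN_tight} together with the convergence of the initial data (Proposition~\ref{prop:initial_cond_HN}) and of the martingale parts (Proposition~\ref{prop:martingale_part_H}).

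\emph{First step: extraction.} Since $(\eta_N)$ and $(\cH_N)$ are individually tight, the pair is tight in $\cC([0,T],\bV_{-2(P+2)}^{\underline{\kappa}+\gamma,\underline{\iota}+\gamma}\oplus\bW_{-2(P+2)}^{\underline{\kappa}+\gamma,\underline{\iota}+\gamma})$; I would extract a subsequence (not relabelled) converging in law to some $(\eta,\cH)$ and, via the Skorokhod representation theorem, realize this convergence almost surely and uniformly on $[0,T]$. Propositions~\ref{prop:initial_cond_HN} and~\ref{prop:martingale_part_H} already provide the joint convergence of $(\eta_{N,0},\cH_{N,0})$ and of $\overline{\cM}_N=(\cM_N^{(\eta)},\cM_N^{(\cH)})$ with $(\eta_N,\cH_N)$, so along the subsequence the initial conditions and the martingale terms converge to $(\eta_0,\cH_0)$ and $\overline{\cM}=(\cM^{(\eta)},\cM^{(\cH)})$ of Proposition~\ref{prop:initial_cond_HN} and Definition~\ref{def:covariance_WH}.

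\emph{Second step: choice of Sobolev indices and passage in the remaining terms.} I would test~\eqref{eq:semimart} against a fixed $f\in\bV_{3(P+2)}^{\underline{\kappa},\underline{\iota}}$ and~\eqref{eq:semimart_HN} against a fixed $g\in\bW_{3(P+2)}^{\underline{\kappa},\underline{\iota}}$. By the Hilbert--Schmidt embeddings of Section~\ref{sec:Hilbert_spaces}, $f$ and $g$ lie in the preduals $\bV_{2(P+2)}^{\underline{\kappa}+\gamma,\underline{\iota}+\gamma}$ and $\bW_{2(P+2)}^{\underline{\kappa}+\gamma,\underline{\iota}+\gamma}$ of the spaces in which $\eta_{N,s}$, $\cH_{N,s}$ (and $\eta_s$, $\cH_s$) live, and the continuity statements~\eqref{eq:continuityL}, \eqref{eq:Phi_regularity}, \eqref{eq:continuityL_eta}, applied with the general Sobolev parameters specialized to the present choice~\eqref{eq:q_kappas_1}, show that $L[\nu_s]f$, $\Phi[f]$ and $\mathscr{L}_s g$ stay, uniformly in $s$, in those predual spaces, and that $s\mapsto L[\nu_s]f$, $s\mapsto\mathscr{L}_s g$ are continuous curves there (using continuity of $s\mapsto\nu_s$ from Proposition~\ref{prop:decomp_nu}). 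It follows that the functionals
\[
w\longmapsto\int_0^t\big\langle w_s,L[\nu_s]f\big\rangle\dd s,\qquad
h\longmapsto\int_0^t\big\langle h_s,\Phi[f]\big\rangle\dd s,\qquad
h\longmapsto\int_0^t\big\langle h_s,\mathscr{L}_s g\big\rangle\dd s
\]
are continuous on the product path space, so the a.s.\ uniform convergence of $(\eta_N,\cH_N)$ passes through them; the extra terms $\int_0^tF_{N,s}g\,\dd s$ and $\int_0^tG_{N,s}g\,\dd s$ vanish as $N\to\infty$ by Proposition~\ref{prop:UN_VN}. Collecting these limits in~\eqref{eq:semimart} and~\eqref{eq:semimart_HN} yields, for all such $f$ and $g$, the weak form of~\eqref{eq:SPDE_general}; since this holds for a dense family of test functions the identity holds in $\bV_{-3(P+2)}^{\underline{\kappa},\underline{\iota}}\oplus\bW_{-3(P+2)}^{\underline{\kappa},\underline{\iota}}$, with $L[\nu_s]^{\ast}$, $\Phi^{\ast}$, $\mathscr{L}_s^{\ast}$ the transposes of the bounded operators identified above.

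\emph{Main obstacle.} The delicate point is the passage to the limit in the drift integrals: there the pairing is against a \emph{time-dependent} test function ($L[\nu_s]f$, $\mathscr{L}_s g$), so a continuous-mapping argument on path space requires first establishing that $s\mapsto L[\nu_s]f$ (and $s\mapsto\mathscr{L}_s g$) is a genuinely continuous, not merely measurable, curve in the predual Sobolev space — which rests on the continuity of $s\mapsto\nu_s$ from Proposition~\ref{prop:decomp_nu} and on the continuity of the operator families — together with careful bookkeeping of the two derivatives and the polynomial weight lost under $L[\nu_s]$, $\mathscr{L}_s$ and $\Phi$. It is precisely this regularity loss that forces the characterization of $(\eta,\cH)$ to be stated in the enlarged space $\bV_{-3(P+2)}^{\underline{\kappa},\underline{\iota}}\oplus\bW_{-3(P+2)}^{\underline{\kappa},\underline{\iota}}$ rather than in the tightness space, and that dictates the choice of parameters in~\eqref{eq:q_kappas_1}.
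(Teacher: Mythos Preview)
Your proposal is correct and follows essentially the same route as the paper: extract via tightness, test against $f\in\bV_{3(P+2)}^{\underline{\kappa},\underline{\iota}}$ and $g\in\bW_{3(P+2)}^{\underline{\kappa},\underline{\iota}}$, invoke the operator bounds \eqref{eq:continuityL}, \eqref{eq:Phi_regularity}, \eqref{eq:continuityL_eta} with the parameters~\eqref{eq:q_kappas_1}, and pass to the limit in each term of~\eqref{eq:semimart}--\eqref{eq:semimart_HN} using Propositions~\ref{prop:initial_cond_HN}, \ref{prop:martingale_part_H} and~\ref{prop:UN_VN}. The paper packages the drift limits into a single continuous functional $\Pi_{f,g}$ on path space rather than invoking Skorokhod explicitly, and your ``main obstacle'' is slightly overstated: continuity of the map $u\mapsto\int_0^t\langle u_s,L[\nu_s]f\rangle\dd s$ on $\cC([0,T],\bV_{-2(P+2)}^{\underline{\kappa}+\gamma,\underline{\iota}+\gamma})$ only needs the uniform-in-$s$ bound on $\N{L[\nu_s]f}_{2(P+2),\underline{\kappa}+\gamma,\underline{\iota}+\gamma}$, not continuity of $s\mapsto L[\nu_s]f$.
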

\begin{proof}[Proof of Proposition~\ref{prop:ident_limit_of_subsequences}]
Consider a subsequence (that we rename by $(\eta_{ N}, \cH_{ N})$ for simplicity) converging in $\cC([0, T], \bV_{ -2(P+2)}^{ \underline{\kappa} + \gamma, \underline{\iota} + \gamma}\oplus \bW_{ -2(P+2)}^{ \underline{\kappa} + \gamma, \underline{\iota} + \gamma})$ to $(\eta, \cH)$. We easily deduce from \eqref{eq:HN_bounded_2} and \eqref{eq:second_bound_etaN} that 
\begin{equation}
\label{aux:bE_H_eta}
\bE \left(\sup_{ t\leq T} \left(\N{\cH_{ t}}_{ -2(P+2), \underline{ \kappa} + \gamma, \underline{ \iota}+ \gamma}^{ 2} + \N{ \eta_{ t}}_{ -2(P+2), \underline{ \kappa} + \gamma, \underline{ \iota}+ \gamma}^{ 2}\right)\right)<+\infty.
\end{equation}
An application of \eqref{eq:continuityL}, \eqref{eq:Phi_regularity} and \eqref{eq:continuityL_eta} in the case of $q=2(P+2)$, $ \kappa_{ 0}= \underline{ \kappa}$ and $ \iota_{ 0}= \underline{ \iota}$ leads to
\begin{equation}
\label{aux:continuity_H_eta}
\begin{cases}
\N{L[ \nu_{ s}]f}_{2(P+2), \underline{ \kappa} + \gamma, \underline{ \iota} + \gamma}&\leq C \N{f}_{ 3(P+2), \underline{ \kappa}, \underline{ \iota}},\\
\N{ \Phi[f]}_{2(P+2), \underline{ \kappa} + \gamma, \underline{ \iota} + \gamma}&\leq C \N{f}_{ 3(P+2), \underline{ \kappa}, \underline{ \iota}},\\
\N{\mathscr{L}_{ s}g}_{2(P+2), \underline{ \kappa} + \gamma, \underline{ \iota} + \gamma}&\leq C \N{g}_{ 3(P+2), \underline{ \kappa}, \underline{ \iota}}.
\end{cases}
\end{equation}
Estimate \eqref{aux:bE_H_eta} together with \eqref{aux:continuity_H_eta} shows that $ \int_{0}^{t} L[ \nu_{ s}]^{ \ast} \eta_{ s}\dd s$ and $ \int_{0}^{t} \Phi^{ \ast} \cH_{ s} \dd s$ (respectively $ \int_{0}^{t} \mathscr{L}_{ s}^{ \ast} \cH_{ s}\dd s$) make sense as a Bochner integral in $\bV_{ -3(P+2)}^{ \underline{ \kappa}, \underline{ \iota}}$ (respectively in $\bW_{ -3(P+2)}^{ \underline{ \kappa}, \underline{ \iota}}$). Furthermore, Proposition~\ref{prop:initial_cond_HN} and~\ref{prop:martingale_part_H} concerning the convergence of the initial value $( \eta_{ N, 0}, \cH_{ N, 0})$ and the martingale part $(\cM_{ N}^{ (\eta)}, \cM_{ N}^{ (\cH)})$ and Proposition~\ref{prop:UN_VN} are also valid for the choice of the parameters $q=2(P+2)$, $ \kappa_{ 0}= \underline{ \kappa}$ and $ \iota_{ 0}= \underline{ \iota}$. Moreover, for every $\left(\begin{smallmatrix} f\\ g\end{smallmatrix}\right)\in \bV_{ 3(P+2)}^{ \underline{\kappa}, \underline{\iota} }\oplus \bW_{ 3(P+2)}^{ \underline{\kappa}, \underline{ \iota}}$,
\begin{equation}
\left\{
\begin{split}
\cro{\etaN{t}}{f} &= \cro{\etaN{0}}{f} + \int_{0}^{t} \cro{\etaN{s}}{L[\nu_{ s}]f}\dd s+ \int_{0}^{t} \cro{\cH_{ N, s}}{ \Phi[f]} \dd s+ \cM_{ N, t}^{( \eta)}f,\\
\cro{ \cH_{ N, t}}{ g}&= \cro{ \cH_{ N, 0}}{ g} + \int_{0}^{t} \cro{ \cH_{ N, s}}{ \mathscr{L}_{ s}g}\dd s + \int_{0}^{t} F_{ N, s}g \dd s + \int_{0}^{t} G_{ N, s}g \dd s +  \cM^{ (\cH)}_{ N, t}g.
\end{split}\quad t\in[0, T],\right.
\end{equation}
For fixed $\left(\begin{smallmatrix} f\\ g\end{smallmatrix}\right)$ in $\bV_{ 3(P+2)}^{ \underline{\kappa}, \underline{\iota} }\oplus \bW_{ 3(P+2)}^{ \underline{\kappa}, \underline{ \iota}}$, define the continuous functional from $\cC([0, T], \bV_{ -3(P+2)}^{ \underline{\kappa}, \underline{\iota} }\oplus \bW_{ -3(P+2)}^{ \underline{\kappa}, \underline{ \iota}})$ to $\bbR^{ 2}$ by \[\Pi_{ f, g}(u, v):= \begin{pmatrix} \cro{u_{ t}}{f} - \cro{u_{ 0}}{f} - \int_{0}^{t} \cro{u_{ s}}{L[\nu_{ s}]f}\dd s- \int_{0}^{t} \cro{v_{ s}}{ \Phi[f]} \dd s\\ \cro{ v_{ t}}{ g}- \cro{ v_{ 0}}{ g} - \int_{0}^{t} \cro{ v_{ s}}{ \mathscr{L}_{ s}g}\dd s\end{pmatrix}.\] Since $\left(\begin{smallmatrix} f\\ g\end{smallmatrix}\right)$ in $\bV_{ 3(P+2)}^{ \underline{\kappa}, \underline{\iota} }\oplus \bW_{ 3(P+2)}^{ \underline{\kappa}, \underline{ \iota}} \hookrightarrow \bV_{ 2(P+2)}^{ \underline{\kappa} + \gamma, \underline{\iota} + \gamma}\oplus \bW_{ 2(P+2)}^{ \underline{\kappa} + \gamma, \underline{\iota} + \gamma}$ and since $(\eta, \cH)$ is an accumulation point of $(\eta_{ N}, \cH_{ N})$ in $\cC([0, T], \bV_{ -2(P+2)}^{ \underline{\kappa} + \gamma, \underline{\iota} + \gamma}\oplus \bW_{ -2(P+2)}^{ \underline{\kappa} + \gamma, \underline{\iota} + \gamma})$, one has that $\Pi_{ f, g}( \eta_{ N}, \cH_{ N})\to\Pi_{ f, g}( \eta, \cH)$, as $N\to\infty$, which ends the proof of Proposition~\ref{prop:ident_limit_of_subsequences}.
\end{proof}
\subsection{Some uniqueness results}
\label{sec:uniqueness_result}
We derive in this section two uniqueness results that will be useful in the following. The first one concerns functional equations driven by the linear operator $ \mathscr{L}_{ s}^{ (1)}$ defined in \eqref{eq:def_opL1}. The second one addresses the same problem about the linear operator $L[ \nu_{ s}]$ given by \eqref{eq:propagL}. We treat in details the case of $\mathscr{L}_{ s}^{ (1)}$ in Section~\ref{sec:uniqueness_L1}. The treatment of the operator $L[ \nu_{ s}]$ being strictly identical, we only state the result in Section~\ref{sec:uniqueness_Lnu} and leave the proof to the reader.
\subsubsection{Uniqueness in linear functional equations driven by $\mathscr{L}_{ s}^{ (1)}$}
\label{sec:uniqueness_L1}
Recall the definition of the operator $\mathscr{L}_{ s}^{ (1)}$ in \eqref{eq:def_opL1}. For all $0\leq s \leq t\leq T $ and $( \tau, \ttau)\in (\bbX\times\bbY\times\bbS)^{ 2}$, define as $ X_{ s, t}( \tau, \ttau):= ( \Theta_{ s, t}(\tau), \tTheta_{ s, t}(\ttau))$ the unique solution of the following stochastic differential equation in $(\bbR^{ m})^{ 2}$
\begin{equation}
\left\{\begin{split}
\Theta_{ s, t}( \tau) &= \theta + \int_{s}^{t} v(t, \Theta_{ s, r}(\tau), \omega) \dd r + B_{ t} - B_{ s},\\
\tTheta_{ s, t}( \ttau) &= \ttheta + \int_{s}^{t} v(t, \tTheta_{ s, r}(\tau), \omega) \dd r + \tB_{ t} - \tB_{ s},
\end{split}\right.
\end{equation}
for two independent Brownian motions $B$ and $\tB$ in $\bbR^{ m}$, $(\tau, \ttau)=( \theta, \omega, x, \ttheta, \tomega, \tx)$ and where
\begin{equation}
\label{eq:def_v_sde}
v(t, \Theta, \omega):= c( \Theta, \omega) + \left[ \Gamma \Psi, \nu_{ t}\right](\Theta, \omega).
\end{equation}
We recall here that the quantity $ \left[ \Gamma \Psi, \nu_{ t}\right]$ is indeed independent of $x$ (Remark~\ref{rem:Psix}). Define for any test function $g$
\begin{equation}
\label{eq:def_Uts}
U(t, s)g(\tau, \ttau)= \bE_{ B, \tB}(g(X_{ st}(\tau, \ttau))).
\end{equation}
\begin{proposition}
\label{prop:unique_L1}
Under the assumptions of Section~\ref{sec:assumptions}, for any functional $R$ in $\cC([0, T], \bW_{ -3(P+2)}^{ \underline{ \kappa}, \underline{ \iota}})$, there is at most one solution in $\cC([0, T], \bW_{ -2(P+2)}^{ \underline{ \kappa}+ \gamma, \underline{ \iota}+ \gamma})$ to the equation
\begin{equation}
\label{eq:cE_linear_eq}
\cE_{ t} = \int_{0}^{t}  \left(\mathscr{L}^{ (1)}_{ s}\right)^{ \ast} \cE_{ s} \dd s + \int_{0}^{t} R_{ s} \dd s,\quad t\in[0, T], \text{ in $ \bW_{ -3(P+2)}^{ \underline{ \kappa}, \underline{ \iota}}$}
\end{equation}
Moreover, one has the representation
\begin{equation}
\label{eq:repr_cE}
\cE_{ t}= \int_{0}^{t} U(t, s)^{ \ast} R_{ s}\dd s,\text{ in $C_{ -2(P+2)}^{ 0, 0}$},
\end{equation}
where $U$ is given in \eqref{eq:def_Uts}.
\end{proposition}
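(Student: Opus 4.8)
The plan is to recognise $\mathscr{L}^{(1)}_s$ as the (time-inhomogeneous) generator of the Markov process $X_{s,t}$ and to run a duality argument against the evolution operator $U(t,s)$ of \eqref{eq:def_Uts}. First I would record the structural observation that $\mathscr{L}^{(1)}_s$ in \eqref{eq:def_opL1} contains no term coupling the variables $\tau$ and $\ttau$, so the diffusion $X_{s,t}(\tau,\ttau)=(\Theta_{s,t}(\tau),\tTheta_{s,t}(\ttau))$ genuinely splits into two independent copies of the one-particle diffusion with drift $v(t,\cdot,\cdot)$ from \eqref{eq:def_v_sde}; under the assumptions of Section~\ref{sec:assumptions} this drift is bounded and smooth in $\theta$, with polynomial control in the frozen variable $\omega$, so $U(t,s)$ is well defined on bounded measurable functions. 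By It\^o's formula $s\mapsto U(t,s)g$ solves the backward equation $\partial_s U(t,s)g=-\mathscr{L}^{(1)}_sU(t,s)g$ with $U(t,t)g=g$. Differentiating the flow $\tau\mapsto\Theta_{s,t}(\tau)$ in its initial datum and controlling the resulting variational equations against the polynomial-in-$\omega$ bounds on $v$ and its derivatives, together with the moment bounds of Propositions~\ref{prop:nonlin} and~\ref{prop:moment_particles}, one checks --- exactly as in \cite{Fernandez1997} --- that $U(t,s)$ is bounded on the test space $\bW_{3(P+2)}^{\underline{\kappa},\underline{\iota}}$ uniformly for $0\le s\le t\le T$ and that $s\mapsto U(t,s)g$ is of class $\cC^1$ with values in $\bW_{2(P+2)}^{\underline{\kappa}+\gamma,\underline{\iota}+\gamma}$, the derivative lying in that space by Proposition~\ref{prop:continuityL} (applied with $q=2(P+2)$, $\kappa_0=\underline{\kappa}$, $\iota_0=\underline{\iota}$). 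One also notes, via the Hilbert--Schmidt embedding \eqref{eq:comp_embed}, that $\bW_{3(P+2)}^{\underline{\kappa},\underline{\iota}}\hookrightarrow\bW_{2(P+2)}^{\underline{\kappa}+\gamma,\underline{\iota}+\gamma}$, since $\gamma>P$ by \eqref{eq:P} and the definition of $\gamma$.

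Next comes the duality identity. Fix $t\in[0,T]$ and a test function $g\in\bW_{3(P+2)}^{\underline{\kappa},\underline{\iota}}$, and let $\cE\in\cC([0,T],\bW_{-2(P+2)}^{\underline{\kappa}+\gamma,\underline{\iota}+\gamma})$ solve \eqref{eq:cE_linear_eq}; taking $t=0$ there gives $\cE_0=0$. Set $\psi(s):=\cro{\cE_s}{U(t,s)g}$ on $[0,t]$, which is meaningful because $U(t,s)g\in\bW_{3(P+2)}^{\underline{\kappa},\underline{\iota}}\hookrightarrow\bW_{2(P+2)}^{\underline{\kappa}+\gamma,\underline{\iota}+\gamma}$ pairs with $\cE_s\in\bW_{-2(P+2)}^{\underline{\kappa}+\gamma,\underline{\iota}+\gamma}$. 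Combining the integral form $\cE_s=\int_0^s\big((\mathscr{L}^{(1)}_r)^{\ast}\cE_r+R_r\big)\dd r$ with the $\cC^1$ regularity of $s\mapsto U(t,s)g$ from Step~1, a standard product-rule (equivalently Fubini) argument shows $\psi$ is absolutely continuous with, for a.e. $s$,
\[
\psi'(s)=\cro{(\mathscr{L}^{(1)}_s)^{\ast}\cE_s+R_s}{U(t,s)g}+\cro{\cE_s}{\partial_s U(t,s)g}=\cro{R_s}{U(t,s)g},
\]
the two terms carrying $\mathscr{L}^{(1)}_s$ cancelling by definition of the adjoint and by $\partial_s U(t,s)g=-\mathscr{L}^{(1)}_sU(t,s)g$. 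Integrating over $[0,t]$ and using $\psi(0)=\cro{\cE_0}{U(t,0)g}=0$ and $\psi(t)=\cro{\cE_t}{g}$ gives $\cro{\cE_t}{g}=\int_0^t\cro{R_s}{U(t,s)g}\dd s=\cro{\int_0^tU(t,s)^{\ast}R_s\dd s}{g}$. As $g$ ranges over a dense class of test functions this simultaneously yields uniqueness of the solution to \eqref{eq:cE_linear_eq} and the representation \eqref{eq:repr_cE}; that the right-hand side of \eqref{eq:repr_cE} is a well-defined object in $C^{0,0}_{-2(P+2)}$ follows from the regularising action of $U(t,s)^{\ast}$ for $s<t$ together with the embeddings of Section~\ref{sec:Hilbert_spaces}, as in \cite{Fernandez1997,Lucon2011}.

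The genuinely delicate point is Step~1: one must establish that $U(t,s)$ acts on the weighted Sobolev scale with exactly the loss of derivatives and gain of polynomial weight encoded in Proposition~\ref{prop:continuityL}, uniformly in $0\le s\le t\le T$, so that the pairing $\cro{\cE_s}{U(t,s)g}$ and its time derivative are meaningful and the integration by parts above is rigorous. This is where the moment hypotheses \eqref{eq:moments_mu_xi0} and the specific choice of the exponents $\gamma,\underline{\kappa},\underline{\iota}$ are used, through estimates on the tangent flow of $\Theta_{s,t}$; the material is by now classical and is essentially contained in \cite{Fernandez1997,Lucon2011}. Once it is available, the duality computation is a one-line manipulation. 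Finally, the companion uniqueness statement announced in Section~\ref{sec:uniqueness_Lnu}, with $\mathscr{L}^{(1)}_s$ replaced by $L[\nu_s]$ and the two-particle flow replaced by the corresponding one-particle flow, is obtained by the very same scheme.
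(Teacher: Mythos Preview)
Your proposal is correct and follows the same duality strategy as the paper: identify $\mathscr{L}^{(1)}_s$ as the generator of the two-particle flow $X_{s,t}$, establish the backward equation for $U(t,s)$, and pair it against the weak equation for $\cE$ so that the $\mathscr{L}^{(1)}$-terms cancel. The only differences are cosmetic. First, the paper runs the cancellation via the \emph{integral} backward Kolmogorov identity \eqref{eq:back_Kolm_U_2} and a Fubini swap of $\int_0^t\int_s^t\,\dd r\,\dd s$, rather than by differentiating $\psi(s)=\cro{\cE_s}{U(t,s)g}$; the two computations are equivalent, and the integral version sidesteps the need to assert $\cC^1$ regularity of $s\mapsto U(t,s)g$ directly. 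Second, the paper carries out the flow regularity in the $C_j^{0,\iota}$ scale (Lemma~\ref{lem:regularity_U}, citing Kunita \cite{MR876080} and Jourdain--M\'el\'eard \cite{Jourdain1998} rather than \cite{Fernandez1997}), recording in particular that $U(t,s):C_j^{0,0}\to C_j^{0,\iota}$ picks up one power of the $\omega$-weight because $c(\theta,\omega)$ is only polynomially bounded in $\omega$; it then takes $g\in C_{3P+9}^{0,0}$, obtains the representation there, and extends by density to $C_{2(P+2)}^{0,0}$ and hence to $\bW_{-2(P+2)}^{\underline{\kappa}+\gamma,\underline{\iota}+\gamma}$. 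Your formulation directly in the $\bW$-scale is fine once that weight shift is tracked.
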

Proof of Proposition~\ref{prop:unique_L1} is given in Section~\ref{sec:proof_unique_L1}.

\subsubsection{Uniqueness for linear functional equations driven by $L[ \nu_{ s}]$}
\label{sec:uniqueness_Lnu}
We state here a result similar to Proposition~\ref{prop:unique_L1} for the operator $L[ \nu_{ s}]$ defined in \eqref{eq:propagL}. Since the proof is identical to Proposition~\ref{prop:unique_L1}, we only state the result. 

For all $0\leq s \leq t\leq T $ and $\tau=( \theta, \omega, x)\in \bbX\times\bbY\times\bbS$, define as $ Y_{ s, t}( \tau)$ the unique solution of the following stochastic differential equation in $\bbX$
\begin{equation}
Y_{ s, t}( \tau) = \theta + \int_{s}^{t} v(t, Y_{ s, r}(\tau), \omega) \dd r + B_{ t} - B_{ s},
\end{equation}
for $B$ Brownian motion in $\bbX$ and $v$ defined as in \eqref{eq:def_v_sde} and introduce the flow (for any test function $f$)
\begin{equation}
\label{eq:def_Vts}
V(t, s)f(\tau)= \bE_{ B}(f(Y_{ st}(\tau))).
\end{equation}
\begin{lemma}
\label{lem:regularity_V}
Under the assumptions of Section~\ref{sec:assumptions}, the operator $L[ \nu_{ t}]$ is continuous from $C_{ 3P+8}^{ 0, \iota}$ to $C_{ 3(P+2)}^{ 0, 2 \iota}$
\begin{align*}
\N{L[ \nu_{ t}]f}_{ C_{ 3(P+2)}^{ 0, 2 \iota}}&\leq C \N{f}_{ C_{ 3P+8}^{ 0, \iota}},\ t\in[0, t]\\
\N{L[ \nu_{ s}]f - L[ \nu_{ t}]f}_{ C_{ 3(P+2)}^{ 0, 2 \iota}}&\leq C \N{f}_{ C_{ 3P+8}^{ 0, \iota}} \left\vert t-s \right\vert,\ s,t\in[0, T].
\end{align*}
For any $j\leq 3P+8$, the operator $V(t, s)$ is a linear operator from $C_{ j}^{ 0, 0}$ to $C_{ j}^{ 0, \iota}$ such that
\begin{align}
\N{V(t, s) f}_{ C_{ j}^{ 0, \iota}}&\leq C \N{f}_{ C_{ j}^{ 0, 0}},\ 0\leq s\leq t\leq T,\label{eq:bound_Vts_Cj}\\
\N{V(t, s) f- V(t, s^{ \prime}) f}_{ C_{ j}^{ 0, \iota}}&\leq C \N{f}_{ C_{ j+1}^{ 0, 0}} \sqrt{s^{ \prime}-s},\ 0\leq s\leq s^{ \prime}\leq t\leq T.\label{eq:bound_Vts_diff_Cj}
\end{align}
Moreover, the following backward Kolmogorov equation holds: for every $f\in C_{ 3P+9}^{ 0, 0}$
\begin{equation}
\label{eq:back_Kolm_V_2}
V(t, s)f - f = \int_{s}^{t} L[ \nu_{ t}]V(t, r)f\dd r,\ \text{in $C_{ 3(P+2)}^{ 0, 2 \iota}$}.
\end{equation}
\end{lemma}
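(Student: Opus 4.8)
The plan is to reduce the entire statement to classical stochastic‑flow estimates for the (time‑frozen) diffusion $Y_{s,t}(\tau)$, all of which are routine once the growth of the coefficients is recorded; the only delicate point is the bookkeeping of the polynomial weights in $\omega$, which is governed by assumptions \eqref{eq:cgrowthcond}--\eqref{eq:polgrowthc}. First I would collect the relevant properties of the drift $v(t,\cdot,\omega)=c(\cdot,\omega)+[\Gamma\Psi,\nu_t](\cdot,\omega)$ appearing in \eqref{eq:def_v_sde}: using \eqref{eq:polgrowthc}, \eqref{eq:cgrowthcond}, the boundedness of $\Gamma$ and of its derivatives, and $\int_{\bbS}\Psi(x,\tx)\dd\tx=\frac{2^{\alpha}}{1-\alpha}$ (Lemma~\ref{lem:fluct_Psi}), the map $\Theta\mapsto v(t,\Theta,\omega)$ is bounded in $\Theta$ and smooth, with all derivatives of order $\le 3P+9$ bounded in $\Theta$ and of size $O(1+|\omega|^{\iota})$ in $\omega$, it satisfies a one‑sided Lipschitz estimate in $\Theta$ \emph{uniformly} in $(\omega,t)$ (the term $[\Gamma\Psi,\nu_t]$ having a genuinely bounded gradient), and $t\mapsto v(t,\cdot,\cdot)$ is Lipschitz in these norms. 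The last point comes from the time‑regularity of $\nu$: writing $[\Gamma\Psi,\nu_t](\theta,\omega,x)=\frac{2^{\alpha}}{1-\alpha}\int\Gamma(\theta,\omega,\ttheta,\tomega)\xi_t(\dd\ttheta,\dd\tomega)$ (Remark~\ref{rem:Psix}), representing $\xi_t$ by the law of the nonlinear process \eqref{eq:nonlin} and applying It\^o's formula to the bounded $\cC^2$ map $\ttheta\mapsto\Gamma(\theta,\omega,\ttheta,\tomega)$ together with the boundedness of the drift of $\bar\theta$ yields $|[\Gamma\Psi,\nu_s]-[\Gamma\Psi,\nu_t]|\le C|t-s|$ uniformly, and likewise for the $(\theta,\omega)$‑derivatives.

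Granting this, the two estimates on $L[\nu_t]$ are immediate from the explicit formula \eqref{eq:propagL}: a derivative of order $\le 3(P+2)$ of $L[\nu_t]f$ is, by the Leibniz rule, a finite sum of products of a derivative of $f$ of order $\le 3P+8$ and a derivative of $v$; bounding the latter by $C(1+|\omega|^{\iota})$ and the former by the $C^{0,\iota}_{3P+8}$‑norm of $f$, division by the weight $1+|\omega|^{2\iota}$ gives the first inequality, and the second one follows in exactly the same way since $L[\nu_s]f-L[\nu_t]f=\nabla_{\theta}f\cdot\big([\Gamma\Psi,\nu_s]-[\Gamma\Psi,\nu_t]\big)$, to which the time‑Lipschitz property of the previous paragraph applies.

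For $V(t,s)$ I would differentiate the (time‑homogeneous, frozen‑drift) SDE for $Y_{s,t}(\tau)$ in the parameter $\tau=(\theta,\omega,x)$: since $Y_{s,t}(\tau)$ does not depend on $x$, an $x$‑derivative passes directly onto $f$, while the $\theta$‑ and $\omega$‑derivatives $D^{k}_{\tau}Y_{s,t}(\tau)$ solve linear SDEs driven by the derivatives of $v$, so that Gronwall's lemma and the Burkholder--Davis--Gundy inequality give moment bounds on $\sup_{r\le t}|D^{k}_{\tau}Y_{s,r}(\tau)|$, with the \emph{first} variation bounded uniformly in $\omega$ thanks to the one‑sided Lipschitz estimate and the higher orders controlled by powers of $(1+|\omega|^{\iota})$ absorbed into the target $C^{0,\iota}$‑norm. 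Expanding $D^{k}_{\tau}V(t,s)f(\tau)=\bE_{B}\big[\sum (D^{l}f)(Y_{s,t}(\tau),\omega,x)\cdot(\text{products of }D^{k_i}_{\tau}Y_{s,t}(\tau))\big]$ and using that all derivatives of $f$ are bounded by $\N{f}_{C^{0,0}_{j}}$ gives \eqref{eq:bound_Vts_Cj}. For \eqref{eq:bound_Vts_diff_Cj} I would use the flow identity $Y_{s,t}(\tau)=Y_{s',t}(Y_{s,s'}(\tau),\omega,x)$ (valid for $s\le s'\le t$, with independent Brownian increments on $[s,s']$ and $[s',t]$), which after conditioning gives $V(t,s)f-V(t,s')f=(\cB_{s,s'}-I)V(t,s')f$ with $\cB_{s,s'}h(\tau):=\bE_{B}[h(Y_{s,s'}(\tau),\omega,x)]$; since $|Y_{s,s'}(\tau)-\theta|\le C(1+|\omega|^{\iota})(s'-s)+|B_{s'}-B_{s}|$, one gets $|(\cB_{s,s'}-I)g(\tau)|\le C\N{g}_{C^{0,\iota}_{1}}(1+|\omega|^{\iota})\sqrt{s'-s}$, and applying this to $g=V(t,s')f$ together with the bound \eqref{eq:bound_Vts_Cj} already proved (and differentiating in $\tau$ as above for the higher‑order part) yields the claim.

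Finally, \eqref{eq:back_Kolm_V_2} is the standard backward Kolmogorov equation for the time‑homogeneous diffusion with generator $L[\nu_t]$: for $f\in C^{0,0}_{3P+9}$ one has $\partial_{s}V(t,s)f=-L[\nu_t]V(t,s)f$, and integrating from $s$ to $t$ with $V(t,t)f=f$ gives $V(t,s)f-f=\int_{s}^{t}L[\nu_t]V(t,r)f\,\dd r$; that the right‑hand side is a well‑defined Bochner integral in $C^{0,2\iota}_{3(P+2)}$ and that the identity holds there follows from the first estimate on $L[\nu_t]$ and the continuity of $r\mapsto V(t,r)f$ in $C^{0,0}_{3P+8}$ supplied by \eqref{eq:bound_Vts_diff_Cj}. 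The main obstacle, as already flagged, is carrying the $\omega$‑weights through the iterated differentiation of the flow and verifying that they stay inside the declared $C^{0,\iota}$ (resp. $C^{0,2\iota}$) classes — which crucially exploits the one‑sided Lipschitz condition \eqref{eq:cgrowthcond} to keep the first variation of the flow bounded \emph{uniformly} in the disorder, rather than merely exponential in $|\omega|^{\iota}$.
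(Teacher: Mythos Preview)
Your approach is correct and is exactly the route the paper takes: the paper does not prove Lemma~\ref{lem:regularity_V} directly but declares the argument identical to that for $\mathscr{L}_s^{(1)}$ and $U(t,s)$ in Section~\ref{sec:proof_unique_L1}, which in turn defers the flow estimates and backward Kolmogorov equation to Kunita~\cite{MR876080} and to \cite{Jourdain1998}, Lemma~3.11. Your sketch of the drift bounds, the Leibniz-rule estimate for $L[\nu_t]$, the moment bounds on the flow derivatives (with the one-sided Lipschitz trick for the first variation), the flow-property argument for \eqref{eq:bound_Vts_diff_Cj}, and the backward Kolmogorov identity reproduces precisely this standard argument, and you have correctly singled out the $\omega$-weight bookkeeping as the only nontrivial point --- this matches the paper's own remark after Lemma~\ref{lem:regularity_U} on why the weight jumps from $0$ to $\iota$ (resp.\ $\iota$ to $2\iota$).
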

One can deduce from Lemma~\ref{lem:regularity_V} the proposition
\begin{proposition}
\label{prop:unique_Lnu}
Under the assumptions of Section~\ref{sec:assumptions}, for any functional $R$ in $\cC([0, T], \bV_{ -3(P+2)}^{ \underline{ \kappa}, \underline{ \iota}})$, there is at most one solution in $\cC([0, T], \bV_{ -2(P+2)}^{ \underline{ \kappa}+ \gamma, \underline{ \iota}+ \gamma})$ to the equation
\begin{equation}
\label{eq:cE_linear_eq_V}
\cE_{ t} = \int_{0}^{t} L[ \nu_{ t}]^{ \ast} \cE_{ s} \dd s + \int_{0}^{t} R_{ s} \dd s,\quad t\in[0, T], \text{ in $ \bV_{ -3(P+2)}^{ \underline{ \kappa}, \underline{ \iota}}$}
\end{equation}
Moreover, one has the representation
\begin{equation}
\label{eq:repr_cE_V}
\cE_{ t}= \int_{0}^{t} V(t, s)^{ \ast} R_{ s}\dd s,\text{ in $C_{ -2(P+2)}^{ 0, 0}$},
\end{equation}
where $V$ is given in \eqref{eq:def_Vts}.

\end{proposition}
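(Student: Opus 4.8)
The plan is to carry over, \emph{mutatis mutandis}, the proof of Proposition~\ref{prop:unique_L1}, under the substitution dictionary: the two-variable operator $\mathscr{L}^{(1)}_{s}$ on $(\bbX\times\bbY\times\bbS)^{2}$ is replaced by the one-variable propagator $L[\nu_{s}]$ on $\bbX\times\bbY\times\bbS$, the flow $U(t,s)$ by $V(t,s)$, the auxiliary SDE $X_{s,t}$ by $Y_{s,t}$, the distribution spaces $\bW$ by $\bV$, and ``test functions of two variables $(\tau,\ttau)$'' by ``test functions of one variable $\tau$''. All the analytic input specific to the present case is packaged in Lemma~\ref{lem:regularity_V}, which plays exactly the role that the analogous regularity statement for $U(t,s)$ plays in the proof of Proposition~\ref{prop:unique_L1}: it supplies (i) the mapping property $\N{L[\nu_{t}]f}_{C^{0,2\iota}_{3(P+2)}}\leq C\N{f}_{C^{0,\iota}_{3P+8}}$ and its Lipschitz-in-$t$ refinement, (ii) the continuity estimates \eqref{eq:bound_Vts_Cj}--\eqref{eq:bound_Vts_diff_Cj}, which make $s\mapsto V(t,s)f$ a $\tfrac12$-H\"older curve in a space of continuous functions embedding into the predual of $\bV^{\underline{\kappa}+\gamma,\underline{\iota}+\gamma}_{-2(P+2)}$, and (iii) the backward Kolmogorov identity \eqref{eq:back_Kolm_V_2}.

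The heart of the proof would be a duality computation. Fix $t\in[0,T]$ and a smooth, compactly supported test function $f$ (these are dense in $\bV^{\underline{\kappa},\underline{\iota}}_{3(P+2)}$ and regular enough for Lemma~\ref{lem:regularity_V}); let $\cE\in\cC([0,T],\bV^{\underline{\kappa}+\gamma,\underline{\iota}+\gamma}_{-2(P+2)})$ be any solution of \eqref{eq:cE_linear_eq_V}, so that evaluating \eqref{eq:cE_linear_eq_V} at $t=0$ gives $\cE_{0}=0$. Set $\phi(s):=\cro{\cE_{s}}{V(t,s)f}$ for $s\in[0,t]$. Differentiating $\phi$ and using the integral equation \eqref{eq:cE_linear_eq_V} for $\cE_{s}$ together with the backward equation \eqref{eq:back_Kolm_V_2} for $V(t,\cdot)f$, the two drift contributions built from $L[\nu]$ cancel, leaving $\phi'(s)=\cro{R_{s}}{V(t,s)f}$. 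Integrating on $[0,t]$ and using $\cE_{0}=0$ and $V(t,t)=\mathrm{Id}$,
\begin{equation*}
\cro{\cE_{t}}{f}=\phi(t)-\phi(0)=\int_{0}^{t}\cro{R_{s}}{V(t,s)f}\dd s=\cro{\int_{0}^{t}V(t,s)^{\ast}R_{s}\dd s}{f}.
\end{equation*}
Since this holds for a dense family of test functions, it yields $\cE_{t}=\int_{0}^{t}V(t,s)^{\ast}R_{s}\dd s$ in $C^{0,0}_{-2(P+2)}$: this is both the announced representation \eqref{eq:repr_cE_V} and the sought uniqueness, the right-hand side being a well-defined Bochner integral thanks to the uniform bound $\N{V(t,s)^{\ast}}\leq C$ from \eqref{eq:bound_Vts_Cj} and the continuity of $s\mapsto R_{s}$.

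A few justifications have to be supplied, all routine given Lemma~\ref{lem:regularity_V}. The a priori bound $\sup_{s\leq T}\N{\cE_{s}}_{-2(P+2),\underline{\kappa}+\gamma,\underline{\iota}+\gamma}<+\infty$ needed to justify Fubini and the differentiation of $\phi$ follows from $\cE\in\cC([0,T],\bV_{-2(P+2)}^{\underline{\kappa}+\gamma,\underline{\iota}+\gamma})$ and compactness of $[0,T]$; the continuity of $s\mapsto\cro{\cE_{s}}{\cdot}$ and of $s\mapsto V(t,s)f$ in the relevant topologies comes from \eqref{eq:bound_Vts_Cj}--\eqref{eq:bound_Vts_diff_Cj}; and \eqref{eq:back_Kolm_V_2} is what gives a meaning to $s\mapsto L[\nu]V(t,s)f$ in $C^{0,2\iota}_{3(P+2)}$, so that it can be paired against $\cE_{s}$. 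One also uses that equation \eqref{eq:cE_linear_eq_V}, a priori an identity in $\bV^{\underline{\kappa},\underline{\iota}}_{-3(P+2)}$, is automatically satisfied in the smaller space in which $\cE$ is assumed to take values, through the Hilbert--Schmidt embedding \eqref{eq:comp_embed_dual_one_var}.

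I expect the only real difficulty to be the bookkeeping of the Sobolev and weight indices. One must check at each step that the pairings $\cro{\cdot}{\cdot}$ are well defined: applying $L[\nu_{s}]$ to a test function costs $P+2$ derivatives and shifts the polynomial weight by $\gamma$ (this is \eqref{eq:continuityL_eta} together with Lemma~\ref{lem:regularity_V}), and the choice \eqref{eq:q_kappas_1} of $(q,\kappa_{0},\iota_{0})=(P+2,\underline{\kappa}+\gamma,\underline{\iota}+\gamma)$, combined with the definitions \eqref{eq:kappas}, \eqref{eq:iotas} of $(\underline{\kappa},\underline{\iota})$ and \eqref{eq:def_beta} of $\gamma$, is precisely calibrated so that this cost is absorbed when one passes from the working space $\bV^{\underline{\kappa}+\gamma,\underline{\iota}+\gamma}_{-2(P+2)}$ to the enlarged space $\bV^{\underline{\kappa},\underline{\iota}}_{-3(P+2)}$ where \eqref{eq:cE_linear_eq_V} is posed. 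Finally, it is worth noting that, in contrast with the supercritical identification of $\mathscr{L}_{s}$ in Section~\ref{sec:ident_supercrit}, no compactness of $\bbX$ or of $\Supp\mu$ is needed here: $L[\nu_{s}]$ is a bona fide diffusion generator on $\bbX\times\bbY\times\bbS$ (second-order part $\tfrac12\Delta_{\theta}$, one-sided Lipschitz and polynomially controlled drift), so the probabilistic representation through the flow $V(t,s)$ of the SDE $Y_{s,t}$ is available directly, exactly as for $U(t,s)$ in Proposition~\ref{prop:unique_L1}.
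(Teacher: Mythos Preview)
Your proposal is correct and follows essentially the same route as the paper, which explicitly says the proof is identical to that of Proposition~\ref{prop:unique_L1} with Lemma~\ref{lem:regularity_V} replacing the regularity lemma for $U(t,s)$. The only cosmetic difference is that you phrase the key cancellation by differentiating $\phi(s)=\cro{\cE_{s}}{V(t,s)f}$, whereas the paper (in Section~\ref{sec:proof_unique_L1}) stays at the integral level: it substitutes $f=V(t,s)f-\int_{s}^{t}L[\nu_{r}]V(t,r)f\,\dd r$ from \eqref{eq:back_Kolm_V_2} into $\cro{\cE_{t}}{f}=\int_{0}^{t}\cro{L[\nu_{s}]^{\ast}\cE_{s}+R_{s}}{f}\dd s$ and applies Fubini, which avoids having to justify the differentiability of $\phi$. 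Both computations encode exactly the same cancellation and yield the same representation~\eqref{eq:repr_cE_V}.
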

\subsection{Convergence when $ \alpha< \frac{ 1}{ 2}$}
\subsubsection{Representation of $\cH$ in terms of $\eta$}
\label{sec:repr_H_eta}
The purpose of this paragraph is to prove that the representation result \eqref{eq:rel_eta_H} of the two-particle process $\cH$ in terms of the fluctuation process $ \eta$ is indeed true when $ \alpha< \frac{ 1}{ 2}$ (Section~\ref{sec:relation_HN_etaN}). More precisely, we prove
\begin{proposition}
\label{prop:repres_H_eta}
Suppose $ \alpha< \frac{ 1}{ 2}$. Under the assumptions of Section~\ref{sec:assumptions}, the process
\begin{equation}
\label{eq:cEN}
\cE_{ N, t}g:= \left\langle \cH_{ N, t}\, ,\, g\right\rangle - \left\langle \eta_{ N, t}\, ,\, \left\langle \nu_{ t}\, ,\, \Psi g\right\rangle\right\rangle,\ g\in \bW_{ -2(P+2)}^{ \underline{ \kappa} + \gamma, \underline{ \iota}+ \gamma},\ t\in[0, T]
\end{equation}
converges in law to $0$, in $\cC([0, T], \bW_{ -2(P+2)}^{ \underline{ \kappa} + \gamma, \underline{ \iota}+ \gamma})$, as $N\to\infty$.
\end{proposition}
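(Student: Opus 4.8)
I would identify every accumulation point of $\cE_{N}$ as the unique (hence zero) solution of the \emph{homogeneous} linear equation driven by $(\mathscr{L}^{(1)}_{s})^{\ast}$, and then invoke Proposition~\ref{prop:unique_L1}. Throughout, write $G_{g,t}(\ttau):=\cro{\nu_{t}}{\Psi(\cdot,\tx)g(\cdot,\ttau)}=[g\Psi,\nu_{t}]_{2}(\ttau)$ for a two-variable test function $g$. By Proposition~\ref{prop:decomp_nu} and the change of variable $z=y-\tx$ (exactly as in the proof of Proposition~\ref{prop:continuityL}), $G_{g,t}$ inherits the regularity of $g$ in \emph{all} variables, the $\Psi$-singularity being absorbed into $\rho\in L^{1}(\bbS)$; hence $g\mapsto G_{g,\cdot}$ is continuous from $\bW$ into $\cC([0,T],\bV)$, uniformly in $t$, and $t\mapsto G_{g,t}$ is continuous since $t\mapsto\xi_{t}$ is. By \eqref{eq:rel_etaN_HN}, $\cro{\cE_{N,t}}{g}=\cro{\cH_{N,t}}{g}-\cro{\eta_{N,t}}{G_{g,t}}$, i.e. $\cE_{N}=\cH_{N}-G^{\ast}_{\cdot}\eta_{N}$ for the ($t$-dependent) adjoint of $g\mapsto G_{g,t}$; this is a continuous map of $(\cH_{N},\eta_{N})$, so tightness of $\cE_{N}$ in $\cC([0,T],\bW_{-2(P+2)}^{\underline\kappa+\gamma,\underline\iota+\gamma})$ is immediate from Theorems~\ref{theo:HN_tight} and~\ref{theo:etaN_tight}, and it remains only to characterise accumulation points.

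\textbf{Two algebraic identities.} First, directly from \eqref{eq:def_opL2} and \eqref{eq:Phif}, $\mathscr{L}^{(2)}_{s}g=\Phi[G_{g,s}]$, because $\cro{\nu_{s}}{\Psi(\cdot,x)\nabla_{\ttheta}g(\cdot,\tau)}=\nabla_{\theta}G_{g,s}(\tau)$. Second, $\partial_{s}G_{g,s}=G_{L[\nu_{s}]g,s}$ (meaning $L[\nu_{s}]$ acting on the first triple of $g$): this comes from applying the weak McKean--Vlasov equation \eqref{eq:nutLm} to the test function $\tau\mapsto\Psi(x,\tx)g(\tau,\ttau)$, which is smooth in $\theta$ and only integrable in $x$ but admissible, since $L[\nu_{s}]$ differentiates only $\theta$ and the a priori bounds \eqref{eq:est_pt}--\eqref{eq:est_div_pt} make $\cro{\nu_{s}}{L[\nu_{s}](\Psi(\cdot,\tx)g(\cdot,\ttau))}$ convergent (approximate $\Psi$ by bounded kernels and pass to the limit). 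Splitting $\mathscr{L}^{(1)}_{s}$ according to which triple $L[\nu_{s}]$ acts on, the first half contributes $\partial_{s}G_{g,s}$ and the second half $L[\nu_{s}]G_{g,s}$, so that $G_{\mathscr{L}^{(1)}_{s}g,s}=L[\nu_{s}]G_{g,s}+\partial_{s}G_{g,s}$.

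\textbf{The limit equation.} Pass to a subsequence along which $(\eta_{N},\cH_{N},\cE_{N})$ converges to $(\eta,\cH,\cE)$ with $\cE=\cH-G^{\ast}_{\cdot}\eta$; by Proposition~\ref{prop:ident_limit_of_subsequences} the pair $(\eta,\cH)$ solves \eqref{eq:SPDE_general} with $(\eta_{0},\cH_{0})$ from Proposition~\ref{prop:initial_cond_HN} and $(\cM^{(\eta)},\cM^{(\cH)})$ from Definition~\ref{def:covariance_WH}. Applying It\^o's formula in its time-dependent-test-function version (legitimate by the regularity of $t\mapsto G_{g,t}$) to $\cro{\eta_{t}}{G_{g,t}}$ via the first line of \eqref{eq:SPDE_general}, and subtracting from the second line: the coupling term $\int_{0}^{t}\cro{\cH_{s}}{\Phi[G_{g,s}]}\dd s$ cancels the $\mathscr{L}^{(2)}$-part of $\int_{0}^{t}\cro{\cH_{s}}{\mathscr{L}_{s}g}\dd s$ by the first identity, the two $\eta$-integrals collapse by the second, and one is left with $\cro{\cE_{t}}{g}=\cro{\cE_{0}}{g}+\int_{0}^{t}\cro{\cE_{s}}{\mathscr{L}^{(1)}_{s}g}\dd s+\big(\cM^{(\cH)}_{t}g-\int_{0}^{t}\cro{\dd\cM^{(\eta)}_{s}}{G_{g,s}}\big)$. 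The martingale difference vanishes: using $\partial_{\theta^{(r)}}G_{g,u}=[(\partial_{\ttheta^{(r)}}g)\Psi,\nu_{u}]_{2}$, a direct computation from \eqref{eq:def_covariances_mart} shows its Doob--Meyer bracket is identically $0$, hence it is $0$. Likewise $\cE_{0}=\cH_{0}-G^{\ast}_{0}\eta_{0}$ is a centred Gaussian whose covariance, computed from \eqref{eq:covariances_0} together with $[g\Psi,\nu_{0}]_{2}=G_{g,0}$, is identically $0$, so $\cE_{0}=0$ a.s. Therefore $\cE$ solves $\cE_{t}=\int_{0}^{t}(\mathscr{L}^{(1)}_{s})^{\ast}\cE_{s}\dd s$ in $\bW^{\underline\kappa,\underline\iota}_{-3(P+2)}$ with $\cE\in\cC([0,T],\bW^{\underline\kappa+\gamma,\underline\iota+\gamma}_{-2(P+2)})$; Proposition~\ref{prop:unique_L1} with $R\equiv0$ gives $\cE\equiv0$. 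Every subsequence having a further subsequence along which $\cE_{N}\to0$, the whole sequence converges in law to $0$.

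\textbf{Main obstacle.} The delicate points are in the third paragraph: making the time-dependent-test-function It\^o formula rigorous in the distribution spaces involved, and, upstream of it, justifying the extended McKean--Vlasov identity $\partial_{s}G_{g,s}=G_{L[\nu_{s}]g,s}$ for a test function singular in $x$ --- both rely on $L[\nu_{s}]$ seeing only the $\theta$-variable together with the density estimates of Proposition~\ref{prop:decomp_nu}. Equivalently, one may stay at finite $N$: combining Propositions~\ref{prop:semimart} and~\ref{prop:semimartI} with the time-dependent test function $G_{g,t}$ yields $\cE_{N,t}=\cE_{N,0}+\int_{0}^{t}(\mathscr{L}^{(1)}_{s})^{\ast}\cE_{N,s}\dd s+\int_{0}^{t}(F_{N,s}+G_{N,s})\dd s+\cN_{N,t}$, with $\cN_{N}$ the residual martingale pieces $\cM^{(\cH)}_{N}g-\int\cro{\dd\cM^{(\eta)}_{N,s}}{G_{g,s}}$; one then shows $F_{N}+G_{N}\to0$ by Proposition~\ref{prop:UN_VN}, and $\cE_{N,0}\to0$, $\cN_{N}\to0$ by the estimates of Section~\ref{sec:tightness_two_particle} --- the latter crucially using $\sum_{j\in\Lambda_{N}}\Psi(x_{i},x_{j})^{2}=O(N)$, valid by Lemma~\ref{lem:Riem} \emph{only because} $2\alpha<1$, together with Proposition~\ref{prop:ttaVSnonlin} to control the Riemann-sum bias. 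This is precisely the place where $\alpha<\frac{1}{2}$ is genuinely needed, and where the argument breaks down for $\alpha>\frac{1}{2}$.
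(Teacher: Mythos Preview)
Your proof is correct and is essentially the paper's own argument. The paper chooses your ``finite-$N$'' variant: it applies the semimartingale decomposition \eqref{eq:semimart} of $\eta_{N}$ with the time-dependent test function $f_{t}=G_{g,t}$, subtracts it from \eqref{eq:semimart_HN}, and obtains exactly your equation for $\cE_{N}$ (this is \eqref{eq:EN_semimart}); the algebraic identities $\Phi[G_{g,s}]=\mathscr{L}^{(2)}_{s}g$ and $L[\nu_{s}]G_{g,s}+\partial_{s}G_{g,s}=G_{\mathscr{L}^{(1)}_{s}g,s}$ are the same computation as yours, and the conclusion is via Proposition~\ref{prop:unique_L1}. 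Your limit-level Variant~A is an equally valid reorganisation, once Proposition~\ref{prop:ident_limit_of_subsequences} is available.

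One clarification on your closing remark. The place where $\alpha<\tfrac12$ is genuinely used is \emph{not} the martingale residual $\cN_{N}$ via the estimate $\sum_{j}\Psi(x_{i},x_{j})^{2}=O(N)$: for $\alpha>\tfrac12$ one has $a_{N}^{2}/|\Lambda_{N}|\to 0$, so by \eqref{eq:MNH_bounded} and \eqref{eq:MN_eta_bounded} both $\cM^{(\cH)}_{N}$ and $\cM^{(\eta)}_{N}$ (hence $\cN_{N}$) vanish trivially. The breakdown for $\alpha>\tfrac12$ occurs at the \emph{initial condition}: by Proposition~\ref{prop:initial_cond_HN}, $\cE_{N,0}\to\cH_{0}-G_{0}^{\ast}\eta_{0}=\cH_{0}$, which is the nonzero deterministic functional \eqref{eq:H0_large_alpha}. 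Conversely, for $\alpha<\tfrac12$ the limits are Gaussian with the covariances \eqref{eq:covariances_0} and \eqref{eq:def_covariances_mart}, and it is precisely the structure of these covariances (together with $\partial_{\theta^{(r)}}G_{g,u}=[(\partial_{\ttheta^{(r)}}g)\Psi,\nu_{u}]_{2}$) that makes both $\cE_{0}$ and the limiting martingale identically zero --- this is how the paper argues, and how your own Variant~A argues.
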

\begin{remark}
\label{rem:uncoupling_eta_H}
Let us admit for a moment Proposition~\ref{prop:repres_H_eta}. This result (applied to the function $g= \Phi[f]$ for any $ \tau \mapsto f(\tau)$ where $ \Phi$ is defined in \eqref{eq:Phif}) combined with the analysis made in Section~\ref{sec:identification_limit} shows that each accumulation point $ \eta$ of $ \eta_{ N}$ is actually a solution in $\bV_{ -3(P+2)}^{ \underline{\kappa}, \underline{\iota} }$ to the uncoupled equation \eqref{eq:SPDE_subcrit_eta}.
\end{remark}

\begin{proof}[Proof of Proposition~\ref{prop:repres_H_eta}]
The strategy is the following: first, write the semimartingale decomposition the process $\cE_{ N}$ (based on the corresponding decompositions of $ \eta_{ N}$ and $\cH_{ N}$), second, prove that any accumulation point is a solution to a linear PDE and third, prove that the unique solution to this equation is the trivial solution $0$.

First observe that the tightness of the processes $\cH_{ N}$ and $ \eta_{ N}$ implies the tightness of the process $\cE_{ N}$ itself in $\cC([0, T], \bW_{ -2(P+2)}^{ \underline{ \kappa} + \gamma, \underline{ \iota}+ \gamma})$. Fix now a test function $(\tau, \ttau) \mapsto g( \tau, \ttau)$. An application of the semimartingale decomposition of $ \eta_{ N}$ \eqref{eq:semimart} to a regular time-dependent test function $(t, \tau) \mapsto f_{ t}(\tau)$ gives
\begin{align}
\cro{\etaN{t}}{f_{ t}} &= \cro{\etaN{0}}{f_{ 0}} + \int_{0}^{t} \cro{\etaN{s}}{L[\nu_{ s}]f_{ s}}\dd s+ \int_{0}^{t} \cro{\cH_{ N, s}}{ \Phi[f_{ s}]} \dd s+ \cM_{ N, t}^{( \eta)}f_{ t}\nonumber\\
& +  \int_{0}^{t} \left\langle \eta_{ N, s}\, ,\, \partial_{ s} f_{ s}\right\rangle \dd s.\label{aux:etaN_semi}
\end{align}
In the particular case of 
\begin{equation}
\label{eq:ft}
f_{ t}(\ttau)= \left\langle \nu_{ t}\, ,\, \Psi(\cdot, \tx)g(\cdot, \ttau)\right\rangle,
\end{equation}
we have, using \eqref{eq:nut},
\begin{align*}
\partial_{ t} f_{ t}(\ttau)&= \left\langle \nu_{ t}(\dd \tau)\, ,\, \Psi(x, \tx) \left(\frac{ 1}{ 2} \Delta_{ \theta}g(\tau, \ttau)  + \nabla_{\theta}g(\tau, \ttau) \cdot \left\lbrace c(\theta, \omega) + \Big[ \Gamma \Psi\, ,\, \nu_{ t}\Big](\tau) \right\rbrace\right) \right\rangle
\end{align*}
and
\begin{align*}
L[\nu_{ t}]f_{ t}(\ttau)&= \left\langle \nu_{ t}(\dd \tau)\, ,\, \Psi(x, \tx) \left(\frac{ 1}{ 2} \Delta_{ \ttheta}g(\tau, \ttau)  + \nabla_{\ttheta}g(\tau, \ttau) \cdot \left\lbrace c(\ttheta, \tomega) + \Big[ \Gamma \Psi\, ,\, \nu_{ t}\Big](\ttau) \right\rbrace\right)\right\rangle.
\end{align*}
By definition of $ \Phi[\cdot]$ in \eqref{eq:Phif}, we have also that
\begin{align*}
\Phi[f_{ t}](\tau, \ttau)&=\nabla_{\theta}f_{ t}(\theta, \omega, x)\cdot \Gamma( \theta, \omega, \ttheta, \tomega),\\
&= \left\langle \nu_{ t}\, ,\, \Psi(\cdot, x) \nabla_{\ttheta}g(\cdot, \tau)\right\rangle \cdot \Gamma( \theta, \omega, \ttheta, \tomega)= \mathscr{L}_{ t}^{ (2)}g( \tau, \ttau),
\end{align*}where the definition of $\mathscr{L}^{ (2)}_{ t}$ is given in \eqref{eq:def_opL2}. Consequently, one can rewrite \eqref{aux:etaN_semi} as 
\begin{align}
\cro{\etaN{t}}{ \left\langle \nu_{ t}\, ,\, \Psi g\right\rangle¥} &= \cro{\etaN{0}}{ \left\langle \nu_{ 0}\, ,\, \Psi g\right\rangle} + \int_{0}^{t} \cro{\etaN{s}}{ \left\langle \nu_{ s}\, ,\, \Psi \mathscr{L}_{ s}^{ (1)}g\right\rangle}\dd s \nonumber\\&+ \int_{0}^{t} \cro{\cH_{ N, s}}{ \mathscr{L}_{ s}^{ (2)}g} \dd s+ \cM_{ N, t}^{( \eta)}f_{ t}\label{aux:etaN_semi2}
\end{align}
Subtracting \eqref{aux:etaN_semi2} to \eqref{eq:semimart_HN} gives, for $t\in[0, T]$,
\begin{equation}
\label{eq:EN_semimart}
\cE_{ N, t}g = \cE_{ N, 0}g + \int_{0}^{t} \cE_{ N, s}(\mathscr{L}_{ s}^{ (1)}g) \dd s + \int_{0}^{t} F_{ N, s}g \dd s + \int_{0}^{t} G_{ N, s}g \dd s + \cM_{ N, t}^{ (\cH)}g - \cM_{ N, t}^{ (\eta)}f_{ t}.
\end{equation}
We already know from Proposition~\ref{prop:UN_VN} that $ \int_{0}^{t} F_{ N, s}g \dd s + \int_{0}^{t} G_{ N, s}g \dd s$ converges to $0$ as $N\to\infty$. We prove that it is also the case for the initial condition and the martingale part in \eqref{eq:EN_semimart}, when $ \alpha< \frac{ 1}{ 2}$. First consider $\cE_{ N, 0}g= \left\langle \cH_{ N, 0}\, ,\, g\right\rangle - \left\langle \eta_{ N, 0}\, ,\, \left\langle \nu_{ 0}\, ,\, \Psi g\right\rangle\right\rangle$. Using Proposition~\ref{prop:initial_cond_HN}, we know that $\cE_{ N, 0}g$ converges as $N\to\infty$ to $\left\langle \cH_{0}\, ,\, g\right\rangle - \left\langle \eta_{0}\, ,\, \left\langle \nu_{ 0}\, ,\, \Psi g\right\rangle\right\rangle$. Using the form of the covariance of $( \eta_{ 0}, \cH_{ 0})$ in \eqref{eq:covariances_0}, it is immediate to see that $\lim_{ N\to\infty}\cE_{N, 0}\equiv 0$. 
 
We prove now a similar result for the martingale part in \eqref{eq:EN_semimart}: we deduce from Proposition~\ref{prop:martingale_part_H} that the martingale part in \eqref{eq:EN_semimart} converges as $N\to \infty$ to $\cM_{t}^{ (\cH)}g - \cM_{t}^{ (\eta)}f_{ t}$, where $(\cM^{ (\eta)}, \cM^{ (\cH)})$ is given in Definition~\ref{def:covariance_WH}. It is then easy from the form of the covariance $\cK_{ s, t}$ in \eqref{eq:def_covariances_mart} and the definition of $f_{ t}$ in \eqref{eq:ft} to see that this process is equally $0$. 
 
Following now the same procedure as in Section~\ref{sec:identification_limit}, we deduce from \eqref{eq:EN_semimart} that any accumulation point $\cE$ of $\cE_{ N}$ in $\cC([0, T], \bW_{ -2(P+2)}^{ \underline{ \kappa} + \gamma, \underline{ \iota}+ \gamma})$ necessarily solves \[\cE_{ t} = \int_{0}^{t} (\mathscr{L}_{ s}^{ (1)})^{ \ast}\cE_{ s} \dd s,\quad t\in[0, T].\] 
Then Proposition~\ref{prop:repres_H_eta} follows directly from Proposition~\ref{prop:unique_L1}.
\end{proof}
\subsubsection{Convergence of $(\eta_{ N})_{ N\geq1}$}
\label{sec:conv_subcrit_etaN}
Theorem~\ref{theo:conv_subcrit} is an easy consequence of the following uniqueness result.
\begin{proposition}[Uniqueness in law of the limit]
\label{prop:uniqueness_subcrit}
Under the assumptions of Section~\ref{sec:assumptions}, there is uniqueness in law (as well as pathwise uniqueness) in the limiting SPDE \eqref{eq:SPDE_subcrit_eta}. 
\end{proposition}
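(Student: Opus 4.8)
The plan is to exploit the linearity of \eqref{eq:SPDE_subcrit_eta}. If $\eta^{(1)}$ and $\eta^{(2)}$ are two solutions built on the same probability space driven by the same initial datum $\eta_0$ and the same martingale $\cM^{(\eta)}$ (recall that in Theorem~\ref{theo:conv_subcrit} these are prescribed, independent, with explicit law), then $\cE:=\eta^{(1)}-\eta^{(2)}$ belongs to $\cC([0,T],\bV_{-2(P+2)}^{\underline\kappa+\gamma,\underline\iota+\gamma})$, vanishes at $t=0$, and solves the homogeneous deterministic equation $\cE_t=\int_0^t\cL_s^\ast\cE_s\,\dd s$ in $\bV_{-3(P+2)}^{\underline\kappa,\underline\iota}$ (the integrand being continuous into that space thanks to \eqref{eq:continuityL_eta} and the bound on $B_s$ below). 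Pathwise uniqueness, and then uniqueness in law — since the law of a solution is entirely determined by that of the fixed pair $(\eta_0,\cM^{(\eta)})$ — will follow once we show this equation forces $\cE\equiv0$.

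To this end I would write $\cL_s=L[\nu_s]+B_s$ with $B_sf(\theta,\omega,x):=\cro{\nu_s}{\nabla_\theta f(\cdot)\cdot\Gamma(\cdot,\cdot,\theta,\omega)\Psi(\cdot,x)}$, see \eqref{eq:scL}. Although $\Psi$ is singular, $B_s$ is a \emph{regular} operator: exactly as in the last part of the proof of Proposition~\ref{prop:continuityL} one writes $\cro{\nu_s}{\Psi(\cdot,x)\varphi(\cdot)}=\int_{\bbS}\rho(z)\varphi(z+x)\,\dd z$ with $\rho\in L^1(\bbS)$ (recall \eqref{eq:def_rho}), and uses the finite moments of $\xi_s$ to conclude that $B_s$ maps $\bV_{3(P+2)}^{\underline\kappa,\underline\iota}$ continuously into $\bV_{2(P+2)}^{\underline\kappa+\gamma,\underline\iota+\gamma}$, uniformly in $s\in[0,T]$. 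Hence $s\mapsto R_s:=B_s^\ast\cE_s$ lies in $\cC([0,T],\bV_{-3(P+2)}^{\underline\kappa,\underline\iota})$ and $\cE$ solves $\cE_t=\int_0^tL[\nu_s]^\ast\cE_s\,\dd s+\int_0^tR_s\,\dd s$. Proposition~\ref{prop:unique_Lnu} then applies and yields the Duhamel representation $\cE_t=\int_0^tV(t,s)^\ast R_s\,\dd s=\int_0^tV(t,s)^\ast B_s^\ast\cE_s\,\dd s$, where $V(t,s)$ is the flow of Lemma~\ref{lem:regularity_V}.

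It remains to close a Gronwall argument. Combining the smoothing bounds for $V(t,s)$ in Lemma~\ref{lem:regularity_V} with the regularity of $B_s$ above, one shows that $V(t,s)^\ast B_s^\ast$ is bounded, uniformly in $0\leq s\leq t\leq T$, on a single fixed distribution space containing the trajectory of $\cE$; the representation then gives $\N{\cE_t}\leq C\int_0^t\N{\cE_s}\,\dd s$, whence $\cE\equiv0$ by Gronwall's lemma. The only genuine difficulty is precisely this last step, namely the careful choice of Sobolev indices so that the $\gamma$ orders of regularity lost through $B_s$ (and through $L[\nu_s]$, cf. \eqref{eq:continuityL_eta}) are compensated by the embeddings \eqref{eq:comp_embed_dual_one_var} and by the action of $V(t,s)$ — a computation entirely parallel to the book-keeping already carried out in Sections~\ref{sec:identification_limit} and~\ref{sec:uniqueness_result}. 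As an alternative that sidesteps this composition estimate, one may instead observe that $\cL_s$ generates a backward evolution family $W(t,s)$ enjoying the same regularity as $V(t,s)$ — since $B_s$ is a bounded perturbation of $L[\nu_s]$ — and that $s\mapsto\cro{\cE_s}{W(t,s)f}$ is then constant on $[0,t]$, so that $\cro{\cE_t}{f}=\cro{\cE_0}{W(t,0)f}=0$ for $f$ ranging over a dense set of test functions, giving once more $\cE\equiv0$.
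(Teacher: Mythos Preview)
Your reduction to the homogeneous equation for $\cE=\eta^{(1)}-\eta^{(2)}$ and the decomposition $\cL_s=L[\nu_s]+B_s$ are fine, and your use of Proposition~\ref{prop:unique_Lnu} to reach the Duhamel formula $\cE_t=\int_0^t V(t,s)^\ast B_s^\ast\cE_s\,\dd s$ is correct. The gap is in closing the Gronwall loop. Your own estimate on $B_s$ loses $P+2$ orders of regularity (it maps $\bV_{3(P+2)}\to\bV_{2(P+2)}$), and Lemma~\ref{lem:regularity_V} shows that $V(t,s)$ preserves but does \emph{not} improve the derivative index: it maps $C_j^{0,0}$ to $C_j^{0,\iota}$ with the \emph{same} $j$. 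Hence $V(t,s)^\ast B_s^\ast$ sends $\bV_{-2(P+2)}^{\underline\kappa+\gamma,\underline\iota+\gamma}$ to something no better than $\bV_{-3(P+2)}$, and you cannot write $\N{\cE_t}\leq C\int_0^t\N{\cE_s}\,\dd s$ in one fixed norm. The ``book-keeping parallel to Sections~\ref{sec:identification_limit}--\ref{sec:uniqueness_result}'' does not help, because every estimate there goes only one way (losing regularity); nothing there gains derivatives back. Your alternative of building the full evolution family $W(t,s)$ for $\cL_s$ has the same problem: with the estimate you have, $B_s$ is \emph{not} a bounded perturbation of $L[\nu_s]$ on a fixed space, so the standard perturbation theory does not apply.

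What the paper does, and what you are missing, is an \emph{integration by parts against the density} $p_t$ of $\xi_t$ (Proposition~\ref{prop:decomp_nu}). Writing $K[\nu_u]f(\theta,\omega,x)=-\int\div_{\bar\theta}\bigl(p_u(\bar\theta,\bar\omega)\Gamma(\bar\theta,\bar\omega,\theta,\omega)\bigr)f(\bar\theta,\bar\omega,\bar x)\Psi(x,\bar x)\,\dd\bar\theta\,\mu(\dd\bar\omega)\,\dd\bar x$ moves the gradient off $f$ and onto $p_u\Gamma$; the a~priori bounds \eqref{eq:est_pt}--\eqref{eq:est_div_pt} then yield $\N{K[\nu_u]f}_{\cC_l^{0,0}}\leq C\,\N{f}_{\cC_l^{0,\iota}}\bigl(u^{-\alpha_0}+u^{-\alpha_0-1/2}\bigr)$ with the \emph{same} $l$ on both sides. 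This makes $K[\nu_u]$ a zeroth-order (in $f$) operator at the price of an integrable time singularity near $u=0$, and the fixed-point/Picard iteration then closes in a single space $\cC_{-2(P+2)}^{0,0}$ via a H\"older-in-time Gronwall. The paper carries this out and obtains an explicit convergent series for $\eta$ in terms of $R(t)=\eta_0+\cM_t$, which gives both pathwise uniqueness and uniqueness in law directly.
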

\begin{proof}[Proof of Proposition~\ref{prop:uniqueness_subcrit}]
We follow here the strategy used by Mitoma (see \cite{MR820620}, Step~3, p.~352) who proved a similar uniqueness result in a slightly different context of weighted tempered distributions. Although the functional context is different, the strategy is similar. Let $ t\mapsto \eta(t) \in \cC([0, T], \bV_{ -2(P+2)}^{ \underline\kappa + \gamma, \underline\iota + \gamma})$ be a solution in $\bV_{ -3(P+2)}^{ \underline\kappa, \underline\iota}$ to \eqref{eq:SPDE_subcrit_eta}. For simplicity, we write $\cM$ in \eqref{eq:SPDE_subcrit_eta} instead of $\cM^{ (\eta)}$ in the following. Setting 
\begin{equation}
\label{eq:def_h}
h(t):= \int_{0}^{t} \cL_{ s}^{ \ast} \eta_{ s} \dd s= \eta_{ t} - \eta_{ 0} - \cM_{ t}
\end{equation} and differentiating this quantity with respect to $t$, one obtains that (almost surely w.r.t. the randomness) 
\begin{equation}
\frac{ \dd}{ \dd t} h(t) = \cL^{ \ast}_{ t} h(t) + \eta_{ 0} + \cM_{ t}
\end{equation}
Define for all $0\leq t\leq T$, all test function $f$ and $(\theta, \omega, x)$
\begin{equation}
\label{eq:Knu}
K[ \nu_{ t}]f(\theta, \omega, x):= \cro{\nu_{t}}{\nabla_{\theta}f(\cdot) \cdot \Gamma(\cdot, \cdot, \theta, \omega)\Psi(\cdot, x)},
\end{equation}
so that the linear operator $\cL_{ t}$ in \eqref{eq:scL}
\begin{equation}
\label{eq:decomp_scL}
\cL_{ t}= L[ \nu_{ t}] + K[ \nu_{ t}].
\end{equation}
Focus for a moment on the regularity of the linear operator $K$ in \eqref{eq:Knu}: for all $l\geq1$, there exists a constant $C>0$ such that for all $f\in\cC_{ l}^{ 0, \iota}$, for all $u\in(0, T]$
\begin{equation}
\label{eq:bound_Knu}
\N{K[ \nu_{ u}]f}_{ \cC_{ l}^{ 0, 0}}\leq C\N{f}_{ \cC_{ l}^{ 0, \iota}} \left(\frac{ 1}{ u^{ \alpha_{ 0}}} + \frac{ 1}{ u^{ \alpha_{ 0}+ \frac{ 1}{ 2}}}\right),
\end{equation}
where $ \alpha_{ 0}\in[0, \frac{ 1}{ 2})$ is defined in Proposition~\ref{prop:decomp_nu}. Indeed, using the decomposition $ \nu_{ t}(\dd \theta, \dd \omega, \dd x)= p_{ t}(\theta, \omega) \dd \theta\mu(\dd \omega)\dd x$ (recall Proposition~\ref{prop:decomp_nu}), one obtains by integration by parts
\begin{align*}
K[ \nu_{ u}]f(\theta, \omega, x)&= - \int_{ \bbX\times\bbY\times\bbS} \div_{ \bar\theta} \left(p_{ t}( \bar\theta, \bar \omega) \Gamma(\bar \theta, \bar \omega, \theta, \omega)\right) f(\bar \theta, \bar \omega, \bar x) \Psi(x, \bar x) \dd \bar\theta \mu(\dd \bar\omega)\dd \bar x,\\
&= - \int_{ \bbX\times\bbY\times\bbS} \div_{ \bar\theta} \left(p_{ t}( \bar\theta, \bar \omega) \Gamma(\bar \theta, \bar \omega, \theta, \omega)\right) f(\bar \theta, \bar \omega, x-z) \rho(z) \dd \bar\theta \mu(\dd \bar\omega)\dd z,
\end{align*}where $ \rho$ is defined in \eqref{eq:def_rho}. Hence, for all $l\geq 1$,
\begin{align*}
\N{K[ \nu_{ u}]f}_{ \cC_{ l}^{ 0, 0}}&=\\ \sum_{ \left\vert k \right\vert\leq l} \sup_{ \theta, \omega, x} &\left\vert \int_{ \bbX\times\bbY\times\bbS} \div_{ \bar \theta} \left(p_{ t}(\bar \theta, \bar \omega)D_{ \theta}^{ k_{ 1}}D_{ \omega}^{ k_{ 2}} \Gamma( \bar \theta, \bar \omega, \theta, \omega)\right)D_{ x}^{ k_{ 3}}f(\bar \theta, \bar \omega, x-z) \rho(z) \dd \bar\theta \mu(\dd \bar\omega)\dd z\right\vert,\\
\leq C\N{f}_{ \cC_{ l}^{ 0, \iota}} \sum_{ \left\vert k \right\vert\leq l} &\sup_{ \theta, \omega} \int_{ \bbX\times\bbY} \left\vert \div_{ \bar \theta} \left(p_{ t}(\bar \theta, \bar \omega)D_{ \theta}^{ k_{ 1}}D_{ \omega}^{ k_{ 2}} \Gamma( \bar \theta, \bar \omega, \theta, \omega)\right)\right\vert (1+ \left\vert \bar \omega \right\vert^{ \iota}) \dd \bar\theta \mu(\dd \bar\omega),
\end{align*}
where the sum above is taken over all multi-indices $k=(k_{ 1}, k_{ 2}, k_{ 3})$ such that $ \left\vert k \right\vert= \left\vert k_{ 1} \right\vert + \left\vert k_{ 2} \right\vert + \left\vert k_{ 3} \right\vert\leq l$. Consequently, using \eqref{eq:est_pt} and \eqref{eq:est_div_pt},
\begin{align*}
\N{K[ \nu_{ u}]f}_{ \cC_{ l}^{ 0, 0}}\leq&\\ C\N{f}_{ \cC_{ l}^{ 0, \iota}} &\left(\frac{ 1}{ u^{ \alpha_{ 0}}} + \frac{ 1}{ u^{ \alpha_{ 0}+ \frac{ 1}{ 2}}}\right) \int_{ \bbY}(1+ \left\vert \bar \omega \right\vert^{ \iota})^{ 2} \mu(\dd \bar\omega) \sum_{ \left\vert k \right\vert\leq l+1} \sup_{ \theta, \omega, \bar \omega} \left(\int_{ \bbX} \left\vert D_{ \theta, \omega}^{ k} \Gamma( \bar \theta, \bar \omega, \theta, \omega)\right\vert  \dd \bar\theta\right),\\
&\leq C\N{f}_{ \cC_{ l}^{ 0, \iota}} \left(\frac{ 1}{ u^{ \alpha_{ 0}}} + \frac{ 1}{ u^{ \alpha_{ 0}+ \frac{ 1}{ 2}}}\right),
\end{align*}
by the assumptions made on $ \mu$ and $ \Gamma$ (recall Section~\ref{sec:assumptions}). Using the semigroup $V(s, t)$ associated to $L[ \nu_{ t}]$, one has
\begin{align*}
\frac{ \dd}{ \dd u} V^{ \ast}(t, u) h(u)&= - V^{ \ast}(t, u) L[ \nu_{ u}]^{ \ast} h(u) + V^{ \ast}(t,u)(\cL^{ \ast}_{ u} h(u) + \eta_{ 0} + \cM_{ u}),\\
&= V^{ \ast}(t, u)(K^{ \ast}[ \nu_{ u}] h(u) + \eta_{ 0} + \cM_{ u}).
\end{align*}
Hence, $h$ is also solution to
\begin{equation}
\label{eq:eq_ht}
h(t)=  \int_{0}^{t}V^{ \ast}(t, u)(K^{ \ast}[ \nu_{ u}] h(u) + \eta_{ 0} + \cM_{ u}) \dd u.
\end{equation}
The main point of the proof is to see that $h$ solution of \eqref{eq:eq_ht} can be approximated by the converging sequence $(h_{ n})_{ n\geq1}$ defined recursively as follows
\begin{equation}
\label{eq:recurs_hn}
\begin{cases}
h_{ 1}(t)&= \int_{0}^{t} V^{ \ast}(t, u) ( \eta_{ 0} + \cM_{ u})\dd u,\\
h_{ n}(t)&= \int_{0}^{ t}V^{ \ast}(t, u)( K^{ \ast}[ \nu_{ u}] h_{ n-1}(u) + \eta_{ 0} + \cM_{ u})\dd u,\ n\geq 2.
\end{cases} 
\end{equation}
Indeed, by the boundedness of the semigroup $V(t, u)$ \eqref{eq:bound_Vts_Cj} and by the boundedness of $K[ \nu_{ u}]$ \eqref{eq:bound_Knu}, we obtain that for all $0<u<t<T$, for all $f\in \cC_{ 2(P+2)}^{ 0, 0}$, for all $h\in \cC_{ -2(P+2)}^{ 0, 0}$
\begin{align*}
\left\vert h K[ \nu_{ u}] V(t, u) f \right\vert &\leq \N{h}_{  \cC_{ -2(P+2)}^{ 0, 0}} \N{K[ \nu_{ u}] V(t, u) f}_{  \cC_{ 2(P+2)}^{ 0, 0}},\\
&\leq C\N{h}_{  \cC_{ -2(P+2)}^{ 0, 0}} \left(\frac{ 1}{ u^{ \alpha_{ 0}}} + \frac{ 1}{ u^{ \alpha_{ 0} + \frac{ 1}{ 2}}}\right) \N{V(t, u) f}_{  \cC_{ 2(P+2)}^{ 0, \iota}},\\
&\leq C\N{h}_{  \cC_{ -2(P+2)}^{ 0, 0}} \left(\frac{ 1}{ u^{ \alpha_{ 0}}} + \frac{ 1}{ u^{ \alpha_{ 0} + \frac{ 1}{ 2}}}\right) \N{f}_{  \cC_{ 2(P+2)}^{ 0, 0}}.
\end{align*}
Thus, the sequence $(h_{ n})_{ n\geq1}$ defined in \eqref{eq:recurs_hn} satisfies, for all $n\geq 2$
\begin{align*}
\N{h_{ n+1}(t) - h_{ n}(t)}_{ \cC_{ -2(P+2)}^{ 0, 0}}&\leq C \int_{0}^{t} \left( \frac{ 1}{ u^{ \alpha_{ 0}}} + \frac{ 1}{ u^{ \alpha_{ 0} + \frac{ 1}{ 2}}}\right) \N{h_{ n}(u)- h_{ n-1}(u)}_{ \cC_{ -2(P+2)}^{ 0, 0}}\dd u.
\end{align*}
Choose now $r>1$ such that $1< r < \frac{ 1}{ \alpha_{ 0} + \frac{ 1}{ 2}}$ and $r^{ \ast}$ such that $ \frac{ 1}{ r} + \frac{ 1}{ r^{ \ast}}=1$. Then, by H¨\"older inequality, one obtains that $\N{h_{ n+1}(t) - h_{ n}(t)}^{ r^{ \ast}}_{ \cC_{ -2(P+2)}^{ 0, 0}}\leq C \int_{0}^{t} \N{h_{ n}(u)- h_{ n-1}(u)}^{ r^{ \ast}}_{ \cC_{ -2(P+2)}^{ 0, 0}}\dd u$. By an immediate recursion, for all $k\geq 1$,  $\N{h_{ n+1+k}(t) - h_{ n+k}(t)}^{ r^{ \ast}}_{ \cC_{ -2(P+2)}^{ 0, 0}}\leq C^{ k} \frac{ T^{ k}}{ k!}$, so that $(h_{ n})_{ n\geq1}$ is a Cauchy sequence in $\cC([0, T], \cC_{ -2(P+2)}^{ 0, 0})$ and thus converges to $h$, solution of \eqref{eq:eq_ht}. Turning back to $ \eta$ (recall \eqref{eq:def_h}) and writing $R(t):= \eta_{ 0} + \cM_{ t}$, we obtain that $ \eta$ is uniquely written as
\begin{equation}
\label{eq:repr_eta}
\begin{split}
\eta_{ t} = \lim_{ n\to \infty} \Bigg\lbrace \eta_{ 0} + \cM_{ t} &+ \int_{0}^{t} V^{ \ast}(t, t_{ 1})\cL^{ \ast}_{ t_{ 1}} R(t_{ 1}) \dd t_{ 1}\\ &+ \int_{0}^{t} \int_{0}^{t_{ 1}} V^{ \ast}(t, t_{ 1}) K[ \nu_{ t_{ 1}}]^{ \ast} V^{ \ast}(t, t_{ 2}) \cL^{ \ast}_{ t_{ 2}} R(t_{ 2}) \dd t_{ 2} \dd t_{ 1} + \ldots \\
&+\int_{0}^{t} \int_{0}^{t_{ 1}} \cdots \int_{0}^{t_{ n-1}} V^{ \ast}(t, t_{ 1}) K[ \nu_{ t_{ 1}}]^{ \ast}\cdots V^{ \ast}(t, t_{ n}) \cL^{ \ast}_{ t_{ n}} R(t_{ n})\dd t_{ n}\ldots\dd t_{ 2}\dd t_{ 1}\Bigg\rbrace.
\end{split}
\end{equation}
This proves pathwise uniqueness. But if one chooses another solution $ \tilde \eta$ defined on another probability space, with initial condition $\tilde \eta_{ 0}$ and noise $\tilde\cM$ with the same law as $(\eta_{ 0}, \cM)$, we obtain the same expression as above with $R$ replaced by $\tilde R(t) = \tilde\eta_{ 0} + \tilde \cM_{ t}$. Since $R$ and $\tilde R$ have then the same law, uniqueness in law in \eqref{eq:SPDE_subcrit_eta} follows from \eqref{eq:repr_eta}. Proposition~\ref{prop:uniqueness_subcrit} is proven.
\end{proof}
\subsection{Convergence when $ \alpha>\frac{ 1}{ 2}$}
\label{sec:ident_supercrit}
Theorem~\ref{theo:conv_supercrit} will be proven once we have established uniqueness of a solution to \eqref{eq:SPDE_supercrit}. The main issue here is that the linear operator $\mathscr{L}_{ s}$ in \eqref{eq:def_opL} is not the generator of a diffusion, due to the nonstandard integro-differential term $\mathscr{L}^{ (2)}_{ s}$ in \eqref{eq:def_opL}. In particular, the existence of a semi-group solving a backward Kolmogorov equation similar to \eqref{eq:back_Kolm_U_2} is unclear in the case of $\mathscr{L}_{ s}$. Thus, we restrict here to the case where the state space $\bbX$ is compact and $c$ bounded (see Section~\ref{sec:intro_supercrit}).
\begin{proposition}
\label{prop:uniqueness_supercrit}
Under the assumptions of Sections~\ref{sec:assumptions} and~\ref{sec:intro_supercrit}, there exists at most one solution $( \eta, \cH)$ in $\cC([0, T], \bV_{ -2(P+2)}^{ \underline{\kappa} + \gamma, \underline{\iota} + \gamma}\oplus \bW_{ -2(P+2)}^{ \underline{\kappa} + \gamma, \underline{\iota} + \gamma})$ to \eqref{eq:SPDE_supercrit}.
\end{proposition}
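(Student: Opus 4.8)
The plan is to exploit the triangular structure of \eqref{eq:SPDE_supercrit}: the equation for $\cH$ does not involve $\eta$, so it is enough to prove uniqueness for $\cH$ first, and then uniqueness for $\eta$ once $\cH$ is fixed. The second step is immediate. If $\cH\in\cC([0,T],\bW_{-2(P+2)}^{\underline{\kappa}+\gamma,\underline{\iota}+\gamma})$ is given, the $\eta$--equation reads $\eta_{t}=\int_{0}^{t}L[\nu_{s}]^{\ast}\eta_{s}\dd s+\int_{0}^{t}R_{s}\dd s$ with $R_{s}:=\Phi^{\ast}\cH_{s}$ (recall that the martingale part vanishes when $\alpha>\tfrac12$). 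By Proposition~\ref{prop:Phi_regularity}, $\Phi$ maps $\bV_{3(P+2)}^{\underline{\kappa},\underline{\iota}}$ continuously into $\bW_{2(P+2)}^{\underline{\kappa}+\gamma,\underline{\iota}+\gamma}$, hence $\Phi^{\ast}$ maps $\bW_{-2(P+2)}^{\underline{\kappa}+\gamma,\underline{\iota}+\gamma}$ continuously into $\bV_{-3(P+2)}^{\underline{\kappa},\underline{\iota}}$; since $s\mapsto\cH_{s}$ is continuous into $\bW_{-2(P+2)}^{\underline{\kappa}+\gamma,\underline{\iota}+\gamma}$, we get $R\in\cC([0,T],\bV_{-3(P+2)}^{\underline{\kappa},\underline{\iota}})$, and Proposition~\ref{prop:unique_Lnu} yields at most one solution $\eta\in\cC([0,T],\bV_{-2(P+2)}^{\underline{\kappa}+\gamma,\underline{\iota}+\gamma})$, namely $\eta_{t}=\int_{0}^{t}V(t,s)^{\ast}\Phi^{\ast}\cH_{s}\dd s$.

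For the uniqueness of $\cH$ I would treat the nonstandard integro--differential term $\mathscr{L}_{s}^{(2)}$ in the decomposition $\mathscr{L}_{s}=\mathscr{L}_{s}^{(1)}+\mathscr{L}_{s}^{(2)}$ of \eqref{eq:def_opL} as a perturbation of the genuine two--particle diffusion generator $\mathscr{L}_{s}^{(1)}$, whose flow $U(t,s)$ and the corresponding uniqueness result are supplied by Proposition~\ref{prop:unique_L1}. This is exactly where the restriction to a compact state space $\bbX$ and a compactly supported disorder law $\mu$ from Section~\ref{sec:intro_supercrit} is used: it makes $\mathscr{L}_{s}^{(1)}$ the generator of a well--behaved non--degenerate flow on the weight--free Banach spaces $C_{j}^{0,0}$, whereas $\mathscr{L}_{s}^{(2)}$ — for which a backward Kolmogorov equation is unavailable — is only a bounded perturbation. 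Concretely, let $\cH^{(1)},\cH^{(2)}$ be two solutions in $\cC([0,T],\bW_{-2(P+2)}^{\underline{\kappa}+\gamma,\underline{\iota}+\gamma})$ and set $\delta\cH:=\cH^{(1)}-\cH^{(2)}$. On subtraction, $\cH_{0}$ cancels and
\begin{equation*}
\delta\cH_{t}=\int_{0}^{t}\bigl(\mathscr{L}_{s}^{(1)}\bigr)^{\ast}\delta\cH_{s}\dd s+\int_{0}^{t}\bigl(\mathscr{L}_{s}^{(2)}\bigr)^{\ast}\delta\cH_{s}\dd s .
\end{equation*}
By the $\mathscr{L}_{s}^{(2)}$--summand of Proposition~\ref{prop:continuityL}, $\mathscr{L}_{s}^{(2)}$ maps $\bW_{3(P+2)}^{\underline{\kappa},\underline{\iota}}$ continuously into $\bW_{2(P+2)}^{\underline{\kappa}+\gamma,\underline{\iota}+\gamma}$, uniformly in $s$, so $R_{s}:=(\mathscr{L}_{s}^{(2)})^{\ast}\delta\cH_{s}$ belongs to $\cC([0,T],\bW_{-3(P+2)}^{\underline{\kappa},\underline{\iota}})$. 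Thus $\delta\cH$ is a solution of $\cE_{t}=\int_{0}^{t}(\mathscr{L}_{s}^{(1)})^{\ast}\cE_{s}\dd s+\int_{0}^{t}R_{s}\dd s$ in the sense of Proposition~\ref{prop:unique_L1}, and the representation \eqref{eq:repr_cE} of that proposition turns the above into the fixed--point identity
\begin{equation*}
\delta\cH_{t}=\int_{0}^{t}U(t,s)^{\ast}\bigl(\mathscr{L}_{s}^{(2)}\bigr)^{\ast}\delta\cH_{s}\dd s,\qquad t\in[0,T],\ \text{in }C_{-2(P+2)}^{0,0}.
\end{equation*}

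It then remains to deduce $\delta\cH\equiv0$ from this identity by a Picard/Gronwall argument in $\cC([0,T],C_{-2(P+2)}^{0,0})$, following verbatim the scheme used in the proof of Proposition~\ref{prop:uniqueness_subcrit}. The point to be verified is a bound on the composed kernel $U(t,s)^{\ast}(\mathscr{L}_{s}^{(2)})^{\ast}$ on $C_{-2(P+2)}^{0,0}$. Writing $\mathscr{L}_{s}^{(2)}g=\bigl(\nabla_{\theta}\cro{\nu_{s}}{\Psi(\cdot,x)\,g(\cdot,\tau)}\bigr)\cdot\Gamma$, one sees that $\mathscr{L}_{s}^{(2)}$ loses exactly one derivative, the $\nu_{s}$--averaging against $\rho\in L^{1}(\bbS)$ from \eqref{eq:def_rho} and the multiplication by the bounded, smoothly bounded kernel $\Gamma$ being of order zero; hence, for a test function $f$,
\begin{equation*}
\N{\mathscr{L}_{s}^{(2)}U(t,s)f}_{C_{2(P+2)}^{0,0}}\leq C\,\N{U(t,s)f}_{C_{2(P+2)+1}^{0,0}}\leq \frac{C}{\sqrt{t-s}}\,\N{f}_{C_{2(P+2)}^{0,0}},
\end{equation*}
the last inequality being the one--derivative smoothing of the non--degenerate flow $U(t,s)$ (a Bismut--Elworthy--Li integration by parts), available precisely because $\bbX$ is compact and the drift $v(s,\cdot)$ of \eqref{eq:def_v_sde} is bounded together with its derivatives. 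Dualising gives $\N{U(t,s)^{\ast}(\mathscr{L}_{s}^{(2)})^{\ast}h}_{C_{-2(P+2)}^{0,0}}\leq C(t-s)^{-1/2}\N{h}_{C_{-2(P+2)}^{0,0}}$, an integrable singularity, so the iterated kernels of the map $u\mapsto(t\mapsto\int_{0}^{t}U(t,s)^{\ast}(\mathscr{L}_{s}^{(2)})^{\ast}u_{s}\dd s)$ decay factorially (a fractional--integral, Mittag--Leffler type estimate), exactly as the $s^{-\alpha_{0}-1/2}$ singularity is absorbed in Proposition~\ref{prop:uniqueness_subcrit}. The only fixed point of the identity is therefore $\delta\cH\equiv0$, which proves uniqueness of $\cH$ and, with the first paragraph, of $(\eta,\cH)$. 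I expect the only genuinely technical point to be this smoothing/derivative--loss bookkeeping for the composed kernel $U(t,s)^{\ast}(\mathscr{L}_{s}^{(2)})^{\ast}$ (together with the continuity in $s$ of $\mathscr{L}_{s}^{(2)}$); everything else is a direct assembly of Propositions~\ref{prop:unique_L1}, \ref{prop:unique_Lnu}, \ref{prop:continuityL} and~\ref{prop:Phi_regularity}.
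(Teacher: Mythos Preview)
Your triangular reduction --- first uniqueness for $\cH$, then for $\eta$ via Proposition~\ref{prop:unique_Lnu} --- is exactly what the paper does. The difference is in the $\cH$-step. The paper does \emph{not} split $\mathscr{L}_{s}$ as $\mathscr{L}_{s}^{(1)}+\mathscr{L}_{s}^{(2)}$; instead it splits as $\tfrac{1}{2}\Delta_{\theta,\ttheta}+(\mathscr{L}_{s}-\tfrac{1}{2}\Delta_{\theta,\ttheta})$ and uses the explicit heat semigroup $G(t,s)\varphi=g_{t-s}\ast\varphi$ with its elementary $L^{2}$-smoothing $\N{\nabla G(t,s)\varphi}_{2}\leq C(t-s)^{-1/2}\N{\varphi}_{2}$. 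This yields the representation $\tcH_{t}=\int_{0}^{t}G(t,s)^{\ast}R_{s}\,\dd s$ with $R_{s}=(\mathscr{L}_{s}-\tfrac{1}{2}\Delta_{\theta,\ttheta})^{\ast}\tcH_{s}$ and then the Gronwall-type inequality $\N{\tcH_{t}}_{-3(P+2)}\leq C\int_{0}^{t}s^{-\beta}(t-s)^{-1/2}\N{\tcH_{s}}_{-3(P+2)}\,\dd s$ for some $\beta\in(0,\tfrac{1}{2})$; the factor $s^{-\beta}$ enters because controlling the $\mathscr{L}_{s}^{(2)}$-part of the remainder in $L^{2}$ requires the pointwise density bound \eqref{eq:est_pt} on $p_{s}$.

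Your route keeps all of $\mathscr{L}_{s}^{(1)}$ in the unperturbed operator and leaves only $\mathscr{L}_{s}^{(2)}$ as perturbation. This is conceptually cleaner (no $s^{-\beta}$ singularity: in sup-norm $\mathscr{L}_{s}^{(2)}$ just integrates against the probability measure $\nu_{s}$), but it shifts the whole difficulty onto a one-derivative smoothing estimate for the \emph{inhomogeneous} flow $U(t,s)$. Lemma~\ref{lem:regularity_U} only gives boundedness $C_{j}^{0,0}\to C_{j}^{0,\iota}$, not the gain $\N{U(t,s)f}_{C_{j+1}}\leq C(t-s)^{-1/2}\N{f}_{C_{j}}$ that you invoke. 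Such a Bismut--Elworthy--Li bound is certainly obtainable here (the diffusion is uniformly elliptic in $(\theta,\ttheta)$ on compact $\bbX$, with smooth bounded drift), but it is genuine extra input not supplied by the paper; you also need to track that $U$ is degenerate in the $(\omega,x,\tomega,\tx)$ directions and that the derivative lost by $\mathscr{L}_{s}^{(2)}$ sits in a $\theta$-direction where smoothing is available. The paper's heat-kernel split avoids proving any smoothing for $U$ at the price of carrying the drift terms, and hence the $s^{-\beta}$ factor from \eqref{eq:est_pt}, in the remainder.
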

\begin{proof}[Proof of Proposition~\ref{prop:uniqueness_supercrit}]
The supplementary assumptions made in the beginning at the paragraph ensure that the weighted norms introduced in Section~\ref{sec:Hilbert_spaces} are now equivalent to the usual Sobolev norms without weights. The purpose here is to use the regularizing properties of the heat kernel in those spaces.

Let $(\eta^{ (1)}, \cH^{ (1)})$ and $(\eta^{ (2)}, \cH^{ (2)})$ be two solutions of \eqref{eq:SPDE_supercrit} in $\cC([0, T], \bV_{ -2(P+2)}^{ \underline{\kappa} + \gamma, \underline{\iota} + \gamma}\oplus \bW_{ -2(P+2)}^{ \underline{\kappa} + \gamma, \underline{\iota} + \gamma})$. Then the difference $\tcH= \cH^{ (1)} - \cH^{ (2)}$ is in $\cC([0, T], \bW_{ -2(P+2)}^{ \underline{\kappa} + \gamma, \underline{\iota} + \gamma})$ and solves in $\bW_{ -3(P+2)}^{ \underline{\kappa}, \underline{\iota}}$
 \begin{equation}
 \label{eq:SPDE_diff_cH}
 \tcH_{ t}=  \int_{0}^{t} (\mathscr{L}_{ s})^{ \ast} \tcH_{ s} \dd s.
 \end{equation}
 In particular, it is a solution to the following equation (with unknown $\cE\in\cC([0, T], \bW_{ -2(P+2)}^{ \underline{\kappa} + \gamma, \underline{\iota} + \gamma})$)
 \begin{equation}
 \label{eq:SPDE_diff_cH2}
 \cE_{ t}=  \int_{0}^{t} \left( \frac{ 1}{ 2} \Delta_{ \theta, \ttheta}\right)^{ \ast} \cE_{ s} \dd s + \int_{0}^{t} R_{ s}\dd s,
 \end{equation}
 where
 \begin{equation}
 \label{eq:def_Rs}
 R_{ s}:= \left(\mathscr{L}_{ s} - \frac{ 1}{ 2} \Delta_{ \theta, \ttheta}\right)^{ \ast} \tcH_{ s},\ s\in[0, T],
 \end{equation}
Denote by $ (\theta, \bar \theta) \mapsto g_{ t}( \theta, \bar \theta)$ the heat kernel on $\bbX^{ 2}$ and by
\begin{equation}
G(s, t) \varphi(\tau, \ttau):= g_{ t-s} \ast \varphi(\tau, \ttau),
\end{equation}
the corresponding semi-group (note that the convolution only concerns the variables $(\theta, \ttheta)$). It is standard to prove a backward Kolmogorov equation similar to \eqref{eq:back_Kolm_U_2} concerning $G$:
\begin{equation}
G(t, s)g - g = \int_{s}^{t} \frac{ 1}{ 2} \Delta_{ \theta, \ttheta}(G(t, r)g)\dd r,\ \text{in $\cC_{ 3(P+2)}$}.
\end{equation}
By the same procedure as in Proposition~\ref{prop:unique_L1}, one obtains that \eqref{eq:SPDE_diff_cH2} has a unique solution (which is necessarily $\tcH$) that satisfies,
 \begin{equation}
 \label{eq:repr_tcH}
 \tcH_{ t}= \int_{0}^{t} G(t, s)^{ \ast}R_{ s}\dd s,\text{ in $\cC_{ -3(P+2)}$},
 \end{equation}
that is, for all $g$ in $\cC_{ 3(P+2)}$,
 \begin{equation}
 \label{eq:repr_tcH_g}
\left\langle \tcH_{ t}\, ,\, g\right\rangle = \int_{0}^{t} \left\langle \tcH_{ s}\, ,\, \left(\mathscr{L}_{ s} - \frac{ 1}{ 2} \Delta_{ \theta, \ttheta}\right) G(t, s) g\right\rangle\dd s.
 \end{equation}
Note also that the coefficients in $\mathscr{L}_{ s}$ do not depend on the space variables $(x, \tx)$ (by rotational invariance), and that $ \Phi[f]$ in \eqref{eq:Phif} only depends on $x$, not on $\tx$. Hence, with no loss of generality, we can consider test functions $g$ depending only on $(\theta, \omega, x, \ttheta, \tomega)$. Using the regularizing properties of the heat kernel, we deduce from \eqref{eq:repr_tcH} that, for some constant $C>0$ and some $ \beta\in(0, \frac{ 1}{ 2})$,
\begin{equation}
\label{eq:NtcH}
\N{\tcH_{ t}}_{ -3(P+2)} \leq C \int_{0}^{t} \frac{ 1}{ s^{ \beta} \sqrt{t-s}} \N{\tcH_{ s}}_{ -3(P+2)} \dd s.
\end{equation}
Let us admit for a moment \eqref{eq:NtcH}. Then, following the same procedure as in \cite{daiPra96}, Step~3, p.~763, we obtain directly the uniqueness for $\cH$. Consequently, $\tilde \eta:= \eta^{ (2)} - \eta^{ (1)}$ solves
\begin{equation}
\label{eq:tceta}
\tilde\eta_{ t} = \int_{0}^{t} L[ \nu_{ s}]^{ \ast} \tilde\eta_{ s}\dd s,\ t\in[0, T].
\end{equation}
Applying Proposition~\ref{prop:unique_Lnu}, we obtain that $\tilde \eta\equiv 0$ and Proposition~\ref{prop:uniqueness_supercrit} follows. Hence, it remains to prove \eqref{eq:NtcH}. One has for any differential operators $D_{1}$ and $D_{ 2}$ of order $q_{ 1}$ and $q_{ 2}$ such that $3(P+2)$,
\begin{align*}
\N{D_{1}\left(\nabla_{ \theta}G(t, s)g\right) \cdot \left(D_{ 2}c\right)}_{ 2}^{ 2}&= \int_{ (\bbX\times\bbY\times\bbZ)^{ 2}} \left\vert D_{1} \left(\nabla_{ \theta}G(t, s)g\right) \cdot \left(D_{ 2}c\right) \right\vert^{ 2} \dd \tau \dd \ttau,\\
&\leq C\int_{ (\bbX\times\bbY\times\bbZ)^{ 2}} \left\vert \nabla_{ \theta}G(t, s)D_{1}g \right\vert^{ 2} \dd \tau \dd \ttau\leq \frac{ C}{ t-s} \N{D_{ 1}g}_{ 2}^{ 2}.
\end{align*}
The term $\nabla_{ \theta}g( \tau, \ttau) \cdot\left[ \Gamma \Psi, \nu_{s}\right](\tau)$ in \eqref{eq:def_opL} can be treated in the same way, using the boundedness of $ \Gamma$ and its derivatives. For the last term in \eqref{eq:def_opL}, for the same reasons, it suffices to estimate, for any differential operator $D$ of order smaller than $3(P+2)$,
 \begin{align*}
\int \left\vert \cro{ \nu_{s}}{ \Psi(\cdot, x)\nabla_{ \ttheta}g(\cdot, \tau)} \right\vert^{ 2} \dd \theta\dd \omega \dd x &= \int\left\vert \int_{ \bbS}  \Psi(z, x) \int_{ \bbX\times\bbY}\nabla_{ \ttheta}g(\bar\theta, \bar\omega, z, \theta, \omega) p_{ s}(\bar\theta, \bar\omega)\dd \bar\theta \mu(\dd \bar\omega)\dd z \right\vert^{ 2} \dd \theta\dd \omega \dd x,\\
&=  \int\left\vert \int_{ \bbS}  \rho(x-z) \left\langle \nabla_{ \ttheta}g(\cdot, \cdot, z, \theta, \omega)\, ,\, \xi_{ s}\right\rangle\dd z \right\vert^{ 2} \dd x \dd \theta\dd \omega,\\
&=\int_{ \bbX\times\bbY} \left(\int_{ \bbS}\left\vert \rho \ast \left\langle \nabla_{ \ttheta}g(\cdot, \cdot, \cdot, \theta, \omega)\, ,\, \xi_{ s}\right\rangle \right\vert^{ 2}(x) \dd x\right) \dd \theta\dd \omega,
\end{align*}
where we recall the definition of $ \rho$ in \eqref{eq:def_rho}. By Young's inequality,
\begin{align*}
\int \left\vert \cro{ \nu_{s}}{ \Psi(\cdot, x)\nabla_{ \ttheta}g(\cdot, \tau)} \right\vert^{ 2} \dd \theta\dd \omega \dd x &\leq \N{ \rho}_{ 1}^{ 2}\int_{ \bbX\times\bbY} \left(\int_{ \bbS}\left\vert \left\langle \nabla_{ \ttheta}g(\cdot, \cdot, \cdot, \theta, \omega)\, ,\, \xi_{ s}\right\rangle \right\vert^{ 2}(x) \dd x\right) \dd \theta\dd \omega,\\
&=\N{ \rho}_{ 1}^{ 2}\int_{ \bbX\times\bbY} \left(\int_{ \bbS}\left\vert \int_{ \bbX\times\bbY} \nabla_{ \ttheta}g(\bar \theta, \bar \omega, x, \theta, \omega) p_{ s}(\bar\theta, \bar\omega) \dd \bar\theta \mu(\dd \bar\omega)\right\vert^{ 2} \dd x\right) \dd \theta\dd \omega,\\
&\leq \N{ \rho}_{ 1}^{ 2}\int_{ (\bbX\times\bbY)^{ 2}\times\bbS}\left\vert \nabla_{ \ttheta}g(\bar \theta, \bar \omega, x, \theta, \omega) \right\vert^{ 2} p_{ s}(\bar\theta, \bar\omega) \dd \bar\theta \mu(\dd \bar\omega)\dd \theta\dd \omega \dd x,\\
&\leq \frac{ \N{ \rho}_{ 1}^{ 2}}{ s^{ \alpha_{ 0}}}\int_{ (\bbX\times\bbY)^{ 2}\times\bbS}\left\vert \nabla_{ \ttheta}g(\bar \theta, \bar \omega, x, \theta, \omega) \right\vert^{ 2}  (1+ \left\vert \bar\omega \right\vert^{ \iota}) \dd \bar\theta \dd \theta  \dd \omega \mu(\dd \bar\omega) \dd x,
\end{align*}
where we used the \emph{a priori} estimate \eqref{eq:est_pt} on the density $p$. As in \eqref{eq:repr_tcH}, injecting $D_{ \tau}G(t,s)g= G(t,s) D_{ \tau}g$ into the previous estimate, we obtain
\begin{align*}
\int \left\vert \cro{ \nu_{s}}{ \Psi(\cdot, x)\nabla_{ \ttheta}G(t, s)D_{ \tau}g(\cdot, \tau)} \right\vert^{ 2}& \dd \theta\dd \omega \dd x\\ &\leq   \frac{ \N{ \rho}_{ 1}^{ 2}}{ s^{ \alpha_{ 0}}}\int\left\vert \nabla_{ \ttheta}G(t, s)D_{ \tau}g(\bar \theta, \bar \omega, x, \theta, \omega) \right\vert^{ 2}  (1+ \left\vert \bar\omega \right\vert^{ \iota}) \dd \bar\theta \dd \theta  \dd \omega \mu(\dd\bar\omega) \dd x.
\end{align*}
Another application of Young's inequality and usual estimate on the heat kernel lead to
\begin{align*}
\int \left\vert \cro{ \nu_{s}}{ \Psi(\cdot, x)\nabla_{ \ttheta}G(t, s)D_{ \tau}g(\cdot, \tau)} \right\vert^{ 2} \dd \theta\dd \omega \dd x &\leq   \frac{C}{ s^{ \alpha_{ 0}} (t-s)} \N{D_{ \tau}g}_{ 2}^{ 2}.
\end{align*}
The inequality \eqref{eq:NtcH} follows by density.
\end{proof}
\section{Proofs}
\label{sec:proofs}

\subsection{Proof of Propositions~\ref{prop:semimart} and~\ref{prop:semimartI}}
\label{sec:proof_semimart}
We prove in this paragraph the semimartingale decompositions of processes $ \eta_{ N}$ and $\cH_{ N}$ of Section~\ref{sec:semimart_repr_etaN_HN}.
\begin{proof}[Proof of Proposition~\ref{prop:semimart}]
Writing Ito's formula for \eqref{eq:odegene} gives
\begin{multline}
\label{aux:itonuN}
\cro{\nuN{t}}{f}= \cro{\nuN{0}}{f} + \int_{0}^{t} \cro{\nuN{s}}{ \frac{ 1}{ 2}\Delta_{\theta}f}\dd s \\+\int_{0}^{t} \cro{\nuN{s}}{ \nabla_{\theta}f\cdot \left\{c + \Big[\Gamma\Psi, \nuN{s}\Big]\right\}} \dd s+ \frac{1}{a_{N}} \cM_{ N, t}^{( \eta)}f
\end{multline}
Combining the McKean-Vlasov equation \eqref{eq:nut} and \eqref{aux:itonuN}, one obtains
\begin{align*}
\cro{ \eta_{ N, t}}{ f}&= \cro{ \eta_{ N, 0}}{ f} + \int_{0}^{t}  \cro{ \eta_{ N, s}}{ \frac{ 1}{ 2}\Delta_{\theta}f + \nabla_{\theta}f\cdot c}\dd s \\&+a_{ N}\int_{0}^{t} \left(\cro{\nuN{s}}{ \nabla_{\theta}f\cdot\Big[\Gamma\Psi, \nuN{s}\Big]} - \cro{\nu_{ s}}{ \nabla_{\theta}f\cdot\Big[\Gamma\Psi, \nu_{ s}\Big]}\right) \dd s+ \cM_{ N, t}^{( \eta)}f,\\
&= \cro{ \eta_{ N, 0}}{ f} + \int_{0}^{t}  \cro{ \eta_{ N, s}}{ \frac{ 1}{ 2}\Delta_{\theta}f + \nabla_{\theta}f\cdot c}\dd s + \cM_{ N, t}^{( \eta)}f \\&+\int_{0}^{t} \left(a_{ N} \cro{ \nu_{ N, s}- \nu_{ s}}{ \nabla_{\theta}f\cdot\Big[\Gamma\Psi, \nu_{ s}\Big]}+ a_{ N}\cro{\nuN{s}}{ \nabla_{\theta}f\cdot\Big[\Gamma\Psi, \nuN{s}- \nu_{ s}\Big]}\right) \dd s,\\
&= \cro{ \eta_{ N, 0}}{ f} + \int_{0}^{t}  \cro{ \eta_{ N, s}}{ \frac{ 1}{ 2}\Delta_{\theta}f + \nabla_{\theta}f\cdot \left\{c + \Big[\Gamma\Psi, \nu_{ s}\Big]\right\}}\dd s 
\\&+\underbrace{\int_{0}^{t} a_{ N}\cro{\nuN{s}}{ \nabla_{\theta}f\cdot\Big[\Gamma\Psi, \nuN{s}- \nu_{ s}\Big]}  \dd s}_{:=u_{ N, t}}+ \cM_{ N, t}^{( \eta)}f.
\end{align*}
The remaining term $u_{ N}$ can be computed as:
\begin{align*}
  \begin{split}
  u_{ N, t} &= \int_{0}^{t} a_{ N} \Bigg(\frac{1}{ \left\vert \Lambda_{ N} \right\vert^{ 2}}\sum_{ i, j\in \Lambda_{ N}}\nabla_{ \theta} f( \tau_{ i, s}) \cdot \Gamma( \theta_{ i, s}, \omega_{ i}, \theta_{ j, s}, \omega_{ j})\Psi(x_{ i}, x_{ j})\\& \qquad\qquad\qquad- \frac{1}{ \left\vert \Lambda_{ N} \right\vert}\sum_{ i\in \Lambda_{ N}} \int \nabla_{ \theta} f( \tau_{ i, s}) \cdot\Gamma(\theta_{ i, s}, \omega_{ i}, \ttheta, \tomega) \Psi(x_{ i}, \tx)\nu_{ s}(\dd \ttau)\Bigg)\dd s
  \end{split}\\
&= \int_{0}^{t}\cro{ \cH_{ N, s}}{ \Phi[f]} \dd s.
\end{align*}
Proposition~\ref{prop:semimart} follows.
\end{proof}

\begin{proof}[Proof of Proposition~\ref{prop:semimartI}]
Applying Ito's Formula to \eqref{eq:odegene_short} (for any regular two-variable function $(\tau, \ttau)\mapsto g( \tau, \ttau)= g( \theta, \omega, x, \ttheta, \tomega, \tx)$), one obtains, for all $t\in[0, T]$
\begin{equation}
\label{eq:ito2tau}
\begin{split}
g( \tau_{i,t}, \tau_{j,t})&= g( \tau_{i, 0}, \tau_{j, 0}) + \frac{1}{2} \int_{0}^{t} \Delta_{ \theta, \ttheta} g( \tau_{i,s}, \tau_{j,s })\dd s\\ &+ \int_{0}^{t} \nabla_{ \theta} g( \tau_{i, s}, \tau_{j, s}) \cdot\left\{ c( \theta_{i, s}, \omega_{i}) + [ \Gamma \Psi, \nu_{ N, s}](\tau_{ i, s})\right\}\dd s\\
&+ \int_{0}^{t} \nabla_{ \ttheta} g( \tau_{i,s}, \tau_{j,s})\cdot \left\{c( \theta_{j, s}, \omega_{j}) + [ \Gamma \Psi, \nu_{ N, s}](\tau_{ j, s})\right\} \dd s\\
&+ \int_{0}^{t} \nabla_{ \theta} g(\tau_{i,s}, \tau_{j, s}) \cdot\dd B_{i, s} + \int_{0}^{t} \nabla_{ \ttheta} g(\tau_{i, s}, \tau_{j, s}) \cdot\dd B_{j,s}
\end{split}
\end{equation}

Applying Ito's Formula to the process $t\mapsto \int \Psi(x_{ i}, \tx) g( \tau_{ i, t}, \ttau) \nu_{ t}( \dd \ttau)$ (recall \eqref{eq:nut}):
\begin{equation}
\label{eq:itoint}
\begin{split}
\int& \Psi(x_{ i}, \tx)g( \tau_{ i, t}, \ttau) \nu_{ t}(\dd\ttau)=\int \Psi(x_{ i}, \tx)g( \tau_{ i, 0}, \ttau) \nu_{ 0}(\dd\ttau)\\&+ \frac{ 1}{ 2}\int_{0}^{t} \int \Psi(x_{ i}, \tx) \Delta_{ \theta, \ttheta} g( \tau_{ i, s}, \ttau) \nu_{ s}(\dd\ttau) \dd s\\ +& \int_{0}^{t} \int \Psi(x_{ i}, \tx)\nabla_{ \theta}g( \tau_{ i, s}, \ttau) \nu_{ s}(\dd\ttau) \cdot\Big\{c( \theta_{ i, s}, \omega_{ i}) + \left[ \Gamma \Psi, \nu_{ N, s}\right]( \tau_{ i, s})\Big\}\dd s\\ 
+& \int_{0}^{t} \int \Psi(x_{ i}, \tx)\nabla_{  \theta}g( \tau_{ i, s}, \ttau) \nu_{ s}(\dd\ttau) \cdot\dd B_{ i, s}\\
+& \int_{0}^{t} \int \Psi(x_{ i}, \tx)\nabla_{ \ttheta}g( \tau_{ i, s}, \ttau) \cdot\left\{c( \ttheta, \tomega) + \left[ \Gamma \Psi, \nu_{s}\right]( \ttau)\right\}\nu_{ s}(\dd\ttau) \dd s
\end{split}
\end{equation}
Combining \eqref{eq:ito2tau} and \eqref{eq:itoint}, we have
\begin{equation}
\label{aux:HNtg}
\begin{split}
\cro{ \cH_{ N, t}}{ g}= \cro{ \cH_{ N, 0}}{ g} + \int_{0}^{t} \cro{ \cH_{ N, s}}{ \frac{ 1}{ 2} \Delta_{ \theta, \ttheta} g( \tau, \ttau) + \nabla_{ \theta}g( \tau, \ttau)\cdot c(\theta, \omega)  + \nabla_{ \ttheta}g( \tau, \ttau)\cdot c(\ttheta, \tomega)} \dd s \\ + \int_{0}^{t}\cro{ \cH_{ N, s}}{ \nabla_{ \theta}g( \tau, \ttau)\cdot \left[ \Gamma \Psi, \nu_{ N, s}\right](\tau)}\dd s + \cM^{ (\cH)}_{ N, t}g\\
+ \frac{ a_{ N}}{ \left\vert \Lambda_{ N} \right\vert} \sum_{ i\in \Lambda_{ N}} \int_{0}^{t} \Bigg\lbrace \int \Psi(x_{ i}, \tx) \nabla_{  \ttheta} g( \tau_{ i, s}, \ttau)  \cdot\left[\Gamma \Psi, \nu_{N, s}\right](\ttau)\nu_{ N, s}(\dd\ttau)\\
- \int \Psi(x_{ i}, \tx) \nabla_{  \ttheta} g( \tau_{ i, s}, \ttau) \cdot \left[ \Gamma \Psi, \nu_{s}\right](\ttau)\nu_{s}(\dd\ttau) \Bigg\rbrace.
\end{split}
\end{equation}
We treat the last term of \eqref{aux:HNtg} apart: it can be written as the sum of $A_{ N}$ and $B_{ N}$ where
\begin{align*}
A_{ N, t}&:= \frac{ a_{ N}}{ \left\vert \Lambda_{ N} \right\vert^{ 2}}\sum_{ i, j\in \Lambda_{ N}} \int_{0}^{t} \Psi(x_{ i}, x_{ j}) \nabla_{ \ttheta} g( \tau_{ i,s}, \tau_{ j,s}) \cdot\Big\{ \left[ \Gamma \Psi, \nu_{N,s}\right]( \tau_{j,s}) - \left[ \Gamma \Psi, \nu_{s}\right]( \tau_{j, s})\Big\},\\
&= \int_{0}^{t} \frac{ a_{ N}}{ \left\vert \Lambda_{ N} \right\vert} \sum_{ j\in \Lambda_{ N}} \Big\{ \frac{ 1}{ \left\vert \Lambda_{ N} \right\vert} \sum_{ i\in \Lambda_{ N}} \nabla_{ \ttheta} g( \tau_{ i,s}, \tau_{ j, s})\Psi(x_{ i}, x_{ j})\Big\}\cdot\Big\{ \frac{ 1}{ \left\vert \Lambda_{ N} \right\vert} \sum_{ l\in \Lambda_{ N}} \Gamma( \theta_{ j,s}, \omega_{ j}, \theta_{ l, s}, \omega_{ l}) \Psi(x_{ j}, x_{ l})\\
&\qquad - \int \Gamma( \theta_{ j,s}, \omega_{ j}, \ttheta, \tomega) \Psi(x_{ j}, \tx) \nu_{ s}(\dd \ttau)\Big\}\dd s,\\
&= \int_{0}^{t} \cro{ \cH_{ N,s}}{ \cro{ \nu_{ N, s}}{ \nabla_{ \ttheta}g(\cdot, \tau)\Psi(\cdot, x)} \cdot\Gamma( \theta, \omega, \ttheta, \tomega)}\dd s,
\end{align*}
and $B_{ N}$ is given by
\begin{align*}
B_{ N, t}&= \frac{ a_{ N}}{ \left\vert \Lambda_{ N} \right\vert} \sum_{ i\in \Lambda_{ N}} \int_{0}^{t} \Bigg( \frac{ 1}{ \left\vert \Lambda_{ N} \right\vert}\sum_{ j\in \Lambda_{ N}} \Psi(x_{ i}, x_{ j}) \nabla_{ \ttheta}g(\tau_{ i,s}, \tau_{ j,s}) \cdot\left[ \Gamma \Psi, \nu_{ s}\right](\tau_{ j,s})\\ 
&- \int  \Psi(x_{ i}, \tx) \nabla_{ \ttheta} g(\tau_{ i,s}, \ttau) \cdot\left[ \Gamma \Psi, \nu_{ s}\right](\ttau)\nu_{ s}(\dd \ttau)\Bigg)\dd s,\\
&= \int_{0}^{t} \cro{ \cH_{ N,s}}{ \nabla_{ \ttheta}g(\tau, \ttau) \cdot \left[ \Gamma \Psi, \nu_{ s}\right](\ttau)}\dd s.
\end{align*}
Rewriting \eqref{aux:HNtg} in term of $A_{ N}$, $B_{ N}$, $F_{ N}$ and $G_{ N}$ gives Proposition~ \ref{prop:semimartI}.
\end{proof}

\subsection{Proof of Proposition~\ref{prop:decomp_nu}}
\label{app:decomp_nu}
A solution to \eqref{eq:nut} is provided by the nonlinear measure $(\lambda_{ t})_{ t\in[0, T]}$ introduced in Proposition~\ref{prop:nonlin}. Indeed, applying Ito's formula to any function $(\theta, \omega, x)\mapsto f(\theta, \omega, x)$ that is $\cC^{2}$ \wrt $\theta$, one obtains:
\begin{multline}
\label{eq:Itofixedpoint}
f(\bar\theta^{ x, \omega}_{t}, \omega, x)= f(\theta_{0}, \omega, x) + \frac{1}{2} \int_{0}^{t} \Delta_{\theta}f(\bar\theta^{ x, \omega}_{s}, \omega, x)\dd s +\int_{0}^{t} \nabla_{\theta}f \cdot c(\bar\theta^{ x, \omega}_{s}, \omega)\dd s\\ + \int_{0}^{t}\nabla_{\theta}f \cdot  \Big[\Gamma\Psi, \lambda_{t}\Big](\theta^{ x, \omega}_{s}, \omega, x) \dd s + \int_{0}^{t} \nabla_{\theta} f(\bar\theta^{ x, \omega}_{s}, \omega, x) \cdot\dd B_{s}.
\end{multline}
Taking the expectation in \eqref{eq:Itofixedpoint} gives that $t\mapsto \lambda_{t}$ is a weak solution to \eqref{eq:nut}. The uniqueness in \eqref{eq:nut} is a consequence of \cite{LucSta2014}, Proposition~2.19. 

We now turn to the proof of the decomposition \eqref{eq:decomp_nu_xi}: assume for a moment that a decomposition like \eqref{eq:decomp_nu_xi} exists. Necessarily $ \xi$ must satisfy 
\begin{equation}
\label{eq:xi0}
\xi_{ 0}(\dd \theta, \dd\omega)= \zeta(\dd \theta) \mu(\dd \omega)
\end{equation} and for all $t\geq0$ and regular test function $( \theta, \omega) \mapsto h( \theta, \omega)$ independent of $x$
\begin{equation}
\label{eq:xit}
\partial_{t} \cro{\xi_{t}}{h} = \cro{\xi_{t}}{\frac{1}{2}\Delta_{\theta}h + \nabla_{\theta}h \cdot c}+ \cro{\nu_{t}}{\nabla_{\theta}h \cdot \Big[\Gamma\Psi, \nu_{t}\Big]}.
\end{equation}
Since by assumption, $\nu_{ t}(\dd\theta, \dd\omega, \dd x)= \xi_{ t}(\dd \theta, \omega)\dd x$, 
\begin{align}
\Big[\Gamma\Psi, \nu_{t}\Big]( \theta, \omega, x)&= \int_{ \bbX\times\bbY\times\bbS} \Gamma(\theta, \omega, \ttheta, \tomega) \Psi(x, \tx) \xi_{ t}(\dd \ttheta, \tomega)\dd \tx,\nonumber\\
&=  \left(\int_{ \bbX\times\bbY} \Gamma(\theta, \omega, \ttheta, \tomega) \xi_{ t}(\dd \ttheta, \tomega)\right) \left(\int_{ \bbS}\Psi(x, \tx) \dd \tx\right).\label{eq:GamPsix}
\end{align}
As already noticed in the proof of Lemma~\ref{lem:fluct_Psi}, the integral $\int_{ \bbS}\Psi(x, \tx) \dd \tx = 2 \int_{0}^{1/2} \frac{ \dd u}{ \left\vert u \right\vert^{ \alpha}}= \frac{ 2^{ \alpha}}{ 1- \alpha}$, is \emph{independent of $x$}. Consequently, \eqref{eq:xit} becomes
\begin{equation}
\label{eq:xit_bis}
\partial_{t} \cro{\xi_{t}}{h} = \cro{\xi_{t}}{\frac{1}{2}\Delta_{\theta}h + \nabla_{\theta}h \cdot c}+ \cro{\xi_{t}}{\frac{ 2^{ \alpha}}{ 1- \alpha}\nabla_{\theta}h \cdot \Big[\Gamma, \xi_{t}\Big]}.
\end{equation}
Equation \eqref{eq:xit_bis} endowed with $ \xi_{ 0}(\dd \theta, \dd\omega)= \zeta(\dd \theta) \mu(\dd \omega)$ is a standard McKean-Vlasov equation with regular coefficients, and hence, admits a unique solution (see for example \cite{daiPra96} or \cite{Oelsch1984}, Lemma~10). Defining $\lambda_{ t}(\dd \theta, \dd \omega, \dd x):= \xi_{ t}(\dd \theta, \dd \omega) \dd x$ and observing that $\frac{ 2^{ \alpha}}{ 1- \alpha} \Big[\Gamma, \xi_{t}\Big]= \Big[\Gamma \Psi, \lambda_{t}\Big]$, one readily sees that $t\mapsto \lambda_{ t}$ is a solution of \eqref{eq:nut}. By uniqueness in \eqref{eq:nut}, $\lambda=\nu$ and \eqref{eq:decomp_nu_xi} follows. 

We focus now on the regularity of $ \xi_{ t}$ solution of \eqref{eq:xit_bis}. We follow here the strategy developed in \cite{MR3207725}, Proposition~7.1: fix $T>0$, $\omega\in\Supp(\mu)$ and $t\mapsto \xi_t$ the unique solution in $\cC([0, T], \cM_1(\bbX\times\bbY))$ to \eqref{eq:xit_bis}. Define $A_{ t}(\theta, \omega):= c(\theta, \omega)  + \frac{ 2^{ \alpha}}{ 1- \alpha} \left[ \Gamma \Psi, \xi_{ t}\right](\theta, \omega)$ and consider the linear equation
\begin{equation}
 \label{eq:At}
\partial_t p_t(\theta,\omega)\, =\, \frac{1}{2} \Delta p_t(\theta,\omega) -\div_\theta \Big(p_t(\theta,\omega)A_{ t}(\theta, \omega)\Big)\, ,
\end{equation}
such that for $\mu$-a.e. $\omega$, for all $f\in\cC(\bbX)$, 
\begin{equation}
\label{eq:init_edp}
\int_{ \bbX} f(\theta)p_0(\theta, \omega) \dd\theta = \int_{ \bbX} f(\theta) \zeta(\dd\theta). 
\end{equation}
For fixed $\omega\in\Supp(\mu)$, $A_{ \cdot}(\cdot, \omega)$ is continuous in $t$ and regular in $\theta$, by assumption on $ \Gamma$. Suppose for a moment that we have found a solution $p_t(\theta,\omega)$ to \eqref{eq:At}-\eqref{eq:init_edp} such that for
$\mu$-a.e. $\omega$, $p_t(\cdot, \omega)$ is strictly positive on $(0,T]\times\bbX$. In particular for such a solution $p$, the quantity
$\int_{ \bbX} p_t(\theta, \omega)\dd\theta$ is conserved for $t>0$, so that $p_t(\cdot, \omega)$ is indeed a probability density for all $t>0$. Then
both probability measures $\xi_t(\dd\theta, \dd\omega)$ and $p_t(\theta,\omega)\dd\theta\mu(\dd\omega)$ solve
\begin{align}
\int_{ \bbX\times\bbY} f(\theta, \omega)\nu_t(\dd\theta, \dd\omega)&= \int_{ \bbX\times\bbY} f(\theta, \omega)\zeta(\dd\theta) \mu(\dd\omega) +\frac12\int_0^t \int_{ \bbX\times\bbY} \Delta_{ \theta}f(\theta, \omega)\nu_s(\dd\theta,\dd\omega)\dd s\nonumber\\
&+ \int_0^t \int_{ \bbX\times\bbY} \nabla_{ \theta}f(\theta, \omega) A_{ s}(\theta, \omega)\nu_s(\dd\theta,\dd\omega)\dd s.
\label{eq:weak_At}
\end{align}
By \cite{Lucon2011} or \cite{Oelsch1984}, Lemma~10, uniqueness in \eqref{eq:xit_bis} is precisely a consequence of uniqueness in \eqref{eq:weak_At}. Hence, by uniqueness in \eqref{eq:weak_At}, $\xi_t(\dd\theta, \dd\omega)=p_t(\theta,\omega)\dd\theta\mu(\dd\omega)$, which \eqref{eq:xit_pt_mu}. So it suffices to exhibit a weak solution $p_t(\theta,\omega)$ to \eqref{eq:At} such that \eqref{eq:init_edp} is satisfied. This fact can be deduced from standard results for uniform parabolic PDEs (see \cite{MR0435594, Friedman1964} for precise definitions). In particular, a standard result (see \cite{Friedman1964}, Theorem~2, page~251 or \cite{MR0435594}, Section~7) states that \eqref{eq:At} admits a fundamental solution $G(\theta, t; \theta', s, \omega)$ ($t>s$), satisfying, for all $j\in\{0, 1\}$, for any differential operator $D_{ j, \theta}$ in $ \theta$ of order $j$
\begin{equation}
\label{eq:control_G}
\left\vert D_{j, \theta}G(\theta, t; \theta', s, \omega) \right\vert \leq \frac{c_{ 1}(\omega)}{(c_{ 2}(\omega)(t-s))^{ \frac{ m+j}{ 2}}} \exp\left( - \frac{ \left\vert \theta - \theta' \right\vert^2}{c_{ 2}(\omega)(t-s)}\right),
\end{equation}
for positive constants $c_{ 1}$ and $c_{ 2}$ that only depend on the bound and the modulus of continuity of $A_{ t}$ (see \cite{Friedman1964}, Theorem~1, page~241). In particular, by the assumptions made in Section~\ref{sec:assumptions}, one can suppose without loss of generality that for all $i=1,2$, $c_{ i}(\omega)= \alpha_{ i}(1 \vee \left\vert \omega \right\vert^{ \iota})$, for some $ \alpha_{ i}>0$. Thanks to Theorem~3, page~256 in \cite{Friedman1964}, the following expression of $p_t(\theta, \omega)$
\begin{equation}
\label{eq:pt Gamma}
p_t(\theta, \omega)=\int_{ \bbX} G(\theta, t; \theta', 0, \omega)\zeta(\theta')\dd \theta'
\end{equation}
defines a classical solution of \eqref{eq:At} on $(0,T]\times\bbS$ such that \eqref{eq:init_edp} is satisfied. The positivity and boundedness of $p_t(\cdot, \omega)$ for $t>0$ is a consequence of \cite{MR0435594}, Theorem~7, page~661. Applying \eqref{eq:control_G} with $j=0$ gives
\begin{align}
0\leq p_{ t}(\theta, \omega) &\leq \frac{c_{ 1}(\omega)}{(c_{ 2}(\omega)t)^{ \frac{ m}{ 2}}}  \int_{ \bbX} \exp\left( - \frac{ \left\vert \theta - \theta' \right\vert^2}{c_{ 2}(\omega)t}\right) \zeta(\theta')\dd \theta',\nonumber\\
&\leq \frac{c_{ 1}(\omega)}{(c_{ 2}(\omega)t)^{ \frac{ m}{ 2}}}  \left(\int_{ \bbX} \exp\left( - \frac{ q\left\vert \theta - \theta' \right\vert^2}{c_{ 2}(\omega)t}\right)\dd \theta'\right)^{ \frac{ 1}{ q}}  \left(\int_{ \bbX} \zeta(\theta)^{ p}\dd \theta\right)^{ \frac{ 1}{ p}}, \label{aux:pt_holder}\\
&\leq \frac{C c_{ 1}(\omega)}{(c_{ 2}(\omega)t)^{ \frac{ m}{ 2}}}  \left(\int_{ \bbX} \exp\left( - \frac{ q\left\vert \theta - \theta' \right\vert^2}{c_{ 2}(\omega)t}\right)\dd \theta'\right)^{ \frac{ 1}{ q}}\leq \frac{C c_{ 1}(\omega)}{(c_{ 2}(\omega)t)^{ \frac{ m}{ 2}(1- \frac{ 1}{ q})}}.\nonumber
\end{align}
where in \eqref{aux:pt_holder} $ \frac{ 1}{ p} + \frac{ 1}{ q}=1$ (recall \eqref{eq:integ_zeta}).
Similarly, using again \eqref{eq:control_G}, one has
\begin{equation}
\label{aux:div_pt}
\left\vert \div_{ \theta} p_{ t}(\theta, \omega) \right\vert\leq C \frac{ c_{ 1}(\omega)}{ c_{ 2}(\omega)^{ \frac{ 1}{ 2}+\frac{m}{ 2}(1 - \frac{ 1}{ q})}} \frac{ 1}{ t^{ \frac{ 1}{ 2}+\frac{m}{ 2}(1 - \frac{ 1}{ q})}}.
\end{equation}
The parameter $p$ in \eqref{aux:div_pt} is such that $\frac{m}{ 2}(1 - \frac{ 1}{ q})< \frac{ 1}{ 2}$ (recall \eqref{eq:integ_zeta}). By definition of the $c_{ i}$, $i=1,2$ and by \eqref{eq:moments_mu_xi0}, the estimates \eqref{eq:est_pt} and \eqref{eq:est_div_pt} follow. Proposition~\ref{prop:decomp_nu} is proven.\qed

\subsection{Proof of Proposition~\ref{prop:ttaVSnonlin}}
\label{sec:proof_tta_vs_nonlin}
Since the initial condition and the Brownian motion are the same for $\theta_{i}$ and $\btheta_{i}$, one has: for all $i\in \Lambda_{ N}$,
\begin{multline}
\label{aux:ttavsnlin}
\left\vert \theta_{i,t}-\btheta_{i, t} \right\vert^{8} \leq 32 \left(\int_{0}^{t} (\theta_{i,s}-\btheta_{i,s})\cdot(c(\theta_{i,s}, \omega_{i})- c(\btheta_{i,s}, \omega_{i}))\dd s\right)^{4}\\+ 32 \left(\int_{0}^{t} (\theta_{i,s}-\btheta_{i,s})\cdot\left(\Big[\Gamma\Psi, \nuN{s}\Big](\theta_{i,s}, \omega_{i}, x_{i})- \Big[\Gamma\Psi, \nu_{s}\Big](\btheta_{i,s}, \omega_{i}, x_{i})\right)\dd s\right)^{4}.
\end{multline}
Using the one-sided Lipschitz condition on $c$, one can bound the first term in \eqref{aux:ttavsnlin} by $32L^{4} \left(\int_{0}^{t} \left\vert \theta_{i,s}-\btheta_{i,s} \right\vert^{2}\dd s\right)^{4}\leq 32L^{4} T^{ 3}\int_{0}^{t} \left\vert \theta_{i,s}-\btheta_{i,s} \right\vert^{8}\dd s\leq32L^{4} T^{ 3}\int_{0}^{t}\max_{ i\in \Lambda_{ N}}\sup_{u\leq s}\left\vert \theta_{i,u}-\btheta_{i,u} \right\vert^{8}\dd s$.
Now, introducing the empirical measure of the nonlinear processes $(\btheta_{ 1}, \ldots, \btheta_{ N})$
\begin{equation}
\label{eq:nuNbar}
t \in[0, T]\mapsto\bnuN{t}:= \frac{1}{\left\vert \Lambda_{N} \right\vert} \sum_{j\in\Lambda_{N}} \delta_{(\btheta_{j,t}, \omega_{j}, x_{j})}= \frac{ 1}{ \left\vert \Lambda_{ N} \right\vert}\sum_{ j\in \Lambda_{ N}} \delta_{ \tau_{ j, t}},\ N\geq1
\end{equation} the last term in \eqref{aux:ttavsnlin} can estimated above by $C(A_{N}+ B_{N}+ C_{N})$ ($C$ being a numerical constant) where
\begin{align}
A_{N} &:= \left(\int_{0}^{t} \left\vert \theta_{i,s}-\btheta_{i,s} \right\vert \left\vert \Big[\Gamma\Psi, \nuN{s}\Big](\theta_{i,s}, \omega_{i}, x_{i})- \Big[\Gamma\Psi, \nuN{s}\Big](\btheta_{i,s}, \omega_{i}, x_{i}) \right\vert\dd s\right)^{4},\nonumber\\
B_{N} &:= \left(\int_{0}^{t} \left\vert \theta_{i,s}-\btheta_{i,s} \right\vert \left\vert \Big[\Gamma\Psi, \nuN{s} - \bnuN{s}\Big](\btheta_{i,s}, \omega_{i}, x_{i}) \right\vert\dd s\right)^{4},\nonumber\\
C_{N} &:= \left(\int_{0}^{t} \left\vert \theta_{i,s}-\btheta_{i,s} \right\vert \left\vert \Big[\Gamma\Psi, \bnuN{s}- \nu_{s}\Big](\btheta_{i,s}, \omega_{i}, x_{i}) \right\vert\dd s\right)^{4}.\label{aux:def_CN}
\end{align}
Concerning $A_{ N}$ and $B_{ N}$, we have, by the Lipschitz continuity of $\Gamma$, for some constant $C>0$,
\begin{align*}
A_{N}&\leq \left(\int_{0}^{t} \left\vert \theta_{i,s}-\btheta_{i,s} \right\vert\int \left\vert \Gamma(\theta_{i,s}, \omega_{i}, \theta, \omega)- \Gamma(\btheta_{i,s}, \omega_{i}, \theta, \omega) \right\vert\Psi(x_{i}, x) \nuN{s}(\dd\theta, \dd\omega, \dd x)\dd s\right)^{4},\\
&\leq  \N{\Gamma}_{Lip}^{4} \left(\frac{1}{ \left\vert \Lambda_{N} \right\vert}\sum_{j\in\Lambda_{N}} \Psi(x_{i}, x_{j})\right)^{4} \left(\int_{0}^{t} \left\vert \theta_{i,s}-\btheta_{i,s} \right\vert^{2}\dd s\right)^{4},\\
&\leq  CT^{ 3}\N{\Gamma}_{Lip}^{4} \int_{0}^{t} \max_{i\in\Lambda_{N}}\sup_{u\leq s} \left\vert \theta_{i,u}-\btheta_{i,u} \right\vert^{8}\dd s,\ \text{(by \eqref{eq:Psi_bounded})},
\end{align*}
and,
\begin{align*}
B_{N}&= \left(\frac{1}{ \left\vert \Lambda_{N} \right\vert} \sum_{j\in\Lambda_{N}}  \int_{0}^{t} \left\vert \theta_{i,s}-\btheta_{i,s} \right\vert \left\vert \Gamma(\btheta_{i,s}, \omega_{i}, \theta_{j,s}, \omega_{j}) - \Gamma(\btheta_{i,s}, \omega_{i}, \btheta_{j,s}, \omega_{j}) \right\vert\Psi(x_{i}, x_{j})\dd s\right)^{4},\\
&\leq \N{\Gamma}_{Lip}^{4} \left(\frac{1}{ \left\vert \Lambda_{N} \right\vert} \sum_{j\in\Lambda_{N}}  \Psi(x_{i}, x_{j})\int_{0}^{t} \left\vert \theta_{i,s}-\btheta_{i,s} \right\vert \left\vert \theta_{j,s} - \btheta_{j,s} \right\vert\dd s\right)^{4},\\
&\leq \N{\Gamma}_{Lip}^{4} \left(\frac{1}{ \left\vert \Lambda_{N} \right\vert} \sum_{j\in\Lambda_{N}}  \Psi(x_{i}, x_{j})\right)^{4} \left(\int_{0}^{t} \max_{i\in\Lambda_{N}} \sup_{u\leq s}\left\vert \theta_{i,u}-\btheta_{i,u} \right\vert^{2}\dd s\right)^{4},\\
&\leq  CT^{ 3}\N{\Gamma}_{Lip}^{4} \int_{0}^{t} \max_{i\in\Lambda_{N}} \sup_{u\leq s} \left\vert \theta_{i,u}-\btheta_{i,u} \right\vert^{8}\dd s, \quad \text{(recall \eqref{eq:Psi_bounded}).}
\end{align*}
It remains to study the term $C_{N}$ \eqref{aux:def_CN}. Since we obviously have
\begin{multline}
C_{N}\leq \frac{ T^{ 3}}{ 2}\int_{0}^{t} \left\vert \theta_{i,s}-\btheta_{i,s} \right\vert^{8}\dd s + \frac{ T^{ 3}}{ 2} \int_{0}^{t} \left\vert \Big[\Gamma\Psi, \bnuN{s}- \nu_{s}\Big](\btheta_{i,s}, \omega_{i}, x_{i}) \right\vert^{8}\dd s\\
\leq \frac{ T^{ 3}}{ 2}\int_{0}^{t} \max_{i\in\Lambda_{N}}\sup_{u\leq s} \left\vert \theta_{i,u}-\btheta_{i,u} \right\vert^{8}\dd s + \frac{ T^{ 3}}{ 2} \int_{0}^{t} \left\vert \Big[\Gamma\Psi, \bnuN{s}- \nu_{s}\Big](\btheta_{i,s}, \omega_{i}, x_{i}) \right\vert^{8}\dd s,
\label{aux:CN}
\end{multline}
we concentrate on the term $c_{N}:= \left\vert \Big[\Gamma\Psi, \bnuN{s}- \nu_{s}\Big](\btheta_{i,s}, \omega_{i}, x_{i}) \right\vert$ in \eqref{aux:CN}. Using the decomposition $ \nu( \dd \theta, \dd\omega, \dd x)= \xi(\dd \theta, \dd \omega) \dd x$ (recall \eqref{eq:decomp_nu_xi}), we have $c_{ N}\leq d_{ N}+ e_{ N}$ where
\begin{align}
d_{N}&:= \left\vert \frac{1}{ \left\vert \Lambda_{N} \right\vert} \sum_{j \in \Lambda_{ N}} \Psi(x_{i}, x_{j})\left(\Gamma(\btheta_{i, s}, \omega_{i}, \btheta_{j, s}, \omega_{j}) - \int \Gamma(\btheta_{i,s}, \omega_{i}, \theta, \omega)\xi_{s}(\dd\theta, \dd\omega)\right) \right\vert, \label{aux:def_dN}\\
e_{ N}&:= \left\vert \frac{1}{ \left\vert \Lambda_{N} \right\vert} \sum_{j\in\Lambda_{N}} \int \Gamma(\btheta_{i,s}, \omega_{i}, \theta, \omega)\xi_{s}(\dd\theta, \dd\omega)\Psi(x_{i}, x_{j})   - \int \Gamma(\btheta_{i, s}, \omega_{i}, \theta, \omega)\xi_{s}(\dd\theta, \dd\omega)\Psi(x_{i}, x)\dd x \right\vert.\label{aux:def_eN}
\end{align}
We treat first the term $d_{N}$ \eqref{aux:def_dN}. For simplicity, we use the notation $T_{i, j}:=\Gamma(\btheta_{i}, \omega_{i}, \btheta_{j}, \omega_{j}) - \int \Gamma(\btheta_{i}, \omega_{i}, \theta, \omega)\xi_{s}(\dd\theta, \dd\omega)$. Note that, by independence of the $ \bar \theta_{i}$ and by definition of $T_{ i, j}$, for any $J\neq k_{1}, k_{2}, k_{3}$ and $i\neq J$
\begin{align*}
\bE \left((T_{i,J}\cdot T_{i, k_{1}})(T_{i,k_{2}}\cdot T_{i, k_{3}})\right)&= \bE \left(\bE \left((T_{i, J}\cdot T_{i, k_{1}}) (T_{i, k_{2}}\cdot T_{i, k_{3}})\vert \btheta_{r}, r\neq J\right)\right),\\
&=\bE \left( \left(T_{i, k_{1}}\cdot\underbrace{\bE \left(T_{i, J}\vert \btheta_{r}, r\neq l\right)}_{=0}\right) (T_{i, k_{2}}\cdot T_{i, k_{3}})\right)=0.
\end{align*}
Consequently,
\begin{align}
\bE \left(d_{N}^{8}\right)&= \bE \left\vert \frac{1}{ \left\vert \Lambda_{N} \right\vert} \sum_{j\neq i} T_{i,j}\Psi(x_{i}, x_{j}) \right\vert^{8},\nonumber\\
&= \frac{1}{ \left\vert \Lambda_{N} \right\vert^{8}} \sum_{k_{1}, k_{2}, k_{3}, k_{4}\neq i} \left(\prod_{l=1}^{4} \Psi(x_{i}, x_{k_{l}})^{ 2}\right)\bE ( \left\vert T_{i, k_{1}} \right\vert^{ 2} \left\vert T_{i, k_{2}} \right\vert^{ 2} \left\vert T_{i, k_{3}} \right\vert^{ 2} \left\vert T_{i, k_{4}} \right\vert^{ 2})\nonumber\\
&= \frac{1}{ \left\vert \Lambda_{N} \right\vert^{8}} \sum_{\substack{k_{1}, k_{2}, k_{3}, k_{4}\\\text{distincts}}} \left(\prod_{l=1}^{4} \Psi(x_{i}, x_{k_{l}})^{ 2}\right)\bE ( \left\vert T_{i, k_{1}} \right\vert^{ 2} \left\vert T_{i, k_{2}} \right\vert^{ 2} \left\vert T_{i, k_{3}} \right\vert^{ 2} \left\vert T_{i, k_{4}} \right\vert^{ 2}),\nonumber\\
&+\frac{6}{ \left\vert \Lambda_{N} \right\vert^{8}} \sum_{\substack{j,k,l\\\text{distincts}}} \Psi(x_{i}, x_{j})^{2}\Psi(x_{i}, x_{k})^{2} \Psi(x_{i}, x_{l})^{4} \bE( \left\vert T_{i,j} \right\vert^{2}\left\vert T_{i,k} \right\vert^{2} \left\vert T_{i,l} \right\vert^{4})\nonumber\\
&+\frac{2}{ \left\vert \Lambda_{N} \right\vert^{8}} \sum_{k\neq l} \Psi(x_{i}, x_{k})^{4} \Psi(x_{i}, x_{l})^{4} \bE( \left\vert T_{i,k} \right\vert^{4} \left\vert T_{i,l} \right\vert^{4})+ \frac{1}{ \left\vert \Lambda_{N} \right\vert^{8}} \sum_{k} \bE \left\vert T_{i,k} \right\vert^{8}\Psi(x_{i}, x_{k})^{8}\nonumber\\
&\leq C\Ninf{\Gamma}^{8}\Bigg(\frac{1}{ \left\vert \Lambda_{N} \right\vert^{8}} \sum_{\substack{k_{1}, k_{2}, k_{3}, k_{4}\\\text{distincts}}} \prod_{l=1}^{4} \Psi(x_{i}, x_{k_{l}})^{ 2} +\frac{1}{ \left\vert \Lambda_{N} \right\vert^{8}} \sum_{\substack{j,k,l\\\text{distincts}}} \Psi(x_{i}, x_{j})^{2}\Psi(x_{i}, x_{k})^{2} \Psi(x_{i}, x_{l})^{4}\nonumber\\&\qquad+ \frac{1}{ \left\vert \Lambda_{N} \right\vert^{4}} \sum_{k \neq l} \Psi(x_{i}, x_{k})^{4\alpha} \Psi(x_{i}, x_{l})^{4\alpha} + \frac{1}{ \left\vert \Lambda_{N} \right\vert^{8}} \sum_{k} \Psi(x_{i}, x_{k})^{8\alpha}\Bigg),\label{aux:dN}
\end{align}
where we used that $ \left\vert T_{ i, j} \right\vert\leq 2 \N{ \Gamma}_{ \infty}$ and where $C$ is a numerical constant.
An application of Lemma~\ref{lem:Riem} on \eqref{aux:dN} leads to an upper bound of $\bE \left(d_{N}^{8}\right)$ of the form $\frac{C}{N^{4}}$ if $0\leq\alpha< \frac{1}{2}$, (respectively  $\frac{C}{N^{8(1-\alpha)}}$ if $\frac{1}{2}< \alpha<1$), for some constant $C$.
It remains to treat the term $e_{N}$ in \eqref{aux:def_eN}: for simplicity, we use the notation $\pi_{ i, s}:= \int \Gamma(\btheta_{i,s}, \omega_{i}, \theta, \omega)\xi_{s}(\dd\theta, \dd\omega)$, so that
\begin{align*}
e_{N}&= \left\vert \sum_{j\in\Lambda_{N}} \int_{\Delta_{j}} \pi_{i, s} \left( \Psi(x_{i}, x_{j}) - \Psi(x_{i}, x)\right)\dd x \right\vert
\leq\Ninf{\Gamma}\underbrace{\sum_{j\in\Lambda_{N}}\int_{\Delta_{j}}\left\vert \Psi(x_{i}, x_{j})- \Psi(x_{i}, x) \right\vert\dd x}_{\leq C N^{-(1-\alpha)}},\\
&\leq \frac{C\Ninf{ \Gamma}}{N^{1-\alpha}}.
\end{align*}
Taking the expectation in \eqref{aux:ttavsnlin} and applying Gronwall's lemma ends the proof of Proposition~ \ref{prop:ttaVSnonlin}. \qed

\subsection{Proof of Proposition~\ref{prop:etaNbounded1}}
\label{sec:proof_etaNbounded1}
Since we work in this proof with the same Sobolev space $\bW_{ q}^{ \kappa_{ 1}, \iota_{ 1}}$ (and its dual), we write in this proof $ \N{\cdot}_{ q}$ instead of $ \N{\cdot}_{ q, \kappa_{ 1}, \iota_{ 1}}$, for simplicity of notations. Introducing the nonlinear process $ \bar\tau_{ i, t} =( \ttheta_{ i, t}, \omega_{ i}, x_{ i})$ (Proposition~\ref{prop:ttaVSnonlin}), one can decompose $\cro{\cH_{ N,t}}{g}$ into the sum of three terms:
\begin{align}
\cI_{N, t}(g)&:= \frac{a_{N}}{ \left\vert \Lambda_{ N} \right\vert^{ 2}} \sum_{i, j\in\Lambda_{N}} \Psi(x_{ i}, x_{ j})\left(g(\tau_{ i, t}, \tau_{ j,t}\right)-g(\bar\tau_{ i,t}, \bar\tau_{ j, t})),\label{eq:def_cIN}\\
\cJ_{N, t}(g)&:=a_{N} \left(\frac{1}{ \left\vert \Lambda_{ N} \right\vert^{ 2}} \sum_{i, j\in\Lambda_{N}} \Psi(x_{ i}, x_{ j})g(\bar\tau_{ i,t}, \bar\tau_{ j, t}) - \frac{ 1}{ \left\vert \Lambda_{ N} \right\vert} \sum_{ i\in \Lambda_{ N}} \int \Psi(x_{ i}, \tx)g(\bar\tau_{ i, t}, \ttau) \nu_{t}(\dd \ttau)\right),\label{eq:def_cKN}\\
\cK_{ N,t}(g)&:= \frac{ a_{N}}{ \left\vert \Lambda_{ N} \right\vert} \sum_{ i\in \Lambda_{ N}} \int \Psi(x_{ i}, \tx)\left(g(\bar\tau_{ i, t}, \ttau) - g(\tau_{ i, t}, \ttau)\right) \nu_{t}(\dd \ttau),\label{eq:def_cJN}
\end{align}
Intuitively, the boundedness of $\cI_{ N}$ and $\cK_{ N}$ in this decomposition comes from Proposition~\ref{prop:ttaVSnonlin}: each particle $ \theta_{ i}$ is well approximated by its corresponding nonlinear process $\bar \theta_{ i}$. The most technical part of the proof concerns the evaluation of $\cJ_{ N}$ and relies on cancellation arguments in the expression of $\cJ_{ N}(g)^{ 2}$, using the independence of the $(\bar \theta_{ i})_{ i\in \Lambda_{ N}}$. The main point of the proof is that we need an estimate that is uniform in $g$. To do so, we use Hibertian techniques similar to \cite{Fernandez1997,Lucon2011}.

\medskip
\noindent
Concerning $\cI_{N, t}(g)$, one has 
\begin{align*}
\cI_{N, t}(g)^{2}&= \frac{a_{N}^{2}}{ \left\vert \Lambda_{ N} \right\vert^{ 4}} \sum_{i, j, k, l\in\Lambda_{N}} \Psi(x_{ i}, x_{ j})\Psi(x_{ k}, x_{ l})\left(g(\tau_{ i, t}, \tau_{ j,t}\right)-g(\bar\tau_{ i,t}, \bar\tau_{ j, t}))\left(g(\tau_{ k, t}, \tau_{ l,t}\right)-g(\bar\tau_{ k,t}, \bar\tau_{ l, t}))\\
&=\frac{a_{N}^{2}}{ \left\vert \Lambda_{ N} \right\vert^{ 4}} \sum_{i, j, k, l\in\Lambda_{N}} \Psi(x_{ i}, x_{ j})\Psi(x_{ k}, x_{ l})\cR_{ i,j, t}(g)\cR_{ k,l, t}(g),
\end{align*}
where we have written, for simplicity $\cR_{ i,j, t}=\cR_{\tau_{ i, t}, \tau_{ j,t}, \bar\theta_{ i, t}, \bar\theta_{ j, t}} $ (recall \eqref{eq:def_cR}).
Applying the previous equality to an orthonormal system $(g_{p})_{p\geq1}$ in the Hilbert space $\bW^{ \kappa_{ 1}, \iota_{ 1}}_{ q}$ and using Parseval's identity, one obtains
\begin{align*}
\N{\cI_{N, t}}_{-q}^{4} &= \left(\sum_{ p\geq1} \left\vert \cI_{ N,t}(g_{ p}) \right\vert^{ 2}\right)^{2}\\
&= \left(\frac{a_{N}^{2}}{ \left\vert \Lambda_{ N} \right\vert^{ 4}} \sum_{i, j, k, l\in\Lambda_{N}} \Psi(x_{ i}, x_{ j})\Psi(x_{ k}, x_{ l})\sum_{ p\geq1}\cR_{ i,j, t}(g_{ p})\cR_{ k,l, t}(g_{ p})\right)^{ 2}\\
&\leq \left(\frac{a_{N}^{2}}{ \left\vert \Lambda_{ N} \right\vert^{ 4}} \sum_{i, j, k, l\in\Lambda_{N}}\Psi(x_{ i}, x_{ j})\Psi(x_{ k}, x_{ l})\N{\cR_{ i,j, t}}_{ -q}\N{\cR_{ k,l, t}}_{ -q}\right)^{ 2},\\
&= \frac{a_{N}^{4}}{ \left\vert \Lambda_{ N} \right\vert^{ 8}} \sum_{\substack{i_{ 1}, \ldots, i_{ 4}\\ j_{ 1}, \ldots, j_{ 4}}} \prod_{ l=1}^{ 4}\Psi(x_{ i_{ l}}, x_{ j_{ l}}) \prod_{ l=1}^{ 4}\N{\cR_{ i_{ l},j_{ l}, t}}_{ -q}.
\end{align*}
By Proposition~ \ref{prop:linearforms} and Cauchy-Schwarz inequality, 
\begin{align*}
\bE \left(\prod_{ l=1}^{ 4}\N{\cR_{ i_{ l},j_{ l}, t}}_{ -q}\right)&\leq C \bE \left(\prod_{ l=1}^{ 4} \chi_{ i_{ l}, j_{ l}, t} \prod_{ l=1}^{ 4}\left( \left\vert \theta_{ i_{ l}, t} - \bar \theta_{ i_{ l}, t} \right\vert + \left\vert \theta_{ j_{ l}, t} - \bar \theta_{ j_{ l}, t} \right\vert\right)\right)\\
&\leq C\prod_{ l=1}^{ 4}\bE \left(\chi_{ i_{ l},j_{ l}, t}^{ 8}\right)^{ \frac{ 1}{ 8}}  \prod_{ l=1}^{ 4}\bE \left( \left(\left\vert \theta_{ i, t}- \bar\theta_{ i, t} \right\vert + \left\vert \theta_{ j, t}- \bar\theta_{ j, t} \right\vert\right)^{ 8}\right)^{ \frac{ 1}{ 8}}\leq \frac{ C}{ a_{ N}^{ 4}},
\end{align*}
where $\chi_{ i,j, t}$ is a shortcut for $\chi_{ \kappa_{ 1}, \iota_{ 1}}( \tau_{ i, t}, \tau_{ j, t}, \bar\theta_{ i, t}, \bar \theta_{ j, t})$ (recall \eqref{eq:chi}) and where we used \eqref{eq:ttaVSnonlin} and Proposition~\ref{prop:moment_particles}. Consequently, using \eqref{eq:Psi_bounded},
\begin{align*}
\sup_{1\leq N, t\leq T}\bE \left(\N{\cI_{N, t}}_{-q}^{4}\right)&\leq\sup_{1\leq N} \left(\frac{C}{ \left\vert \Lambda_{ N} \right\vert^{ 4}} \sum_{i_{ 1}, \ldots, i_{ 4}\in\Lambda_{N}} \prod_{ l=1}^{ 4} \left( \frac{ 1}{ \left\vert \Lambda_{ N}\right\vert} \sum_{ j\in \Lambda_{ N}}\Psi(x_{ i_{ l}}, x_{ j})\right)\right)<+\infty.
\end{align*}
We now turn to the estimation of $\cJ_{ N, t}$ in \eqref{eq:def_cKN}: for all $N\geq1$, $t\in[0, T]$
\begin{align}
\cJ_{N, t}(g)&= \frac{ a_{ N}}{ \left\vert \Lambda_{ N} \right\vert} \sum_{ i\in \Lambda_{ N}}\left(\frac{1}{ \left\vert \Lambda_{ N} \right\vert} \sum_{j\in\Lambda_{N}} \Psi(x_{ i}, x_{ j})g(\bar\tau_{ i,t}, \bar\tau_{ j, t}) -  \int \Psi(x_{ i}, \tx)g( \bar\tau_{ i, t}, \ttau) \nu_{t}(\dd \ttau)\right),\nonumber\\
&= \frac{ a_{ N}}{ \left\vert \Lambda_{ N} \right\vert^{ 2}} \sum_{ i, j\in \Lambda_{ N}}\Psi(x_{ i}, x_{ j}) \left( g(\bar\tau_{ i,t}, \bar\tau_{ j, t}) -  \int g( \bar\tau_{ i, t}, \ttheta, \tomega, x_{ j}) \xi_{t}(\dd \ttheta, \dd \tomega)\right)\nonumber\\&+
\frac{ a_{ N}}{ \left\vert \Lambda_{ N} \right\vert} \sum_{ i\in \Lambda_{ N}} \left( \frac{ 1}{ \left\vert \Lambda_{ N} \right\vert}\sum_{ j\in \Lambda_{ N}} \Psi(x_{ i}, x_{ j}) \int g( \bar\tau_{ i, t}, \ttheta, \tomega, x_{ j}) \xi_{t}(\dd \ttheta, \dd \tomega)-  \int \Psi(x_{ i}, \tx)g( \bar\tau_{ i, t}, \ttau) \nu_{t}(\dd \ttau)\right),\nonumber\\
&:= J_{ N,t}^{ (1)}(g) + J_{ N,t}^{ (2)}(g).\label{eq:JN}
\end{align}
The linear form $J_{ N}^{ (1)}$ in \eqref{eq:JN} captures the random fluctuations of the nonlinear processes around their mean value (for fixed positions), whereas $J_{ N}^{ (2)}$ captures the variations of the positions of the particles (for a fixed randomness). The first term in \eqref{eq:JN} can be simply written as $J_{ N,t}^{ (1)}(g)=  \frac{ a_{ N}}{ \left\vert \Lambda_{ N} \right\vert^{ 2}} \sum_{ i, j\in \Lambda_{ N}}\Psi(x_{ i}, x_{ j}) A_{ i, j, t}(g)$, where 
\begin{equation}
\label{eq:def_Aij}
A_{ i, j, t}(g):= g(\bar\tau_{ i,t}, \bar\tau_{ j, t}) -  \int g( \bar\tau_{ i, t}, \ttheta, \tomega, x_{ j}) \xi_{ t}(\dd \ttheta, \dd \tomega)= g(\bar\tau_{ i,t}, \bar\tau_{ j, t}) - \bE \left(  g(\bar\tau_{ i,t}, \bar\tau_{ j, t})  \vert \bar\tau_{ r, t}, r\neq j\right).
\end{equation} 
Using again a complete orthonormal system $(g_{ p})_{ p\geq1}$ in $\bW^{ \kappa_{ 1}, \iota_{ 1}}_{ q}$,
\begin{align}
\N{J_{ N,t}^{ (1)}}^{ 4}_{ -q}&= \left(\sum_{ r\geq1}J_{ N,t}^{ (1)}(g_{ r})^{ 2}\right)^{ 2},\nonumber\\
&= \left(\frac{ a_{ N}^{ 2}}{ \left\vert \Lambda_{ N} \right\vert^{ 4}}\sum_{ r\geq1} \sum_{i, j, k, l \in \Lambda_{ N}}\Psi(x_{ i}, x_{ j})\Psi(x_{ k}, x_{ l}) A_{ i, j, t}(g_{ r}) A_{ k, l, t}(g_{ r})\right)^{ 2},\nonumber\\
&=\frac{ a_{ N}^{ 4}}{ \left\vert \Lambda_{ N} \right\vert^{ 8}}\sum_{ r, s\geq1} \sum_{ \substack{i_{ 1}, \ldots, i_{ 4}\\  j_{ 1}, \ldots j_{ 4}}} \prod_{ l=1}^{ 4}\Psi(x_{ i_{ l}}, x_{ j_{ l}}) A_{ i_{ 1}, j_{ 1}, t}(g_{ r})A_{ i_{ 2}, j_{ 2}, t}(g_{ r}) A_{ i_{ 3}, j_{ 3}, t}(g_{ s})A_{ i_{ 4}, j_{ 4}, t}(g_{ s}).\label{aux:JN1_4}
\end{align}
Taking the expectation in \eqref{aux:JN1_4}, we need to estimate the quantity \begin{equation}
\label{eq:def_E1234}E(I_{ 1}, I_{ 2}, I_{ 3}, I_{ 4}):=\bE \left(A_{ i_{ 1}, j_{ 1}, t}(g_{ r})A_{ i_{ 2}, j_{ 2}, t}(g_{ r}) A_{ i_{ 3}, j_{ 3}, t}(g_{ s})A_{ i_{ 4}, j_{ 4}, t}(g_{ s})\right),
\end{equation}
for every sets of couples $I_{ l}:=(i_{ l}, j_{ l})$, $l=1,\ldots, 4$. One crucial observation here is that for all $j\in \Lambda_{N}$,
\begin{equation}
\label{eq:cancel_Aij}
\bE \left(A_{ i, j, t}(g) \vert \bar\tau_{ r, t}, r \neq j\right)=0,
\end{equation}
by definition of the $A_{ i, j}$ in \eqref{eq:def_Aij}. \eqref{eq:cancel_Aij} leads to cancellations in \eqref{eq:def_E1234}. 
\begin{figure}
\includegraphics[width=\textwidth]{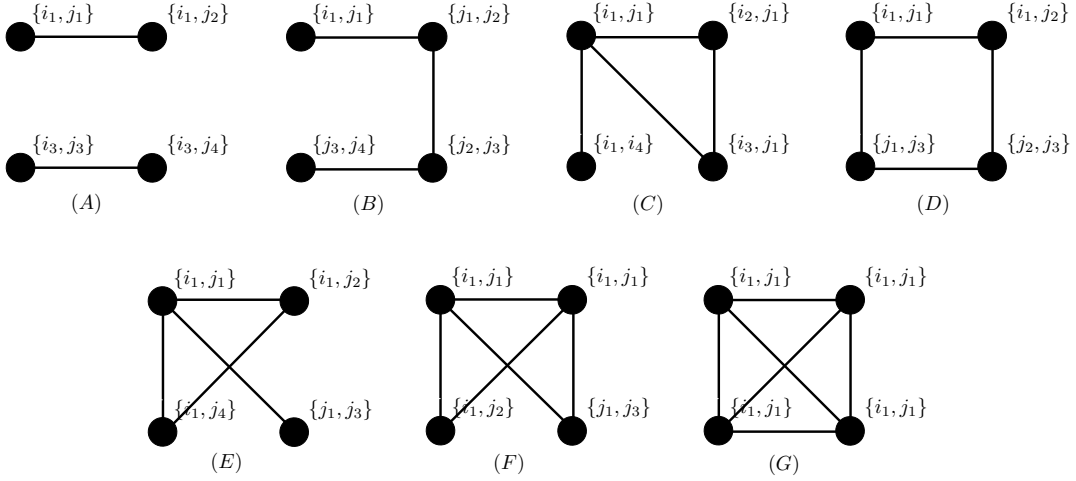}
\caption{The only nontrivial contributions to \eqref{eq:def_E1234} corresponds to indices given by the above diagrams.}
\label{fig:cancell_cases}
\end{figure}
We give here a simple combinatoric argument in order to derive the relevant contributions to the estimation of the expectation of \eqref{aux:JN1_4}: associate a vertex to each $\{i_{ l}, j_{ l}\}$ ($l=1, \ldots, 4$) and draw an edge between two vertices labelled $k$ and $l$ ($k,l=1, \ldots, 4$) if and only if $\{i_{ l}, j_{ l}\}\cap\{i_{ k}, j_{ k}\}\neq \emptyset$ (see Figure~\ref{fig:cancell_cases}). Note that when a vertex (labelled for example by $I_{ 1}=\{i_{ 1}, j_{ 1}\}$) is isolated (that is $\{i_{ 1}, j_{ 1}\}\cap\{i_{ 2}, i_{ 3}, i_{ 4}, j_{ 2}, j_{ 3}, j_{ 4}\}= \emptyset$), by independence of the random variables $\bar \tau_{ i}$, one obtains that 
\begin{align*}
E(I_{ 1}, I_{ 2}, I_{ 3}, I_{ 4}) = \bE(A_{ i_{ 2}, j_{ 2}}A_{ i_{ 3}, j_{ 3}} A_{ i_{ 4}, j_{ 4}}\bE(A_{ i_{ 1}, j_{ 1}}\vert \bar \tau_{ r}, r\neq j_{ 1}))=0.
\end{align*} 
Using this last observation, it is easy to enumerate by inspection all the possible nontrivial contributions to the expectation of \eqref{aux:JN1_4}: they are listed in Figure~\ref{fig:cancell_cases}. We only give here a detailed analysis of the two extreme cases (i.e. the largest $(A)$ and smallest $(G)$) and leave the remaining cases to the reader. The largest contribution of $E(I_{ 1}, I_{ 2}, I_{ 3}, I_{ 4})$ in $\bE\left(\N{J_{ N,t}^{ (1)}}^{ 4}_{ -q}\right)$ corresponds to the case $(A)$ in Figure~\ref{fig:cancell_cases} where the set of indices $\{I_{ 1}, I_{ 2}, I_{ 3}, I_{ 4}\}$ is separated into $\{I_{ 1}, I_{ 2}\}$ and $\{I_{ 3}, I_{ 4}\}$ with the property that $\{i_{ 1}, j_{ 1}, i_{ 2}, j_{ 2}\}\cap\{i_{ 3}, j_{ 3}, i_{ 4}, j_{ 4}\}=\emptyset$. By independence of the random variables $(\bar \tau_{ i})_{ i\in \Lambda_{ N}}$, the contribution of this term to $\bE\left(\N{J_{ N,t}^{ (1)}}^{ 4}_{ -q}\right)$ is then 
\begin{align}
\left(\sum_{ p\geq1}\frac{ a_{ N}^{ 2}}{ \left\vert \Lambda_{ N} \right\vert^{ 4}} \sum_{ i, j,k,l\in \Lambda_{ N}}\Psi(x_{ i}, x_{ j})\Psi(x_{ k}, x_{ l}) \bE \left(A_{ i, j, t}(g_{ p})A_{ k, l, t}(g_{ p})\right)\right)^{ 2}.\label{eq:AijAkl}
\end{align}
First note that, thanks to \eqref{eq:cancel_Aij}, whenever $i\neq l$ and $j \neq l$, $\bE \left(A_{ i, j, t}(g) A_{ k, l, t}(g)\right)= \bE \left( A_{ i, j, t}(g)\bE \left( A_{ k, l, t}(g)\vert \bar\tau_{ r, t}, r\neq l\right)\right)=0$, so that we can suppose that the indices $(i,j,k,l)$ in \eqref{eq:AijAkl} are such that $i=l$ or $j=l$. 

We first treat the case $i=l$: here, we can further assume that $k=j$, since if $k\neq j$, one has that $\bE(A_{ i,j} A_{ k,i})=\bE(A_{ k,i}\bE(A_{ i,j} \vert \bar \tau_{ r}, r\neq j)=0$. Hence, for fixed $i, j$,
\begin{align*}
\sum_{ p\geq 1} \bE \left(A_{ i,j ,t}(g_{ p}) A_{ j, i, t}(g_{ p})\right)&\leq \sum_{ p\geq 1}\bE \left(A_{ i,j,t}(g_{ p})^{ 2}\right)^{ \frac{ 1}{ 2}} \bE \left(A_{ j,i,t}(g_{ p})^{ 2}\right)^{ \frac{ 1}{ 2}},\\
&\leq C \sum_{ p\geq 1}\bE \left(g_{ p}(\bar \tau_{ i, t}, \bar \tau_{ j, t})^{ 2}\right)^{ \frac{ 1}{ 2}} \bE \left(g_{ p}(\bar\tau_{ j, t}, \bar\tau_{ i, t})^{ 2}\right)^{ \frac{ 1}{ 2}},\\
&\leq C \left(\sum_{ p\geq 1}\bE \left(g_{ p}(\bar \tau_{ i, t}, \bar \tau_{ j, t})^{ 2}\right)\right)^{ \frac{ 1}{ 2}} \left(\sum_{ p\geq1} \bE \left(g_{ p}(\bar\tau_{ j, t}, \bar\tau_{ i, t})^{ 2}\right)\right)^{ \frac{ 1}{ 2}},\\
&= C \bE \left(\N{\cS_{ \bar \tau_{ i, t}, \bar \tau_{ j, t}}}^{ 2}_{ -q}\right)^{ \frac{ 1}{ 2}} \bE \left(\N{\cS_{ \bar \tau_{ j, t}, \bar \tau_{ i, t}}}^{ 2}_{ -q}\right)^{ \frac{ 1}{ 2}},
\end{align*}
where $\cS$ is defined in \eqref{eq:def_cS}. Thanks to Proposition \ref{prop:linearforms} and Proposition~\ref{prop:moment_particles}, the last quantity is bounded by a constant that is uniform in $i, j$. Consequently, the contribution of the case $i=l$, $j=k$ to \eqref{eq:AijAkl} can be bounded above by 
\begin{equation}
C \left(\frac{ a_{ N}^{ 2}}{ \left\vert \Lambda_{ N} \right\vert^{ 4}} \sum_{ i, j\in \Lambda_{ N}}\Psi(x_{ i}, x_{ j})^{ 2}\right)^{ 2}= C \left(\frac{ a_{ N}^{ 2}}{ \left\vert \Lambda_{ N} \right\vert^{ 4}} \sum_{ i, j\in \Lambda_{ N}} d(x_{ i}, x_{ j})^{ -2\alpha}\right)^{ 2}.
\end{equation}
When $ \alpha< \frac{ 1}{ 2}$, the latter quantity is bounded above by $ \left(C \frac{ a_{ N}^{ 2}}{ N^{ 2}}\right)^{ 2}= \frac{ C}{ N^{ 2}}$, by \eqref{eq:Psi_bounded} . When $ \alpha> \frac{ 1}{ 2}$, this contribution is of order $ \left(\frac{ N^{ 2(1- \alpha)}}{ N^{ 3}} N^{ 2 \alpha} \right)^{ 2} = \frac{ C}{ N^{ 2}}$.

We now treat the case where $j=l$ in \eqref{eq:AijAkl}. Proceeding as before and using again \eqref{eq:Psi_bounded}, one easily sees that the contribution of this case to \eqref{eq:AijAkl} can be bounded above by
\begin{equation}
C\left(\frac{ a_{ N}^{ 2}}{ \left\vert \Lambda_{ N} \right\vert^{ 4}} \sum_{ i, j, k \in \Lambda_{ N}}\Psi(x_{ i}, x_{ j})\Psi(x_{ k}, x_{ j}) \right)^{ 2} \leq C\left(\frac{ a_{ N}^{ 2}}{ N} \right)^{ 2},
\end{equation}
which is uniformly bounded in $N$, in both cases $ \alpha< \frac{ 1}{ 2}$ and $ \alpha> \frac{ 1}{ 2}$, by definition of $ a_{ N}$. One can conclude that \eqref{eq:AijAkl} is uniformly bounded in $N$.

We now treat the case of the smallest contribution to $\bE\left(\N{J_{ N,t}^{ (1)}}^{ 4}_{ -q}\right)$ which corresponds to the case where all the four couples of indices are equal (see the case $(G)$ in Figure~\ref{fig:cancell_cases}): $I_{ 1}= I_{ 2}= I_{ 3}= I_{ 4}= (i, j)$. This case boils down to estimating the following quantity
\begin{align}
\frac{ a_{ N}^{ 4}}{ \left\vert \Lambda_{ N} \right\vert^{ 8}} \sum_{ i, j \in \Lambda_{ N}} \Psi(x_{ i}, x_{ j})^{ 4}  \sum_{ r, s\geq1}\bE \left( A_{ i, j, t}(g_{ r})^{ 2} A_{ i, j, t}(g_{ s})^{ 2}\right)&= \frac{ a_{ N}^{ 4}}{ \left\vert \Lambda_{ N} \right\vert^{ 8}} \sum_{ i, j \in \Lambda_{ N}} \Psi(x_{ i}, x_{ j})^{ 4} \bE \left( \N{A_{ i, j, t}}_{ -q}^{ 4}\right)\label{eq:Aij4}\\
&\leq \frac{ C a_{ N}^{ 4}}{ \left\vert \Lambda_{ N} \right\vert^{ 8}} \sum_{ i, j \in \Lambda_{ N}} d(x_{ i}, x_{ j})^{ -4 \alpha}. \nonumber
\end{align}
Using again \eqref{eq:Psi_bounded}, we see that if $ 0\leq \alpha < \frac{ 1}{ 4}$, the last quantity is bounded above by $C \frac{ a_{ N}^{ 4}}{ N^{ 8} N^{ 2}}= \frac{ C}{ N^{ 4}}$. If $ \alpha= \frac{ 1}{ 4}$, we obtain an upper-bound of order $ \frac{\ln N}{ N^{ 4}}$. If $ \frac{ 1}{ 4} < \alpha < \frac{ 1}{ 2}$, the upper-bound is of order $ \frac{N^{ 4 \alpha} }{ N^{ 5}} \leq\frac{ 1}{ N^{ 3}}$ and in the case $ \frac{ 1}{ 2} < \alpha< 1$, one has an upper-bound of order $ \frac{ N^{ 4(1- \alpha)}}{ N^{ 7}} N^{ 4 \alpha} = \frac{ 1}{ N^{ 3}}$. In any case, this term is bounded in $N$. From all this we can conclude that 
\begin{equation}
\label{aux:bound_JN1}
\sup_{t\leq T}\bE\left( \N{J_{ N, t}^{ (1)}}_{ -q}^{ 4}\right) \leq C \left(\frac{ a_{ N}^{ 2}}{ N}\right)^{ 2}.
\end{equation}

We now turn to the second term $J_{ N, t}^{ (2)}$ in \eqref{eq:JN}. Setting $\left[g, \xi_{ t}\right](\tau, \tx)= \left[g, \xi_{ t}\right]_{ 1}(\tau, \tx)= \int g(\tau, \ttheta, \tomega, \tx) \xi_{t}(\dd \ttheta, \dd \tomega)$ (recall \eqref{eq:cro_1}) and defining $ \Delta_{j}:=[x_{ j}, x_{ j+1}[\subset \bbS$, $j\in \Lambda_{ N}$, one obtains
\begin{align*}
J_{ N, t}^{ (2)}(g)&= \frac{ a_{ N}}{ \left\vert \Lambda_{ N} \right\vert} \sum_{ i\in \Lambda_{ N}} \left( \frac{ 1}{ \left\vert \Lambda_{ N} \right\vert}\sum_{ j\in \Lambda_{ N}} \Psi(x_{ i}, x_{ j}) \left[g, \xi_{ t}\right](\bar\tau_{ i, t}, x_{ j})-  \int \Psi(x_{ i}, \tx)\left[g, \xi_{ t}\right](\bar\tau_{ i, t}, \tx)\dd \tx\right),\\
&=\frac{ a_{ N}}{ \left\vert \Lambda_{ N} \right\vert} \sum_{ i, j\in \Lambda_{ N}}  \int_{ \Delta_{ j}} \left(\Psi(x_{ i}, x_{ j}) \left[g, \xi_{ t}\right](\bar\tau_{ i, t}, x_{ j})-  \Psi(x_{ i}, \tx)\left[g, \xi_{ t}\right](\bar\tau_{ i, t}, \tx)\right)\dd \tx,\\
&= \frac{ a_{ N}}{ \left\vert \Lambda_{ N} \right\vert} \sum_{ i, j\in \Lambda_{ N}}  \int_{ \Delta_{ j}} \left[g, \xi_{ t}\right](\bar\tau_{ i, t}, x_{ j}) \left(\Psi(x_{ i}, x_{ j}) - \Psi(x_{ i}, \tx) \right)\dd \tx\\
&+ \frac{ a_{ N}}{ \left\vert \Lambda_{ N} \right\vert} \sum_{ i, j\in \Lambda_{ N}} \int_{ \Delta_{ j}} \Psi(x_{ i}, \tx)\left( \left[g, \xi_{ t}\right](\bar\tau_{ i, t}, x_{ j})- \left[g, \xi_{ t}\right](\bar\tau_{ i, t}, \tx)\right)\dd \tx,\\&:= J_{ N, t}^{ (3)}(g) + J_{ N, t}^{ (4)}(g).
\end{align*}
Concerning $J_{ N,t}^{ (3)}$, setting $u_{ i,j}(g) := \left[g, \xi_{ t}\right](\bar\tau_{ i, t}, x_{ j})\int_{ \Delta_{ j}}  \left(\Psi(x_{ i}, x_{ j}) - \Psi(x_{ i}, \tx) \right)\dd \tx$, we have
\begin{align*}
\bE \left(\N{J_{ N,t}^{ (3)}}^{ 4}_{ -q}\right)&= \frac{ a_{ N}^{ 4}}{ \left\vert \Lambda_{ N} \right\vert^{ 4}} \sum_{ \substack{i_{ 1}, \ldots, i_{ 4}\\ j_{ 1}, \ldots, j_{ 4}}} \bE \left( \sum_{ r, s\geq 1} u_{ i_{ 1}, j_{ 1}}(g_{ r}) u_{ i_{ 2}, j_{ 2}}(g_{ r})u_{ i_{ 3}, j_{ 3}}(g_{ s})u_{ i_{ 4}, j_{ 4}}(g_{ s})\right),\\
&\leq \left(\frac{ a_{ N}^{ 2}}{ \left\vert \Lambda_{ N} \right\vert^{ 2}}\sum_{ i, j, k, l\in \Lambda_{ N}}  \left\lbrace \bE \left(\sum_{ p\geq 1} u_{ i,j}(g_{ p})^{ 2}\right) \left(\sum_{ p\geq1}u_{ k,l}(g_{ p})^{ 2}\right)\right\rbrace ^{ \frac{ 1}{ 2}}\right)^{ 2}.
\end{align*}
Using \eqref{eq:regPsi}, it is easy to see that $\int_{ \Delta_{ j}}  \left\vert\Psi(x_{ i}, x_{ j}) - \Psi(x_{ i}, \tx) \right\vert\dd \tx \leq \frac{ N^{ \alpha-1}}{ \left\vert i-j \right\vert^{ \alpha+1}} $. Moreover,
\begin{align*}
\sum_{ p\geq1}   \bE \left(\left[g_{ p}, \xi_{ t}\right](\bar\tau_{ i, t}, x_{ j})^{ 2}\right) &\leq \sum_{ p\geq1}   \bE \left(\int g_{ p}(\bar\tau_{ i, t},  \ttheta, \tomega, x_{ j})^{ 2} \xi_{ t}(\dd \ttheta, \dd \tomega)\right),\\ &= \bE \left( \int \N{\cS_{ \bar\tau_{ i,t}, \ttheta, \tomega, x_{ j}}}^{ 2}_{ -q} \xi_{ t}(\dd \ttheta, \dd \tomega)\right),
\end{align*} which is uniformly bounded in $i, j$ and $N$ (recall Proposition~\ref{prop:linearforms}). Consequently, for some constant $C>0$,
\begin{align*}
\left\lbrace \bE \left(\sum_{ p\geq 1} u_{ i,j}(g_{ p})^{ 2}\right) \left(\sum_{ p\geq1}u_{ k,l}(g_{ p})^{ 2}\right)\right\rbrace ^{ \frac{ 1}{ 2}}&\leq \frac{ CN^{ 2(\alpha-1)}}{ \left\vert i-j \right\vert^{ \alpha+1} \left\vert k-l \right\vert^{ \alpha+1}}.
\end{align*}
Thus, one obtains that 
\begin{align*}
\sup_{t\leq T } \bE \left(\N{J_{ N, t}^{ (3)}}^{ 4}_{ -q}\right)&\leq C\left(\frac{ a_{ N}^{ 2}}{ \left\vert \Lambda_{ N} \right\vert^{ 2}}\sum_{ i, j, k, l\in \Lambda_{ N}}  \frac{ N^{ 2(\alpha-1)}}{ \left\vert i-j \right\vert^{ \alpha+1} \left\vert k-l \right\vert^{ \alpha+1}}\right)^{ 2}\leq C\frac{ a_{ N}^{ 4}}{ N^{ 4(1- \alpha)}},
\end{align*}
which is bounded, uniformly in $N$, by definition of $a_{ N}$.
Turning to $J_{ N, t}^{ (4)}$ and setting $\cU_{ i, j}(g):=\cU_{\bar \tau_{ i,t}, x_{ j}, x_{ i}, \Delta_{ j}, \xi_{ t}}(g)=\int_{ \Delta_{ j}} \Psi(x_{ i}, \tx)\left( \left[g, \xi_{ t}\right](\bar\tau_{ i, t}, x_{ j})- \left[g, \xi_{ t}\right](\bar\tau_{ i, t}, \tx)\right)$ (recall the definition of the form $\cU$ in Proposition~\ref{prop:contU}), we have
\begin{align*}
\bE \left(\N{J_{ N, t}^{ (4)}}^{ 4}_{ -q}\right)&\leq \frac{ a_{ N}^{ 4}}{ \left\vert \Lambda_{ N} \right\vert^{ 4}} \sum_{ \substack{ i_{ 1}, \ldots, i_{ 4}\\j_{ 1}, \ldots, j_{ 4}}}\bE \left(\prod_{ l=1}^{ 4}\N{\cU_{ i_{ l}, j_{ l}}}_{ -q}\right).
\end{align*}
Hence, using the boundedness of the linear form $\cU$ in \eqref{eq:contU}
\begin{align*}
\bE \left(\N{J_{ N, t}^{ (4)}}^{ 4}_{ -q}\right)&\leq C\frac{ a_{ N}^{ 4}}{ \left\vert \Lambda_{ N} \right\vert^{ 4}} \sum_{ \substack{ i_{ 1}, \ldots, i_{ 4}\\j_{ 1}, \ldots, j_{ 4}}} \bE\prod_{ l=1}^{ 4} \left(\left(1+ \left\vert \theta_{ i_{ l}, t} \right\vert^{ \kappa_{ 1}} + \left\vert \omega_{ i_{ l}} \right\vert^{\iota_{ 1}}\right) \left(\sup_{ x\in \Delta_{ j_{ l}}}\left\vert x_{ j_{ l}}-x \right\vert \right)\int_{ \Delta_{ j_{ l}}} \Psi(x_{ i_{ l}}, \tx)  \dd \tx\right),\\
&\leq C\frac{ a_{ N}^{ 4}}{ \left\vert \Lambda_{ N} \right\vert^{ 4}} \frac{ 1}{ N^{ 4}}\sum_{ \substack{ i_{ 1}, \ldots, i_{ 4}\\j_{ 1}, \ldots, j_{ 4}}} \prod_{ l=1}^{ 4} \left(\int_{ \Delta_{ j_{ l}}} \Psi(x_{ i_{ l}}, \tx)  \dd \tx\right) \bE\prod_{ l=1}^{ 4} \left(1+ \left\vert \theta_{ i_{ l}, t} \right\vert^{ \kappa_{ 1}} + \left\vert \omega_{ i_{ l}} \right\vert^{\iota_{ 1}}\right),\\
&\leq C\frac{ a_{ N}^{ 4}}{N^{ 4}} \frac{ 1}{ \left\vert \Lambda_{ N} \right\vert^{ 4}}\sum_{i_{ 1}, \ldots, i_{ 4}} \prod_{ l=1}^{ 4} \sum_{ j\in \Lambda_{ N}} \int_{ \Delta_{ j}} \Psi(x_{ i_{ l}}, \tx)  \dd \tx\leq C \frac{ a_{ N}^{ 4}}{ N^{ 4}},
\end{align*}
so that $\sup_{N\geq 1, t\leq T} \bE \left(\N{J_{ N, t}^{ (4)}}^{ 4}_{ -q}\right)< +\infty$.

The same estimate about the last term $\cK_{ N, t}$ can be proven along the same lines as we have done for the first term $\cI_{ N, t}$. We leave the proof to the reader. Proposition~ \ref{prop:etaNbounded1} is proven.\qed

\subsection{Proof of Proposition~\ref{prop:initial_cond_HN}}
\label{sec:proof_initial_cond_HN}
\begin{definition}
\label{def:cP_cQ}
Define the linear forms on the space of functions $(\theta, \omega, x, \ttheta, \tomega, \tx)\mapsto g(\theta, \omega, x, \ttheta, \tomega, \tx)$ on $(\bbX\times\bbY\times\bbS)^{ 2}$:
\begin{align}
\label{eq:cP}
\cP g(\theta, \omega, \ttheta, \tomega, x)&:= g (\theta, \omega, x, \ttheta, \tomega, x),\\
\label{eq:cQ}
\cQ g&:= g- \cP g.
\end{align}
\end{definition}
If we fix $(\theta, \omega, \ttheta, \tomega, x)$ and we see $g$ as a function of the second spatial variable $\tx$ only, we have to think of $\cP g$ as the first (constant) term in the Taylor decomposition of $\tx \mapsto g(\tx)$ around $x$, $\cQ g$ being the remainder.
\begin{proof}[Proof of Proposition~\ref{prop:initial_cond_HN}]
We decompose $ \eta_{ N, 0}$ \eqref{eq:fluct} and $\cH_{ N, 0}$ \eqref{eq:secorderfluc} into the sum of terms of different scalings (see Remark~\ref{rem:decomp0} below): for all test function $ (\theta, \omega, x)\mapsto f(\theta, \omega, x)$
\begin{align}
\left\langle \eta_{ N, 0}\, ,\, f\right\rangle&=  \frac{ a_{ N}}{ \left\vert \Lambda_{ N} \right\vert} \sum_{ i\in \Lambda_{ N}}\left(f(\theta_{ i, 0}, \omega_{ i}, x_{ i}) - \left[ f, \xi_{ 0}\right](x_{ i}) \right)\nonumber\\
&+ a_{ N} \left(\frac{ 1}{ \left\vert \Lambda_{ N} \right\vert} \sum_{ i\in \Lambda_{ N}} \left[ f, \xi_{ 0}\right](x_{ i})  - \int \left[ f, \xi_{ 0}\right](x)\dd x\right):= \left\langle \eta_{ N, 0}^{ (1)}\, ,\, f\right\rangle + \left\langle \eta_{ N, 0}^{ (2)}\, ,\, f\right\rangle,\label{eq:decomp_eta0}
\end{align}
where $ \left[f, \xi_{ 0}\right](x)=\int_{ \bbX\times\bbY} f(\theta, \omega, x) \xi_{ 0}(\dd \theta, \dd \omega)$ and for every test functions $(\tau, \ttau)\mapsto g(\tau, \ttau)$
\begin{align}
\left\langle \cH_{ N, 0}\, ,\, g\right\rangle&= \frac{ a_{ N}}{ \left\vert \Lambda_{ N} \right\vert^{ 2}} \sum_{ i, j\in \Lambda_{ N}} \Psi(x_{ i}, x_{ j})\left( g( \tau_{ i, 0}, \tau_{ j, 0}) - \left[g, \xi_{ 0}\right](\tau_{ i, 0}, x_{ j})\right)\nonumber\\
\begin{split}
&+ \frac{ a_{ N}}{ \left\vert \Lambda_{ N} \right\vert}\sum_{ i\in \Lambda_{ N}}\bigg( \frac{1}{ \left\vert \Lambda_{ N} \right\vert} \sum_{ j\in \Lambda_{ N}} \Psi(x_{ i}, x_{ j}) \left[g, \xi_{ 0}\right](\tau_{ i, 0}, x_{ j}) - \int_{ \bbS} \Psi(x_{ i}, \tx)\left[g, \xi_{ 0}\right](\tau_{ i, 0}, \tx)\dd \tx\bigg),
\end{split}\nonumber\\
&:= \left\langle \cH_{ N, 0}^{ (1)}\, ,\, g\right\rangle + \left\langle \cH_{ N, 0}^{ (2)}\, ,\, g\right\rangle,\label{eq:decomp_HN0}
\end{align}
where (recall \eqref{eq:cro_1}) $ \left[g, \xi_{ 0}\right](\tau, \tx)=\left[g, \xi_{ 0}\right]_{ 1}(\theta, \omega, x, \tx)= \int_{ \bbX\times\bbY} g( \tau, \ttheta, \tomega, \tx) \xi_{ 0}(\dd \ttheta, \dd \tomega)$.
\begin{remark}
\label{rem:decomp0}
In \eqref{eq:decomp_eta0} and \eqref{eq:decomp_HN0}, $ \eta_{ N, 0}^{ (1)}$ and $ \cH_{ N, 0}^{ (1)}$ capture the fluctuations of the i.i.d. particles and disorder at $t=0$ (and hence, should scale as $ \frac{ a_{ N}}{ \sqrt{N}}$ for large $N$ with Gaussian limits in the case $a_{ N}= \sqrt{N}$). On the other hand, the processes $ \eta_{ N, 0}^{ (2)}$ and $\cH_{ N, 0}^{ (2)}$ capture the variations of the system w.r.t. the spatial variables (for a fixed randomness). Hence, the scaling of $ \eta_{ N, 0}^{ (2)}$ and $\cH_{ N, 0}^{ (2)}$ is governed by the regularity of the weight $ \Psi$ and the test functions. This scaling is in any case different from the Gaussian scaling and yields to deterministic limits as $N\to\infty$.
\end{remark}
We first make the observation that in \eqref{eq:decomp_eta0}, $ \eta_{ N, 0}^{ (2)}$ converges  to $0$ as $N\to\infty$, regardless of the value of $ \alpha\in[0, 1)$. Indeed, since $f$ in \eqref{eq:decomp_eta0} is at least of class $\cC^{ 1}$ (recall \eqref{eq:embed_one_var}), one has $ \left\vert \left\langle \eta_{ N, 0}^{ (2)}\, ,\, f\right\rangle \right\vert \leq C \Ninf{ \nabla f}\frac{ a_{ N}}{ N}$, which goes to $0$ as $N\to\infty$ in both cases $ \alpha< \frac{ 1}{ 2}$ and $ \alpha> \frac{ 1}{ 2}$. Secondly, using in \eqref{eq:decomp_HN0} the decomposition of Definition~\ref{def:cP_cQ}, we write
\begin{equation}
\label{aux:HN02_PQ}
\left\langle \cH_{ N, 0}^{ (2)}\, ,\, g\right\rangle=\left\langle \cH_{ N, 0}^{ (2)}\, ,\, \cP g\right\rangle+ \left\langle \cH_{ N, 0}^{ (2)}\, ,\, \cQ g\right\rangle.
\end{equation}
The intuition for \eqref{aux:HN02_PQ} is that the only (possibly) nontrivial contribution to $\cH_{ N, 0}^{ (2)}$ only comes from $\cP g$, the remainder $\cQ g$ being already sufficiently regular in $(x, \tx)$ to compensate for the singularity of the kernel $ (x, \tx)\mapsto\Psi(x, \tx)$ in \eqref{eq:decomp_HN0}: regardless of the value of $ \alpha\in[0, 1)$,
\begin{equation}
\label{eq:conv_Qg}
\left\langle \cH_{ N, 0}^{ (2)}\, ,\, \cQ g\right\rangle\to0,\text{ as $N\to\infty$}.
\end{equation} Indeed, since $g$ is at least of class $\cC^{ 2}$ (recall \eqref{eq:embed_one_var}), one can write
\begin{align*}
 \left[\cQ g, \xi_{ 0}\right](\theta, \omega, x, \tx) &= \int_{ \bbX\times\bbY} \left(g( \theta, \omega, x, \ttheta, \tomega, \tx) - g( \theta, \omega, x, \ttheta, \tomega, x) \right) \xi_{ 0}(\dd \ttheta, \dd \tomega),\\
 &=\int_{ \bbX\times\bbY} \int_{x}^{\tx}\partial_{ u}g( \theta, \omega, x, \ttheta, \tomega, u) \dd u\xi_{ 0}(\dd \ttheta, \dd \tomega),\\
  &= d(x, \tx)\underbrace{\left( \frac{ 1}{ d(x, \tx)}\int_{x}^{\tx} \left[\partial_{ u}g, \xi_{ 0}\right]_{ 1}(\theta, \omega, x, u) \dd u\right)}_{ :=Q(\theta, \omega, x, \tx)},
\end{align*}
where, by assumption on $g$, the function $Q$ is bounded and of class $\cC^{ 1}$. Writing $Q_{ i}(x_{ i}, x_{ j})=Q(\theta_{ i, 0}, \omega_{ i}, x_{ i}, x_{ j})$, $ \left\langle \cH_{ N, 0}^{ (2)}\, ,\, \cQ g\right\rangle$  can be written as
\begin{align}
\left\langle \cH_{ N, 0}^{ (2)}\, ,\, \cQ g\right\rangle&= \frac{ a_{ N}}{ \left\vert \Lambda_{ N} \right\vert}\sum_{ i\in \Lambda_{ N}}\bigg( \frac{1}{ \left\vert \Lambda_{ N} \right\vert} \sum_{ j\in \Lambda_{ N}} \Psi(x_{ i}, x_{ j}) \left[\cQ g, \xi_{ 0}\right](\theta_{ i, 0}, \omega_{ i}, x_{ i}, x_{ j})\nonumber\\&\qquad\qquad\qquad - \int_{ \bbS} \Psi(x_{ i}, \tx) \left[\cQ g, \xi_{ 0}\right](\theta_{ i, 0}, \omega_{ i}, x_{ i}, \tx)\dd \tx\bigg),\nonumber\\
&= \frac{ a_{ N}}{ \left\vert \Lambda_{ N} \right\vert}\sum_{ i\in \Lambda_{ N}}\bigg( \frac{1}{ \left\vert \Lambda_{ N} \right\vert} \sum_{ j\in \Lambda_{ N}} d(x_{ i}, x_{ j})^{ 1- \alpha} Q_{ i}(x_{ i}, x_{ j})- \int_{ \bbS} d(x_{ i}, \tx)^{ 1- \alpha} Q_{ i}( x_{ i}, \tx)\dd \tx\bigg),\nonumber\\
&= \frac{ a_{ N}}{ \left\vert \Lambda_{ N} \right\vert}\sum_{ i\in \Lambda_{ N}}\bigg(  \sum_{ j\in \Lambda_{ N}} \int_{ \Delta_{ j}} \left(d(x_{ i}, x_{ j})^{ 1- \alpha} Q_{ i}(x_{ i}, x_{ j})-  d(x_{ i}, \tx)^{ 1- \alpha} Q_{ i}( x_{ i}, \tx)\right)\dd \tx\bigg),\nonumber\\
\begin{split}
&= \frac{ a_{ N}}{ \left\vert \Lambda_{ N} \right\vert}\sum_{ i\in \Lambda_{ N}}\bigg(  \sum_{ j\in \Lambda_{ N}} \int_{ \Delta_{ j}}Q_{ i}(x_{ i}, x_{ j}) \left(d(x_{ i}, x_{ j})^{ 1- \alpha} -  d(x_{ i}, \tx)^{ 1- \alpha}\right)\dd \tx\bigg)\\
&+ \frac{ a_{ N}}{ \left\vert \Lambda_{ N} \right\vert}\sum_{ i\in \Lambda_{ N}}\bigg(  \sum_{ j\in \Lambda_{ N}} \int_{ \Delta_{ j}}d(x_{ i}, \tx)^{ 1- \alpha} \left(Q_{ i}(x_{ i}, x_{ j})-  Q_{ i}( x_{ i}, \tx)\right)\dd \tx\bigg),
\end{split}\nonumber\\
\label{aux:Q}&= \left\langle \cH_{ N, 0}^{ (5)}\, ,\, g\right\rangle + \left\langle \cH_{ N, 0}^{ (6)}\, ,\, g\right\rangle
\end{align}
where $ \Delta_{ j}:=[x_{ j}, x_{ j+1})$. We treat the two terms in \eqref{aux:Q} separately. First of all,
\begin{align*}
\left\vert \left\langle \cH_{ N, 0}^{ (5)}\, ,\, g\right\rangle \right\vert&\leq \Ninf{\partial_{ u}g}\frac{ a_{ N}}{ \left\vert \Lambda_{ N} \right\vert}\sum_{ i\in \Lambda_{ N}}\bigg(  \sum_{ j\in \Lambda_{ N}} \int_{ \Delta_{ j}} \left\vert d(x_{ i}, x_{ j})^{ 1- \alpha} -  d(x_{ i}, \tx)^{ 1- \alpha} \right\vert\dd \tx\bigg).
\end{align*}
A long but easy calculation shows that the term within the parentheses is of order $ \frac{ 1}{ N}$, uniformly in $i\in \Lambda_{ N}$ (it is the speed of convergence of the Riemann sum associated to $x \mapsto \left\vert x \right\vert^{ 1- \alpha}$ towards its integral), so that $\left\vert \left\langle \cH_{ N, 0}^{ (5)}\, ,\, g\right\rangle \right\vert\leq C \frac{ a_{ N}}{ N}$, which, in any case in $ \alpha$, goes to $0$ as $N\to \infty$. As far as the second term in \eqref{aux:Q} is concerned, one has
\begin{align*}
\left\vert \left\langle \cH_{ N, 0}^{ (6)}\, ,\, g\right\rangle \right\vert&\leq \Ninf{\partial_{ u}^{ 2}g} \frac{ a_{ N}}{ \left\vert \Lambda_{ N} \right\vert}\sum_{ i\in \Lambda_{ N}}\bigg(  \sum_{ j\in \Lambda_{ N}} \int_{ \Delta_{ j}} \left\vert x_{ j} - \tx \right\vert\dd \tx\bigg),
\end{align*}that is of order $ \frac{ a_{ N}}{ N}\to_{ N\to\infty} 0$ too. This proves \eqref{eq:conv_Qg}.

We specify now our analysis to cases $ \alpha< \frac{ 1}{ 2}$ and $ \alpha> \frac{ 1}{ 2}$. We first treat the case $ \alpha< \frac{ 1}{ 2}$. First observe that in this case the whole process $\cH_{ N, 0}^{ (2)}$ (not only $ \left\langle \cH_{ N, 0}^{ (2)}\, ,\, \cQ g\right\rangle$) converges to $0$ as $N\to\infty$. Indeed, since one has trivially $ \left[\cP g, \xi_{ 0}\right](\tau, \tx)= \left[g, \xi_{ 0}\right](\tau, x)$, we have
\begin{align}
\label{eq:HN02_Pg}
\left\langle \cH_{ N, 0}^{ (2)}\, ,\, \cP g\right\rangle&=\frac{ 1}{ \left\vert \Lambda_{ N} \right\vert}  \sum_{ i\in \Lambda_{ N}}\left[g, \xi_{ 0}\right](\tau_{ i, 0}, x_{ i}) \left\lbrace a_{ N}\bigg( \frac{1}{ \left\vert \Lambda_{ N} \right\vert} \sum_{ j\in \Lambda_{ N}} \Psi(x_{ i}, x_{ j})  - \int_{ \bbS} \Psi(x_{ i}, \tx) \dd \tx\bigg)\right\rbrace,
\end{align}which by Lemma~\ref{lem:fluct_Psi} goes to $0$ as $N\to\infty$. From this and the remarks made above, everything boils down to the identification of the limit of the process $( \eta_{ N, 0}^{ (1)}, \cH_{ N, 0}^{ (1)})_{ N\geq1}$. To do so, we identify the limit of $X_{ N}:= u \left\langle \eta_{ N, 0}^{ (1)}\, ,\, f\right\rangle + v\left\langle \cH_{ N, 0}^{ (1)}\, ,\, g\right\rangle$ for any scalars $u$ and $v$ and any test functions $f$ and $g$. By a density argument, it suffices to identify the limit for test functions $g$ of the type $g(\theta, \omega, x, \ttheta, \tomega, \tx)= g_{ 1}(\theta, \omega)g_{ 2}(\ttheta, \tomega) h(x, \tx)$. In such a case, (recall that $a_{ N}= \sqrt{N}$ and $ \left\vert \Lambda_{ N} \right\vert= 2N$)
\begin{align}
\label{aux:HN01}
\left\langle \cH_{ N, 0}^{ (1)}\, ,\, g\right\rangle&=\frac{ 1}{ \sqrt{N}} \sum_{ j\in \Lambda_{ N}} \bar g_{2}(\theta_{ j, 0}, \omega_{ j}) \left(\frac{ 1}{ 4N} \sum_{ i\in \Lambda_{ N}}g_{1}(\theta_{ i, 0}, \omega_{ i})h(x_{ i}, x_{ j})\Psi(x_{ i}, x_{ j})\right),
\end{align}
where
\begin{equation}
\bar g_{2}(\theta, \omega):= g_{2}(\theta, \omega) -\int g_{2}(\ttheta, \tomega)   \xi_{ 0}(\dd \ttheta, \dd \tomega).
\end{equation}
Intuitively, the term within brackets in \eqref{aux:HN01} should be, for large $N$, close to $ \Xi(x_{ j})$ where 
\begin{equation}
\label{eq:Xi}
\Xi(\tx):= \frac{ 1}{ 2} \int g_{1}(\theta, \omega) h(x, \tx) \Psi(x, \tx) \xi_{ 0}(\dd \theta, \dd \omega)\dd x.
\end{equation} Hence, one can decompose $\cH_{ N, 0}^{ (1)}$ into
\begin{equation}
\label{aux:HN01_2}
\begin{split}
\left\langle \cH_{ N, 0}^{ (1)}\, ,\, g\right\rangle&=\frac{ 1}{ \sqrt{N}} \sum_{ j\in \Lambda_{ N}} \bar g_{2}(\theta_{ j, 0}, \omega_{ j}) \Xi(x_{ j})\\& + \frac{ 1}{ \sqrt{N}} \sum_{ j\in \Lambda_{ N}} \bar g_{2}(\theta_{ j, 0}, \omega_{ j}) \left( \Xi(x_{ j}) - \frac{ 1}{ 4N} \sum_{ i\in \Lambda_{ N}}g_{1}(\theta_{ i, 0}, \omega_{ i})h(x_{ i}, x_{ j})\Psi(x_{ i}, x_{ j})\right)\\&:= \left\langle \cH_{ N, 0}^{ (3)}\, ,\, g\right\rangle + \left\langle \cH_{ N, 0}^{ (4)}\, ,\, g\right\rangle.
\end{split}
\end{equation}
Consequently, $X_{ N}$ may be written as
\begin{align}
X_{ N}&= u \left\langle \eta_{ N, 0}^{ (1)}\, ,\, f\right\rangle + v\left\langle \cH_{ N, 0}^{ (3)}\, ,\, g\right\rangle + v\left\langle \cH_{ N, 0}^{ (4)}\, ,\, g\right\rangle,\nonumber\\
&= \frac{ 1}{ \sqrt{2N}} \sum_{ i\in \Lambda_{ N}} \left(\frac{ u}{ \sqrt{2}}\bar f(\theta_{ i, 0}, \omega_{ i}, x_{ i}) + v \sqrt{2}\bar g_{2}(\theta_{ i, 0}, \omega_{ j}) \Xi(x_{ i}) \right)+ v\left\langle \cH_{ N, 0}^{ (4)}\, ,\, g\right\rangle \label{aux:XN_decomp}
\end{align}
Let us admit for a moment that $\cH_{ N, 0}^{ (4)}$ converges in $L^{ 2}$ to $0$, as $N\to\infty$. Then, Proposition~\ref{prop:initial_cond_HN} for $ \alpha< \frac{ 1}{ 2}$ is a consequence of Lyapounov Central Limit Theorem (see \cite{Billingsley1995}, Theorem~27.3, page 362) applied to \eqref{aux:XN_decomp} and the convergence as $N\to\infty$ of \[s_{ N}^{ 2}:= \frac{ 1}{ N}\sum_{ j\in \Lambda_{ N}} \bE \left(\left(\frac{ u}{ \sqrt{2}}\bar f(\theta_{ i, 0}, \omega_{ i}, x_{ i}) + v \sqrt{2}\bar g_{2}(\theta_{ i, 0}, \omega_{ j}) \Xi(x_{ i}) \right)^{ 2}\right)\] to $u^{ 2}C_{ \eta}(f, f) +2uv C_{ \eta, \cH}(f, g) + C_{ \cH}(g, g)$ (recall \eqref{eq:covariances_0} and \eqref{eq:Xi}). We are now left with proving that $\cH_{ N, 0}^{ (4)}$ in \eqref{aux:HN01_2} goes to $0$ in $L^{ 2}$ as $N\to\infty$. Indeed, using that $(\theta_{ i, 0}, \omega_{ i})_{ i\in \Lambda_{ N}}$ are \iid and that $\bE(\bar g_{2}(\theta_{ i, 0}, \omega_{ i}))=0$ for all $i\in \Lambda_{ N}$, we obtain
\begin{align*}
\bE \left( \left\langle \cH_{ N, 0}^{ (4)}\, ,\, g\right\rangle^{ 2}\right)&= \frac{ 1}{ N} \sum_{ i\in \Lambda_{ N}} \bE \left(\bar g_{2}(\theta_{ i, 0}, \omega_{ i})^{ 2}\right) \Xi(x_{ i})^{ 2}\\
\begin{split}
 - \frac{ 1}{ 2N^{ 2}} \sum_{ i, k\in \Lambda_{ N}} &\bE \left(\bar g_{2}(\theta_{ i, 0}, \omega_{ i})^{ 2}\right)\bE \left(g_{1}(\theta_{ k, 0}, \omega_{ k})\right)h(x_{ k}, x_{ i}) \Psi(x_{ k}, x_{ i}) \Xi(x_{ i})\\
+ \frac{ 1}{ 16 N^{ 3}} \sum_{ i, j, k\in \Lambda_{ N}} &\bE \left(\bar g_{2}(\theta_{ i, 0}, \omega_{ i})^{ 2}\right)\bE \left(g_{1}(\theta_{ j, 0}, \omega_{ j})\right)\bE \left(g_{1}(\theta_{ k, 0}, \omega_{ k})\right)h(x_{ j}, x_{ i}) \Psi(x_{ j}, x_{ i})h(x_{ k}, x_{ i}) \Psi(x_{ k}, x_{ i})
\end{split}\\
&= \bE \left(\bar g_{2}(\theta_{ 1, 0}, \omega_{ 1})^{ 2}\right) \bigg( \frac{ 1}{ N} \sum_{ i\in \Lambda_{ N}}  \Xi(x_{ i})^{ 2}\\
\begin{split}
&- \bE \left(g_{1}(\theta_{ 1, 0}, \omega_{ 1})\right)\frac{ 1}{ 2N^{ 2}} \sum_{ i, k\in \Lambda_{ N}} h(x_{ k}, x_{ i}) \Psi(x_{ k}, x_{ i}) \Xi(x_{ i})\\
&+ \bE \left(g_{1}(\theta_{ 1, 0}, \omega_{ 1})\right)^{ 2} \frac{ 1}{ 16 N^{ 3}} \sum_{ i, j, k\in \Lambda_{ N}} h(x_{ j}, x_{ i}) \Psi(x_{ j}, x_{ i})h(x_{ k}, x_{ i}) \Psi(x_{ k}, x_{ i})\bigg).
\end{split}
\end{align*}
As $N\to\infty$, this quantity goes to 
\begin{align*}
\lim_{ N\to\infty}\bE \left( \left\langle \cH_{ N, 0}^{ (4)}\, ,\, g\right\rangle^{ 2}\right) &=2 \int  \Xi(\tx)^{ 2} \dd \tx - 4\bE \left(g_{1}(\theta_{ 1, 0}, \omega_{ 1})\right)^{ 2} \int \left(\frac{ 1}{ 2}\int h(x, \tx) \Psi(x, \tx)\dd x\right)^{ 2}\dd \tx\\ &+ 2\bE \left(g_{1}(\theta_{ 1, 0}, \omega_{ 1})\right)^{ 2} \int \left(\frac{ 1}{ 2}\int h(x, \tx) \Psi(x, \tx)\dd x\right)^{ 2}\dd \tx\\&=2 \int  \Xi(\tx)^{ 2} \dd \tx - 4\int  \Xi(\tx)^{ 2} \dd \tx + 2\int  \Xi(\tx)^{ 2} \dd \tx =0.
\end{align*} 
which is the desired result. This concludes the proof of Proposition~\ref{prop:initial_cond_HN} in the case $ \alpha< \frac{ 1}{ 2}$.
\medskip

It remains to treat the case $ \alpha> \frac{ 1}{ 2}$ (where $a_{ N}=N^{ 1- \alpha}$): as already proven in the case $ \alpha< \frac{ 1}{ 2}$, the quantity $ \frac{ \sqrt{N}}{ \left\vert \Lambda_{ N} \right\vert} \sum_{ i\in \Lambda_{ N}}\left(f(\theta_{ i, 0}, \omega_{ i}, x_{ i}) - \left[ f, \xi_{ 0}\right](x_{ i}) \right)$ converges in law, so that $ \left\langle \eta_{ N, 0}^{ (1)}\, ,\, f\right\rangle= \frac{ a_{ N}}{ \sqrt{N}}\frac{ \sqrt{N}}{ \left\vert \Lambda_{ N} \right\vert} \sum_{ i\in \Lambda_{ N}}\left(f(\theta_{ i, 0}, \omega_{ i}, x_{ i}) - \left[ f, \xi_{ 0}\right](x_{ i}) \right)$ converges in law to $0$ as $N\to\infty$. Since it is also the case for $ \left\langle \eta_{ N, 0}^{ (2)}\, ,\, f\right\rangle$, the whole process $ \eta_{ N, 0}$ converges in law to $0$. It remains to identify the limit for $\cH_{ N, 0}$: note that $ \left\langle \cH_{ N, 0}^{ (1)}\, ,\, g\right\rangle = J_{ N, 0}^{ (1)}(g)$, where $J_{ N, t}^{ (1)}$ was defined in \eqref{eq:JN}. In particular, we see from \eqref{aux:bound_JN1} and the definition of $a_{ N}$ in \eqref{eq:aN} that $\cH_{ N, 0}^{ (1)}$ vanishes to $0$ as $N\to\infty$ when $ \alpha> \frac{ 1}{ 2}$. By the remarks made at the beginning of this proof, it only remains to identify the limit of $\left\langle \cH_{ N, 0}^{ (2)}\, ,\, \cP g\right\rangle$ in \eqref{aux:HN02_PQ}. An application of Lemma~\ref{lem:fluct_Psi} to \eqref{eq:HN02_Pg} clearly shows that $\left\langle \cH_{ N, 0}^{ (2)}\, ,\, \cP g\right\rangle$ converges as $N\to\infty$ to the quantity defined in \eqref{eq:H0_large_alpha}. This concludes the proof of Proposition~\ref{prop:initial_cond_HN}.
\end{proof}

\subsection{Proof of Proposition~\ref{prop:unique_L1}}
\label{sec:proof_unique_L1}
The proof of this result is standard and uses techniques developed by Kunita in \cite{MR876080} and used by Mitoma \cite{MR820620} and Jourdain and M\'el\'eard \cite{Jourdain1998}. Since the following is mostly an adaptation of the approach of \cite{Jourdain1998} to our case, we sketch the main lines of proof and refer to the mentioned references for technical details.

Since $ \theta \mapsto c(\theta, \omega)$ and $ \Gamma$ are supposed to be bounded as well as their derivatives up to order $3P+9$, the function $ \theta \mapsto v(t, \theta, \omega)$ is regular and bounded (with bounded derivatives), uniformly in $t\in[0, T]$.
According to \cite{MR876080}, Theorem~4.4, p.277, the flow $X_{ s, t}(\tau, \ttau)_{ 0\leq s\leq t \leq T}$ defines a $C^{ 3P+8}$ diffeomorphism. By \cite{MR876080}, for any differential operator $D$ on $(\bbR^{ m})^{ 2}$ of order smaller that $3P+8$
\begin{equation}
\sup_{ (\tau, \ttau)\in\bbX\times\bbY\times\bbS} \sup_{ 0\leq s\leq t\leq T} \bE_{ B, \tB} \left( \left\vert D X_{ s, t}(\tau, \ttau) \right\vert^{ r}\right)<+\infty,\ r>0.
\end{equation}
Using this estimate and backward Ito's formula (\cite{MR876080}, Theorem~1.1, p.256), it is possible to prove (see \cite{Jourdain1998}, p.761) that for all $g\in C_{ b}^{ 2}$, for all $0\leq s\leq t\leq T$, for all $(\tau, \ttau)$, 
\begin{equation}
\label{eq:back_Kolm_U}
U(t, s)g(\tau, \ttau) - g(\tau, \ttau) = \int_{s}^{t} \mathscr{L}_{ r}^{ (1)}(U(t, r)g)(\tau, \ttau)\dd r.
\end{equation}
The next step is to prove that \eqref{eq:back_Kolm_U} is also valid in the space $C_{ 3(P+2)}^{ 0, 2 \iota}$. This relies on the following lemma (we refer to \cite{Jourdain1998}, Lemma~3.11 for a proof of this result):
\begin{lemma}
\label{lem:regularity_U}
Under the assumptions of Section~\ref{sec:assumptions}, the operator $\mathscr{L}_{ t}^{ (1)}$ is continuous from $C_{ 3P+8}^{ 0, \iota}$ to $C_{ 3(P+2)}^{ 0, 2 \iota}$ and
\begin{align*}
\N{\mathscr{L}_{ t}^{ (1)}g}_{ C_{ 3(P+2)}^{ 0, 2 \iota}}&\leq C \N{g}_{ C_{ 3P+8}^{ 0, \iota}},\ t\in[0, t]\\
\N{\mathscr{L}_{ s}^{ (1)}g - \mathscr{L}_{ t}^{ (1)}g}_{ C_{ 3(P+2)}^{ 0, 2 \iota}}&\leq C \N{g}_{ C_{ 3P+8}^{ 0, \iota}} \left\vert t-s \right\vert,\ s,t\in[0, T].
\end{align*}
For any $j\leq 3P+8$, the operator $U(t, s)$ is a linear operator from $C_{ j}^{ 0, 0}$ to $C_{ j}^{ 0, \iota}$ such that
\begin{align*}
\N{U(t, s) g}_{ C_{ j}^{ 0, \iota}}&\leq C \N{g}_{ C_{ j}^{ 0, 0}},\ 0\leq s\leq t\leq T,\\
\N{U(t, s) g-U(t, s^{ \prime}) g}_{ C_{ j}^{ 0, \iota}}&\leq C \N{g}_{ C_{ j+1}^{ 0, 0}} \sqrt{s^{ \prime}-s},\ 0\leq s\leq s^{ \prime}\leq t\leq T.
\end{align*}
\end{lemma}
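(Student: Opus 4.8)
The plan is to treat the four estimates in two groups: the bounds on the differential operator $\mathscr{L}_{t}^{(1)}$, obtained by direct differentiation, and the bounds on the stochastic flow $U(t,s)$, which rest on standard regularity theory for the underlying SDE.

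\textbf{Estimates on $\mathscr{L}_{t}^{(1)}$.} Set $a_{t}(\theta,\omega):=c(\theta,\omega)+\left[\Gamma\Psi,\nu_{t}\right](\theta,\omega)$; by Remark~\ref{rem:Psix} this coefficient is independent of $x$ and equals $c(\theta,\omega)+\frac{2^{\alpha}}{1-\alpha}\int_{\bbX\times\bbY}\Gamma(\theta,\omega,\ttheta,\tomega)\,\xi_{t}(\dd\ttheta,\dd\tomega)$. Since $\Gamma$ and its derivatives up to order $3P+9$ are bounded and $\xi_{t}$ is a probability measure, while the derivatives of $c$ obey \eqref{eq:polgrowthc}, every $(\theta,\omega)$-derivative of $a_{t}$ of order at most $3P+8$ is bounded by $C(1+|\omega|^{\iota})$, uniformly in $t\in[0,T]$. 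For a multi-index $k$ with $|k|\leq 3(P+2)$ I would apply $D^{k}_{\tau,\ttau}$ to the three terms of $\mathscr{L}_{t}^{(1)}g$ and use Leibniz' rule: $\frac12\Delta_{\theta,\ttheta}g$ yields a derivative of $g$ of order $\leq 3P+8$, while $\nabla_{\theta}g\cdot a_{t}(\theta,\omega)$ and $\nabla_{\ttheta}g\cdot a_{t}(\ttheta,\tomega)$ yield finite sums of products of a derivative of $g$ of order $\leq 3P+8$ with a derivative of $a_{t}$. Bounding a derivative of $g$ by $\N{g}_{C_{3P+8}^{0,\iota}}$ times $(1+|\omega|^{\iota}+|\tomega|^{\iota})$ and a derivative of $a_{t}$ by $C(1+|\omega|^{\iota})$, and using $(1+|\omega|^{\iota}+|\tomega|^{\iota})(1+|\omega|^{\iota})\leq C\,w(\tau,\ttau,0,2\iota)$ with $w$ as in \eqref{eq:def_w}, one obtains the first estimate. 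For the second, $\mathscr{L}_{s}^{(1)}g-\mathscr{L}_{t}^{(1)}g=\nabla_{\theta}g\cdot(a_{s}-a_{t})(\theta,\omega)+\nabla_{\ttheta}g\cdot(a_{s}-a_{t})(\ttheta,\tomega)$, so it suffices to prove that every $(\theta,\omega)$-derivative of $a_{s}-a_{t}$ of order $\leq 3P+7$ is bounded by $C|t-s|$. This follows by inserting the test function $(\ttheta,\tomega)\mapsto D^{\ell}_{\theta,\omega}\Gamma(\theta,\omega,\ttheta,\tomega)$ (bounded with bounded $\ttheta$-derivatives) into the weak equation \eqref{eq:xit_bis} for $\xi_{t}$: since $\int(1+|\tomega|^{\iota})\,\xi_{t}(\dd\ttheta,\dd\tomega)$ is finite and bounded in $t$ by Proposition~\ref{prop:decomp_nu}, the quantity $\partial_{t}\cro{\xi_{t}}{D^{\ell}_{\theta,\omega}\Gamma(\theta,\omega,\cdot)}$ is bounded uniformly in $t$, whence the claim and the second estimate.

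\textbf{Estimates on $U(t,s)$.} The crucial observation is that $X_{s,t}(\tau,\ttau)=(\Theta_{s,t}(\tau),\tTheta_{s,t}(\ttau))$ depends on $(\tau,\ttau)$ only through $(\theta,\omega,\ttheta,\tomega)$, the drift $v$ in \eqref{eq:def_v_sde} being $x$-independent; hence $\partial_{x}$ and $\partial_{\tx}$ commute with $U(t,s)$ and carry no growth. For the other variables I would differentiate the defining SDE both in the initial data $(\theta,\ttheta)$ and in the parameters $(\omega,\tomega)$: Gronwall's lemma and the Burkholder--Davis--Gundy inequality, together with the boundedness of the $\theta$-derivatives of $v$ and the polynomial-in-$\omega$ bounds inherited from \eqref{eq:polgrowthc}, give moments of all orders of the variational processes, uniform in $(\theta,\ttheta)$ and polynomial in $(|\omega|,|\tomega|)$; this is exactly where the stochastic-flow results of Kunita \cite{MR876080} quoted above are used, in particular the $C^{3P+8}$-diffeomorphism property and the moment bound displayed before the Lemma. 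Expanding $D^{k}_{\tau,\ttau}(U(t,s)g)$ by the chain rule as an expectation of products of $D^{\ell}g(X_{s,t})$, $|\ell|\leq|k|\leq j$, with such variational factors, and using that $g\in C_{j}^{0,0}$ is bounded with bounded derivatives, Hölder's inequality yields $|D^{k}_{\tau,\ttau}(U(t,s)g)|\leq C\N{g}_{C_{j}^{0,0}}(1+|\omega|^{\iota}+|\tomega|^{\iota})$, i.e.\ the third estimate. For the fourth I would use the flow decomposition $X_{s,t}=X_{s',t}\circ X_{s,s'}$ for $s\leq s'\leq t$ together with the independence of the Brownian increments over $[s,s']$ and $[s',t]$ to write $U(t,s)g-U(t,s')g$ as the expectation of the increment of $U(t,s')g$ obtained by replacing the $(\theta,\ttheta)$-slots by $X_{s,s'}(\tau,\ttau)$; since $\bE_{B,\tB}|X_{s,s'}(\tau,\ttau)-(\theta,\ttheta)|^{r}\leq C(1+|\omega|^{r\iota}+|\tomega|^{r\iota})(s'-s)^{r/2}$ for $s'-s\leq T$ (the Brownian part dominating), a first-order Taylor expansion of $U(t,s')g$ in the $(\theta,\ttheta)$-slots — whose derivatives up to order $j+1$ are controlled by the third estimate applied with $j+1$ in place of $j$ — produces the factor $\sqrt{s'-s}$; differentiating $k$ times with $|k|\leq j$ and repeating the argument on each term of the chain-rule expansion gives the bound with $\N{g}_{C_{j+1}^{0,0}}$.

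\textbf{Main obstacle.} The delicate point is the bookkeeping of the polynomial weights in the flow estimates: one has to organise the Faà di Bruno expansions of $X_{s,t}$ and of $U(t,s)g$ so that iterated differentiation in $\omega$ and $\tomega$ does not inflate the polynomial weight beyond the exponent $\iota$ appearing in the Lemma — each extra $\omega$-derivative being made to fall on a coefficient, which stays bounded by $C(1+|\omega|^{\iota})$, rather than spawning a further power of $|\omega|$ — and to invoke Kunita's theory under the present hypotheses, in which the $\theta$-derivatives of $c$ are only controlled with polynomial-in-$\omega$ constants. These computations are carried out in full in \cite{Jourdain1998}, Lemma~3.11, to which the paper refers. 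Once the Lemma is available, extending the backward Kolmogorov identity \eqref{eq:back_Kolm_U} from $C_{b}^{2}$ to $C_{3(P+2)}^{0,2\iota}$ — the use made of it in the proof of Proposition~\ref{prop:unique_L1} — is then immediate by density, using the first and third estimates to pass to the limit.
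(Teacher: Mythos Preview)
Your proposal is correct and aligns with the paper's approach: the paper does not actually prove this lemma but simply refers to \cite{Jourdain1998}, Lemma~3.11, and your sketch --- direct Leibniz-rule differentiation for the $\mathscr{L}_{t}^{(1)}$ bounds together with Kunita's flow regularity and the composition $X_{s,t}=X_{s',t}\circ X_{s,s'}$ for the $U(t,s)$ bounds --- is precisely the content of that reference, with the weight accounting matching the paper's own remark after the lemma on why the exponent jumps from $\iota$ to $2\iota$.
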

Note in particular that there exists $C>0$ such that for any differential operator $D$ of order smaller than $3(P+2)$, $ \N{D \left[ \Gamma \Psi, \nu_{ s}\right] - D \left[ \Gamma \Psi, \nu_{ t}\right]}_{ \infty}\leq C \left\vert t-s \right\vert$, for any $s, t\in[0, T]$. The change in the parameter $ \iota$ in the spaces $ C_{j}^{ 0, \iota}$ in Lemma~\ref{lem:regularity_U} comes from the fact that $(\theta, \omega)\mapsto c(\theta, \omega)$ is possibly unbounded in $ \omega$: there exists $C>0$ such that for all differential operators $D_{ 1}$ and $D_{ 2}$, with sum of orders smaller than $3(P+2)$, $ \frac{ \left\vert D_{ 1}g D_{ 2}c \right\vert}{ 1+ \left\vert \omega \right\vert^{ 2 \iota}}= \frac{ \left\vert D_{ 1}g\right\vert}{ 1+ \left\vert \omega \right\vert^{ \iota}} \frac{\left\vert  D_{ 2}c \right\vert (1+ \left\vert \omega \right\vert^{ \iota})}{ 1+ \left\vert \omega \right\vert^{ 2\iota}}\leq C\frac{ \left\vert D_{ 1}g\right\vert}{ 1+ \left\vert \omega \right\vert^{ \iota}}$, by assumption on $c$. As a consequence of Lemma~\ref{lem:regularity_U}, we know that if $g\in C_{ 3P+9}^{ 0, 0}$, $ s \mapsto \mathscr{L}_{ s}^{ (1)}(U(t, s)g)$ is continuous in $C_{ 3(P+2)}^{ 0, 2 \iota}$ and hence that $ \int_{0}^{t} \mathscr{L}_{ s}^{ (1)}(U(t, s)g)\dd s$ makes sense as a Riemann integral in $C_{ 3(P+2)}^{ 0, 2 \iota}$. In particular, we obtain, for every $g\in C_{ 3P+9}^{ 0, 0}$
\begin{equation}
\label{eq:back_Kolm_U_2}
U(t, s)g - g = \int_{s}^{t} \mathscr{L}_{ r}^{ (1)}(U(t, r)g)\dd r,\ \text{in $C_{ 3(P+2)}^{ 0, 2 \iota}$}.
\end{equation}
We are now ready to establish the representation \eqref{eq:repr_cE}: let $\cE$ a solution to \eqref{eq:cE_linear_eq} in $\cC([0, T], \bW_{ -2(P+2)}^{ \underline{ \kappa}+ \gamma, \underline{ \iota}+ \gamma})$. Since $\bW_{ -2(P+2)}^{ \underline{ \kappa}+ \gamma, \underline{ \iota}+ \gamma} \hookrightarrow \bW_{ -3(P+2)}^{ \underline{ \kappa}, \underline{ \iota}}$, we have, for all $g\in \bW_{ 3(P+2)}^{ \underline{ \kappa}, \underline{ \iota}}$, 
\begin{equation}
\label{eq:cE_linear_eq_2}
\left\langle \cE_{ t}\, ,\, g\right\rangle = \int_{0}^{t} \left\langle \cE_{ s}\, ,\, \mathscr{L}_{ s}^{ (1)}g\right\rangle \dd s + \int_{0}^{t} \left\langle R_{ s}\, ,\, g\right\rangle\dd s.
\end{equation}
Both relations \eqref{eq:back_Kolm_U_2} and \eqref{eq:cE_linear_eq_2} are in particular true for every $g\in C_{ 3P+9}^{ 0, 0}$, since $C_{ 3(P+2)}^{ 0, 2 \iota}\hookrightarrow \bW_{ 3(P+2)}^{ \underline{ \kappa}, \underline{ \iota}}$ (by \eqref{eq:embed_cont} and since by assumption $\underline{ \kappa}> m$ and $ \underline{ \iota}> n + 2 \iota$, recall \eqref{eq:kappas} and \eqref{eq:iotas}). Combining \eqref{eq:back_Kolm_U_2} and \eqref{eq:cE_linear_eq_2}, one obtains for $g\in C_{ 3P+9}^{ 0, 0}$
\begin{align}
\left\langle \cE_{ t}\, ,\, g\right\rangle&= \int_{0}^{t} \left\langle (\mathscr{L}_{ s}^{ (1)})^{ \ast} \cE_{ s} + R_{ s}\, ,\, U(t, s) g - \int_{s}^{t} \mathscr{L}_{ r}^{ (1)}(U(t, r)g)\dd r\right\rangle \dd s \nonumber\\
&= \int_{0}^{t} \left\langle (\mathscr{L}_{ s}^{ (1)})^{ \ast} \cE_{ s} + R_{ s}\, ,\, U(t, s) g\right\rangle \dd s - \int_{0}^{t} \int_{s}^{t}\left\langle (\mathscr{L}_{ s}^{ (1)})^{ \ast} \cE_{ s} + R_{ s}\, ,\,  \mathscr{L}_{ r}^{ (1)}(U(t, r)g)\right\rangle \dd r \dd s,\nonumber\\
&= \int_{0}^{t} \left\langle (\mathscr{L}_{ s}^{ (1)})^{ \ast} \cE_{ s} + R_{ s}\, ,\, U(t, s) g\right\rangle \dd s - \int_{0}^{t} \int_{0}^{r}\left\langle (\mathscr{L}_{ s}^{ (1)})^{ \ast} \cE_{ s} + R_{ s}\, ,\,  \mathscr{L}_{ r}^{ (1)}(U(t, r)g)\right\rangle \dd s \dd r,\nonumber\\
&= \int_{0}^{t} \left\langle (\mathscr{L}_{ s}^{ (1)})^{ \ast} \cE_{ s} + R_{ s}\, ,\, U(t, s) g\right\rangle \dd s - \int_{0}^{t} \left\langle \cE_{ r}\, ,\, \mathscr{L}_{ r}^{ (1)}(U(t, r)g)\right\rangle \dd r,\label{aux:repr_cE}\\
&= \int_{0}^{t} \left\langle R_{ s}\, ,\, U(t, s) g\right\rangle \dd s,\nonumber
\end{align}
where we used again \eqref{eq:cE_linear_eq_2} in \eqref{aux:repr_cE}. Since $C_{ 3P+9}^{ 0, 0}$ is dense in $C_{ 2(P+2)}^{0,0}$, the identity $\cE_{ t}= \int_{0}^{t} U(t, s)^{ \ast} R_{ s}\dd s$ holds in $C_{ -2(P+2)}^{0, 0}$. Since $C_{ 2(P+2)}^{0, 0}$ is dense in $\bW_{ 2(P+2)}^{ \underline{ \kappa}+ \gamma, \underline{ \iota}+ \gamma}$, uniqueness holds for \eqref{eq:cE_linear_eq} in $\cC([0, T], \bW_{ -2(P+2)}^{ \underline{ \kappa}+ \gamma, \underline{ \iota}+ \gamma})$. Proposition~\ref{prop:unique_L1} is proven.\qed

\section*{Acknowledgements}
Part of this work was made during the stay of E.L. at the Institut f\"ur Mathematik at the Technische Universit\"at and at the Bernstein Center for Computational Neuroscience in Berlin. E.L. thanks the members of the Probability group for the warm hospitality. This work has been partially supported by the BMBF, FKZ 01GQ 1001B.

\bibliographystyle{siam}
\def\cprime{$'$}

\end{document}